\newlength{\leftstackrelawd}
\newlength{\leftstackrelbwd}
\def\leftstackrel#1#2{\settowidth{\leftstackrelawd}%
	{${{}^{#1}}$}\settowidth{\leftstackrelbwd}{$#2$}%
	\addtolength{\leftstackrelawd}{-\leftstackrelbwd}%
	\leavevmode\ifthenelse{\lengthtest{\leftstackrelawd>0pt}}%
	{\kern-.5\leftstackrelawd}{}\mathrel{\mathop{#2}\limits^{#1}}}
\newcommand{\bdd}[1]{ \boldsymbol{#1} }
\newcommand{\unitvec}[1]{\mathbf{#1}}
\newcommand{\vertiii}[1]{{\left\vert\kern-0.25ex\left\vert\kern-0.25ex\left\vert #1 
		\right\vert\kern-0.25ex\right\vert\kern-0.25ex\right\vert}}
\newcommand{\inormsup}[4]{\tensor*[_{#1}]{ \left\|#2 \right\| }{_{#3}^{#4} } }
\newcommand{\inorm}[3]{\inormsup{#1}{#2}{#3}{}}
\newcommand{\reftri}[0]{T}
\newcommand{\reftet}[0]{K}
\crefname{hypothesis}{Hypothesis}{Hypotheses}
\title{Stable Liftings of Polynomial Traces on Tetrahedra \thanks{Submitted to the editors DATE. \funding{The first author acknowledges that this material is based upon work supported by the National Science Foundation under Award No.
			DMS-2201487. }}}
\author{Charles Parker\thanks{ Mathematical Institute, University of Oxford, Andrew Wiles Building, Woodstock Road, Oxford OX2 6GG, UK  (\email{charles.parker@maths.ox.ac.uk}, \email{suli@maths.ox.ac.uk})} \and Endre S\"{u}li\footnotemark[2] }
\DeclareMathOperator{\supp}{supp}
\DeclareMathOperator{\Tr}{Tr}
\DeclareMathOperator{\dist}{dist}
\DeclareMathOperator{\dive}{div}
\renewcommand{\d}[1]{\,\mathrm{d}{#1}}
\begin{document}

\maketitle

\begin{abstract}
	On the reference tetrahedron $\reftet$, we construct, for each $k \in \mathbb{N}_0$, a right inverse for the trace operator $u \mapsto (u, \partial_{\unitvec{n}} u, \ldots,  \partial_{\unitvec{n}}^k u)|_{\partial \reftet}$. The operator is stable as a mapping from the trace space of $W^{s, p}(\reftet)$ to $W^{s, p}(\reftet)$ for all $p \in (1, \infty)$ and $s \in (k+1/p, \infty)$. Moreover, if the data is the trace of a polynomial of degree $N \in \mathbb{N}_0$, then the resulting lifting is a polynomial of degree $N$. One consequence of the analysis is a novel characterization for the range of the trace operator.
\end{abstract}

\begin{keywords}
trace lifting, polynomial extension, polynomial lifting
\end{keywords}

\begin{AMS}
	46E35, 65N30
\end{AMS}

\section{Introduction}
\label{sec:intro}

The numerical analysis of high-order finite element and spectral element methods heavily rely on the existence of stable polynomial liftings -- bounded operators mapping suitable piecewise polynomials on the boundary of the element to polynomials defined over the entire element. A number of operators have been constructed on the reference triangle and square, beginning with the pioneering work of Babu\v{s}ka et al. \cite{BCMP91,BabSuri87}. Their lifting maps $H^{\frac{1}{2}}(\partial E)$ boundedly into $H^1(E)$, where $E$ is either a suitable reference triangle or square, with the additional property that if the datum is continuous and its restriction to each edge is a polynomial of degree $N \geq 0$, then the lifting is a polynomial of degree $N$. Other constructions for continuous piecewise polynomials on $\partial E$ are stable from a discrete trace space to $L^2(E)$ \cite{AinJia19}, from $L^2(\partial E)$ to $H^{\frac{1}{2}}(E)$ \cite{Ain09poly}, from $W^{1-\frac{1}{p}, p}(\partial E)$ to $W^{1, p}(E)$ for $1 < p < \infty$ \cite{Melenk05}, and from $W^{s-\frac{1}{p}, p}(\partial E)$ to $W^{s, p}(E)$ for $s \geq 1$ and $1 < p < \infty$ \cite{Parker23}. Liftings for other types of traces are also available; e.g. lifting the normal trace of $H(\mathrm{div}; E)$ \cite{Ain09poly}, lifting the trace and normal derivative simultaneously into $H^2(E)$ \cite{AinCP19Extension}, and lifting an arbitrary number of normal derivatives simultaneously into $W^{s, p}(E)$ \cite{Parker23}. 

Many of the above results have been extended to three space dimensions. Mu\~{n}oz-Sola \cite{Munoz97} generalized the construction of Babu\v{s}ka et al. \cite{BCMP91,BabSuri87} to the tetrahedron, while Belgacem \cite{Belgacem} gave a different construction for the cube using orthogonal polynomials. Commuting lifting operators for the spaces appearing in the de Rham complex on tetrahedra \cite{Dem08,Dem09,Dem12} and hexahedra \cite{Costabel08} have also been constructed. These operators, among others, have been used extensively in a priori error analysis \cite{AinCP19StokesII,BabSuri87,Ern22,GuoBab10,Melenk05,Munoz97}, a posteriori error analysis \cite{Braess09,Chaumont2022curlcurl,Chaumont23,Ern20}, the analysis of preconditioners \cite{AinJia19,AinJiang21,AinCP19Precon,AinCP19StokesIII,AinCP21LEP,BCMP91,SchMelPechZag08}, the analysis of sprectral element methods, particularly in weighted Sobolev spaces \cite{Bern95,Bern07,Bern10,Bern89}, and in the stability analysis of mixed finite element methods \cite{AinCP19StokesI,AznaranHuParker23,Dem07,Demkowicz03,Dem08book,Lederer18}. Nevertheless, two types of operators are notably missing from the currently available results in three dimensions: (i) lifting operators stable in $L^p$-based Sobelev spaces, crucial in the analysis of high-order finite element methods for nonlinear problems; and (ii) lifting operators for the simultaneous lifting of the trace and normal derivative (and higher-order normal derivatives) which appear in the analysis of fourth-order (and higher-order) problems and in the analysis of mixed finite element methods for problems in electromagnetism and incompressible flow.

We address both of the above problems; namely, for each $k \in \mathbb{N}_0$, we construct a right inverse for the trace operator $u \mapsto (u, \partial_{\unitvec{n}} u, \ldots,  \partial_{\unitvec{n}}^k u)|_{\partial \reftet}$ on the reference tetrahedron $\reftet$ that is stable from the trace of $W^{s, p}(\reftet)$ to $W^{s, p}(\reftet)$ for all $p \in (1, \infty)$ and $s \in (k+1/p, \infty)$. Additionally, if the data is the trace of a polynomial of degree $N \in \mathbb{N}_0$, then the resulting lifting is a polynomial of degree $N$. A precise statement appears at the end of \cref{sec:trace-review}, which also contains a characterization for the trace space that appears to be novel. These results generalize our construction on the reference triangle \cite{Parker23} to the reference tetrahedron and to Sobolev spaces with minimal regularity.

The remainder of the manuscript is organized as follows. In \cref{sec:construction}, we detail an explicit construction of the lifting operator in a sequence of four steps, each consisting of an intermediate single-face lifting operator. The remainder of the manuscript is devoted to the analysis of the intermediate single-face operators: \cref{sec:whole-space-cont,sec:whole-space-lp-cont} characterize the continuity of a related operator defined on all of $\mathbb{R}^3$, while \cref{sec:cont} concludes with the proofs of the continuity properties of the intermediate operators.

\section{The traces of $W^{s, q}(\reftet)$ functions and statement of main result}
\label{sec:trace-review}

	We begin by reviewing the regularity properties of the traces of a function $u$ defined on a tetrahedron. Here, we will work in the setting of Sobolev spaces defined on an open Lipschitz domain $\mathcal{O} \subseteq \mathbb{R}^d$. Let  $s = m + \sigma$ be a nonnegative real number with $m \in \mathbb{N}_0$ and $\sigma \in [0, 1)$. We denote by $W^{s, p}(\mathcal{O})$, $p \in [1, \infty)$, the standard fractional Sobolev(-Slobodeckij) space \cite{Adams03} equipped with norm defined by
	\begin{align*}
		\|v\|_{s, p, \mathcal{O}}^p := \sum_{n=1}^{m} |v|_{n, p, \mathcal{O}}^p + \begin{cases}
			\sum_{|\alpha| = m} |D^{\alpha} v|_{\sigma, p, \mathcal{O}}^p & \text{if } \sigma > 0, \\
			0 & \text{otherwise},
		\end{cases} 
	\end{align*} 
	where the integer-valued seminorms and fractional seminorms are given by
	\begin{align*}
		|v|_{n, p, \mathcal{O}}^p := \sum_{ |\alpha| = n } \int_{\mathcal{O}} |D^{\alpha} v(\bdd{x})|^p  \d{\bdd{x}} \quad \text{and} \quad |v|_{\sigma, p, \mathcal{O}}^p := \iint_{\mathcal{O} \times \mathcal{O}} \frac{ | v(\bdd{x}) -  v(\bdd{y})|^p }{ |\bdd{x}-\bdd{y}|^{\sigma p + d}  } \d{\bdd{x}} \d{\bdd{y}},
	\end{align*}
	with the usual modification for $p = \infty$. When $s = 0$, the Sobolev space $W^{0, p}(\mathcal{O})$ coincides with the standard Lebesque space $L^p(\mathcal{O})$, and we denote the norm by $\|\cdot\|_{p, \mathcal{O}}$. We also require fractional Sobolev spaces defined on domain boundaries. Given a  $C^{k, 1}$, $k \in \mathbb{N}_0$, $(d-1)$-dimensional manifold $\Gamma \subseteq \partial \mathcal{O}$, the surface gradient $D_{\Gamma}$ is well-defined a.e. on $\Gamma$, and we define $W^{s, p}(\Gamma)$, $0 \leq s \leq k+1$, analogously (see e.g. \cite[\S 2.5.2]{Necas11}) with the norm
	\begin{align*}
		\|v\|_{s, p, \Gamma}^p := \sum_{|\beta| \leq m} \int_{\Gamma} |D_{\Gamma}^{\beta} v(\bdd{x})|^p \d{\bdd{x}} + \sum_{|\beta| = m}  \iint_{\Gamma \times \Gamma} \frac{ | D_{\Gamma}^{\beta}v(\bdd{x}) -  D_{\Gamma}^{\beta}v(\bdd{y})|^p }{ |\bdd{x}-\bdd{y}|^{\sigma p + d-1}  } \d{\bdd{x}} \d{\bdd{y}},
	\end{align*}
	where the sums are over multi-indices $\beta \in \mathbb{N}_0^{d-1}$. The seminorms $|\cdot|_{s, p, \Gamma}$ are defined similarly. 
	
	\subsection{Elementary trace results}
	
	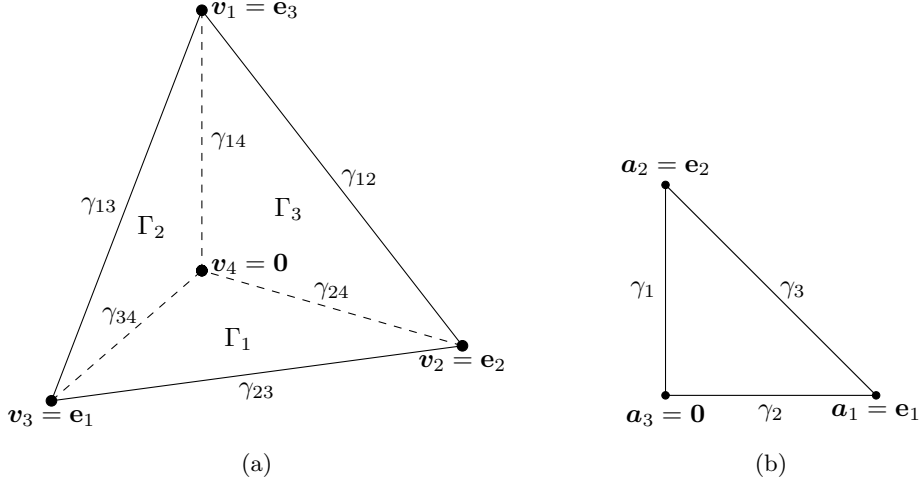
\begin{figure}[htb]
		\centering
		\begin{subfigure}[b]{0.6\linewidth}
			\centering
			\tdplotsetmaincoords{60}{120}
			\begin{tikzpicture}[tdplot_main_coords]
				\coordinate (v1) at (0, 0, 4);
				\coordinate (v2) at (0, 4, 0);
				\coordinate (v3) at (4, 0, 0);
				\coordinate (v4) at (0, 0, 0);
				\coordinate (e12) at ($(v1)!1/2!(v2)$);
				\coordinate (e13) at ($(v1)!1/2!(v3)$);
				\coordinate (e14) at ($(v1)!1/2!(v4)$);
				\coordinate (e23) at ($(v2)!1/2!(v3)$);
				\coordinate (e24) at ($(v2)!1/2!(v4)$);
				\coordinate (e34) at ($(v3)!1/2!(v4)$);
				
				\filldraw (v1) circle(2pt) node[align=center,right]{$\bdd{v}_1 = \unitvec{e}_3$};
				\filldraw (v2) circle(2pt) node[align=center,below]{$\bdd{v}_2 = \unitvec{e}_2$};
				\filldraw (v3) circle(2pt) node[align=center,below]{$\bdd{v}_3 = \unitvec{e}_1$};
				\filldraw (v4) circle(2pt)  circle(2pt) node[align=center,right]{};
				
				\draw ($(v4)+(0,0,0.1)$) node[align=center,right]{$\bdd{v}_4 = \unitvec{0}$};
				
				\draw (v1) -- (v2);
				\draw (v1) -- (v3);
				\draw[dashed] (v1) -- (v4);
				\draw (v2) -- (v3);
				\draw[dashed] (v2) -- (v4);
				\draw[dashed] (v3) -- (v4);
				
				\draw ($(e12)+(0,0,0)$) node[align=center,right]{${\gamma}_{12}$};
				\draw ($(e13)+(0,0,0)$) node[align=center,left]{${\gamma}_{13}$};
				\draw ($(e14)+(0,0,0)$) node[align=center,right]{${\gamma}_{14}$};
				\draw ($(e23)+(0,0,0)$) node[align=center,below]{${\gamma}_{23}$};
				\draw ($(e24)+(0,0,0)$) node[align=center,above]{${\gamma}_{24}$};
				\draw ($(e34)+(0,-0.1,0)$) node[align=center,above]{${\gamma}_{34}$};
				
				\draw (4/3,4/3,0) node[align=center]{$\Gamma_1$};
				\draw (4/3,0,4/3) node[align=center]{$\Gamma_2$};
				\draw (0,4/3,4/3) node[align=center]{$\Gamma_3$};
			\end{tikzpicture}
			\caption{}
			\label{fig:reference-tet}
		\end{subfigure}
		\hfill
		\begin{subfigure}[b]{0.35\linewidth}
			\centering
			\begin{tikzpicture}[scale=0.7]
				\filldraw (0,0) circle (2pt) 
				node[align=center,below]{}
				-- (4,0) circle (2pt)
				node[align=center,below]{}	
				-- (0,4) circle (2pt) 
				node[align=center,above]{}
				-- (0,0);
				
				\coordinate (a1) at (0,0);
				\coordinate (a2) at (4,0);
				\coordinate (a3) at (0,4);
				
				\draw (a1) node[align=center,below]{$\bdd{a}_3=\unitvec{0}$};
				\draw (a2) node[align=center,below]{$\bdd{a}_1 = \unitvec{e}_1$};
				\draw (a3) node[align=center,above]{$\bdd{a}_2 = \unitvec{e}_2$};
				
				\coordinate (e1) at ($(a2)!1/2!(a3)$);
				\coordinate (e2) at ($(a1)!1/2!(a3)$);
				\coordinate (e3) at ($(a1)!1/2!(a2)$);
				
				\coordinate (e113) at ($(a2)!1/3!(a3)$);
				\coordinate (e123) at ($(a2)!2/3!(a3)$);
				\coordinate (e1231) at ($(a2)!2/3+1/sqrt(32)!(a3)$);
				
				\coordinate (e213) at ($(a3)!1/3!(a1)$);
				\coordinate (e223) at ($(a3)!2/3!(a1)$);
				\coordinate (e2231) at ($(a3)!2/3+1/4!(a1)$);
				
				\coordinate (e313) at ($(a1)!1/3!(a2)$);
				\coordinate (e323) at ($(a1)!2/3!(a2)$);
				\coordinate (e3231) at ($(a1)!2/3+1/4!(a2)$);
				
				\draw ($(e1)+(0,0)$) node[align=center,right]{${\gamma}_3$};
				\draw ($(e2)+(0,0)$) node[align=center,left]{${\gamma}_1$};
				\draw ($(e3)+(0,0)$) node[align=center,below]{${\gamma}_2$};
			\end{tikzpicture}		
			\caption{}
			\label{fig:reference triangle}
		\end{subfigure}
		\caption{Reference (a) tetrahedron and (b) triangle, where $\unitvec{e}_i$ are the standard unit vectors. Note that the label for $\Gamma_4 = \{ (x, y, z) \in \bar{K} : x + y + z = 1 \}$ is omitted in (a). } 
	\end{figure}
	
	When the domain is the reference tetrahedron $\reftet := \{(x, y, z) \in \mathbb{R}^3 : 0 < x, y, z, x + y + z < 1\}$ depicted in \cref{fig:reference-tet}, the space $W^{r, p}(\partial \reftet)$, $0 \leq r < 1$, may be equipped with an equivalent norm that is more amenable to the analysis of traces. Let $\Gamma_i$ and $\Gamma_j$, $1 \leq i < j \leq 4$, be two faces of $\reftet$ and let $\gamma_{ij} = \gamma_{ji}$ denote the shared edge with vertices $\bdd{a}$ and $\bdd{b}$. Then, the vertices of $\Gamma_i$ are denoted by $\bdd{a}$, $\bdd{b}$, and $\bdd{c}_i$, while the vertices of $\Gamma_j$ are denoted by $\bdd{a}$, $\bdd{b}$, and $\bdd{c}_j$. Since $\Gamma_i$ and $\Gamma_j$ are both triangles, there exist unique affine mappings $\bdd{F}_{ij} : \reftri \to \Gamma_i$ and $\bdd{F}_{ji} : \reftri \to \Gamma_j$ from the reference triangle $\reftri := \{ (x, y) \in \mathbb{R}^2 : 0 < x, y, x+ y < 1 \}$, labeled as in \cref{fig:reference triangle}, satisfying
	\begin{subequations}
		\label{eq:faces-to-reftri-edge-mappings}
		\begin{alignat}{3}
			\bdd{F}_{ij}(0, 0) &= \bdd{a}, \qquad & \bdd{F}_{ij}(1, 0) = \bdd{b}, \quad \text{and} \quad & \bdd{F}_{ij}(0, 1) &= \bdd{c}_i, \\
			\bdd{F}_{ji}(0, 0) &= \bdd{a}, \qquad & \bdd{F}_{ji}(1, 0) = \bdd{b}, \quad \text{and} \quad & \bdd{F}_{ji}(0, 1) &= \bdd{c}_j,
		\end{alignat}
	\end{subequations}
	and we define the following norm:
	\begin{align*}
		\vertiii{f}_{r, p, \partial \reftet}^p := \sum_{i=1}^{4}  \|f_i\|_{r, p, \Gamma_i}^p + \begin{dcases}
			\sum_{1 \leq i < j \leq 4} \mathcal{I}_{ij}^p(f_{i}, f_{j}) & \text{if } rp = 1, \\
			0 & \text{otherwise},
		\end{dcases}
	\end{align*}
	where $f_i$ denotes the restriction of $f$ to $\Gamma_i$ and $\mathcal{I}_{i j}^p(f, g)$ is defined by the rule
	\begin{align}
		\label{eq:edge-integral-def}
		\mathcal{I}_{ij}^p(f, g) := \int_{\reftri}  |f \circ \bdd{F}_{ij}(\bdd{x}) - g \circ \bdd{F}_{ji}(\bdd{x})|^p  \frac{\d{\bdd{x}}}{x_2}.
	\end{align}
	Thanks to \cref{lem:equivalent-boundary-norm}, $\vertiii{\cdot}_{r, p, \partial \reftet}$ is an equivalent norm on $W^{r, p}(\partial \reftet)$; i.e. 
	\begin{align}
		\label{eq:equivalent-boundary-norm}
		\| f \|_{r, p, \partial \reftet} \approx_{r, p} \vertiii{f}_{r, p, \partial K} \qquad \forall f \in W^{r, p}(\partial \reftet),
	\end{align}
	and we shall use the two norms interchangeably with the common notation $\|\cdot\|_{r, p, \reftet}$. Here, and in what follows, we use the standard notation $a \lesssim_c b$ to indicate $a \leq C b$ where $C$ is a constant depending only on $c$, and $a \approx_c b$ if $a \lesssim_c b$ and $b \lesssim_c a$. 
		
	Now let $u \in W^{s, p}(\reftet)$, $1 < p < \infty$, $s = m + \sigma > 1/p$ with $m \in \mathbb{N}_0$ and $\sigma \in [0, 1)$, be a function defined on the reference tetrahedron. The presence of edges and corners on the boundary of $\reftet$ limit the regularity of the trace of $u$. Nevertheless, we can iteratively apply the standard $W^{s, p}(\reftet)$ trace theorem (e.g. \cite[Theorem 3.1]{Jerison95} or \cite[p. 208 Theorem 1]{Jonsson84}): $W^{s, p}(K)$ embeds continuously into $W^{s-\frac{1}{p}, p}(\partial K)$ for $1/p < s < 1 + 1/p$. In particular, for $k \in \mathbb{N}_0$, the $k$th-order derivative tensor given by
	\begin{align*}
		(D^k u)_{i_1 i_2 \ldots i_k} = \partial_{x_{i_1}} \partial_{x_{i_2}} \cdots \partial_{x_{i_k}} u
	\end{align*}
	satisfies $D^{k} u \in W^{s-k, p}(\reftet) \subset W^{1+\sigma, p}(\reftet)$, $0 \leq k \leq m-1$, and $D^m u \in W^{\sigma, p}(\reftet)$; thus, the traces satisfy
	\begin{align*}
		\begin{cases}
			D^k u|_{\partial \reftet} \in W^{1-\frac{1}{p},p}(\partial \reftet) & \text{for }  0 \leq k < s-\frac{1}{p}, \\
			D^{m-1} u|_{\partial \reftet} \in W^{1+\sigma-\frac{1}{p}, p}(\partial \reftet) & \text{if } m \geq 1 \text{ and }  \sigma p < 1, \\
			D^{m} u|_{\partial \reftet} \in W^{\sigma-\frac{1}{p}, p}(\partial \reftet) & \text{if } \sigma p > 1.
		\end{cases}
	\end{align*}
	Additionally, the trace of $W^{m+\frac{1}{2}, 2}(\mathbb{R}^3)$, $m \geq 1$, on the plane $\mathbb{R}^2 \times \{0\}$ belongs to $W^{m, 2}(\mathbb{R}^2)$ (see e.g. \cite[Chapter 7]{Adams03} or \cite[p. 20 Theorem 4]{Jonsson84}), and so standard arguments show that the trace of $W^{m+\frac{1}{2}, 2}(\reftet)$ on the face $\Gamma_i$, $1 \leq i \leq 4$, belongs to $W^{m, 2}(\Gamma_i)$. Thanks to the norm-equivalence \cref{eq:equivalent-boundary-norm}, we arrive at the following conditions:
	\begin{align}
		\label{eq:review:usual-trace-conditions}
		\left\{ 
		\begin{aligned}
			\sum_{i=1}^{4} \|D^k u \|_{p, \Gamma_i}^p &< \infty  \quad & & \text{for } 0 \leq k < s-\frac{1}{p}, \\
			\sum_{i=1}^{4}\|D^{m-1} u \|_{1 + \sigma - \frac{1}{p}, p, \Gamma_i}^p &< \infty \quad & &\text{if } m \geq 1 \text{ and either } \sigma p < 1 \text{ or } (\sigma, p) = \left(\frac{1}{2}, 2 \right), \\
			\sum_{i=1}^{4}\|D^{m} u \|_{\sigma-\frac{1}{p}, p, \Gamma_i}^p &< \infty \quad & & \text{if } \sigma p > 1, \\
			\sum_{1 \leq i < j \leq 4}\mathcal{I}_{ij}^p(D^{m} u, D^{m} u) &< \infty \quad & & \text{if } \sigma p = 2.
		\end{aligned}
		\right.
	\end{align} 
	\begin{remark}
		The case $\sigma p = 1$ for $p \neq 2$, which is not included in conditions \cref{eq:review:usual-trace-conditions}, is beyond the scope of this paper  since the trace of a $W^{m+\frac{1}{p}, p}(\mathbb{R}^3)$, $m \in \mathbb{N}$, function on the plane $\mathbb{R}^2 \times \{0\}$ belongs to a Besov space, which cannot be identified with an integer-order Sobolev space \cite[p. 20 Theorem 4]{Jonsson84}. 
	\end{remark}
	
	When $s > 2/p$, we obtain additional conditions since the trace of a $W^{s, p}(\reftet)$ function on the edge $\gamma_{ij}$, $1 \leq i < j \leq 4$ is well-defined. This can be seen from standard arguments owing to the fact that the trace of $W^{s, p}(\mathbb{R}^3)$ on the line $\mathbb{R} \times \{0\}^2$ is well-defined. In particular, the traces of the $k$-th derivative tensor, $0 \leq k < s-2/p$, on $\Gamma_i$ and $\Gamma_j$, $1 \leq i < j \leq 4$, must agree on the shared edge $\gamma_{ij}$:
	\begin{align}
		\label{eq:review:edge-continuity-conditions}
		D^k u|_{\Gamma_{i}}(\bdd{x}) &= D^k u|_{\Gamma_j}(\bdd{x}) \qquad \text{for a.e. $\bdd{x} \in \gamma_{ij}$ and all } 0 \leq k < s-\frac{2}{p},
	\end{align}
	where \cref{eq:review:edge-continuity-conditions} is to be interpreted in the trace sense.
	
	\subsection{Trace operators}
	
	We now turn to the consequences of \cref{eq:review:usual-trace-conditions,eq:review:edge-continuity-conditions} for various trace operators.
	
	\subsubsection{Zeroth-order operator}
	First consider the the $0$th-order ``boundary-derivative" operator $\mathfrak{D}_i^0$ on $\Gamma_i$, $1 \leq i \leq 4$, defined formally by the rule
	\begin{align}
		\label{eq:sigma0-def}
		\mathfrak{D}_i^0(f) := f \qquad \text{on } \Gamma_i.
	\end{align}
	Then, \cref{eq:review:usual-trace-conditions,eq:review:edge-continuity-conditions} show that for $u \in W^{s, p}(\reftet)$, $(s, q) \in \mathcal{A}_0$, where
	\begin{align}
		\label{eq:admissible-set-def}
		\mathcal{A}_k := \left\{ (s, p) \in \mathbb{R}^2 : 1 < p < \infty, \ (s-k)p > 1, \text{ and } s - \frac{1}{p} \notin \mathbb{Z} \text{ if } p \neq 2 \right\}, \quad k \in \mathbb{N}_0,
	\end{align}
	the trace $f = u|_{\partial \reftet}$ satisfies the following conditions:
	\begin{enumerate}
		\item $W^{s-\frac{1}{p}, p}$ regularity on each face:
		\begin{align}
			\label{eq:review:sigma0-face-reg}
			\mathfrak{D}_i^0(f) \in W^{s-\frac{1}{p}, p}(\Gamma_i), \qquad 1 \leq i \leq 4.
		\end{align}
		
		\item Compatible traces along edges: For $1 \leq i < j \leq 4$, there holds
		\begin{subequations}
			\label{eq:review:sigma0-cont-combo}
			\begin{alignat}{2}
				\label{eq:review:sigma0-cont-1}
				\mathfrak{D}_{i}^0(f)|_{\gamma_{ij}} - \mathfrak{D}_{j}^0(f)|_{\gamma_{ij}} &= 0 \qquad & &\text{if } sp > 2, \\
				\label{eq:review:sigma0-cont-2}
				\mathcal{I}_{ij}^{p}(\mathfrak{D}_{i}^0(f), \mathfrak{D}_{j}^0(f)) &< \infty \qquad & & \text{if } sp = 2.
			\end{alignat}
		\end{subequations}
	\end{enumerate}
	
	If $(s-n)p = 2$ for some $n \in \mathbb{N}$, then we obtain an additional condition since the $n$-th derivative tensor satisfies $\mathcal{I}_{ij}^p(D^n u, D^n u) < \infty$ for $1 \leq i < j \leq 4$. To describe this condition we define the following notation for a $d$-dimensional tensor $S$ and vector $v \in \mathbb{R}^3$:
	\begin{align*}
		v^{\otimes 0} \cdot S := S \quad \text{and} \quad v^{\otimes j} \cdot S := S_{i_1 i_2 \cdots i_d} v_{i_1} v_{i_2} \cdots v_{i_j}, \qquad 1 \leq j \leq d.
	\end{align*}
	In particular, for $1 \leq i < j \leq 4$, denoting by $\unitvec{t}_{ij}$ a unit vector tangent to $\gamma_{ij}$, we can differentiate $\mathfrak{D}_i^0(u)$ and $\mathfrak{D}_j^0(u)$ in the direction $\unitvec{t}_{ij}$ to obtain the following identity.
	\begin{align*}
  		\frac{\partial^n \mathfrak{D}_i^0(u)}{\partial \unitvec{t}_{ij}^n} = \unitvec{t}_{ij}^{\otimes n} \cdot D^n u|_{\Gamma_i} \quad \text{and} \quad \frac{\partial^n \mathfrak{D}_j^0(u)}{\partial \unitvec{t}_{ij}^n} = \unitvec{t}_{ij}^{\otimes n} \cdot D^n u|_{\Gamma_j}.
	\end{align*}
	Consequently, the trace $f = u|_{\partial \reftet}$ also satisfies the following property:
	\begin{enumerate}
		\item[3.] Compatible tangential derivatives: For $1 \leq i < j \leq 4$ and $n \in \mathbb{N}$, there holds
		\begin{align}
			\label{eq:review:sigma0-cont-dt}
			\mathcal{I}_{ij}^{p}\left( \frac{\partial^n \mathfrak{D}_i^0(u)}{\partial \unitvec{t}_{ij}^n}, \frac{\partial^n \mathfrak{D}_j^0(u)}{\partial \unitvec{t}_{ij}^n}\right) < \infty \qquad \text{if } (s-n)p = 2.
		\end{align}
	\end{enumerate}
	
	\subsubsection{First-order operator}
	For $(s, p) \in \mathcal{A}_1$, we turn to the regularity of the trace of the gradient of $u \in W^{s, p}(\reftet)$. To this end, on each face $\Gamma_i$, $1 \leq i \leq 4$, let $\{ \bdd{\tau}_{i, 1}, \bdd{\tau}_{i, 2} \}$ be orthonormal vectors tangent to $\Gamma_i$ and let $\unitvec{n}_i$ denote the outward unit normal vector on $\Gamma_i$. We define the 1st-order ``boundary-derivative" operator $\mathfrak{D}_i^1$ on $\Gamma_i$, $1 \leq i \leq 4$, by the rule
	\begin{align}
		\label{eq:sigma1-def}
		\mathfrak{D}_i^1(f, g) := \sum_{j=1}^{2} \frac{\partial f}{\partial \bdd{\tau}_{i, j}}  \bdd{\tau}_{i, j} + g \unitvec{n}_i  \qquad \text{on } \Gamma_i,
	\end{align} 
	so that $\mathfrak{D}_i^1(u, \partial_{\unitvec{n}} u) = Du|_{\Gamma_i}$. Again applying \cref{eq:review:usual-trace-conditions,eq:review:edge-continuity-conditions}, we obtain analogues of \cref{eq:review:sigma0-face-reg,eq:review:sigma0-cont-combo} stated below in \cref{eq:review:sigma1-face-reg,eq:review:sigma1-cont-combo} with $k=0$. However, if $(s-2)p > 2$, then the second derivative tensor has matching traces on edges (i.e. \cref{eq:review:edge-continuity-conditions} holds with $k=2$). In particular, for $1 \leq i < j \leq 4$, we define the vectors
	\begin{align}
		\label{eq:orthog-vectors-two-faces}
		\unitvec{b}_{ij} := \unitvec{t}_{ij} \times \unitvec{n}_{i} \quad \text{and} \quad \unitvec{b}_{ji}:= \unitvec{t}_{ij} \times \unitvec{n}_{j},
	\end{align}
	where we recall that $\unitvec{t}_{ij}$ is a unit vector tangent to $\gamma_{ij}$, so that on $\gamma_{ij}$, there holds
	\begin{align*}
		\unitvec{b}_{ji} \cdot \frac{\partial \mathfrak{D}^1_{i}(u, \partial_{\unitvec{n}} u)}{\partial \unitvec{b}_{ij}} = \unitvec{b}_{ji} \cdot \frac{\partial Du}{\partial \unitvec{b}_{ij}} = \frac{\partial^2 u}{\partial \unitvec{b}_{ij} \partial \unitvec{b}_{ji}} =  \unitvec{b}_{ij} \cdot \frac{\partial Du}{\partial \unitvec{b}_{ji}} = \unitvec{b}_{ij} \cdot \frac{\partial \mathfrak{D}^1_{j}(u, \partial_{\unitvec{n}} u)}{\partial \unitvec{b}_{ji}}
	\end{align*}
	in the sense of traces. As a consequence, the operator $\mathfrak{D}_i^1$ satisfies the additional condition \cref{eq:review:sigma1-deriv} below with $n=0$ thanks to \cref{eq:review:usual-trace-conditions,eq:review:edge-continuity-conditions}. Finally, we can differentiate in the direction tangent to each edge to obtain the analogue of \cref{eq:review:sigma0-cont-dt} stated in \cref{eq:review:sigma1-cont-int,eq:review:sigma1-deriv-cont-int} below. To summarize, the traces $f = u|_{\partial \reftet}$ and $g = \partial_{\unitvec{n}} u|_{\partial \reftet}$ satisfy the following for all $(s, p) \in \mathcal{A}_1$:
	\begin{enumerate}
		
		\item $W^{s-1-\frac{1}{p}, p}$ regularity on each face:
		\begin{align}
			\label{eq:review:sigma1-face-reg}
			\mathfrak{D}_i^1(f, g) \in W^{s-1-\frac{1}{p}, p}(\Gamma_i), \qquad 1 \leq i \leq 4.
		\end{align}
		
		\item Compatible traces along edges: For $1 \leq i < j \leq 4$ and $n \in \mathbb{N}_0$, there holds
		\begin{subequations}
			\label{eq:review:sigma1-cont-combo}
			\begin{alignat}{2}
				\label{eq:review:sigma1-cont}
					\mathfrak{D}_{i}^1(f, g)|_{\gamma_{ij}} - \mathfrak{D}_{j}^1(f, g)|_{\gamma_{ij}} &= 0 \qquad & &\text{if } (s-1)p > 2, \\
				\label{eq:review:sigma1-cont-int}
				\mathcal{I}_{ij}^{p}\left( \frac{\partial^n \mathfrak{D}_{i}^1(f, g)}{ \partial \unitvec{t}_{ij}^n }, \frac{\partial^n \mathfrak{D}_{j}^1(f, g)}{\partial \unitvec{t}_{ij}^n} \right) &< \infty \qquad & & \text{if } (s-n-1)p = 2.
			\end{alignat}
		\end{subequations}
		
		\item Compatible traces of higher derivatives along edges: For $1 \leq i < j \leq 4$ and $n \in \mathbb{N}_0$, there holds
		\begin{subequations}
			\label{eq:review:sigma1-deriv}
			\begin{alignat}{2}
				\label{eq:review:sigma1-deriv-cont}
				\left. \unitvec{b}_{ji} \cdot \frac{\partial \mathfrak{D}^1_{i}(f, g)}{\partial \unitvec{b}_{ij}} \right|_{\gamma_{ij}}  - \left. \unitvec{b}_{ij} \cdot \frac{\partial \mathfrak{D}^1_{j}(f, g)}{\partial \unitvec{b}_{ji}}\right|_{\gamma_{ij}} &= 0 \qquad & &\text{if } (s-2)p > 2, \\
				\label{eq:review:sigma1-deriv-cont-int}
				\mathcal{I}_{ij}^{p}\left(\unitvec{b}_{ji} \cdot \frac{\partial^{n+1} \mathfrak{D}^1_{i}(f, g)}{ \partial \unitvec{t}_{ij}^{n} \partial \unitvec{b}_{ij}}, \unitvec{b}_{ij} \cdot \frac{\partial^{n+1} \mathfrak{D}^1_{j}(f, g)}{ \partial \unitvec{t}_{ij}^n \partial \unitvec{b}_{ji}} \right) &< \infty \qquad & & \text{if } (s-n-2)p = 2.
			\end{alignat}
		\end{subequations}
	\end{enumerate}
	
	\begin{remark}
		\label{rem:review:sigma1-2d-relate}
		For smooth enough functions, conditions \cref{eq:review:sigma1-cont,eq:review:sigma1-deriv-cont} 
		may be interpreted as the application of the vertex compatibility conditions for traces on the triangle (see e.g. \cite[eqs. (2.11a) and (2.12a)]{Parker23}) at every point on the edge $\gamma_{ij}$. 
	\end{remark}

	\subsubsection{$m$th-order operator}
	We now turn to the general case of the trace of the $m$-th derivative tensor of a function $u \in W^{s, p}(\reftet)$, where $m \geq 2$ and $(s, p) \in \mathcal{A}_m$. Given a collection of functions $F = (f^0, f^1, \ldots, f^m)$ defined on $\partial \reftet$, we define the $m$-th order ``boundary-derivative" operator $\mathfrak{D}_i^m$ on $\Gamma_i$, $1 \leq i \leq 4$, by the rule
	\begin{align}
		\label{eq:review:sigmak-def}
		\mathfrak{D}_i^m(F) := \sum_{\substack{\alpha \in \mathbb{N}_0^3 \\ |\alpha| = m}} \frac{\partial^{\alpha_1+\alpha_2} f^{\alpha_3} }{\partial \bdd{\tau}_{i, 1}^{\alpha_1} \partial \bdd{\tau}_{i, 2}^{\alpha_2}} \sum_{ \phi \in \mathfrak{M}_i(\alpha)} \phi(1) \otimes \phi(2) \otimes \cdots \otimes \phi(m) \qquad \text{on } \Gamma_i,
	\end{align}
	where the set $\mathfrak{M}_{i}(\alpha)$ consists of the following mappings.
	\begin{align*}
		\mathfrak{M}_{i}(\alpha) := \{ \phi : \{1,2,\ldots, |\alpha|\} \to \{ \bdd{\tau}_{i, 1}, \bdd{\tau}_{i, 2}, \unitvec{n}_i \} \text{ s.t. } |\phi^{-1}(\bdd{\tau}_{i, j})| = \alpha_j, \ j=1,2 \},
	\end{align*}
	where we recall that $\{ \bdd{\tau}_{i, 1}, \bdd{\tau}_{i, 2} \}$ are orthonormal vectors tangent to $\Gamma_i$. For notational convenience, we set
	\begin{align*}
		\mathfrak{D}_i^l(F) := \mathfrak{D}_i^l(f^0, f^1, \ldots, f^{l}), \qquad 0 \leq l < m.
	\end{align*}
	Then, one may readily verify that $\mathfrak{D}^m_i(u, \partial_{\unitvec{n}} u, \ldots, \partial_{\unitvec{n}}^m u) = D^m u$ on $\Gamma_i$. Let $f^l = \partial_{\unitvec{n}}^l u$ on $\partial \reftet$. As before, we obtain $W^{s-m-\frac{1}{p}, p}$ regularity of $\mathfrak{D}_i^m(F)$ \cref{eq:review:sigmak-face-reg} below on each face thanks to \cref{eq:review:usual-trace-conditions} and the edge compatibility conditions \cref{eq:review:sigmak-cont-combo} below with $l = 0$ from \cref{eq:review:edge-continuity-conditions}. 
	
	As was the case with the first-order operator, there are additional edge compatibility conditions. In particular, if $(s-m-l)p > 0$ for some $1 \leq l \leq m$, then \cref{eq:review:edge-continuity-conditions} shows that the $(m+l)$th derivative tensor has matching traces on edges. Some components of the $(m+l)$th derivative tensor can be expressed in terms of $\mathfrak{D}_l^{m}(F)$. In particular, on the edge $\gamma_{ij}$, $1 \leq i < j \leq 4$, there holds
	\begin{multline*}
		\unitvec{b}_{ji}^{\otimes l} \cdot \frac{\partial^l \mathfrak{D}_i^{m}(F)}{\partial \unitvec{b}_{ij}^{l}} 
		=  \unitvec{b}_{ji}^{\otimes l} \cdot \frac{\partial^l D^m u}{\partial \unitvec{b}_{ij}^{l}} 
		= \unitvec{b}_{ji}^{\otimes l} \cdot \left( \unitvec{b}_{ij}^{\otimes l} \cdot D^{m+l} u \right) \\
		= \unitvec{b}_{ij}^{\otimes l} \cdot \left( \unitvec{b}_{ji}^{\otimes l} \cdot D^{m+l} u \right) 
		= \unitvec{b}_{ij}^{\otimes l} \cdot \frac{\partial^l D^m u}{\partial \unitvec{b}_{ji}^{l}} 
		= \unitvec{b}_{ij}^{\otimes l} \cdot \frac{\partial^l \mathfrak{D}_j^{m}(F)}{\partial \unitvec{b}_{ji}^{l}}
	\end{multline*}
	in the sense of traces, where we used symmetry of the derivative tensor $D^{m+l} u$. We can also differentiate in the direction tangent to each edge to obtain similar conditions. Consequently, $\mathfrak{D}_i^m(F)$ satisfies \cref{eq:review:sigmak-cont-combo} below. In summary, for $m \in \mathbb{N}_0$, the traces $F = (u, \partial_{\unitvec{n}} u, \ldots, \partial_{\unitvec{n}}^{m} u)$ satisfy the following for all $(s, p) \in \mathcal{A}_m$: 	
	\begin{enumerate}
		
		\item $W^{s-m-\frac{1}{p}, p}$ regularity on each face:
		\begin{align}
			\label{eq:review:sigmak-face-reg}
			\mathfrak{D}_i^m(F) \in W^{s-m-\frac{1}{p}, p}(\Gamma_i), \qquad 1 \leq i \leq 4,
		\end{align}
		where $\mathfrak{D}_i^0$, $\mathfrak{D}_i^1$, and $\mathfrak{D}_i^l$, $l \geq 2$, are defined in \cref{eq:sigma0-def}, \cref{eq:sigma1-def}, and \cref{eq:review:sigmak-def}.
		
		\item Compatible traces along edges: For $1 \leq i < j \leq 4$ and $0 \leq l \leq m$ and $n \in \mathbb{N}_0$, there holds
		\begin{subequations}
			\label{eq:review:sigmak-cont-combo}
			\begin{alignat}{2}
				\label{eq:review:sigmak-cont}
				\left. \unitvec{b}_{ji}^{\otimes l} \cdot \frac{\partial^l \mathfrak{D}_i^{m}(F)}{\partial \unitvec{b}_{ij}^{l}} \right|_{\gamma_{ij}} - \left. \unitvec{b}_{ij}^{\otimes l} \cdot \frac{\partial^l \mathfrak{D}_j^{m}(F)}{\partial \unitvec{b}_{ji}^{l}} \right|_{\gamma_{ij}} &= 0 \qquad & &\text{if } (s-m-l)p > 2, \\
				\label{eq:review:sigmak-cont-int}
				\mathcal{I}_{ij}^{p}\left(\unitvec{b}_{ji}^{\otimes l} \cdot \frac{\partial^{l+n} \mathfrak{D}_i^{m}(F)}{\partial \unitvec{t}_{ij}^{n} \partial \unitvec{b}_{ij}^{l}},  \unitvec{b}_{ij}^{\otimes l} \cdot \frac{\partial^{l+n} \mathfrak{D}_j^{m}(F)}{\partial \unitvec{t}_{ij}^{n} \partial \unitvec{b}_{ji}^{l}} \right) &< \infty \qquad & & \text{if } (s-m-l-n)p = 2.
			\end{alignat}
		\end{subequations}
	\end{enumerate}
	\begin{remark}
		As was the case in \cref{rem:review:sigma1-2d-relate}, condition \cref{eq:review:sigmak-cont} is simply the application of the vertex compatibility conditions for traces on the triangle \cite[eq. (7.2)]{Parker23} at every point on the edge $\gamma_{ij}$, provided that $u$ is smooth enough. 
	\end{remark}
	
	\subsection{The trace theorem on a tetrahedron}
	
	 Motivated by the conditions derived in the previous section, we define trace spaces as follows. Given a set of indices $\mathcal{S} \subseteq \{1,2,3,4\}$ with $|\mathcal{S}| \geq 1$, let $\Gamma_S := \cup_{i \in \mathcal{S}} \Gamma_i$. We define the trace space on part of the boundary $\Tr_k^{s, p}(\Gamma_{\mathcal{S}})$ for $k \in \mathbb{N}_0$ and $(s, p) \in \mathcal{A}_k$ as follows.	
	 \begin{align*}
	 	\begin{aligned}
	 		\Tr^{s,p}_k(\Gamma_{\mathcal{S}}) &:= \{ F = (f^0, f^1, \ldots, f^k) \in L^p(\Gamma_{\mathcal{S}})^{k+1} : \text{For $0 \leq m \leq k$,} \\ 
	 		&\qquad \quad \text{$F$ satisfies \cref{eq:review:sigmak-face-reg} for $i \in \mathcal{S}$ and } \\
	 		&\qquad \quad \text{\cref{eq:review:sigmak-cont-combo} for $i, j \in \mathcal{S}$ with $i < j$, $0 \leq l \leq m$ and $n \in \mathbb{N}_0$} \},
	 	\end{aligned}
	 \end{align*}
	 equipped with the norm
	 \begin{multline*}
	 	\| (f^0, f^1, \ldots, f^k) \|_{\Tr_k^{s, p}, \Gamma_{\mathcal{S}}}^p := \sum_{m=0}^{k} \sum_{i \in \mathcal{S}} \| f_i^{m} \|_{s-m-\frac{1}{p}, p, \Gamma_i}^p  \\
	 	+ \sum_{\substack{i, j \in \mathcal{S} \\ i < j \\ 0 \leq l \leq k \\ n \in \mathbb{N}_0} }  \begin{cases}
	 		\mathcal{I}_{ij}^p\left(\unitvec{b}_{ji}^{\otimes l} \cdot \frac{\partial^{l+n} \mathfrak{D}_i^{k}(F)}{ \partial \unitvec{t}_{ij}^{n} \partial \unitvec{b}_{ij}^{l}},  \unitvec{b}_{ij}^{\otimes l} \cdot \frac{\partial^{l+n} \mathfrak{D}_j^{k}(F)}{ \partial \unitvec{t}_{ij}^{n} \partial \unitvec{b}_{ji}^{l}} \right) & \text{if } (s-k-l-n)p = 2, \\
	 		0 & \text{otherwise}.
	 	\end{cases}
	 \end{multline*}
	 Note that the sum in the definition contains only finitely many nonzero terms,  and hence is well defined. When $\mathcal{S} = \{1,2,3,4\}$, we set $\Tr^{s,p}_k(\partial \reftet) := \Tr^{s,p}_k(\Gamma_{\mathcal{S}})$ and $\| \cdot \|_{\Tr_k^{s, p}, \partial \reftet} :=  \| \cdot \|_{\Tr_k^{s, p}, \Gamma_{\mathcal{S}}}$.  The following trace theorem is a consequence of the discussion in the previous section.
	\begin{theorem}
		\label{thm:trace-theorem}
		Let $\mathcal{S} \subseteq \{1,2,3,4\}$, $k \in \mathbb{N}_0$, and $(s, p) \in \mathcal{A}_k$ be given. Then, for every $u \in W^{s, p}(\reftet)$, the traces satisfy $(u, \partial_{\unitvec{n}} u, \ldots, \partial_{\unitvec{n}}^{k} u)|_{\Gamma_{\mathcal{S}}} \in  \Tr_k^{s, p}(\Gamma_{\mathcal{S}})$ and
		\begin{align}
			\label{eq:review:trace-theorem}
			\| (u, \partial_{\unitvec{n}} u, \ldots, \partial_{\unitvec{n}}^{k} u) \|_{\Tr_k^{s, p}, \Gamma_{\mathcal{S}}} \lesssim_{k, s, p} \| u \|_{s, p, \reftet}.
		\end{align}
	\end{theorem}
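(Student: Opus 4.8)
The plan is to observe that the \emph{qualitative} content of the theorem --- that $(u, \partial_{\unitvec{n}} u, \ldots, \partial_{\unitvec{n}}^k u)|_{\Gamma_{\mathcal{S}}}$ obeys \cref{eq:review:sigmak-face-reg,eq:review:sigmak-cont-combo} --- was already derived in \cref{sec:trace-review}, and to upgrade this to the quantitative estimate \cref{eq:review:trace-theorem} by noting that every trace theorem, Sobolev embedding, and norm equivalence invoked there is a \emph{bounded} operator. Writing $F := (u, \partial_{\unitvec{n}} u, \ldots, \partial_{\unitvec{n}}^k u)|_{\partial \reftet}$ and using the identities $\mathfrak{D}_i^m(F) = D^m u|_{\Gamma_i}$ on $\Gamma_i$ for $0 \le m \le k$ (which follow from the definitions \cref{eq:sigma0-def,eq:sigma1-def,eq:review:sigmak-def} together with density of smooth functions in $W^{s,p}(\reftet)$), as well as the norm equivalence \cref{eq:equivalent-boundary-norm}, it suffices to bound (i) the face norms $\|D^m u|_{\Gamma_i}\|_{s - m - \frac{1}{p}, p, \Gamma_i}$ for $0 \le m \le k$, $i \in \mathcal{S}$, and (ii) the edge integrals appearing in $\|\cdot\|_{\Tr_k^{s,p}, \Gamma_{\mathcal{S}}}$ for those finitely many $i < j$ in $\mathcal{S}$, $0 \le l \le m \le k$, $n \in \mathbb{N}_0$ with $(s - m - l - n)p = 2$.

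\emph{Face terms.} For $0 \le m \le k$ we have $D^m u \in W^{s - m, p}(\reftet)$ with $\|D^m u\|_{s-m,p,\reftet} \le \|u\|_{s,p,\reftet}$, and, since $(s,p) \in \mathcal{A}_k \subseteq \mathcal{A}_m$, both $s - m > \frac{1}{p}$ and $s - m - \frac{1}{p} = (s - \frac{1}{p}) - m \notin \mathbb{Z}$ when $p \neq 2$. Iterating the elementary face trace theorem $W^{r,p}(\reftet) \hookrightarrow W^{r - \frac{1}{p}, p}(\Gamma_i)$ for $\frac{1}{p} < r < 1 + \frac{1}{p}$ --- using that differentiation along the flat face $\Gamma_i$ commutes with the face trace and that $D^{m+j} u \in W^{s - m - j, p}(\reftet)$, and, in the borderline case $p = 2$ with $s - m \in \frac{1}{2} + \mathbb{N}$, the embedding $W^{s-m, 2}(\reftet) \hookrightarrow W^{s - m - \frac{1}{2}, 2}(\Gamma_i)$ recalled in \cref{sec:trace-review} --- gives $D^m u|_{\Gamma_i} \in W^{s - m - \frac{1}{p}, p}(\Gamma_i)$ with $\|D^m u|_{\Gamma_i}\|_{s - m - \frac{1}{p}, p, \Gamma_i} \lesssim_{m,s,p} \|D^m u\|_{s-m,p,\reftet} \le \|u\|_{s,p,\reftet}$. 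This is \cref{eq:review:sigmak-face-reg} with its bound; the matching conditions \cref{eq:review:sigmak-cont} are the qualitative edge-continuity statements \cref{eq:review:edge-continuity-conditions} for the tensors $D^{m+l} u$, contracted against $\unitvec{b}_{ij}$ and $\unitvec{b}_{ji}$ using the symmetry of $D^{m+l} u$ exactly as in the displayed identities preceding \cref{eq:review:sigmak-cont-combo}, and contribute nothing to the norm.

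\emph{Edge terms.} Fix $i < j$ in $\mathcal{S}$, $0 \le l \le m \le k$, and $n \in \mathbb{N}_0$ with $(s - m - l - n)p = 2$. The component $\unitvec{b}_{ji}^{\otimes l} \cdot \partial_{\unitvec{t}_{ij}}^n \partial_{\unitvec{b}_{ij}}^l \mathfrak{D}_i^m(F)$ equals, in the trace sense, a fixed contraction of $D^{m+l+n} u|_{\Gamma_i}$ against the (face-constant) vectors $\unitvec{b}_{ij}, \unitvec{b}_{ji}, \unitvec{t}_{ij}$ --- here $\unitvec{t}_{ij}$ is tangent to $\gamma_{ij}$, so it commutes with restriction to the edge --- and the companion $\Gamma_j$ component is the \emph{same} contraction of $D^{m+l+n} u|_{\Gamma_j}$, whence by symmetry of $D^{m+l+n} u$ the two have matching edge values. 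Since $(s - (m + l + n))p = 2$, we have $D^{m + l + n} u \in W^{2/p, p}(\reftet)$, so its face traces belong to $W^{1/p, p}$; invoking \cref{eq:equivalent-boundary-norm} with $r = 1/p$ --- the case $rp = 1$, in which the $\mathcal{I}_{ij}^p$ terms are present --- bounds the edge integral by $\lesssim_{m,l,n,s,p} \|D^{m+l+n} u\|_{2/p, p, \reftet}^p \le \|u\|_{s,p,\reftet}^p$.

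Summing the finitely many face and edge bounds yields $F|_{\Gamma_{\mathcal{S}}} \in \Tr_k^{s,p}(\Gamma_{\mathcal{S}})$ together with \cref{eq:review:trace-theorem}, the constant depending only on $k, s, p$. I expect the borderline edge term to be the only step that is not essentially a direct citation: controlling $\mathcal{I}_{ij}^p$ for the traces of a $W^{2/p, p}(\reftet)$ tensor amounts to a Hardy-type inequality near the edge $\gamma_{ij}$, which is precisely what the norm equivalence in \cref{lem:equivalent-boundary-norm} supplies; a secondary technical point is the justification of the trace-sense identities $\mathfrak{D}_i^m(F) = D^m u|_{\Gamma_i}$ and of the $\unitvec{b}$-contraction identities, which is handled by density.
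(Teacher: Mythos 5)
Your proposal is correct and follows essentially the same route as the paper, which gives no separate proof but declares \cref{thm:trace-theorem} a consequence of the preceding discussion: iterated face trace theorems (including the $W^{m+\frac12,2}$ borderline case) for \cref{eq:review:sigmak-face-reg}, the edge-continuity conditions \cref{eq:review:edge-continuity-conditions} contracted via symmetry of $D^{m+l}u$ for \cref{eq:review:sigmak-cont}, and the norm equivalence \cref{eq:equivalent-boundary-norm} at $rp=1$ for the $\mathcal{I}_{ij}^p$ terms. Your write-up merely makes the quantitative bookkeeping explicit, which is exactly what the paper intends.
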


	\subsection{The trace of polynomials}
	
	Given $N \in \mathbb{N}_0$, let $\mathcal{P}_N(\reftet)$ denote the set of all polynomials of total degree at most $N$, while $\mathcal{P}_{-M} := \{0\}$ for $M > 0$. If $u \in \mathcal{P}_N(\reftet)$, then $u \in W^{s, p}(\reftet)$ for all  $s \geq 0$ and $p \geq 1$. Consequently, for each $k \in \mathbb{N}_0$ and $\mathcal{S} \subseteq \{1,2,3,4\}$, the traces $F = (u, \partial_{\unitvec{n}} u, \ldots, \partial_{\unitvec{n}}^k u)|_{\Gamma_{\mathcal{S}}} \in \Tr^{s, p}_k(\Gamma_{\mathcal{S}})$ for all $(s, p) \in \mathcal{A}_k$. In particular, $s$ may be taken to be arbitrarily large in \cref{eq:review:sigmak-cont}. Thus, the traces satisfy
	\begin{subequations}
		\label{eq:review:sigmak-cont-poly}
		\begin{alignat}{2}
			f^m_i &\in \mathcal{P}_{N-m}(\Gamma_i), \qquad & &0\leq m \leq k, \ i \in \mathcal{S}, \\
			\label{eq:review:sigmak-cont-poly-edge-compat}
			\mathfrak{D}_i^m(F)|_{\gamma_{ij}} &= \mathfrak{D}_j^m(F)|_{\gamma_{ij}}, \qquad & &0 \leq m \leq k, \ i, j \in \mathcal{S}, \ i < j, \\
			\label{eq:review:sigmak-cont-poly-edge-compat-deriv}
			\left. \unitvec{b}_{ji}^{\otimes l} \cdot \frac{\partial^l \mathfrak{D}_i^{k}(F)}{\partial \unitvec{b}_{ij}^{l}} \right|_{\gamma_{ij}} &= \left. \unitvec{b}_{ij}^{\otimes l} \cdot \frac{\partial^l \mathfrak{D}_j^{k}(F)}{\partial \unitvec{b}_{ji}^{l}} \right|_{\gamma_{ij}}, \qquad & &0 \leq l \leq k, \ i, j \in \mathcal{S}, \ i < j.
		\end{alignat}
	\end{subequations}

	Note that we have not included the integral condition \cref{eq:review:sigmak-cont-int} in the list \cref{eq:review:sigmak-cont-poly} above. The following lemma shows that if a tuple of functions defined on $\partial \reftet$ satisfy \cref{eq:review:sigmak-cont-poly}, then \cref{eq:review:sigmak-cont-int} is automatically satisfied.
	\begin{lemma}
		\label{lem:review:polys-trace-space}
		Let $\mathcal{S} \subseteq \{1,2,3,4\}$ and $k \in \mathbb{N}_0$. If $F : \Gamma_{\mathcal{S}} \to \mathbb{R}^{k+1}$ satisfies \cref{eq:review:sigmak-cont-poly}, then $F \in \Tr_k^{s, p}(\Gamma_{\mathcal{S}})$ for all $(s, p) \in \mathcal{A}_k$.
	\end{lemma}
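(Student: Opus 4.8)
The plan is to unwind the definition of $\Tr_k^{s,p}(\Gamma_{\mathcal{S}})$ directly. For a fixed $(s,p)\in\mathcal{A}_k$, membership requires the face regularity \cref{eq:review:sigmak-face-reg}, the pointwise edge identities \cref{eq:review:sigmak-cont} for $0\le l\le m\le k$ (whenever $(s-m-l)p>2$), and the integral bounds \cref{eq:review:sigmak-cont-int} for $0\le l\le m\le k$ and $n\in\mathbb{N}_0$ (whenever $(s-m-l-n)p=2$). Since $s$ may be taken arbitrarily large within $\mathcal{A}_k$, it suffices to prove \cref{eq:review:sigmak-face-reg}, \cref{eq:review:sigmak-cont} for \emph{all} $0\le l\le m\le k$, and \cref{eq:review:sigmak-cont-int} for \emph{all} $0\le l\le m\le k$ and $n\in\mathbb{N}_0$; this settles every $(s,p)\in\mathcal{A}_k$ at once. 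The face regularity is immediate: each $f^m_i$ is a polynomial, so $\mathfrak{D}_i^m(F)$ is a (tensor-valued) polynomial on the bounded face $\Gamma_i$ and hence lies in $W^{r,p}(\Gamma_i)$ for every $r\ge0$.

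For the edge conditions, fix $i<j$ in $\mathcal{S}$ and, for each of $\Gamma_i,\Gamma_j$, introduce the polynomial on $\mathbb{R}^3$
\[
P_i(\bdd{x}) := \sum_{m=0}^{k}\frac{1}{m!}\,\big(f^m_i\circ\pi_i\big)(\bdd{x})\,\big((\bdd{x}-\pi_i(\bdd{x}))\cdot\unitvec{n}_i\big)^m ,
\]
where $\pi_i$ is the orthogonal projection onto the plane containing $\Gamma_i$ and $f^m_i$ is viewed as a polynomial on that plane. Since $(\bdd{x}-\pi_i(\bdd{x}))\cdot\unitvec{n}_i$ is constant along $\Gamma_i$ while $f^m_i\circ\pi_i$ is constant along $\unitvec{n}_i$, evaluating derivatives of $P_i$ in the frame $\{\bdd{\tau}_{i,1},\bdd{\tau}_{i,2},\unitvec{n}_i\}$ shows that $D^m P_i|_{\Gamma_i}=\mathfrak{D}_i^m(F)$ for $0\le m\le k$ (by \cref{eq:sigma0-def,eq:sigma1-def,eq:review:sigmak-def}, both sides are the symmetric $m$-tensor whose components are the tangential derivatives of the $f^c_i$). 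Writing $Q:=P_i-P_j$ and using that $\unitvec{b}_{ij}$ is tangent to $\Gamma_i$ and $\unitvec{b}_{ji}$ to $\Gamma_j$, together with symmetry of $D^\nu P_i$, condition \cref{eq:review:sigmak-cont} for $(m,l)$ becomes exactly
\[
\big(\unitvec{b}_{ij}^{\otimes l}\otimes\unitvec{b}_{ji}^{\otimes l}\big)\cdot D^{m+l}Q\big|_{\gamma_{ij}}=0 .
\]
If $m+l\le k$, this follows from \cref{eq:review:sigmak-cont-poly-edge-compat} with index $m+l$, which gives $D^{m+l}Q|_{\gamma_{ij}}=0$ and hence every contraction of it. If $m+l>k$, set $l':=m+l-k$, so $1\le l'\le l\le k$; then \cref{eq:review:sigmak-cont-poly-edge-compat-deriv} with parameter $l'$ gives $(\unitvec{b}_{ij}^{\otimes l'}\otimes\unitvec{b}_{ji}^{\otimes l'})\cdot D^{m+l}Q|_{\gamma_{ij}}=0$ (as $k+l'=m+l$), and contracting $l-l'$ further slots against $\unitvec{b}_{ij}$ and against $\unitvec{b}_{ji}$ yields the displayed identity. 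Thus \cref{eq:review:sigmak-cont} holds for all $0\le l\le m\le k$.

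For the integral conditions, differentiate the displayed identity $n$ times in the direction $\unitvec{t}_{ij}$; since $\unitvec{t}_{ij}$ is tangent to $\gamma_{ij}$ the result still vanishes on $\gamma_{ij}$, so the two arguments $g_i:=\unitvec{b}_{ji}^{\otimes l}\cdot\frac{\partial^{l+n}\mathfrak{D}_i^m(F)}{\partial\unitvec{t}_{ij}^n\partial\unitvec{b}_{ij}^l}$ and $g_j:=\unitvec{b}_{ij}^{\otimes l}\cdot\frac{\partial^{l+n}\mathfrak{D}_j^m(F)}{\partial\unitvec{t}_{ij}^n\partial\unitvec{b}_{ji}^l}$ of $\mathcal{I}_{ij}^p$ coincide on $\gamma_{ij}$. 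Both are polynomials, and by \cref{eq:faces-to-reftri-edge-mappings} the maps $\bdd{F}_{ij},\bdd{F}_{ji}$ both take the edge $\{x_2=0\}$ of $\reftri$ onto $\gamma_{ij}$; hence $g_i\circ\bdd{F}_{ij}-g_j\circ\bdd{F}_{ji}$ is a polynomial on $\reftri$ vanishing on $\{x_2=0\}$, so it equals $x_2 h$ for a polynomial $h$, and the integrand in \cref{eq:edge-integral-def} equals $x_2^{p-1}|h|^p$ (componentwise in the tensor-valued case), which is bounded on $\reftri$ because $p>1$. Therefore $\mathcal{I}_{ij}^p(g_i,g_j)<\infty$, which is \cref{eq:review:sigmak-cont-int}, and we conclude that $F\in\Tr_k^{s,p}(\Gamma_{\mathcal{S}})$ for every $(s,p)\in\mathcal{A}_k$.

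I expect the main obstacle to be the middle paragraph: carrying out the reduction of \cref{eq:review:sigmak-cont} to a single tensor identity for $Q$ — in particular justifying $D^m P_i|_{\Gamma_i}=\mathfrak{D}_i^m(F)$ and keeping careful track of which derivative tensors of $P_i$ and $P_j$ are forced to agree on $\gamma_{ij}$ — and the observation that the two regimes $m+l\le k$ and $m+l>k$ are matched precisely by the two families of hypotheses in \cref{eq:review:sigmak-cont-poly}. The remaining steps are routine.
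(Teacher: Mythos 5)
Your proposal is correct and takes essentially the same route as the paper: establish the pointwise edge identities \cref{eq:review:sigmak-cont} for all $0\le l\le m\le k$ from \cref{eq:review:sigmak-cont-poly-edge-compat,eq:review:sigmak-cont-poly-edge-compat-deriv}, pull back to $\reftri$ where the polynomial difference vanishes on $\{x_2=0\}$ and hence factors as $x_2$ times a polynomial, and conclude $\mathcal{I}_{ij}^p<\infty$ since $\int_{\reftri} x_2^{p-1}|\cdot|^p \d{\bdd{x}}<\infty$ for $p>1$. Your auxiliary Taylor-type polynomials $P_i$ and the case split $m+l\le k$ versus $m+l>k$ merely make explicit the verification that the paper compresses into "thanks to \cref{eq:review:sigmak-cont-poly-edge-compat,eq:review:sigmak-cont-poly-edge-compat-deriv}".
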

	\begin{proof}
		Let $(s, p) \in \mathcal{A}_k$, $0 \leq l \leq m \leq k$, be given. Thanks to \cref{eq:review:sigmak-cont-poly-edge-compat,eq:review:sigmak-cont-poly-edge-compat-deriv}, the difference 
		\begin{align*}
			H_{ij} := \unitvec{b}_{ji}^{\otimes l} \cdot \frac{\partial^{l} \mathfrak{D}_i^{m}(F)}{\partial \unitvec{b}_{ij}^{l}} \circ \bdd{F}_{ij} -  \unitvec{b}_{ij}^{\otimes l} \cdot \frac{\partial^{l} \mathfrak{D}_j^{m}(F)}{\partial \unitvec{b}_{ji}^{l}} \circ \bdd{F}_{ji}, \qquad \text{ on $\reftri$}, i,j \in \mathcal{S},
		\end{align*}
		vanishes on the edge $\gamma_2$ of the reference triangle $\reftri$ and $H_{ij}$ has entries $\mathcal{P}_{N-m-l}(\reftri)$. Thus, $H_{ij} = x_2 G_{ij}$, where $G_{ij}$ has entries in $\mathcal{P}_{N-m-l-1}(\reftri)$. Consequently, for all $n \in \mathbb{N}_0$, there holds
		\begin{align*}
			\mathcal{I}_{ij}^{p}\left(\unitvec{b}_{ji}^{\otimes l} \cdot \frac{\partial^{l+n} \mathfrak{D}_i^{m}(F)}{\partial \unitvec{t}_{ij}^{n} \partial \unitvec{b}_{ij}^{l}},  \unitvec{b}_{ij}^{\otimes l} \cdot \frac{\partial^{l+n} \mathfrak{D}_j^{m}(F)}{\partial \unitvec{t}_{ij}^{n} \partial \unitvec{b}_{ji}^{l}} \right) &\approx_p \int_{\reftri} |\partial_{x_1}^n H_{ij}(\bdd{x})|^p \frac{\d{\bdd{x}}}{x_2} \\
			&= \int_{\reftri} |\partial_{x_1}^n G_{ij}(\bdd{x})|^p x_2^{p-1} \d{\bdd{x}},
		\end{align*}
		which is finite since $G_{ij}$ has polynomial entries. The inclusion $F \in \Tr_k^{s, p}(\Gamma_{\mathcal{S}})$ now follows from \cref{eq:review:sigmak-cont-poly}.
	\end{proof}

	\subsection{Statement of the main result}
	
	The aim of the current work is to construct a right inverse $\mathcal{L}_k$ of the operator $u \mapsto (u, \partial_{\unitvec{n}} u, \ldots, \partial_{\unitvec{n}}^{k} u)|_{\partial \reftet}$ for each $k \in \mathbb{N}_0$ that is bounded from $\Tr_k^{s, p}(\partial \reftet)$ into $W^{s, p}(\reftet)$ for all $(s, p) \in \mathcal{A}_k$ \textit{and} preserves polynomials in the following sense: if $F = (f^0, f^1, \ldots, f^k)$ is the trace of some degree $N$ polynomial, then $\mathcal{L}_k (F)$ is a polynomial of degree $N$. In particular, the main result is as follows.
	
	\begin{theorem}
		\label{thm:main-result}
		Let $k \in \mathbb{N}_0$. There exists a linear operator
		\begin{align*}
			\mathcal{L}_k : \bigcup_{(s, p) \in \mathcal{A}_k} \Tr_k^{s, p}(\partial \reftet) \to L^1(\reftet)
		\end{align*}
		satisfying the following properties: for all $(s, p) \in \mathcal{A}_{k}$ and $F = (f^0, f^1, \ldots, f^k) \in \Tr_k^{s, p}(\partial \reftet)$, $\mathcal{L}_k(F) \in W^{s, p}(\reftet)$,
		\begin{align*}
			\partial_{\unitvec{n}}^{l} \mathcal{L}_k(F)|_{\partial \reftet} = f^l, \qquad 0 \leq l \leq k, \quad \text{and} \quad \| \mathcal{L}_k(F)\|_{s, p, \reftet} \lesssim_{k, s, p} \| F \|_{\Tr_k^{s, p}, \partial \reftet}.
		\end{align*}
		Moreover, if  $F$ is a piecewise polynomial of degree $N \in \mathbb{N}_0$ satisfying \cref{eq:review:sigmak-cont-poly} with $\mathcal{S} = \{1,2,3,4\}$, then $\mathcal{L}_k(F) \in \mathcal{P}_N(\reftet)$.
	\end{theorem}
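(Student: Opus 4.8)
I would construct $\mathcal{L}_k$ by composing four \emph{single-face} lifting operators that clear the prescribed data one face at a time, reducing every assertion of the theorem to the corresponding property of these operators (which is what the rest of the paper establishes). Concretely, for each $i \in \{1,2,3,4\}$ one wants a linear single-face operator $\mathcal{E}_i$, acting on tuples $G = (g^0,\dots,g^k)$ of functions on $\Gamma_i$ that are compatible, in the edge sense of \cref{eq:review:sigmak-cont-combo}, with vanishing data on $\Gamma_1,\dots,\Gamma_{i-1}$, such that for every $(s,p) \in \mathcal{A}_k$ one has $\mathcal{E}_i(G) \in W^{s,p}(\reftet)$ with
\begin{align*}
	\partial_{\unitvec{n}}^l \mathcal{E}_i(G)\big|_{\Gamma_i} = g^l \quad (0 \le l \le k), \qquad \partial_{\unitvec{n}}^l \mathcal{E}_i(G)\big|_{\Gamma_j} = 0 \quad (0 \le l \le k,\ j < i),
\end{align*}
with $\|\mathcal{E}_i(G)\|_{s,p,\reftet} \lesssim_{k,s,p} \|G\|$, and with $\mathcal{E}_i$ sending polynomial data of degree $N$ into $\mathcal{P}_N(\reftet)$. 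These four operators --- one clearing no faces, then one, two, and three faces --- are exactly the intermediate single-face operators built in \cref{sec:construction} and analysed in \cref{sec:whole-space-cont,sec:whole-space-lp-cont,sec:cont}.

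\textbf{The reduction.} Granting the $\mathcal{E}_i$, set $F^{(0)} := F$ and, for $i = 1,\dots,4$,
\begin{align*}
	u_i := \mathcal{E}_i\bigl(F^{(i-1)}|_{\Gamma_i}\bigr), \qquad F^{(i)} := F^{(i-1)} - \bigl(u_i, \partial_{\unitvec{n}} u_i, \dots, \partial_{\unitvec{n}}^k u_i\bigr)\big|_{\partial \reftet}, \qquad \mathcal{L}_k(F) := \sum_{i=1}^4 u_i.
\end{align*}
Since this recipe is explicit and does not refer to $(s,p)$, it defines a single linear operator on $\bigcup_{(s,p)\in\mathcal{A}_k}\Tr_k^{s,p}(\partial\reftet)$ whose restriction to each $\Tr_k^{s,p}(\partial\reftet)$ maps into $W^{s,p}(\reftet) \subseteq L^1(\reftet)$. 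An induction gives that all components of $F^{(i)}$ vanish on $\Gamma_1,\dots,\Gamma_i$, so the hypotheses of $\mathcal{E}_{i+1}$ hold at each stage; consequently $\partial_{\unitvec{n}}^m(u_1+\dots+u_l)|_{\Gamma_l} = f^m$ and $\partial_{\unitvec{n}}^m u_j|_{\Gamma_l} = 0$ for $j > l$, whence $\partial_{\unitvec{n}}^m \mathcal{L}_k(F)|_{\Gamma_l} = f^m$ for every $l$ and $0 \le m \le k$. Stability follows by chaining the bounds on the $\mathcal{E}_i$ with the trace estimate \cref{eq:review:trace-theorem}, which bounds $\|F^{(i)}\|_{\Tr_k^{s,p},\partial\reftet}$ by a constant times $\|F\|_{\Tr_k^{s,p},\partial\reftet}$. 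For polynomial preservation, if $F$ is the trace of a degree-$N$ polynomial then, using linearity of the conditions \cref{eq:review:sigmak-cont-poly}, \cref{lem:review:polys-trace-space}, and an induction, each $F^{(i)}$ is a piecewise polynomial of degree $N$ satisfying \cref{eq:review:sigmak-cont-poly} and vanishing on $\Gamma_1,\dots,\Gamma_i$, so each $u_i \in \mathcal{P}_N(\reftet)$ and hence $\mathcal{L}_k(F) \in \mathcal{P}_N(\reftet)$.

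\textbf{Delicate points and the main obstacle.} The subtle part of the reduction is to verify that $F^{(i)}|_{\Gamma_{i+1}}$ really lies in the data space on which $\mathcal{E}_{i+1}$ acts: the face-regularity \cref{eq:review:sigmak-face-reg} on $\Gamma_{i+1}$ and the edge conditions \cref{eq:review:sigmak-cont-combo} on the edges $\gamma_{j,i+1}$ with $j > i+1$ are inherited because the subtracted quantity is itself the trace of a $W^{s,p}(\reftet)$ function (\cref{thm:trace-theorem}), while the required vanishing along the edges $\gamma_{j,i+1}$ with $j \le i$ is where the edge-matching conditions \cref{eq:review:sigmak-cont} for $F$ combine with the fact that $u_1+\dots+u_i$ already has zero traces on $\Gamma_1,\dots,\Gamma_i$; tracking the operators $\mathfrak{D}_i^m$ and the conormal derivatives built from \cref{eq:orthog-vectors-two-faces} through this bookkeeping is the fiddliest step. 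However, the genuine obstacle is not the reduction but the construction and continuity analysis of the $\mathcal{E}_i$ themselves: one must write down an explicit lifting through an auxiliary operator on $\mathbb{R}^3$, check that it both reproduces the data on $\Gamma_i$ and annihilates $\partial_{\unitvec{n}}^l$ for $0 \le l \le k$ on the already-cleared faces, establish $W^{s,p}$-continuity over the whole range $1 < p < \infty$, $s > k+1/p$ --- including the borderline integral conditions that appear when $(s-m-l-n)p = 2$ --- and confirm that the polynomial degree is not raised; this is carried out in \cref{sec:construction,sec:whole-space-cont,sec:whole-space-lp-cont,sec:cont}.
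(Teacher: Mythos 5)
Your proposal follows essentially the same route as the paper: the paper's own proof of this theorem is precisely to set $\mathcal{L}_k := \mathcal{L}_k^{[4]}$ and invoke \cref{lem:l4}, which is built by the same face-by-face correction scheme you describe, using single-face operators ($\mathcal{E}_k^{[1]}$, $\mathcal{M}_{m,k+1}^{[2]}$, $\mathcal{S}_{m,k+1}^{[3]}$, $\mathcal{R}_{m,k+1}^{[4]}$) that reproduce data on one face and annihilate normal derivatives on the previously handled faces, with the residual regularity handled by \cref{lem:vanish-trace-reg,lem:vanish-trace-reg-2} (the "fiddly step" you correctly identify). The only cosmetic difference is that the paper lifts the residual tuple on each face by an inner induction over the normal-derivative order rather than in one shot, which is exactly how your posited tuple-lifting operators $\mathcal{E}_i$ are realized.
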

	\noindent The construction of the lifting operator $\mathcal{L}_k$ in \cref{thm:main-result} is the focus of the next section, and the proof of \cref{thm:main-result} appears in \cref{sec:proof-thm:main-result}. An immediate consequence is the following characterization of the range of the trace operator.
	
	\begin{corollary}
		For each $k \in \mathbb{N}_0$, the operator $u \mapsto (u, \partial_{\unitvec{n}} u, \ldots, \partial_{\unitvec{n}}^{k} u)|_{\partial \reftet}$ is surjective from $W^{s, p}(K)$ onto $\Tr_k^{s, p}(\partial K)$ for all $(s, p) \in \mathcal{A}_k$.
	\end{corollary}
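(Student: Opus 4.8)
The plan is to deduce the corollary directly from the two preceding results, since together they supply the two halves of a surjectivity statement. Specifically, the corollary asks that for each $k \in \mathbb{N}_0$ and each $(s, p) \in \mathcal{A}_k$, the trace map
$$
T_k : W^{s, p}(\reftet) \to \Tr_k^{s, p}(\partial \reftet), \qquad T_k u := (u, \partial_{\unitvec{n}} u, \ldots, \partial_{\unitvec{n}}^k u)|_{\partial \reftet},
$$
is well-defined, bounded, and onto. The first two of these are precisely the content of \cref{thm:trace-theorem} with $\mathcal{S} = \{1,2,3,4\}$: the theorem guarantees $T_k u \in \Tr_k^{s,p}(\partial \reftet)$ and the bound \cref{eq:review:trace-theorem}. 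So the only thing left is surjectivity, and this is exactly where \cref{thm:main-result} enters.

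The key step is therefore the following one-line argument: given any $F = (f^0, \ldots, f^k) \in \Tr_k^{s, p}(\partial \reftet)$, apply the operator $\mathcal{L}_k$ from \cref{thm:main-result}. Since $(s, p) \in \mathcal{A}_k$ and $F \in \Tr_k^{s,p}(\partial \reftet)$, the theorem yields $u := \mathcal{L}_k(F) \in W^{s, p}(\reftet)$ with $\partial_{\unitvec{n}}^l u|_{\partial \reftet} = f^l$ for all $0 \leq l \leq k$; that is, $T_k u = F$. Hence every element of $\Tr_k^{s,p}(\partial \reftet)$ lies in the range of $T_k$, which is the claimed surjectivity. One should remark that $\mathcal{L}_k$ is a common right inverse valid for all $(s,p) \in \mathcal{A}_k$ simultaneously, so in fact a single operator witnesses surjectivity across the whole admissible range; this is worth noting but is not needed for the corollary as stated.

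I do not anticipate a genuine obstacle here: the corollary is a packaging statement, and all the analytic work has been front-loaded into \cref{thm:trace-theorem} (continuity and mapping into the trace space) and \cref{thm:main-result} (existence of the bounded polynomial-preserving right inverse). The only point requiring a modicum of care is bookkeeping: one must check that the trace space $\Tr_k^{s,p}(\partial \reftet)$ used as the codomain in \cref{thm:trace-theorem} is literally the same space appearing as a summand of the domain $\bigcup_{(s,p) \in \mathcal{A}_k} \Tr_k^{s,p}(\partial \reftet)$ of $\mathcal{L}_k$, which it is by definition, and that the normal-derivative trace operators are interpreted consistently in both statements — again immediate from the definitions of $\mathfrak{D}_i^m$. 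So the proof is essentially ``combine \cref{thm:trace-theorem} and \cref{thm:main-result},'' and I would write it in two or three sentences.
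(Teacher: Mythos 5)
Your argument is correct and matches the paper exactly: the corollary is stated there as an immediate consequence of \cref{thm:main-result} (the right inverse $\mathcal{L}_k$ provides a preimage for any $F \in \Tr_k^{s,p}(\partial \reftet)$), with \cref{thm:trace-theorem} guaranteeing the trace map lands in $\Tr_k^{s,p}(\partial \reftet)$ in the first place. Nothing further is needed.
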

	
	\section{Construction of the lifting operator}
	\label{sec:construction}

	The construction of the lifting operator $\mathcal{L}_k$, $k \in \mathbb{N}_0$, proceeds face-by-face using similar techniques to \cite{Munoz97,Parker23}. The main idea is to perform a sequence of liftings and corrections using a fundamental convolution operator (see e.g. \cite[eq. (4.2)]{Arn88}, \cite{BCMP91}, \cite{Bern95}, \cite[p. 56, eq. (2.1)]{Bern07}, \cite[\S 2.5.5]{Necas11}) and subsequent modifications to it. Given a nonnegative integer $k \in \mathbb{N}_0$, a smooth compactly supported function $b \in C_c^{\infty}(\reftri)$, and a function $f : \reftri \to \mathbb{R}$, we define the operator $\mathcal{E}_k^{[1]}$ formally by the rule
	\begin{align}
		\label{eq:ek1-def}
		\mathcal{E}_k^{[1]}(f)(\bdd{x}, z) := \frac{(-z)^k}{k!} \int_{\reftri} b(\bdd{y}) f(\bdd{x} + z \bdd{y}) \d{\bdd{y}} \qquad \forall (\bdd{x}, z) \in \reftet,
	\end{align}
	and we use the notation $\mathcal{E}_k^{[1]}[b]$ when we want to make the dependence on $b$ explicit. Note that for $(\bdd{x}, z) \in \reftet$ and $\bdd{y} \in \reftri$, there holds $\bdd{x} + z \bdd{y} \in \reftri$, and so \cref{eq:ek1-def} is well-defined for e.g. $f \in C^{\infty}(\bar{\reftri})$. For functions $f : \Gamma_1 \to \mathbb{R}$ we define
	\begin{align}
		\label{eq:ek1-gamma1-def}
		\mathcal{E}_k^{[1]}(f) := \mathcal{E}_k^{[1]}(f \circ \mathfrak{I}_1), \qquad \text{where} \quad \mathfrak{I}_1(\bdd{x}) := (\bdd{x}, 0) \qquad \forall \bdd{x} \in \reftri. 
	\end{align}
	
	\subsection{Lifting from one face}
	
	The first result concerns the interpolation and continuity properties of $\mathcal{E}_k^{[1]}$.	
	\begin{lemma}
		\label{lem:em-gamma1-cont-high}
		Let $b \in C_c^{\infty}(\reftri)$, $k \in \mathbb{N}_0$, and $(s, p) \in \mathcal{A}_k$. Then, for all $f \in W^{s-k-\frac{1}{p},p}(\Gamma_1)$, there holds
		\begin{align}
			\label{eq:em-gamma1-interp}
			\partial_{\unitvec{n}}^m \mathcal{E}_k^{[1]}(f)|_{\Gamma_1} = \delta_{mk} \left( \int_{\reftri} b(\bdd{x}) \d{\bdd{x}} \right) f, \qquad 0 \leq m \leq k,  
		\end{align}
		and 
		\begin{align}
			\label{eq:em-gamma1-continuity-high}
			\| \mathcal{E}_k^{[1]}(f) \|_{s, p, \reftet} \lesssim_{b, k, s, p} 
			\| f \|_{s-k-\frac{1}{p}, p, \Gamma_1}.
		\end{align}
		Moreover, if $f \in \mathcal{P}_N(\Gamma_1)$, $N \in \mathbb{N}_0$, then $\mathcal{E}_k^{[1]}(f) \in \mathcal{P}_{N+k}(\reftet)$.
	\end{lemma}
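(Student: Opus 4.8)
\emph{Overview and the easy parts.} The plan is to prove the interpolation identity \cref{eq:em-gamma1-interp} and the polynomial property first for smooth data by differentiating under the integral sign, to reduce the Sobolev bound \cref{eq:em-gamma1-continuity-high} to a half-space estimate for a companion convolution operator, and then to close \cref{eq:em-gamma1-interp} for rough $f$ by density. For the first part, observe that $\Gamma_1$ lies in the plane $\{z=0\}$ while $\reftet\subset\{z>0\}$, so the outward normal on $\Gamma_1$ is $-\unitvec{e}_3$ and $\partial_{\unitvec{n}}=-\partial_z$ there. For $f\in C^\infty(\bar\reftri)$ (identified on $\Gamma_1$ via $\mathfrak{I}_1$) the function $g(\bdd{x},z):=\int_{\reftri}b(\bdd{y})f(\bdd{x}+z\bdd{y})\d{\bdd{y}}$ is smooth on $\bar\reftet$, since $\bdd{x}+z\bdd{y}\in\reftri$ whenever $(\bdd{x},z)\in\reftet$ and $\bdd{y}\in\reftri$. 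Applying the Leibniz rule to $\mathcal{E}_k^{[1]}(f)=\frac{(-z)^k}{k!}g$, every term of $\partial_z^m\big[\frac{(-z)^k}{k!}g\big]$ carries a factor $z^{k-j}$ with $j\le m$, which vanishes on $\{z=0\}$ when $m<k$, while for $m=k$ the only term surviving there equals $(-1)^k g(\bdd{x},0)=(-1)^k\big(\int_{\reftri}b\big)f$; multiplying by $(-1)^m$ from $\partial_{\unitvec{n}}^m=(-1)^m\partial_z^m$ gives \cref{eq:em-gamma1-interp}. If $f\in\mathcal{P}_N(\Gamma_1)$, then $f\circ\mathfrak{I}_1$ coincides on $\reftri$ with a degree-$N$ polynomial on $\mathbb{R}^2$, and expanding $(\bdd{x}+z\bdd{y})^\beta$ binomially shows that each monomial of $f(\bdd{x}+z\bdd{y})$, of degree $\le N$ in $(\bdd x,z\bdd y)$, integrates against $b(\bdd{y})\d{\bdd{y}}$ to a polynomial of degree $\le N$ in $(\bdd{x},z)$; the prefactor $(-z)^k/k!$ then yields $\mathcal{E}_k^{[1]}(f)\in\mathcal{P}_{N+k}(\reftet)$.

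\emph{Reduction of \cref{eq:em-gamma1-continuity-high}.} Set $r:=s-k-\tfrac1p$, which is positive since $(s,p)\in\mathcal{A}_k$. Extend $g:=f\circ\mathfrak{I}_1\in W^{r,p}(\reftri)$ to $\tilde g\in W^{r,p}(\mathbb{R}^2)$ with $\|\tilde g\|_{r,p,\mathbb{R}^2}\lesssim_{r,p}\|g\|_{r,p,\reftri}\approx\|f\|_{r,p,\Gamma_1}$ using a bounded Sobolev extension operator for the Lipschitz domain $\reftri$, and define on $\mathbb{R}^3_+:=\mathbb{R}^2\times(0,\infty)$ the operator $\mathcal{E}_k(\tilde g)(\bdd{x},z):=\frac{(-z)^k}{k!}\int_{\mathbb{R}^2}b(\bdd{y})\tilde g(\bdd{x}+z\bdd{y})\d{\bdd{y}}$, the integration being over $\reftri$ since $\supp b\subset\reftri$. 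As $\bdd{x}+z\bdd{y}\in\reftri$, where $\tilde g=g$, for all $(\bdd x,z)\in\reftet$ and $\bdd y\in\reftri$, the two operators agree on $\reftet$, so restricting the integration domain in the norm gives $\|\mathcal{E}_k^{[1]}(f)\|_{s,p,\reftet}\le\|\mathcal{E}_k(\tilde g)\|_{s,p,\mathbb{R}^3_+}$. Hence \cref{eq:em-gamma1-continuity-high} follows once one has $\|\mathcal{E}_k(\tilde g)\|_{s,p,\mathbb{R}^3_+}\lesssim_{b,k,s,p}\|\tilde g\|_{r,p,\mathbb{R}^2}$, which is the content of the whole-space continuity analysis in \cref{sec:whole-space-cont,sec:whole-space-lp-cont}. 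With \cref{eq:em-gamma1-continuity-high} in hand, $\mathcal{E}_k^{[1]}$ extends continuously to $W^{r,p}(\Gamma_1)$, and since both sides of \cref{eq:em-gamma1-interp} depend continuously on $f$ — the left-hand side through \cref{eq:em-gamma1-continuity-high} together with the trace theorem \cref{thm:trace-theorem} — and agree on the dense subspace $C^\infty(\bar\reftri)$, identity \cref{eq:em-gamma1-interp} holds for every $f\in W^{r,p}(\Gamma_1)$.

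\emph{The main obstacle.} The essential difficulty is the half-space bound $\|\mathcal{E}_k(\tilde g)\|_{s,p,\mathbb{R}^3_+}\lesssim\|\tilde g\|_{r,p,\mathbb{R}^2}$ for non-integer $s$ and general $p\in(1,\infty)$, which is relegated to \cref{sec:whole-space-cont,sec:whole-space-lp-cont,sec:cont}. To prove it I would use that (i) tangential derivatives commute with $\mathcal{E}_k$ up to replacing $\tilde g$ by a tangential derivative of $\tilde g$; (ii) $z\partial_z$ turns $\mathcal{E}_k$-type expressions into lower-order ones plus, after integrating by parts in $\bdd y$ using $\int_{\reftri}\mathrm{div}(b(\bdd y)\bdd y)\d{\bdd y}=0$, difference-quotient kernels $z^{-1}\big(\tilde g(\bdd x+z\bdd y)-\tilde g(\bdd x)\big)$ whose $L^p(\mathbb{R}^3_+)$ norms are bounded by fractional seminorms of $\tilde g$ through Minkowski's inequality and a Hardy-type estimate; and (iii) the built-in factor $z^k$ supplies the $k$ extra orders of vanishing at $z=0$ needed to absorb the loss from prescribing the $k$-th normal derivative. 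For $p=2$ the same estimates follow transparently from Plancherel, since the partial Fourier transform of $\mathcal{E}_k(\tilde g)$ in $\bdd x$ has symbol $\tfrac{(-z)^k}{k!}\hat b(-z\xi)$ with $\hat b$ Schwartz, hence rapidly decaying in $z|\xi|$. The delicate point throughout is controlling the top-order fractional Slobodeckij seminorm near the face $z=0$, where the dilation $\bdd y\mapsto z\bdd y$ and the weight $z^k$ interact; this is exactly the analysis carried out in the later sections.
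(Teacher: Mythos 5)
Your route is essentially the paper's: prove \cref{eq:em-gamma1-interp} and the polynomial property by direct differentiation (your sign bookkeeping with $\partial_{\unitvec{n}}=-\partial_z$ on $\Gamma_1$ is correct, as is the density closure for rough $f$), and reduce \cref{eq:em-gamma1-continuity-high}, via a bounded extension of $f\circ\mathfrak{I}_1$ to $\mathbb{R}^2$, to the continuity of a whole-space companion operator established in \cref{sec:whole-space-cont,sec:whole-space-lp-cont}.

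There is, however, one concrete defect in the reduction: your companion operator $\mathcal{E}_k(\tilde g)(\bdd{x},z)=\frac{(-z)^k}{k!}\int b(\bdd{y})\tilde g(\bdd{x}+z\bdd{y})\d{\bdd{y}}$ carries no cutoff in $z$, and the estimate you defer to the later sections, $\|\mathcal{E}_k(\tilde g)\|_{s,p,\mathbb{R}^3_+}\lesssim\|\tilde g\|_{s-k-\frac{1}{p},p,\mathbb{R}^2}$, is false in that form and is not what \cref{thm:tilde-em-cont-high} asserts: there the operator is $\tilde{\mathcal{E}}_k[\chi,b]$ with $\supp\chi\subset(-2,2)$, and the compact support of $\chi$ is used essentially (e.g. via Hardy's inequality in \cref{lem:tilde-e0-cont-h12l2} and in the $L^p$ bound \cref{eq:tilde-em-lp-continuity}). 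Without the cutoff there is no decay in $z$: writing $h_z(\bdd{x}):=\int b(\bdd{y})\tilde g(\bdd{x}+z\bdd{y})\d{\bdd{y}}$ and taking $b,\tilde g\ge 0$, one has $\int_{\mathbb{R}^2}h_z=\|\tilde g\|_{1,\mathbb{R}^2}\int b$ for every $z$ while $|\supp h_z|\lesssim(1+z)^2$, so $\|h_z\|_{p,\mathbb{R}^2}\gtrsim(1+z)^{\frac{2}{p}-2}$ and hence
\begin{align*}
\|z^k h_z\|_{p,\mathbb{R}^3_+}^p \gtrsim \int_{1}^{\infty} z^{kp+2-2p}\d{z}=\infty \qquad \text{whenever } kp\ge 2p-3,
\end{align*}
in particular for every $p$ once $k\ge 2$; so your operator need not even lie in $L^p(\mathbb{R}^3_+)$. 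The same issue afflicts your Plancherel sketch, since the symbol $\tfrac{(-z)^k}{k!}\hat b(-z\bdd{\xi})$ has no decay in $z$ at $\bdd{\xi}=\bdd{0}$. The repair is immediate and is exactly the paper's identity \cref{eq:proof:em-tilde-em-id}: insert a factor $\chi(z)$ with $\chi\equiv 1$ on $(-1,1)$ and $\supp\chi\subset(-2,2)$; since $\reftet\subset\mathbb{R}^2\times(0,1)$ this does not change the operator on $\reftet$, and \cref{thm:tilde-em-cont-high} then applies verbatim. With that one-line correction your argument coincides with the paper's proof.
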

	The proof appears in \cref{sec:proof-em-gamma1-cont-high}. We now construct a lifting operator from $\Gamma_1$. 
	\begin{lemma}
		\label{lem:l1}
		Let $b \in C_c^{\infty}(\reftri)$ with $\int_{\reftri} b(\bdd{x}) \d{\bdd{x}} = 1$ and $k \in \mathbb{N}_0$. We formally define the following operators for $F = (f^0, f^1, \ldots, f^k)~\in~L^p(\Gamma_1)^{k+1}$:
		\begin{subequations}
			\label{eq:l1-def}
			\begin{alignat}{2}
				\mathcal{L}_0^{[1]}(F) &:= \mathcal{E}_0^{[1]}(f^0), \qquad & & \\
				\mathcal{L}_m^{[1]}(F) &:= \mathcal{E}_{m}^{[1]}( f^m - \partial_{\unitvec{n}}^{m} \mathcal{L}_{m-1}^{[1]}(F)|_{\Gamma_1} ), \qquad & &1 \leq m \leq k.
			\end{alignat}
		\end{subequations}
		Then, for all $(s, p) \in \mathcal{A}_k$ and $F \in \Tr_k^{s, p}(\Gamma_1)$, $\mathcal{L}_k^{[1]}(F)$ is well-defined and there holds
		\begin{align}
			\label{eq:l1-gamma1-interp-continuity}
			\partial_{\unitvec{n}}^m \mathcal{L}_k^{[1]}(F)|_{\Gamma_1} = f^m, \qquad 0 \leq m \leq k,  \quad \text{and} \quad \| \mathcal{L}_k^{[1]}(F) \|_{s, p, \reftet} \lesssim_{b, k, s, p} \|F\|_{\Tr_k^{s, p}, \Gamma_1}.
		\end{align}
		Moreover, if $f^m \in \mathcal{P}_{N-m}(\Gamma_1)$, $0 \leq m \leq k$, for some $N \in \mathbb{N}_0$, then $\mathcal{L}_k^{[1]}(F) \in \mathcal{P}_N(\reftet)$.
	\end{lemma}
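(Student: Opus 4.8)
\textbf{Proof proposal for \cref{lem:l1}.}

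The plan is to prove the three claims---well-posedness with the interpolation identities, the continuity bound, and polynomial preservation---by induction on $m$ from $0$ to $k$, using \cref{lem:em-gamma1-cont-high} at each step. The key structural fact is that $\mathcal{L}_m^{[1]}$ is built so that each successive application of $\mathcal{E}_m^{[1]}$ corrects the $m$th normal derivative while leaving the lower-order ones untouched, because \cref{eq:em-gamma1-interp} shows $\partial_{\unitvec{n}}^j \mathcal{E}_m^{[1]}(g)|_{\Gamma_1} = \delta_{jm} g$ for $0 \le j \le m$ (recall $\int_{\reftri} b = 1$).

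First I would establish the interpolation identity \cref{eq:l1-gamma1-interp-continuity} (left) by induction. The base case $m = 0$ is immediate from \cref{eq:em-gamma1-interp} with $k = 0$. For the inductive step, suppose $\partial_{\unitvec{n}}^j \mathcal{L}_{m-1}^{[1]}(F)|_{\Gamma_1} = f^j$ for $0 \le j \le m-1$. Writing $g^m := f^m - \partial_{\unitvec{n}}^m \mathcal{L}_{m-1}^{[1]}(F)|_{\Gamma_1}$, we have $\mathcal{L}_m^{[1]}(F) = \mathcal{L}_{m-1}^{[1]}(F) + \mathcal{E}_m^{[1]}(g^m)$; applying $\partial_{\unitvec{n}}^j|_{\Gamma_1}$ and using \cref{eq:em-gamma1-interp} gives $f^j$ for $j < m$ (since $\mathcal{E}_m^{[1]}(g^m)$ contributes $0$ there) and gives $\partial_{\unitvec{n}}^m \mathcal{L}_{m-1}^{[1]}(F)|_{\Gamma_1} + g^m = f^m$ for $j = m$. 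The one point requiring care here is that $g^m$ must lie in a space on which $\mathcal{E}_m^{[1]}$ is defined and on which \cref{eq:em-gamma1-interp} applies, namely $W^{s-m-1/p,p}(\Gamma_1)$; this is where the continuity bound feeds back into the well-posedness argument, so the induction must carry the regularity statement along with the identity.

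Next, for the continuity estimate, I would again induct on $m$, proving $\|\mathcal{L}_m^{[1]}(F)\|_{s,p,\reftet} \lesssim_{b,m,s,p} \sum_{j=0}^m \|f^j\|_{s-j-1/p,p,\Gamma_1}$, which is bounded by $\|F\|_{\Tr_m^{s,p},\Gamma_1}$ (the trace-space norm controls each face seminorm). In the inductive step I bound $\|\mathcal{L}_m^{[1]}(F)\|_{s,p,\reftet} \le \|\mathcal{L}_{m-1}^{[1]}(F)\|_{s,p,\reftet} + \|\mathcal{E}_m^{[1]}(g^m)\|_{s,p,\reftet}$; the first term is handled by the inductive hypothesis, and for the second I invoke \cref{eq:em-gamma1-continuity-high} to get $\|\mathcal{E}_m^{[1]}(g^m)\|_{s,p,\reftet} \lesssim \|g^m\|_{s-m-1/p,p,\Gamma_1}$. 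It remains to control $\|g^m\|_{s-m-1/p,p,\Gamma_1} \le \|f^m\|_{s-m-1/p,p,\Gamma_1} + \|\partial_{\unitvec{n}}^m \mathcal{L}_{m-1}^{[1]}(F)|_{\Gamma_1}\|_{s-m-1/p,p,\Gamma_1}$; the latter is a trace of $\partial_{\unitvec{n}}^m \mathcal{L}_{m-1}^{[1]}(F)$, and by the standard trace theorem $W^{s,p}(\reftet) \hookrightarrow W^{s-1/p,p}(\partial\reftet)$ applied after noting $\partial_{\unitvec{n}}^m$ costs $m$ derivatives, this is $\lesssim \|\mathcal{L}_{m-1}^{[1]}(F)\|_{s,p,\reftet}$, which closes the induction via the hypothesis. (Here $(s,p) \in \mathcal{A}_k \subseteq \mathcal{A}_m$ guarantees $s - m - 1/p > 0$ and the trace exponents avoid the excluded integers, so every trace invoked is legitimate.)

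Finally, polynomial preservation follows by the same induction: if $f^j \in \mathcal{P}_{N-j}(\Gamma_1)$ for all $j$, then assuming $\mathcal{L}_{m-1}^{[1]}(F) \in \mathcal{P}_N(\reftet)$, its $m$th normal derivative restricted to $\Gamma_1$ lies in $\mathcal{P}_{N-m}(\Gamma_1)$, hence $g^m \in \mathcal{P}_{N-m}(\Gamma_1)$, and the last claim of \cref{lem:em-gamma1-cont-high} gives $\mathcal{E}_m^{[1]}(g^m) \in \mathcal{P}_{(N-m)+m}(\reftet) = \mathcal{P}_N(\reftet)$; summing, $\mathcal{L}_m^{[1]}(F) \in \mathcal{P}_N(\reftet)$. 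I expect the main obstacle to be purely bookkeeping: making sure the three inductions are run simultaneously (or in the right order) so that at step $m$ one already knows $\mathcal{L}_{m-1}^{[1]}(F) \in W^{s,p}(\reftet)$ before forming $g^m$ and before differentiating, and keeping track of which trace theorems are licensed by $(s,p) \in \mathcal{A}_k$; there is no genuinely hard analytic estimate here beyond \cref{lem:em-gamma1-cont-high}, which is quoted.
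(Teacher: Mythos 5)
Your proposal is correct and follows essentially the same route as the paper: induction on the order of the lifted derivative (the paper phrases it as induction on the lemma's parameter $k$ applied to the truncated tuple $\tilde F$), with the trace theorem (\cref{thm:trace-theorem}) supplying the regularity and bound of the correction datum $f^m-\partial_{\unitvec{n}}^{m}\mathcal{L}_{m-1}^{[1]}(F)|_{\Gamma_1}$ and \cref{lem:em-gamma1-cont-high} supplying the interpolation, continuity, and polynomial-preservation properties of each $\mathcal{E}_m^{[1]}$ step. Note that you read the recursion cumulatively, $\mathcal{L}_m^{[1]}(F)=\mathcal{L}_{m-1}^{[1]}(F)+\mathcal{E}_m^{[1]}(f^m-\partial_{\unitvec{n}}^{m}\mathcal{L}_{m-1}^{[1]}(F)|_{\Gamma_1})$, which is the reading required for the claimed identities $\partial_{\unitvec{n}}^m\mathcal{L}_k^{[1]}(F)|_{\Gamma_1}=f^m$ (and the one used implicitly in the paper's proof and in the later operators \cref{eq:l2-def}--\cref{eq:l4-def}), even though \cref{eq:l1-def} as displayed omits the $\mathcal{L}_{m-1}^{[1]}(F)$ term.
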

	\begin{proof}
		We proceed by induction on $k$. The case $k = 0$ follows immediately from \cref{lem:em-gamma1-cont-high}. Now assume that the lemma is true for some $k \in \mathbb{N}_0$ and let $(s, p) \in \mathcal{A}_{k+1}$ and $F \in \Tr_{k+1}^{s, p}(\Gamma_1)$ be as in the statement of the lemma. Then, we may apply the lemma to $\tilde{F} = (f^0, f^1, \ldots, f^k) \in \Tr_k^{s, p}(\Gamma_1)$ to conclude that for $0 \leq m \leq k$ and
		\begin{align*}
			\partial_{\unitvec{n}}^m \mathcal{L}_k^{[1]}(\tilde{F})|_{\Gamma_1} = f^m, \ \ 0 \leq m \leq k, \ \ \text{and} \ \
			\| \mathcal{L}_k^{[1]}(\tilde{F}) \|_{s, p, \reftet} \lesssim_{b, k, s, p} 
			\|\tilde{F}\|_{\Tr_k^{s, p}, \Gamma_1}.
		\end{align*}
		Thanks to the trace theorem (\cref{thm:trace-theorem}), there holds $f^{k+1} - \partial_{\unitvec{n}}^{k+1} \mathcal{L}_{k}^{[1]}(F)|_{\Gamma_1} \in W^{s-k-1-\frac{1}{p}, p}(\Gamma_1)$ with
		\begin{align*}
			 \| f^{k+1} - \partial_{\unitvec{n}}^{k+1} \mathcal{L}_{k}^{[1]}(\tilde{F}) \|_{s-k-1-\frac{1}{p}, p, \Gamma_1} &\lesssim_{b, k, s, p} \|F\|_{\Tr_{k+1}^{s, p}, \Gamma_1},
		\end{align*}
		and so \cref{eq:l1-gamma1-interp-continuity} follows from \cref{lem:em-gamma1-cont-high}. Additionally, if $F$ satisfies $f^m \in \mathcal{P}_{N-m}(\Gamma_1)$, $0 \leq m \leq k+1$, for some $N \in \mathbb{N}_0$, then $\tilde{F}$ satisfies the same condition, where the upper bound of $m$ is restricted to $k$. Consequently, $\mathcal{L}_k^{[1]}(\tilde{F}) \in \mathcal{P}_N(\reftet)$ and so $f^{k+1} - \partial_{\unitvec{n}}^{k+1} \mathcal{L}_{k}^{[1]}(\tilde{F})|_{\Gamma_1} \in \mathcal{P}_{N-k-1}(\Gamma_1)$. Thus, $\mathcal{L}_{k+1}^{[1]}(F) \in \mathcal{P}_N(\reftet)$ thanks to \cref{lem:em-gamma1-cont-high}.  
	\end{proof}

	\subsection{Lifting from two faces}
	\label{sec:two-faces}
	
	We now seek a lifting operator from $\Gamma_2$ that has zero trace on $\Gamma_1$. The operator will be a generalization of the form introduced in \cite{Munoz97}. We first define an operator that lifts traces from $\Gamma_1$ and has zero trace on $\Gamma_2$, and then define the lifting operator from $\Gamma_2$ in terms of this operator. To this end, denote by $\omega_i$ the barycentric coordinates of $\reftri$ extended to all of $\mathbb{R}^2$ as follows.
	\begin{align}
		\label{eq:omegai-def}
		\omega_i(\bdd{x}) := \min\{ x_i, 1 \}, \quad 1 \leq i \leq 2, \quad \text{and} \quad \omega_3(\bdd{x}) := 1 - \min\{x_1 + x_2, 1\} \quad \forall \bdd{x} \in \mathbb{R}^2.
	\end{align}	
	Given nonnegative integers $k, r \in \mathbb{N}_0$, a smooth compactly supported function $b \in C_c^{\infty}(\reftri)$, and a function $f : \reftri \to \mathbb{R}$, we define the operator $\mathcal{M}_{k, r}^{[1]}$ formally by the rule
	\begin{align}
		\label{eq:mkr-def}
		\begin{aligned}
			\mathcal{M}_{k, r}^{[1]}(f)(\bdd{x}, z) &:= x_2^r \mathcal{E}_k^{[1]}(\omega_2^{-r} f)(\bdd{x}, z) 
			\\
			&= x_2^r \frac{(-z)^k}{k!} \int_{\reftri}  \frac{b(\bdd{y}) f(\bdd{x} + z \bdd{y})}{(x_2 + z y_2)^r} \d{\bdd{y}} \qquad \forall (\bdd{x}, z) \in \reftet.
		\end{aligned}
	\end{align}
	Note that when $r = 0$, we have $\mathcal{M}_{k, 0}^{[1]} = \mathcal{E}_k^{[1]}$. For functions $f : \Gamma_1 \to \mathbb{R}$, we again abuse notation and set $\mathcal{M}_{k, r}^{[1]}(f) := \mathcal{M}_{k, r}^{[1]}(f \circ \mathfrak{I}_1)$. 
	
	The presence of the weight $\omega_2^{-r}$ in the operator $\mathcal{M}_{k, r}^{[1]}$ means that derivatives of $f : \Gamma_{1} \to \mathbb{R}$ up to order $r$ have to vanish on edge $\gamma_{12}$ in an appropriate sense. To this end, let $s = m + \sigma$ with $m \in \mathbb{N}_0$ and $\sigma \in [0, 1)$ and $1 < p < \infty$. Given a face $\Gamma_j$, $1 \leq j \leq 4$, and $\mathfrak{E}$ a subset of the edges of $\Gamma_j$, we define the following subspaces of $W^{s, p}(\Gamma_j)$ with vanishing traces on the edges in $\mathfrak{E}$:
	\begin{multline}
		\label{eq:wsp-e-def}
		W^{s, p}_{\mathfrak{E}}(\Gamma_j) := \left\{ f \in W^{s, p}(\Gamma_j) : D_{\Gamma}^{\beta} f|_{\gamma} = 0 \text{ for all } 0 \leq |\beta| < s - \frac{1}{p} \text{ and } \gamma \in \mathfrak{E} \right. \\ \left. \vphantom{\frac{1}{p}} \text{ and } \inorm{\mathfrak{E}}{f}{s, p, \reftri} < \infty \right\},
	\end{multline}
	where the norm on $W^{s, p}_{\mathfrak{E}}(\Gamma_j)$ is given by
	\begin{align*}
		\inormsup{\mathfrak{E}}{f}{s, p, \Gamma_j}{p} := \|f\|_{s, p, \Gamma_j}^p + \begin{cases}
			\| \dist(\cdot, \bigcup_{\gamma \in \mathfrak{E}} \gamma )^{-\sigma} D_{\Gamma}^m f  \|_{p, \Gamma_j}^p & \text{if } \sigma p = 1 \text{ and } \mathfrak{E} \neq \emptyset, \\
			0 & \text{otherwise},
		\end{cases}
	\end{align*}
	and we recall that $D_{\Gamma}$ is the surface gradient operator. 
	When $\mathfrak{E}$ consists of only one element $\gamma$, we set $W_{\gamma}^{s, p}(\Gamma_j) := W_{\mathfrak{E}}^{s, p}(\Gamma_j)$ and $\inorm{\gamma}{f}{s, p, \Gamma_j} := \inorm{\mathfrak{E}}{f}{s, p, \Gamma_j}$. One can readily verify that the spaces $W_{\mathfrak{E}}^{s, p}(\Gamma_j)$ are Banach spaces and that the following relations hold:
	\begin{align}
		\label{eq:wsp-e-intersection-id}
		W_{\mathfrak{E}}^{s, p}(\Gamma_j) = \bigcap_{\gamma \in \mathfrak{E}} W_{\gamma}^{s, p}(\Gamma_j) \quad \text{and} \quad \inorm{\mathfrak{E}}{f}{s, p, \Gamma_j} \approx_{s,p} \sum_{\gamma \in \mathfrak{E}} \inorm{\gamma}{f}{s, p, \Gamma_j}.
	\end{align}
	Given a subset of edges $\mathfrak{E}$ of the reference triangle $\reftri$, we define the spaces $W_{\mathfrak{E}}^{s, p}(\reftri)$ analogously.

	The first result states the continuity properties of $\mathcal{M}_{k, r}^{[1]}$.
	\begin{lemma}
		\label{lem:mmr-gamma1-cont-high}
		Let $b \in C_c^{\infty}(\reftri)$, $k, r \in \mathbb{N}_0$, and $(s, p) \in \mathcal{A}_k$. Then, for all \\ $f~\in~W^{s-k-\frac{1}{p},p}(\Gamma_1) \cap W_{\gamma_{12}}^{\min\{s-k-\frac{1}{p}, r\}, p}(\Gamma_1)$, there holds
		\begin{subequations}
			\begin{alignat}{2}
				\label{eq:mmr-gamma1-interp}
				\partial_{\unitvec{n}}^m \mathcal{M}_{k, r}^{[1]}(f)|_{\Gamma_1} &= \delta_{km} \left( \int_{\reftri} b(\bdd{x}) \d{\bdd{x}} \right) f, \qquad & &0 \leq m \leq k, \\
				\label{eq:mmr-gamma2-zero}
				\partial_{\unitvec{n}}^{j} \mathcal{M}_{k, r} ^{[1]}(f)|_{\Gamma_2} &= 0, \qquad & &0 \leq j < \min\left\{ r, s - \frac{1}{p} \right\},
			\end{alignat}
		\end{subequations}
		and 
		\begin{align}
			\label{eq:mmr-gamma1-continuity-high}
			\| \mathcal{M}_{k, r}^{[1]}(f) \|_{s, p, \reftet} \lesssim_{b, k, r, s, p} \begin{cases}
				\inorm{\gamma_{12}}{f}{s-k-\frac{1}{p}, p, \Gamma_1} & \text{if } s \leq k+r+\frac{1}{p}, \\
				\| f\|_{s-k-\frac{1}{p}, p, \Gamma_1} & \text{if } s > k + r + \frac{1}{p}.
			\end{cases}
		\end{align}
		Moreover, if $f \in \mathcal{P}_N(\Gamma_1)$, $N \in \mathbb{N}_0$, satisfies $D_{\Gamma}^{l} f|_{\gamma_{12}} = 0$ for $0 \leq l \leq r-1$, then $\mathcal{M}_{k, r}^{[1]}(f) \in \mathcal{P}_{N+k}(\reftet)$.
	\end{lemma}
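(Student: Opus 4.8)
The proof rests on the algebraic identity $\mathcal{M}_{k,r}^{[1]}(f) = x_2^r\,\mathcal{E}_k^{[1]}(\omega_2^{-r}f)$ recorded in \cref{eq:mkr-def}, on the geometric facts that $\omega_2 = x_2$ on $\reftri$ (so that under $\mathfrak{I}_1$ the weight $\omega_2^{-r}$ is a negative power of the distance to $\gamma_{12}$), that $\Gamma_1 = \{z=0\}$, $\Gamma_2 = \{x_2 = 0\}$ and $\gamma_{12} = \Gamma_1 \cap \Gamma_2 = \{x_2 = z = 0\}$, with $\partial_{\unitvec{n}} = -\partial_z$ on $\Gamma_1$ and $\partial_{\unitvec{n}} = -\partial_{x_2}$ on $\Gamma_2$, and on \cref{lem:em-gamma1-cont-high}. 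I would establish the four assertions first for $f$ smooth on $\overline{\reftri}$ and then remove this restriction by density, which is legitimate once the estimate \cref{eq:mmr-gamma1-continuity-high} is available, because $(s,p)\in\mathcal{A}_k$ forces $(s-k)p > 1$ and hence the maps $g\mapsto\partial_{\unitvec{n}}^m g|_{\Gamma_i}$, $0\le m\le k$, are continuous on $W^{s,p}(\reftet)$.

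The interpolation identity \cref{eq:mmr-gamma1-interp} is immediate: since $x_2^r$ does not depend on $z$, one has $\partial_{\unitvec{n}}^m\mathcal{M}_{k,r}^{[1]}(f)|_{\Gamma_1} = x_2^r\,\partial_{\unitvec{n}}^m\mathcal{E}_k^{[1]}(\omega_2^{-r}f)|_{\Gamma_1}$, and for smooth $f$ vanishing to order $r$ on $\gamma_{12}$ the function $\omega_2^{-r}f$ is smooth on $\overline{\reftri}$, so \cref{eq:em-gamma1-interp} applies and the powers $x_2^r$, $x_2^{-r}$ cancel. For the vanishing property \cref{eq:mmr-gamma2-zero}, write $\mathcal{M}_{k,r}^{[1]}(f) = x_2^r g$ with $g := \mathcal{E}_k^{[1]}(\omega_2^{-r}f)$; because $b\in C_c^\infty(\reftri)$, the denominator $x_2 + zy_2$ in \cref{eq:mkr-def} is bounded below by a positive multiple of $z$ on $\supp b$, so $g$ extends smoothly up to $\Gamma_2\setminus\gamma_{12}$, and the Leibniz rule forces $\partial_{\unitvec{n}}^j(x_2^r g)$ to vanish on $\Gamma_2$ for every $j < r$, since each term retains a positive power of $x_2$; imposing in addition $j < s - 1/p$ merely guarantees the trace is well defined. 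The polynomial-preservation claim is likewise read off the identity: if $f\in\mathcal{P}_N(\Gamma_1)$ with $D_\Gamma^l f|_{\gamma_{12}} = 0$ for $0\le l\le r-1$, then $f\circ\mathfrak{I}_1$ is a degree-$N$ polynomial divisible by $x_2^r$, so $\omega_2^{-r}f\in\mathcal{P}_{N-r}(\reftri)$, whence $\mathcal{E}_k^{[1]}(\omega_2^{-r}f)\in\mathcal{P}_{N-r+k}(\reftet)$ by \cref{lem:em-gamma1-cont-high} and $\mathcal{M}_{k,r}^{[1]}(f) = x_2^r\,\mathcal{E}_k^{[1]}(\omega_2^{-r}f)\in\mathcal{P}_{N+k}(\reftet)$.

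The continuity estimate \cref{eq:mmr-gamma1-continuity-high} is the substantial part, and it cannot be deduced from \cref{lem:em-gamma1-cont-high} by the naive substitution $g = \omega_2^{-r}f$: for $f$ merely in the weighted trace space the function $\omega_2^{-r}f$ need not lie in $L^p(\Gamma_1)$, and even where it does (the range $s > k+r+1/p$) dividing by $x_2^r$ generically lowers Sobolev regularity, for example $x_2^{-r}(x_2^r\log x_2) = \log x_2$. My plan instead is to (i) extend $f$ from $\reftri$ to $\mathbb{R}^2$ by a bounded linear operator that preserves vanishing on the line carrying $\gamma_{12}$ and is stable in the weighted norm $\inorm{\gamma_{12}}{\,\cdot\,}{\min\{s-k-1/p,\,r\},\,p,\,\Gamma_1}$ — the $\dist^{-\sigma}$ term in \cref{eq:wsp-e-def} being exactly what is needed in the critical case $\sigma p = 1$; (ii) recognise the corresponding extension of $\mathcal{M}_{k,r}^{[1]}$ as a localisation of a convolution-type operator on $\mathbb{R}^3$ carrying the weight $(x_2 + z y_2)^{-r}$; (iii) prove continuity of that whole-space operator in $L^p$-based fractional Sobolev norms; and (iv) restrict back to $\reftet$, using that multiplication by the polynomial $x_2^r$ is bounded on $W^{s,p}(\reftet)$.

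The decisive step is (iii). The sharp accounting of how many orders of vanishing on $\gamma_{12}$ the weight $(x_2 + z y_2)^{-r}$ absorbs is precisely what produces the dichotomy $s \lessgtr k + r + 1/p$ in \cref{eq:mmr-gamma1-continuity-high} and the appearance of the weighted trace norm on the right-hand side; and the end-point cases — the critical exponent $\sigma p = 1$ and the exclusion $s - 1/p\notin\mathbb{Z}$ built into $\mathcal{A}_k$ — must be dealt with there. This weighted whole-space convolution estimate is, in my assessment, the \emph{main obstacle}; the reductions above are essentially bookkeeping once it is in hand.
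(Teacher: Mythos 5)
Your treatment of the trace identities \cref{eq:mmr-gamma1-interp,eq:mmr-gamma2-zero} and of polynomial preservation is fine and matches the paper (the paper disposes of these by direct computation; note that \cref{eq:mmr-gamma1-interp} in fact follows for \emph{general} $f$ by differentiating under the integral at $z=0$, where the prefactor $x_2^r$ cancels the kernel weight exactly, so you do not need to pass through smooth $f$ vanishing to order $r$ and a density argument on that subspace). The problem is the continuity estimate \cref{eq:mmr-gamma1-continuity-high}: your steps (i)--(iv) are a programme, not a proof, and you say yourself that the decisive step (iii) --- continuity of a whole-space convolution operator carrying the kernel weight $(x_2+zy_2)^{-r}$, with the sharp dichotomy $s\lessgtr k+r+\frac1p$ and the endpoint case $\sigma p=1$ --- is the main obstacle and is left open. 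Since that estimate \emph{is} the substance of the lemma, the proposal has a genuine gap. It is also not a small one: you would additionally need an extension operator from $\Gamma_1$ to $\mathbb{R}^2$ bounded in the weighted norm $\inorm{\gamma_{12}}{\cdot}{s,p,\Gamma_1}$, including the $\dist^{-\sigma}$ term at $\sigma p=1$, and a whole-space theory for the weighted kernel that keeps track of only \emph{partial} vanishing of $f$ on the edge when $s\le k+r+\frac1p$; none of this is supplied.

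The paper avoids ever estimating a whole-space operator with the weight inside the kernel. Its proof is an induction on $r$ based on the algebraic identity
\begin{align*}
\mathcal{M}_{k, r+1}^{[1]}(f) \;=\; (k+1)\, \mathcal{M}_{k+1, r}^{[1]}[\omega_2 b](\omega_2^{-1} f) \;+\; \mathcal{M}_{k, r}^{[1]}(f),
\end{align*}
which trades one power of the weight for one extra power of $z$ (raising $k$ by one) at the price of replacing $f$ by $\omega_2^{-1}f$. The factor $\omega_2^{-1}f$ is then controlled by the Hardy-type inequalities of \cref{lem:omega1-inv-mapping} (\cref{eq:omega1-invs-wsp1-bound,eq:omega1-inv-wsp-bound,eq:omega1-inv-wsps-bound}), splitting into the regimes $k+\frac1p\le s\le k+1+\frac1p$, $k+1+\frac1p<s\le k+r+1+\frac1p$, and $s>k+r+1+\frac1p$ --- this is where the dichotomy in \cref{eq:mmr-gamma1-continuity-high} arises. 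The low-regularity regime is handled not by a weighted-kernel estimate but by the weighted $L^p\to W^{s,p}$ bound for the \emph{unweighted} whole-space lifting (\cref{thm:tilde-em-cont-low}, transferred to $\mathcal{M}_{k,r}^{[1]}$ in \cref{eq:mmr-gamma1-continuity-low}), where the weight sits only on the data. If you want to complete your argument along the paper's lines, the recursion identity above and the Hardy inequalities are the missing ingredients; pursuing your plan as stated would require proving a new weighted whole-space estimate that the paper deliberately circumvents.
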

	The proof of \cref{lem:mmr-gamma1-cont-high} appears in \cref{sec:proof-mmr-gamma1-cont-high}. 
	By mapping the other faces of $\reftet$ to $\Gamma_1$ and mapping $\reftet$ onto itself in an appropriate fashion, we may define operators corresponding to these faces. In particular, we define the following operator corresponding to $\Gamma_2$:
	\begin{align*}
		\mathcal{M}_{k, r}^{[2]}(f)(\bdd{x}, z) := \mathcal{M}_{k, r}^{[1]}(f \circ \mathfrak{I}_2) \circ \mathfrak{R}_{12}(\bdd{x}, z) \qquad \forall (\bdd{x}, z) \in \reftet,
	\end{align*}
	where $\mathfrak{I}_2(\bdd{x}) := (x_1, 0, x_2)$ and $\mathfrak{R}_{12}(\bdd{x}, z) := (x_1, z, x_2)$ for all $(\bdd{x}, z) \in \reftet$.
	
	Thanks to the chain rule and the smoothness of the mappings $\mathfrak{I}_2$ and $\mathfrak{R}_{12}$, the continuity and interpolation properties of $\mathcal{M}_{k, r}^{[2]}$ follow immediately from \cref{lem:mmr-gamma1-cont-high}.
	\begin{corollary}
		\label{cor:mmr-gamma2-cont-high}
		Let $b \in C_c^{\infty}(\reftri)$, $k, r \in \mathbb{N}_0$, and $(s, p) \in \mathcal{A}_k$. Then, for all $f \in W^{s-k-\frac{1}{p},p}(\Gamma_2) \cap W_{\gamma_{12}}^{\min\{s-k-\frac{1}{p}, r\}, p}(\Gamma_2)$, there holds
		\begin{subequations}
			\begin{alignat}{2}
				\label{eq:mmr2-gamma1-interp}
				\partial_{\unitvec{n}}^m \mathcal{M}_{k, r}^{[2]}(f)|_{\Gamma_2} &= \delta_{km} \left( \int_{\reftri} b(\bdd{x}) \d{\bdd{x}} \right) f, \qquad & &0 \leq m \leq k, \\
				\label{eq:mmr2-gamma2-zero}
				\partial_{\unitvec{n}}^{j} \mathcal{M}_{k, r} ^{[2]}(f)|_{\Gamma_1} &= 0, \qquad & &0 \leq j < \min\left\{ r, s - \frac{1}{p} \right\},
			\end{alignat}
		\end{subequations}
		and 
		\begin{align}
			\label{eq:mmr2-gamma2-continuity-high}
			\| \mathcal{M}_{k, r}^{[2]}(f) \|_{s, p, \reftet} \lesssim_{b, k, r, s, p} \begin{cases}
				\inorm{\gamma_{12}}{f}{s-k-\frac{1}{p}, p, \Gamma_2} & \text{if } s \leq k + r + \frac{1}{p}, \\
				\| f\|_{s-k-\frac{1}{p}, p, \Gamma_2} & \text{if } s > k + r + \frac{1}{p}.
			\end{cases}
		\end{align}
		Moreover, if $f \in \mathcal{P}_N(\Gamma_2)$, $N \in \mathbb{N}_0$, satisfies $D_{\Gamma}^{l} f|_{\gamma_{12}} = 0$ for $0 \leq l \leq r-1$, then $\mathcal{M}_{k, r}^{[2]}(f) \in \mathcal{P}_{N+k}(\reftet)$.
	\end{corollary}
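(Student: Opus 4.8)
The plan is to reduce every assertion to the corresponding one in \cref{lem:mmr-gamma1-cont-high} using nothing more than the chain rule together with the elementary geometry of the two maps appearing in the definition of $\mathcal{M}_{k,r}^{[2]}$. First I would record their relevant properties. The map $\mathfrak{R}_{12}(\bdd{x},z)=(x_1,z,x_2)$ is the orthogonal linear involution of $\mathbb{R}^3$ interchanging the second and third coordinates; it maps $\reftet$ onto itself, interchanges the faces $\Gamma_1=\{z=0\}$ and $\Gamma_2=\{y=0\}$, fixes the shared edge $\gamma_{12}=\{y=z=0\}$ pointwise, and, since it sends the interior of $\reftet$ to itself and $\Gamma_2$ onto $\Gamma_1$, it carries the outward unit normal of $\Gamma_2$ onto that of $\Gamma_1$. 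The map $\mathfrak{I}_2(\bdd{x})=(x_1,0,x_2)$ is a linear isometry of $\reftri$ onto $\Gamma_2$ taking the edge $\{x_2=0\}$ of $\reftri$ onto $\gamma_{12}$. Consequently $v\mapsto v\circ\mathfrak{R}_{12}$ is an isometric automorphism of $W^{s,p}(\reftet)$ for every $s\geq 0$ and $1<p<\infty$ that preserves $\mathcal{P}_N(\reftet)$ for each $N$; and $g\mapsto g\circ\mathfrak{I}_2$ is an isometric isomorphism of $W^{\rho,p}(\Gamma_2)$ onto $W^{\rho,p}(\reftri)$ and of $W_{\gamma_{12}}^{\rho,p}(\Gamma_2)$ onto $W_{\{x_2=0\}}^{\rho,p}(\reftri)$ (surface distances and surface gradients being preserved exactly, so the edge-distance weights match) that likewise preserves polynomials of each degree.

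Next I would transfer the continuity estimate and the polynomial statement. Writing $\mathcal{M}_{k,r}^{[2]}(f)=\mathcal{M}_{k,r}^{[1]}(f\circ\mathfrak{I}_2)\circ\mathfrak{R}_{12}$, the bound \cref{eq:mmr2-gamma2-continuity-high} follows by chaining three facts: invariance of $\|\cdot\|_{s,p,\reftet}$ under $\mathfrak{R}_{12}$; the estimate \cref{eq:mmr-gamma1-continuity-high} of \cref{lem:mmr-gamma1-cont-high} applied to the $\reftri$-function $f\circ\mathfrak{I}_2$ (identifying $\reftri$ with $\Gamma_1$ through $\mathfrak{I}_1$, so that the edge $\{x_2=0\}$ of $\reftri$ plays the role of $\gamma_{12}$); and the isometric identities $\|f\circ\mathfrak{I}_2\|_{s-k-\frac1p,p,\reftri}=\|f\|_{s-k-\frac1p,p,\Gamma_2}$ and $\inorm{\{x_2=0\}}{f\circ\mathfrak{I}_2}{s-k-\frac1p,p,\reftri}=\inorm{\gamma_{12}}{f}{s-k-\frac1p,p,\Gamma_2}$, with the case distinction $s\lessgtr k+r+\tfrac1p$ carried over verbatim. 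For the polynomial statement, if $f\in\mathcal{P}_N(\Gamma_2)$ satisfies $D_\Gamma^l f|_{\gamma_{12}}=0$ for $0\leq l\leq r-1$, then by the chain rule $f\circ\mathfrak{I}_2\in\mathcal{P}_N(\reftri)$ has vanishing surface derivatives of order less than $r$ on $\{x_2=0\}$, so $\mathcal{M}_{k,r}^{[1]}(f\circ\mathfrak{I}_2)\in\mathcal{P}_{N+k}(\reftet)$ by \cref{lem:mmr-gamma1-cont-high}, and precomposition with the linear map $\mathfrak{R}_{12}$ does not raise the degree, giving $\mathcal{M}_{k,r}^{[2]}(f)\in\mathcal{P}_{N+k}(\reftet)$.

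Finally I would establish the interpolation identities \cref{eq:mmr2-gamma1-interp,eq:mmr2-gamma2-zero}. The one point requiring (mild) care is the transport of iterated normal derivatives under $\mathfrak{R}_{12}$: since $\mathfrak{R}_{12}$ is orthogonal and maps $\Gamma_2$ onto $\Gamma_1$ with outward normal sent to outward normal, one has $\partial_{\unitvec{n}}^{m}(h\circ\mathfrak{R}_{12})|_{\Gamma_2}=\bigl(\partial_{\unitvec{n}}^{m}h|_{\Gamma_1}\bigr)\circ\mathfrak{R}_{12}|_{\Gamma_2}$ in the trace sense, and symmetrically with $\Gamma_1,\Gamma_2$ swapped. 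Taking $h=\mathcal{M}_{k,r}^{[1]}(f\circ\mathfrak{I}_2)$ and invoking \cref{eq:mmr-gamma1-interp}, the value of $\partial_{\unitvec{n}}^{m}\mathcal{M}_{k,r}^{[2]}(f)$ at $(x_1,0,x_2)\in\Gamma_2$ (which $\mathfrak{R}_{12}$ sends to $(x_1,x_2,0)\in\Gamma_1$) equals $\delta_{km}\bigl(\int_{\reftri}b(\bdd{x})\d{\bdd{x}}\bigr)(f\circ\mathfrak{I}_2)(x_1,x_2)=\delta_{km}\bigl(\int_{\reftri}b(\bdd{x})\d{\bdd{x}}\bigr)f(x_1,0,x_2)$, which is \cref{eq:mmr2-gamma1-interp}; and invoking instead \cref{eq:mmr-gamma2-zero} gives $\partial_{\unitvec{n}}^{j}\mathcal{M}_{k,r}^{[2]}(f)|_{\Gamma_1}=\bigl(\partial_{\unitvec{n}}^{j}\mathcal{M}_{k,r}^{[1]}(f\circ\mathfrak{I}_2)|_{\Gamma_2}\bigr)\circ\mathfrak{R}_{12}|_{\Gamma_1}=0$ for $0\leq j<\min\{r,s-\tfrac1p\}$, which is \cref{eq:mmr2-gamma2-zero}. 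The whole argument is bookkeeping; the only genuine steps are verifying the incidence and normal-transport properties of $\mathfrak{R}_{12}$ and $\mathfrak{I}_2$ and the exact correspondence of the edge-distance weights under $\mathfrak{I}_2$, and none of these presents a real obstacle because both maps are affine isometries.
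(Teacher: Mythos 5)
Your proposal is correct and follows the same route as the paper: the corollary is obtained by transferring \cref{lem:mmr-gamma1-cont-high} through the affine maps $\mathfrak{I}_2$ and $\mathfrak{R}_{12}$ via the chain rule, which the paper records in one sentence and you merely spell out (invariance of $\|\cdot\|_{s,p,\reftet}$ under the orthogonal involution $\mathfrak{R}_{12}$, exact correspondence of the $\gamma_{12}$-weighted norms under the isometry $\mathfrak{I}_2$, transport of normal derivatives between $\Gamma_1$ and $\Gamma_2$, and degree preservation under composition with linear maps). No gaps.
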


	\subsubsection{Regularity of partially vanishing traces}
	
	The operator $\mathcal{M}_{k, r}^{[2]}$ lifts traces from $\Gamma_2$ to $\reftet$ and has zero trace on $\Gamma_1$, which are the properties we desired to correct the traces of $\mathcal{L}_k^{[1]}$ on $\Gamma_2$. However, $\mathcal{M}_{k, r}^{[2]}$ acts on functions belonging to $W^{s-k-\frac{1}{p},p}(\Gamma_2) \cap W_{\gamma_{12}}^{\min\{s-k - \frac{1}{p}, r\}, p}(\Gamma_2)$ rather than just functions in $W^{s-k-\frac{1}{p},p}(\Gamma_2)$. The main result of this section characterizes one scenario in which traces belong to the space $W^{s-k-\frac{1}{p},p}(\Gamma_2) \cap W_{\gamma_{12}}^{\min\{s-k - \frac{1}{p}, r\}, p}(\Gamma_2)$, and fortunately, we will encounter exactly this scenario in our construction.
	
	We have the following result which characterizes the regularity of the restriction of a trace $F \in \Tr_k^{s,p}(\Gamma_i \cup \Gamma_j)$ to $\Gamma_j$ when $F$ vanishes on $\Gamma_i$ and the first $l$ components of $F$ vanish on $\Gamma_j$.	
	\begin{lemma}
		\label{lem:vanish-trace-reg}
		Let $k \in \mathbb{N}_0$, $(s, p) \in \mathcal{A}_k$, $1 \leq l \leq k$, and $1 \leq i, j \leq 4$ with $i \neq j$ be given. Suppose that $F = (f^0, f^1, \ldots, f^k) \in \Tr_k^{s,p}(\Gamma_i \cup \Gamma_j)$ satisfies
		\begin{enumerate}
			\item[(i)] $F = (0, 0, \ldots, 0)$ on $\Gamma_i$;
			\item[(ii)] $f_j^m = 0$ on $\Gamma_j$ for $0 \leq m \leq l-1$.
		\end{enumerate}
		Then, there holds $f^l_j \in W^{s - l - \frac{1}{p}, p}(\Gamma_j) \cap W^{\min\{s - l - \frac{1}{p}, k+1 \}, p}_{\gamma_{ij}}(\Gamma_j)$ and
		\begin{subequations}
			\label{eq:trace-math-reg}
			\begin{alignat}{2}
				\label{eq:trace-match-low}
				\inorm{\gamma_{ij}}{f_j^l}{s-l-\frac{1}{p}, p, \Gamma_{j}} &\lesssim_{k, s, p} \|F\|_{\Tr_k^{s, p}, \Gamma_i \cup \Gamma_j} \qquad & & \text{if } s-l \leq k + 1 + \frac{1}{p}, \\
				\label{eq:trace-match-high}
				\|f_j^l\|_{s-l-\frac{1}{p}, p, \Gamma_{j}}  &\lesssim_{k, s, p} \|F\|_{\Tr_k^{s, p}, \Gamma_i \cup \Gamma_j} \qquad & & \text{if } s - l > k + 1 + \frac{1}{p}.
			\end{alignat}
		\end{subequations}
	\end{lemma}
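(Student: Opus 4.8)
The plan is to isolate $f^l_j$ inside the boundary--derivative operator $\mathfrak{D}^l_j$, read the required edge--vanishing out of the compatibility conditions \cref{eq:review:sigmak-cont}, and treat separately the single borderline case in which $W^{\min\{s-l-\frac{1}{p},k+1\},p}_{\gamma_{ij}}(\Gamma_j)$ carries a weight. First I would note that, since $f^m_j=0$ on $\Gamma_j$ for $0\le m\le l-1$, the only term of \cref{eq:review:sigmak-def} with $m=l$ that survives on $\Gamma_j$ is the one indexed by $\alpha=(0,0,l)$, so $\mathfrak{D}^l_j(F)=f^l_j\,\unitvec{n}_j^{\otimes l}$ there; taking the $\unitvec{n}_j^{\otimes l}$--component of \cref{eq:review:sigmak-face-reg} (with $m=l$) then yields $f^l_j\in W^{s-l-\frac{1}{p},p}(\Gamma_j)$ with $\|f^l_j\|_{s-l-\frac{1}{p},p,\Gamma_j}\le\|F\|_{\Tr_k^{s,p},\Gamma_i\cup\Gamma_j}$. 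If $s-l>k+1+\frac{1}{p}$, then $\min\{s-l-\frac{1}{p},k+1\}=k+1$, the corresponding space carries no weight, and $\inorm{\gamma_{ij}}{f^l_j}{k+1,p,\Gamma_j}=\|f^l_j\|_{k+1,p,\Gamma_j}\le\|f^l_j\|_{s-l-\frac{1}{p},p,\Gamma_j}$, so \cref{eq:trace-match-high} follows once the edge conditions are in place; if $s-l\le k+1+\frac{1}{p}$, then $\min\{s-l-\frac{1}{p},k+1\}=s-l-\frac{1}{p}$ and \cref{eq:trace-match-low} will follow from the remaining steps.

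Second, for the edge--vanishing I would use that $F\equiv(0,\dots,0)$ on $\Gamma_i$ forces $\mathfrak{D}^\mu_i(F)\equiv0$ on $\Gamma_i$ for all $0\le\mu\le k$, hence all of its surface derivatives vanish and the first argument of every relation in \cref{eq:review:sigmak-cont-combo} restricts to $0$ on $\gamma_{ij}$. Working in the orthonormal tangential frame $\{\unitvec{t}_{ij},\unitvec{b}_{ji}\}$ on $\Gamma_j$, and using that $\unitvec{b}_{ij}\cdot\unitvec{t}_{ij}=0$ while $c:=\unitvec{b}_{ij}\cdot\unitvec{n}_j\ne0$ since $\Gamma_i$ and $\Gamma_j$ are not coplanar, I would show by induction on the operator order $\mu$ from $\mu=l$ up to $\mu=k$ that $D_\Gamma^\beta f^l_j|_{\gamma_{ij}}=0$ for every $\beta$ with $|\beta|\le k$ and $|\beta|<s-l-\frac{2}{p}$ — precisely the vanishing required for membership in $W^{\min\{s-l-\frac{1}{p},k+1\},p}_{\gamma_{ij}}(\Gamma_j)$. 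At stage $\mu$, substituting into \cref{eq:review:sigmak-cont} for the operator $\mathfrak{D}^\mu$ (all contraction indices $0\le l'\le\mu$ for which $(s-\mu-l')p>2$) the identities already obtained at stages $<\mu$, together with the $\unitvec{t}_{ij}$--tangential derivatives of earlier conclusions, leaves a linear system for the ``new'' transverse derivatives $\partial_{\unitvec{b}_{ji}}^{l'}f^l_j|_{\gamma_{ij}}$ and $\partial_{\unitvec{b}_{ji}}^{l'}f^{\mu'}_j|_{\gamma_{ij}}$ ($l<\mu'\le\mu$); it is triangular with diagonal entries $\propto c^{l'}\ne0$, hence forces all of them to vanish in the stated range. (Conceptually: freezing a point of $\gamma_{ij}$ and passing to the plane orthogonal to the edge, the conditions say that, at each order $\nu\le k+l$, a certain binary form of degree $\nu$ determined along $\gamma_{ij}$ has a zero of order $\ge k+1$ and a zero of order $\ge l$ at two distinct points, hence vanishes; and $\partial_{\unitvec{b}_{ji}}^{l'}f^l_j$ is its coefficient at order $l+l'\le k+l$.)

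Third, the weight in $\inorm{\gamma_{ij}}{\cdot}{\sigma',p,\Gamma_j}$, with $\sigma'=\min\{s-l-\frac{1}{p},k+1\}$, is present only when $\sigma'$ has fractional part $\frac{1}{p}$, which forces $\sigma'=s-l-\frac{1}{p}$ and $m':=s-l-\frac{2}{p}\in\mathbb{N}_0$ (then $m'\le k$), the weighted term being $\|\dist(\cdot,\gamma_{ij})^{-1/p}D_\Gamma^{m'}f^l_j\|_{p,\Gamma_j}$. Here I would invoke the integral conditions \cref{eq:review:sigmak-cont-int}, active precisely when $(s-\mu-l'-n)p=2$, i.e. $\mu+l'+n=l+m'$: since $F=0$ on $\Gamma_i$ the first entry of $\mathcal{I}^p_{ij}$ vanishes, so by \cref{eq:edge-integral-def} each one reduces to $\int_{\reftri}|g^\mu_{l',n}\circ\bdd{F}_{ji}|^p\,\frac{\d{\bdd{x}}}{x_2}<\infty$ (with $g^\mu_{l',n}:=\unitvec{b}_{ij}^{\otimes l'}\cdot\partial_{\unitvec{t}_{ij}}^n\partial_{\unitvec{b}_{ji}}^{l'}\mathfrak{D}^\mu_j(F)$), comparable to $\int_{\Gamma_j}|g^\mu_{l',n}|^p\dist(\cdot,\gamma_{ij})^{-1}$. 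Running the induction of the previous step on these identities recovers $\int_{\Gamma_j}|\partial_{\unitvec{t}_{ij}}^n\partial_{\unitvec{b}_{ji}}^{l'}f^l_j|^p\dist(\cdot,\gamma_{ij})^{-1}<\infty$ for all $n+l'=m'$, hence finiteness of the weighted term; and to bound it by $\|F\|_{\Tr_k^{s,p},\Gamma_i\cup\Gamma_j}$ (whose weighted part only sees the top operator $\mathfrak{D}^k$) I would reduce the $\mathfrak{D}^\mu$--integrals with $\mu<k$ to the $\mathfrak{D}^k$--integrals and the face norms $\|f^m\|_{s-m-\frac{1}{p},p,\Gamma_i}$ exactly as in the proof of \cref{lem:review:polys-trace-space}, extracting a factor $x_2$ from the relevant differences, which vanish on $\gamma_2\subset\reftri$.

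Assembling the three steps gives, when $s-l\le k+1+\frac{1}{p}$, $f^l_j\in W^{s-l-\frac{1}{p},p}(\Gamma_j)\cap W^{s-l-\frac{1}{p},p}_{\gamma_{ij}}(\Gamma_j)$ with $\inorm{\gamma_{ij}}{f^l_j}{s-l-\frac{1}{p},p,\Gamma_j}\lesssim_{k,s,p}\|F\|_{\Tr_k^{s,p},\Gamma_i\cup\Gamma_j}$, i.e. \cref{eq:trace-match-low}, and when $s-l>k+1+\frac{1}{p}$, $f^l_j\in W^{s-l-\frac{1}{p},p}(\Gamma_j)\cap W^{k+1,p}_{\gamma_{ij}}(\Gamma_j)$ with \cref{eq:trace-match-high}. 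I expect the inductive core of the second step to be the main obstacle — turning the purely algebraic compatibility relations for $\mathfrak{D}^\mu$, $l\le\mu\le k$, into the sharp edge--jet vanishing to order $k$ and to order $<s-l-\frac{2}{p}$ — where one must keep track of which transverse derivatives each pair $(\mu,l')$ controls and exploit both the triangular structure and the non--coplanarity $c\ne0$; the analogous bookkeeping for the weighted term in the third step is a close second.
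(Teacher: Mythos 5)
Your Steps 1 and 2 are, in substance, the paper's own argument: the paper likewise isolates $f^l_j$ as the $\unitvec{n}_j^{\otimes l}$-component of $\mathfrak{D}^l_j(F)$, and proves the edge-jet vanishing by combining \cref{eq:review:sigmak-cont} with hypotheses (i)--(ii) and the non-coplanarity of the faces; the only organizational difference is that instead of your stage-by-stage ``triangular system'' it writes closed-form identities — \cref{eq:proof:trace-id-low} for transverse order $\alpha_2\le k-l$, and \cref{eq:proof:trace-id-high} (obtained by expanding $\unitvec{n}_j=a_1\unitvec{b}_{ij}+a_2\unitvec{b}_{ji}$ and using (ii)) for $\alpha_2>k-l$ — which express $\partial_{\unitvec{t}_{ij}}^{\alpha_1}\partial_{\unitvec{b}_{ji}}^{\alpha_2}f^l_j$ \emph{pointwise on $\Gamma_j$} as a contraction of $\mathfrak{D}_j^{\,\cdot}(F)\circ\bdd{F}_{ji}-\mathfrak{D}_i^{\,\cdot}(F)\circ\bdd{F}_{ij}$. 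Your binary-form heuristic is the right picture, but be aware that a resolution which substitutes ``earlier conclusions'', i.e.\ vanishing already established on $\gamma_{ij}$, only produces identities \emph{on the edge}; the paper needs, and proves, identities valid on the whole face, precisely because they are reused in the weighted step.

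The genuine gap is in your third step, the quantitative part of the borderline case $\sigma p=1$. Your plan to convert the critical $\mathfrak{D}^\mu$-integrals with $\mu<k$ into face norms ``exactly as in the proof of \cref{lem:review:polys-trace-space}, extracting a factor $x_2$ from the relevant differences'' does not survive outside the polynomial setting. The factorization $H=x_2G$ is special to polynomials; for general data the relevant difference, after the $n$ tangential derivatives that make the exponent critical, has exactly $W^{1/p,p}$ face regularity, and at $sp=1$ a vanishing edge trace does \emph{not} yield $\int_{\reftri}|H|^p x_2^{-1}\,\mathrm{d}\bdd{x}<\infty$, let alone a bound by unweighted Sobolev norms — Hardy's inequality fails at precisely this exponent, which is the very reason the spaces \cref{eq:wsp-e-def} and the trace space carry extra weighted/$\mathcal{I}$ terms at all. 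So that reduction step would fail, and with it your proof of \cref{eq:trace-match-low} for the components with small transverse order. The paper avoids this by feeding the pointwise identities \cref{eq:proof:trace-id-low,eq:proof:trace-id-high} directly into \cref{eq:proof:trace-match-dist-integral} and the critical conditions \cref{eq:review:sigmak-cont-int}: the weighted quantity is estimated by the corresponding critical $\mathcal{I}^p_{ij}$-terms themselves, not manufactured from face norms. To repair your argument you would need to do the same — upgrade your induction to pointwise identities on $\Gamma_j$ (using only (i)--(ii), which hold on whole faces) and bound the weighted integrals by the critical $\mathcal{I}^p_{ij}$ quantities, rather than by an $x_2$-extraction.
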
		
	\begin{proof}
		Without loss of generality, assume that $i < j$. We first show that for $\alpha \in \mathbb{N}_0^2$ there holds 
		\begin{align}
			\label{eq:proof:trace-vanishes-edges}
			\left. \frac{\partial^{|\alpha|} f^l_j}{\partial \unitvec{t}_{ij}^{\alpha_1} \partial \unitvec{b}_{ji}^{\alpha_2}} \right|_{\gamma_{ij}} \equiv 0 \qquad 0 \leq |\alpha| < \min\left\{s-l - \frac{2}{p}, k+1 \right\}, 
		\end{align}
		where $\unitvec{b}_{ij}$ and $\unitvec{b}_{ji}$ are defined in \cref{eq:orthog-vectors-two-faces}.
		
		\noindent \textbf{Step 1: $0 \leq \alpha_2 \leq k-l$ and $|\alpha| < \min\{s-l-2/p, k+1\}$. } Manipulating definitions shows that
		\begin{align}
			\frac{ \partial^{\alpha_1+r} \mathfrak{D}_j^{l}(F) }{ \partial \unitvec{t}_{ij}^{\alpha_1} \partial  \unitvec{b}_{ij}^r } = \unitvec{b}_{ij}^{\otimes r} \cdot \frac{ \partial^{\alpha_1} \mathfrak{D}_j^{l+r}(F)} {\partial \unitvec{t}_{ij}^{\alpha_1} } \qquad 0 \leq r \leq k-l,
		\end{align}
		and so there holds
		\begin{align*}
			\frac{\partial^{|\alpha|} f^l_j}{\partial \unitvec{t}_{ij}^{\alpha_1} \partial \unitvec{b}_{ji}^{\alpha_2}} =  \unitvec{n}_j^{\otimes l} \cdot \frac{\partial^{|\alpha|} \mathfrak{D}_j^{l}(F) }{ \partial \unitvec{t}_{ij}^{\alpha_1} \partial  \unitvec{b}_{ji}^{\alpha_2} } =  \unitvec{n}_j^{\otimes l}  \cdot \unitvec{b}_{ji}^{\otimes \alpha_2} \cdot \frac{ \partial^{\alpha_1} \mathfrak{D}_j^{l+\alpha_2}(F)}{ \partial \unitvec{t}_{ij}^{\alpha_1} },
		\end{align*}
		and using that $\mathfrak{D}_i^{l+\alpha_2}(F)|_{\Gamma_i} = 0$ by (i) gives
		\begin{align}
			\label{eq:proof:trace-id-low}
			\frac{\partial^{|\alpha|} f^l_j}{\partial \unitvec{t}_{ij}^{\alpha_1} \partial \unitvec{b}_{ji}^{\alpha_2}} \circ \bdd{F}_{ji}
			= \unitvec{n}_j^{\otimes l}  \cdot \unitvec{b}_{ji}^{\otimes \alpha_2} \cdot \left(  \frac{ \partial^{\alpha_1} \mathfrak{D}_j^{l+\alpha_2}(F)}{ \partial \unitvec{t}_{ij}^{\alpha_1} }  \circ \bdd{F}_{ji} - \frac{ \partial^{\alpha_1} \mathfrak{D}_i^{l+\alpha_2}(F)}{ \partial \unitvec{t}_{ij}^{\alpha_1} }  \circ \bdd{F}_{ij} \right) 
		\end{align}
		on $\reftri$. Equality \cref{eq:proof:trace-vanishes-edges} now follows from \cref{eq:review:sigmak-cont}.
		
		\noindent \textbf{Step 2: $k-l+1 \leq \alpha_2 \leq k$ and  $|\alpha| < \min\{s-l-2/p, k+1\}$. } The same arguments as in Step 1 show that
		\begin{align*}
			\frac{\partial^{|\alpha|} f^l_j}{ \partial \unitvec{t}_{ij}^{\alpha_1} \partial \unitvec{b}_{ji}^{\alpha_2}} = \unitvec{n}_j^{\otimes l} \cdot \unitvec{b}_{ji}^{\otimes k-l} \cdot \frac{\partial^{|\alpha|-k+l} \mathfrak{D}_j^{k}(F) }{  \partial \unitvec{t}_{ij}^{\alpha_1}  \partial  \unitvec{b}_{ji}^{\alpha_2-k+l} }.
		\end{align*}
		By construction, there exist constants $a_1$ and $a_2$ such that $\unitvec{n}_j = a_1 \unitvec{b}_{ij} + a_2 \unitvec{b}_{ji}$, and so
		\begin{align*}
			\frac{\partial^{|\alpha|} f^l_j}{ \partial \unitvec{t}_{ij}^{\alpha_1} \partial \unitvec{b}_{ji}^{\alpha_2}}  = \sum_{r=0}^{l} c_r \unitvec{b}_{ij}^{\otimes r} \cdot \unitvec{b}_{ji}^{\otimes k-r} \cdot \frac{\partial^{|\alpha|-k+l} \mathfrak{D}_j^{k}(F) }{  \partial \unitvec{t}_{ij}^{\alpha_1} \partial  \unitvec{b}_{ji}^{|\alpha|-k+l} } 
			&= \sum_{r=0}^{l} c_r \unitvec{b}_{ij}^{\otimes r} \cdot  \frac{\partial^{|\alpha|+l-r} \mathfrak{D}_j^{r}(F) }{  \partial \unitvec{t}_{ij}^{\alpha_1} \partial  \unitvec{b}_{ji}^{\alpha_2+l-r} }
		\end{align*}
		for some suitable constants $\{c_r\}_{r=0}^{l}$. For $0 \leq r \leq l-1$, $\mathfrak{D}^r_j(F) = 0$ by (ii), and so
		\begin{align*}
			\frac{\partial^{|\alpha|} f^l_j}{ \partial \unitvec{t}_{ij}^{\alpha_1} \partial \unitvec{b}_{ji}^{\alpha_2}} = c_l \unitvec{b}_{ij}^{\otimes l} \cdot  \frac{\partial^{|\alpha|} \mathfrak{D}_j^{l}(F) }{ \partial \unitvec{t}_{ij}^{\alpha_1} \partial  \unitvec{b}_{ji}^{\alpha_2} } 
			&= c_l \unitvec{b}_{ij}^{\otimes l} \cdot  \unitvec{b}_{ji}^{\otimes k-l} \cdot \frac{\partial^{|\alpha|-k+l} \mathfrak{D}_j^{k}(F) }{ \partial \unitvec{t}_{ij}^{\alpha_1} \partial  \unitvec{b}_{ji}^{\alpha_2-k+l} } \\
			&= c_l  \unitvec{b}_{ij}^{\otimes k-\alpha_2} \cdot  \unitvec{b}_{ji}^{\otimes k-l} \cdot  \left(\unitvec{b}_{ij}^{\otimes \alpha_2-k+l} \cdot \frac{\partial^{|\alpha|-k+l} \mathfrak{D}_j^{k}(F) }{    \partial \unitvec{t}_{ij}^{\alpha_1} \partial \unitvec{b}_{ji}^{\alpha_2-k+l} }  \right).
		\end{align*}
		Applying (i) then gives the following identity on $\reftri$:
		\begin{multline}
			\label{eq:proof:trace-id-high}
			\frac{\partial^{|\alpha|} f^l_j}{ \partial \unitvec{t}_{ij}^{\alpha_1} \partial \unitvec{b}_{ji}^{\alpha_2}} \circ \bdd{F}_{ji}  =  c_l  \unitvec{b}_{ij}^{\otimes k-\alpha_2} \cdot  \unitvec{b}_{ji}^{\otimes k-l} \\
			\cdot  \left( \unitvec{b}_{ij}^{\otimes \alpha_2-k+l} \cdot \frac{\partial^{|\alpha|-k+l} \mathfrak{D}_j^{k}(F) }{    \partial \unitvec{t}_{ij}^{\alpha_1} \partial  \unitvec{b}_{ji}^{\alpha_2-k+l} } \circ \bdd{F}_{ji} -  \unitvec{b}_{ji}^{\otimes \alpha_2-k+l} \cdot \frac{\partial^{|\alpha|-k+l} \mathfrak{D}_i^{k}(F) }{  \partial \unitvec{t}_{ij}^{\alpha_1} \partial  \unitvec{b}_{ij}^{\alpha_2-k+l} } \circ \bdd{F}_{ij} \right).
		\end{multline}
		Equality \cref{eq:proof:trace-vanishes-edges} then follows from \cref{eq:review:sigmak-cont}.
		
		\noindent \textbf{Step 3: $f_j^l \in W^{s-l-\frac{1}{p}, p}(\Gamma_j) \cap W^{\min\{s - l - \frac{1}{p}, k+1 \} p}_{\gamma_{ij}}(\Gamma_j)$. } For $s - 2/p \notin \mathbb{Z}$, the inclusion $f_j^l \in W^{s-l-\frac{1}{p}, p}(\Gamma_j) \cap W^{\min\{s - l - \frac{1}{p}, k+1 \} , p}_{\gamma_{ij}}(\Gamma_j)$ follows from \cref{eq:proof:trace-vanishes-edges}, and  \cref{eq:trace-match-low,eq:trace-match-high} are an immediate consequence of the definition of the $\|\cdot\|_{\Tr_k^{s, p}, \Gamma_i \cup \Gamma_j}$ norm. For $s - 2/p \in \mathbb{Z}$,  and $|\alpha| = \min\{s-l- 2/p, k+1\}$, there holds 		
		\begin{align}
			\label{eq:proof:trace-match-dist-integral}
			\int_{\Gamma_j} \left| \frac{\partial^{|\alpha|} f^l_j}{\partial \unitvec{t}_{ij}^{\alpha_1} \partial \unitvec{b}_{ji}^{\alpha_2}}(\bdd{x}) \right|^p \frac{\d{\bdd{x}}}{\dist(\bdd{x}, \gamma_{ij})} &\approx_p \int_{\reftri}  \left| \frac{\partial^{|\alpha|} f^l_j}{\partial \unitvec{t}_{ij}^{\alpha_1} \partial \unitvec{b}_{ji}^{\alpha_2}} \right|^p \circ \bdd{F}_{ji}(\bdd{x}) \frac{\d{\bdd{x}}}{x_2},
		\end{align}
		and so the inclusion $f_j^l \in W^{\min\{s - l - \frac{1}{p}, k + 1 \}, p}_{\gamma_{ij}}(\Gamma_j)$ follows from \cref{eq:proof:trace-id-low,eq:proof:trace-id-high,eq:review:sigmak-cont-int}, while \cref{eq:trace-match-low} follows from the definition of the norm.
	\end{proof}

	\subsubsection{Construction of lifting}
	
	In the following lemma, we construct the lifting operator $\mathcal{L}_{k}^{[2]}$ in the same fashion as $\mathcal{L}_k^{[1]}$ \cref{eq:l1-def}, replacing the use of $\mathcal{E}_{m}^{[1]}$ with $\mathcal{M}_{m, k+1}^{[2]}$.
	\begin{lemma}
		\label{lem:l2}
		Let $b \in C_c^{\infty}(\reftri)$ with $\int_{\reftri} b(\bdd{x}) \d{\bdd{x}} = 1$, $k \in \mathbb{N}_0$, and $\mathcal{S} = \{1,2\}$. For  $F = (f^0, f^1, \ldots, f^k) \in L^p(\Gamma_1 \cup \Gamma_2)^{k+1}$, we formally define the following operators:
		\begin{subequations}
			\label{eq:l2-def}
			\begin{alignat}{2}
				\mathcal{L}_{k, 0}^{[2]}(F) &:= \mathcal{L}_k^{[1]}(F) + \mathcal{M}_{0, k+1}^{[2]}(f_2^0 - \mathcal{L}_k^{[1]}(F)|_{\Gamma_2} ), \qquad & & \\
				\mathcal{L}_{k, m}^{[2]}(F) &:= \mathcal{L}_{k, m-1}^{[2]}(F) + \mathcal{M}_{m, k+1}^{[2]}( f_2^m - \partial_{\unitvec{n}}^{m} \mathcal{L}_{k, m-1}^{[2]}(F)|_{\Gamma_2} ), \qquad & &1 \leq m \leq k, \\
				\mathcal{L}_k^{[2]}(F) &:= \mathcal{L}_{k, k}^{[2]}(F). \qquad & &
			\end{alignat}
		\end{subequations}
		Then, for all $(s, p) \in \mathcal{A}_k$ and $F \in \Tr_k^{s, p}(\Gamma_{\mathcal{S}})$, $\mathcal{L}_{k}^{[2]}(F)$ is well-defined and there holds
		\begin{align}
			\label{eq:l2-interp-continuity-high}
			\partial_{\unitvec{n}}^m \mathcal{L}_{k}^{[2]}(F)|_{\Gamma_j} = f_j^m, \ \ 0 \leq m \leq k, \ j \in \mathcal{S}, \ \ \text{and} \ \ 	\| \mathcal{L}_{k}^{[2]}(F) \|_{s, p, \reftet} \lesssim_{b, k, s, p} 
			\| F \|_{\Tr_k^{s, p}, \Gamma_{\mathcal{S}}}.
		\end{align}
		Moreover, if for some $N \in \mathbb{N}_0$, $F$ satisfies \cref{eq:review:sigmak-cont-poly}, then $\mathcal{L}_{k}^{[2]}(F) \in \mathcal{P}_N(\reftet)$.
	\end{lemma}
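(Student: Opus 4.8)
The plan is to follow the same pattern as the proof of \cref{lem:l1}, but to induct on the correction index $m$ (rather than on $k$), exploiting that each stage $\mathcal{L}_{k,m}^{[2]}$ is obtained from $\mathcal{L}_{k,m-1}^{[2]}$ by adding one copy of a weighted operator $\mathcal{M}_{m,k+1}^{[2]}$. Concretely, I would prove by induction on $m\in\{0,1,\ldots,k\}$ the statement: for every $(s,p)\in\mathcal{A}_k$ and every $F=(f^0,\ldots,f^k)\in\Tr_k^{s,p}(\Gamma_{\mathcal S})$, the function $\mathcal{L}_{k,m}^{[2]}(F)$ is well defined, lies in $W^{s,p}(\reftet)$, satisfies $\partial_{\unitvec n}^{l}\mathcal{L}_{k,m}^{[2]}(F)|_{\Gamma_1}=f^l$ for $0\le l\le k$ and $\partial_{\unitvec n}^{l}\mathcal{L}_{k,m}^{[2]}(F)|_{\Gamma_2}=f_2^l$ for $0\le l\le m$, obeys $\|\mathcal{L}_{k,m}^{[2]}(F)\|_{s,p,\reftet}\lesssim_{b,k,s,p}\|F\|_{\Tr_k^{s,p},\Gamma_{\mathcal S}}$, and sends data in the sense of \cref{eq:review:sigmak-cont-poly} to $\mathcal{P}_N(\reftet)$. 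Taking $m=k$ then yields \cref{eq:l2-interp-continuity-high} and the polynomial claim, since $\mathcal{L}_k^{[2]}=\mathcal{L}_{k,k}^{[2]}$.

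For the inductive step, assume the statement for $m-1$ and set $G:=F-\bigl(\partial_{\unitvec n}^{0}\mathcal{L}_{k,m-1}^{[2]}(F)|_{\Gamma_{\mathcal S}},\ldots,\partial_{\unitvec n}^{k}\mathcal{L}_{k,m-1}^{[2]}(F)|_{\Gamma_{\mathcal S}}\bigr)$. Since $\mathcal{L}_{k,m-1}^{[2]}(F)\in W^{s,p}(\reftet)$ by the inductive hypothesis, \cref{thm:trace-theorem} places its trace tuple in $\Tr_k^{s,p}(\Gamma_{\mathcal S})$, so $G\in\Tr_k^{s,p}(\Gamma_1\cup\Gamma_2)$ with $\|G\|_{\Tr_k^{s,p},\Gamma_1\cup\Gamma_2}\lesssim\|F\|_{\Tr_k^{s,p},\Gamma_{\mathcal S}}$. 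I would then check the two hypotheses of \cref{lem:vanish-trace-reg}: $G$ vanishes on $\Gamma_1$, because $\mathcal{L}_k^{[1]}(F)$ reproduces $f^0,\ldots,f^k$ there (\cref{lem:l1}) while every correction $\mathcal{M}_{\cdot,k+1}^{[2]}$ has vanishing normal derivatives on $\Gamma_1$ up to order $\min\{k+1,s-1/p\}>k$ by \cref{eq:mmr2-gamma2-zero} (using $(s,p)\in\mathcal{A}_k\Rightarrow s-1/p>k$); and the first $m$ components of $G$ vanish on $\Gamma_2$ by the inductive hypothesis. Applying \cref{lem:vanish-trace-reg} with $i=1$, $j=2$, $l=m$ shows that $g_m:=f_2^m-\partial_{\unitvec n}^{m}\mathcal{L}_{k,m-1}^{[2]}(F)|_{\Gamma_2}$ lies in $W^{s-m-\frac1p,p}(\Gamma_2)\cap W_{\gamma_{12}}^{\min\{s-m-\frac1p,k+1\},p}(\Gamma_2)$ with $\inorm{\gamma_{12}}{g_m}{s-m-\frac1p,p,\Gamma_2}$ (if $s-m\le k+1+\frac1p$) or $\|g_m\|_{s-m-\frac1p,p,\Gamma_2}$ (if $s-m>k+1+\frac1p$) bounded by $\|F\|_{\Tr_k^{s,p},\Gamma_{\mathcal S}}$. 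This is precisely the admissible class and the case split of \cref{cor:mmr-gamma2-cont-high} with first index $m$ and weight exponent $k+1$ (its threshold $s\le m+(k+1)+\frac1p$ being the same as $s-m\le k+1+\frac1p$), so $\mathcal{M}_{m,k+1}^{[2]}(g_m)\in W^{s,p}(\reftet)$ with norm $\lesssim\|F\|_{\Tr_k^{s,p},\Gamma_{\mathcal S}}$; by \cref{eq:mmr2-gamma2-zero} it leaves the $\Gamma_1$ traces (orders $0,\ldots,k$) untouched, by \cref{eq:mmr2-gamma1-interp} it leaves the $\Gamma_2$ traces of order $<m$ untouched and contributes $(\int_{\reftri}b)\,g_m=g_m$ at order $m$. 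Adding it to $\mathcal{L}_{k,m-1}^{[2]}(F)$ thus establishes the inductive statement for $m$.

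The base case $m=0$ is the same argument with the $l=0$ specialization of \cref{lem:vanish-trace-reg} (whose proof is the obvious simplification of Steps 1--3): $g_0:=f_2^0-\mathcal{L}_k^{[1]}(F)|_{\Gamma_2}$ is the $\Gamma_2$-restriction of the $0$th component of $F$ minus the trace tuple of $\mathcal{L}_k^{[1]}(F)$, an element of $\Tr_k^{s,p}(\Gamma_1\cup\Gamma_2)$ vanishing on $\Gamma_1$, so the edge-compatibility relations \cref{eq:review:sigmak-cont,eq:review:sigmak-cont-int} together with the distance-weighted norm identity \cref{eq:proof:trace-match-dist-integral} put $g_0$ in $W^{s-\frac1p,p}(\Gamma_2)\cap W_{\gamma_{12}}^{\min\{s-\frac1p,k+1\},p}(\Gamma_2)$ with the corresponding bound, and \cref{cor:mmr-gamma2-cont-high} concludes as before. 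For polynomial preservation, if $F$ satisfies \cref{eq:review:sigmak-cont-poly} then $\mathcal{L}_k^{[1]}(F)\in\mathcal{P}_N(\reftet)$ by \cref{lem:l1}, and at step $m$ the datum $g_m$ is a polynomial of degree $N-m$ on $\Gamma_2$ all of whose surface derivatives of order $\le k$ vanish on $\gamma_{12}$ --- the exact version of identity \cref{eq:proof:trace-vanishes-edges}, which follows directly from the exact edge-compatibility relations \cref{eq:review:sigmak-cont-poly-edge-compat,eq:review:sigmak-cont-poly-edge-compat-deriv} --- so the polynomial clause of \cref{cor:mmr-gamma2-cont-high} gives $\mathcal{M}_{m,k+1}^{[2]}(g_m)\in\mathcal{P}_{(N-m)+m}(\reftet)=\mathcal{P}_N(\reftet)$ and hence $\mathcal{L}_{k,m}^{[2]}(F)\in\mathcal{P}_N(\reftet)$.

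The genuine difficulty has been discharged in advance in \cref{lem:vanish-trace-reg}: the crux is that the residual datum $g_m$ fed into the weighted operator $\mathcal{M}_{m,k+1}^{[2]}$ really does inherit enough vanishing on $\gamma_{12}$ to land in $W_{\gamma_{12}}^{\min\{s-m-\frac1p,k+1\},p}(\Gamma_2)$, which works only because $\mathcal{M}_{\cdot,k+1}^{[2]}$ vanishes to order $>k$ on $\Gamma_1$ (so that $G$ vanishes identically on $\Gamma_1$) and because the earlier corrections have already matched the lower-order normal traces on $\Gamma_2$. What remains here is essentially bookkeeping: matching the thresholds $\min\{s-m-\frac1p,k+1\}$ delivered by \cref{lem:vanish-trace-reg} against the admissible class and the $s\lessgtr k+1+\frac1p$-type dichotomy of \cref{cor:mmr-gamma2-cont-high} term by term, and checking that no correction undoes a trace already set --- both reducing to the interpolation identities \cref{eq:mmr2-gamma1-interp,eq:mmr2-gamma2-zero} and the inequality $s-\frac1p>k$ valid on $\mathcal{A}_k$.
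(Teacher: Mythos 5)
Your proposal is correct and follows essentially the same route as the paper's proof: induction on the correction index $m$, using \cref{lem:vanish-trace-reg} to place each residual datum $g_m$ in $W^{s-m-\frac{1}{p},p}(\Gamma_2)\cap W_{\gamma_{12}}^{\min\{s-m-\frac1p,k+1\},p}(\Gamma_2)$ and then invoking \cref{cor:mmr-gamma2-cont-high} for continuity, trace preservation, and the polynomial clause. Your explicit remark that the base case needs the $l=0$ version of \cref{lem:vanish-trace-reg} (valid by the obvious simplification of its proof, since hypothesis (ii) is vacuous) is in fact slightly more careful than the paper, which cites the lemma with $l=1$ at that point.
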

	\begin{proof}
		 Let $k \in \mathbb{N}_0$, $(s, p) \in \mathcal{A}_k$, and $f \in \Tr_k^{s, p}(\mathcal{S})$ be given. 
		 
		 \noindent \textbf{Step 1: $m = 0$.} Thanks to \cref{lem:l1}, the traces $G = (g^0, g^1, \ldots, g^k)$ given by 
		\begin{align*}
			g_i^{l} := f_i^l - \partial_{\unitvec{n}} \mathcal{L}_{k}^{[1]}(F)|_{\Gamma_i}, \qquad 0 \leq l \leq k, \ 1 \leq i \leq 2,
		\end{align*}
		satisfy the hypotheses of \cref{lem:vanish-trace-reg} with $(i, j) = (1, 2)$ and $l=1$. Thanks to \cref{lem:vanish-trace-reg} and \cref{cor:mmr-gamma2-cont-high}, $\mathcal{M}_{0, k+1}^{[2]}(g_2^0)$, and hence $\mathcal{L}_{k, 0}^{[2]}(F)$, is well-defined with
		\begin{align*}
			\| \mathcal{L}_{k, 0}^{[2]}(F) \|_{s, p, \reftet} \lesssim_{b,k,s,p} \| \mathcal{L}_k^{[1]}(F)\|_{s, p, \reftet} +  \begin{cases} \hspace{-0.15cm}
				\inorm{\gamma_{12}}{f_2^0 - \mathcal{L}_k^{[1]}(F)}{s-\frac{1}{p}, p, \Gamma_2} & \hspace{-0.2cm} \text{if } s \leq k + 1 + \frac{1}{p}, \\
				\| f_2^0 - \mathcal{L}_k^{[1]}(F) \|_{s-\frac{1}{p}, p, \Gamma_2} & \hspace{-0.2cm} \text{if } s > k + 1 + \frac{1}{p}.
			\end{cases} 
		\end{align*} 
		Applying \cref{eq:l1-gamma1-interp-continuity} and \cref{eq:mmr2-gamma1-interp} gives
		\begin{align*}
			 \partial_{\unitvec{n}}^{l} \mathcal{L}_{k, 0}^{[2]}(F)|_{\Gamma_1} = f_1^l, \qquad 0 \leq l \leq k, \quad \text{and} \quad \mathcal{L}_{k, 0}^{[2]}(F)|_{\Gamma_2} = f_2^0,
		\end{align*}
		and applying \cref{eq:l1-gamma1-interp-continuity} and \cref{eq:trace-math-reg} gives
		\begin{align*}
			\| \mathcal{L}_{k, 0}^{[2]}(F) \|_{s, p, \reftet} \lesssim_{b,k,s,p} \|F\|_{\Tr_k^{s, p}, \Gamma_{\mathcal{S}}}.
		\end{align*}
		Moreover, if $F$ satisfies \cref{eq:review:sigmak-cont-poly} and for some $N \in \mathbb{N}_0$, then $F \in \Tr^{s, p}_k(\Gamma_{\mathcal{S}})$ by \cref{lem:review:polys-trace-space} and $\mathcal{L}_k^{[1]}(F) \in \mathcal{P}_N(\reftet)$ by \cref{lem:l1}. Thus, the trace $G$ satisfies \cref{eq:review:sigmak-cont-poly} for $\{i, j\} \subseteq \{1,2\}$ and $G \in \Tr^{s, p}_k(\Gamma_{\mathcal{S}})$ for all $(s, p) \in \mathcal{A}_k$. By \cref{lem:vanish-trace-reg}, $g_2^0 \in W_{\gamma_{12}}^{k + 1, p}(\Gamma_2)$ for all $p \in (1,\infty)$, and so $D_{\Gamma}^l g^0_2|_{\gamma_{12}} = 0$ for $0 \leq l \leq k$. Thanks to \cref{cor:mmr-gamma2-cont-high}, $\mathcal{L}_{k, 0}^{[2]}(F) \in \mathcal{P}_{N}(\reftet)$.
		
		\noindent \textbf{Step 2: Induction on $m$. } Assume that for some $m$ such that $0 \leq m \leq k-1$, $\mathcal{L}^{[2]}_{k, m}(F)$ is well-defined and satisfies
		\begin{align}
			\label{eq:proof:l2km-interp}
			\partial_{\unitvec{n}}^{l} \mathcal{L}_{k, m}^{[2]}(F)|_{\Gamma_1} = f_1^l, \qquad 0 \leq l \leq k, \qquad \partial_{\unitvec{n}}^{l} \mathcal{L}_{k, m}^{[2]}(F)|_{\Gamma_2} = f_2^l, \qquad 0 \leq l \leq m,
		\end{align}
		and
		\begin{align}
			\label{eq:proof:l2km-cont}
			\| \mathcal{L}_{k, m}^{[2]}(F) \|_{s, p, \reftet} \lesssim_{b,k,s,p} \|F\|_{\Tr_k^{s, p}, \Gamma_{\mathcal{S}}}.
		\end{align}
		Additionally assume that if $F$ satisfies \cref{eq:review:sigmak-cont-poly} for $\{i, j\} \subseteq \{1,2\}$ and for some $N \in \mathbb{N}_0$, then $\mathcal{L}_{k, m}^{[2]}(F) \in \mathcal{P}_N(\reftet)$.
		
		Thanks to \cref{eq:proof:l2km-interp}, the traces $G = (g^0, g^1, \ldots, g^k)$ given by 
		\begin{align*}
			g_i^{l} := f_i^l - \partial_{\unitvec{n}}^{l} \mathcal{L}_{k, m}^{[2]}(F)|_{\Gamma_i}, \qquad 0 \leq l \leq k, \ 1 \leq i \leq 2,
		\end{align*}
		satisfy the hypotheses of \cref{lem:vanish-trace-reg} with $(i, j) = (1, 2)$ and $l=m+1$. Thanks to \cref{lem:vanish-trace-reg} and \cref{cor:mmr-gamma2-cont-high}, $\mathcal{M}_{m+1, k+1}^{[2]}(g_2^{m+1})$, and hence $\mathcal{L}_{k, m+1}^{[2]}(F)$ is well-defined with
		\begin{multline*}
			\| \mathcal{L}_{k, m+1}^{[2]}(F) \|_{s, p, \reftet} \lesssim_{b,k,s,p} \| \mathcal{L}_{k,m}^{[2]}(F)\|_{s, p, \reftet} \\
			 + \begin{cases}
				\inorm{\gamma_{12}}{f_2^{m+1} - \partial_{\unitvec{n}}^{m+1} \mathcal{L}_{k,m}^{[2]}(F)}{s-m-1-\frac{1}{p}, p, \Gamma_2} & \text{if } s - m - 1 \leq k + 1 + \frac{1}{p}, \\
				\| f_2^{m+1} - \partial_{\unitvec{n}}^{m+1} \mathcal{L}_{k,m}^{[2]}(F) \|_{s-m-1-\frac{1}{p}, p, \Gamma_2} & \text{if } s - m - 1 > k + 1 + \frac{1}{p}.
			\end{cases} 
		\end{multline*} 
		Applying \cref{eq:proof:l2km-interp} and \cref{eq:mmr2-gamma1-interp} gives \cref{eq:proof:l2km-interp} for $m+1$, while applying \cref{eq:proof:l2km-cont} and \cref{eq:trace-math-reg} gives \cref{eq:proof:l2km-cont} for $m+1$.
		
		Moreover, if $F$ satisfies \cref{eq:review:sigmak-cont-poly} for some $N \in \mathbb{N}_0$, then $\mathcal{L}_{k, m}^{[2]}(F) \in \mathcal{P}_N(\reftet)$ by assumption and so the trace $G$ satisfies \cref{eq:review:sigmak-cont-poly} and $G \in \Tr^{s, p}_k(\Gamma_{\mathcal{S}})$ for all $(s, p) \in \mathcal{A}_k$. By \cref{lem:vanish-trace-reg}, $g_2^{m+1} \in W_{\gamma_{12}}^{k + 1, p}(\Gamma_2)$ for all $p \in (1,\infty)$, and so $D_{\Gamma}^l g^{m+1}_2|_{\gamma_{12}} = 0$ for $0 \leq l \leq k$. Thanks to \cref{cor:mmr-gamma2-cont-high}, $\mathcal{L}_{k, m+1}^{[2]}(F) \in \mathcal{P}_{N}(\reftet)$.
	\end{proof}
	
	\subsection{Lifting from three faces}
	
	We continue in the spirit of the previous two sections and define another lifting operator from $\Gamma_1$ with vanishing traces on $\Gamma_2$ and $\Gamma_3$. 	Given nonnegative integers $k, r \in \mathbb{N}_0$, a smooth compactly supported function $b \in C_c^{\infty}(\reftri)$, and a function $f : \reftri \to \mathbb{R}$, we define the operator $\mathcal{S}_{k, r}^{[1]}$ formally by the rule
	\begin{align}
		\label{eq:skr-def}	
		\begin{aligned}
			\mathcal{S}_{k, r}^{[1]}(f)(\bdd{x}, z) &:= (x_1 x_2)^r \mathcal{E}_k^{[1]}((\omega_1 \omega_2)^{-r} f)(\bdd{x}, z) \\
			&= (x_1 x_2)^r \frac{(-z)^k}{k!} \int_{\reftri} \frac{ b(\bdd{y}) f(\bdd{x} + z \bdd{y})}{((x_1 + z y_1)(x_2 + z y_2))^r} \d{\bdd{y}} \qquad \forall (\bdd{x}, z) \in \reftet.
		\end{aligned}
	\end{align}
	Note that when $r = 0$, we have $\mathcal{S}_{k, 0}^{[1]} = \mathcal{E}_k^{[1]}$. For functions $f : \Gamma_1 \to \mathbb{R}$, we again abuse the notation and set $\mathcal{S}_{k, r}^{[1]}(f) := \mathcal{S}_{k, r}^{[1]}(f \circ \mathfrak{I}_1)$.
	
	We require one additional family of spaces with vanishing traces. Let $s = m + \sigma$ with $m \in \mathbb{N}_0$ and $\sigma \in [0, 1)$ and $1 < p < \infty$. Given $r \in \mathbb{N}$, a face $\Gamma_j$, $1 \leq j \leq 4$, and $\mathfrak{E}$ a subset of the edges of $\Gamma_j$, we define the following subspaces of $W^{s, p}_{\mathfrak{E}}(\Gamma_j)$:
	\begin{align}
		\label{eq:wsp-er-def}
		W^{s, p}_{\mathfrak{E}, r}(\Gamma_j) := \left\{ f \in W^{s, p}(\Gamma_j) \cap W^{\min\{s, r\}, p}_{\mathfrak{E}}(\Gamma_j) : \inorm{\mathfrak{E}, r}{f}{s, p, \reftri} < \infty \right\},
	\end{align}
	where the norm on $W^{s, p}_{\mathfrak{E}, r}(\Gamma_j)$ is given by
	\begin{align*}
		\begin{aligned}
			\inormsup{\mathfrak{E}, r}{f}{s, p, \Gamma_j}{p} := &\begin{cases}
				\inormsup{\mathfrak{E}}{f}{s, p, \reftri}{p}  & \text{if } s \leq r, \\
				\|f\|_{s, p, \reftri}^p & \text{if } s > r,
			\end{cases} \\
			&+ \begin{cases}
				\left\| \dist(\cdot, \bigcup_{\gamma \in \mathfrak{E}} \gamma )^{-\sigma} \frac{\partial^{m-r+1} D_{\Gamma}^{r-1} f}{ \partial \unitvec{t}_{\gamma}^{m-r+1} }  \right\|_{p, \Gamma_j}^p & \text{if } s > r, \ \sigma p = 1, \mathfrak{E} \neq \emptyset, \\
				0 & \text{otherwise},
			\end{cases}
		\end{aligned}
	\end{align*}
	where $\unitvec{t}_{\gamma}$ is a unit-tangent vector on the edge $\gamma \in \mathfrak{E}$. For $r = 0$, we set $W_{\mathfrak{E}, 0}^{s, p}(\Gamma_j) := W^{s, p}(\Gamma_j)$. When $\mathfrak{E}$ consists of only one element $\gamma$, we set $W_{\gamma, r}^{s, p}(\Gamma_j) := W_{\mathfrak{E}, r}^{s, p}(\Gamma_j)$ and $\inorm{\gamma, r}{f}{s, p, \Gamma_j} := \inorm{\mathfrak{E}, r}{f}{s, p, \Gamma_j}$. One can again verify that $W_{\mathfrak{E}, r}^{s, p}(\Gamma_j)$ are Banach spaces and that the following analogue of \cref{eq:wsp-e-intersection-id} holds:
	\begin{align}
		\label{eq:wsp-e-intersection-id-2}
		W_{\mathfrak{E}, r}^{s, p}(\Gamma_j) = \bigcap_{\gamma \in \mathfrak{E}} W_{\gamma, r}^{s, p}(\Gamma_j) \quad \text{and} \quad \inorm{\mathfrak{E}, r}{f}{s, p, \Gamma_j} \approx_{s,p} \sum_{\gamma \in \mathfrak{E}} \inorm{\gamma, r}{f}{s, p, \Gamma_j}.
	\end{align}
	
	 The following result shows that the continuity of $\mathcal{S}_{k, r}^{[1]}$ can be characterized with these spaces.	
	\begin{lemma}
		\label{lem:smr-gamma1-cont-high}
		Let $b \in C_c^{\infty}(\reftri)$, $k, r \in \mathbb{N}_0$, $(s, p) \in \mathcal{A}_k$, and $\mathfrak{E} = \{ \gamma_{12}, \gamma_{13} \}$. Then, for all $f \in W^{s-k-\frac{1}{p}, p}_{\mathfrak{E}, r}(\Gamma_1)$, there holds
		\begin{subequations}
			\begin{alignat}{2}
				\label{eq:smr-gamma1-interp}
				\partial_{\unitvec{n}}^m \mathcal{S}_{k, r}^{[1]}(f)|_{\Gamma_1} &= \delta_{km} \left( \int_{\reftri} b(\bdd{x}) \d{\bdd{x}} \right) f, \qquad & &0 \leq m \leq k, \\
				\label{eq:smr-gamma2-zero}
				\partial_{\unitvec{n}}^{j} \mathcal{S}_{k, r} ^{[1]}(f)|_{\Gamma_i} &= 0, \qquad & &0 \leq j < \min\left\{ r, s - \frac{1}{p} \right\}, \ 2 \leq i \leq 3,
			\end{alignat}
		\end{subequations}
		and 
		\begin{align}
			\label{eq:smr-gamma1-continuity-high}
			\| \mathcal{S}_{k, r}^{[1]}(f) \|_{s, p, \reftet} \lesssim_{b, k, r, s, p} \inorm{\mathfrak{E}, r}{f}{s-k-\frac{1}{p}, p, \Gamma_1}.
		\end{align}
		Moreover, if $f \in \mathcal{P}_N(\Gamma_1)$, $N \in \mathbb{N}_0$, satisfies $D_{\Gamma}^{l} f|_{\gamma_{12}} = D_{\Gamma}^{l} f|_{\gamma_{13}} = 0$ for $0 \leq l \leq r-1$, then $\mathcal{S}_{k, r}^{[1]}(f) \in \mathcal{P}_{N+k}(\reftet)$.
	\end{lemma}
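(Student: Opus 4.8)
The proof will follow the same pattern as that of \cref{lem:mmr-gamma1-cont-high} (given in \cref{sec:proof-mmr-gamma1-cont-high}), relying on the continuity of a companion convolution operator on $\mathbb{R}^3$ established in \cref{sec:whole-space-cont,sec:whole-space-lp-cont}. Two algebraic identities, immediate from \cref{eq:skr-def} and from $\omega_i = x_i$ on $\reftet$, will be used throughout:
\[
\mathcal{S}_{k,r}^{[1]}(f) = (x_1 x_2)^r\,\mathcal{E}_k^{[1]}\!\left((\omega_1\omega_2)^{-r} f\right) = x_1^r\,\mathcal{M}_{k,r}^{[1]}\!\left(\omega_1^{-r} f\right) \qquad \text{on }\reftet,
\]
together with their mirror images under the coordinate swap $x_1 \leftrightarrow x_2$ of $\reftet$, which fixes $\Gamma_1$ and $\Gamma_4$ and interchanges $\Gamma_2$ with $\Gamma_3$, hence $\gamma_{12}$ with $\gamma_{13}$.

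\emph{Interpolation and vanishing traces.} For $(\bdd{x}, z) \in \reftet$ with $\bdd{x}$ in the interior of $\reftri$, the weight $(\omega_1\omega_2)^{-r}$ is smooth near $\bdd{x}$; differentiating the first identity under the integral sign, using that $(x_1 x_2)^r$ does not depend on $z$, and invoking \cref{eq:em-gamma1-interp} gives $\partial_{\unitvec{n}}^m \mathcal{S}_{k,r}^{[1]}(f) = \delta_{km}\big(\int_{\reftri} b\,\mathrm{d}\bdd{x}\big)(x_1 x_2)^r (\omega_1\omega_2)^{-r} f = \delta_{km}\big(\int_{\reftri} b\,\mathrm{d}\bdd{x}\big) f$ a.e. on $\reftri$, which is \cref{eq:smr-gamma1-interp}; likewise \cref{eq:smr-gamma2-zero} on $\Gamma_2$ follows from \cref{eq:mmr-gamma2-zero} via the second identity (multiplication by the smooth factor $x_1^r$ preserving the vanishing), and on $\Gamma_3 = \{x_1 = 0\}$ from expanding $\partial_{x_1}^j\!\left(x_1^r\,\mathcal{M}_{k,r}^{[1]}(\omega_1^{-r} f)\right)$, where for $j < r$ every summand carries a positive power of $x_1$ while the surviving derivatives of $\mathcal{M}_{k,r}^{[1]}(\omega_1^{-r} f)$, being of order $< s-\frac{1}{p}$, have traces on $\Gamma_3$. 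Once $\mathcal{S}_{k,r}^{[1]}(f) \in W^{s,p}(\reftet)$ has been shown these identities hold in the trace sense.

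\emph{Continuity.} This is the heart of the matter. I would use a smooth partition of unity on $\reftri$ to write $f = f_0 + f_\infty$, with $f_0$ supported near the vertex $\bdd{v}_4 = \gamma_{12} \cap \gamma_{13}$ and $f_\infty$ supported away from it. For $\mathcal{S}_{k,r}^{[1]}(f_\infty)$, at most one of $\gamma_{12}, \gamma_{13}$ is close to $\supp f_\infty$, so one of the two weights is a smooth bounded multiplier, and the estimate reduces — through one of the two factorizations above together with a division (Hardy) lemma showing that $f \mapsto \omega_1^{-r} f$ (resp. $\omega_2^{-r} f$) maps the pertinent one‑edge weighted space boundedly into the domain of $\mathcal{M}_{k,r}^{[1]}$ — to \cref{lem:mmr-gamma1-cont-high}. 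For $\mathcal{S}_{k,r}^{[1]}(f_0)$ I would extend $b$ and $f_0$ suitably and reduce, just as in the proof of \cref{lem:mmr-gamma1-cont-high}, to a whole‑space companion of $\mathcal{S}_{k,r}^{[1]}$ built from the same fundamental convolution and \emph{both} weights $\omega_1^{-r}, \omega_2^{-r}$, whose boundedness from the appropriate weighted trace space into $W^{s,p}(\mathbb{R}^3)$ is of the kind proved in \cref{sec:whole-space-cont,sec:whole-space-lp-cont}. Transferring back and gluing the two pieces via the norm equivalences \cref{eq:wsp-e-intersection-id,eq:wsp-e-intersection-id-2} yields \cref{eq:smr-gamma1-continuity-high}, in which only a single case appears because the weighted norm on $W^{s-k-\frac{1}{p},p}_{\mathfrak{E},r}(\Gamma_1)$ already encodes the dichotomy between $s \le k+r+\frac{1}{p}$ and $s > k+r+\frac{1}{p}$. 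I expect essentially all of the work to lie in the estimate for $f_0$: relative to \cref{lem:mmr-gamma1-cont-high} the whole‑space operator now carries two interacting weights rather than one, and the borderline exponents $\sigma p = 1$ — for which the distance‑weighted correction terms in the norms of the spaces $W^{s,p}_{\mathfrak{E}}$ and $W^{s,p}_{\mathfrak{E},r}$ were introduced — demand careful bookkeeping.

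\emph{Polynomial preservation.} If $f \in \mathcal{P}_N(\Gamma_1)$ with $D_\Gamma^l f|_{\gamma_{12}} = D_\Gamma^l f|_{\gamma_{13}} = 0$ for $0 \le l \le r-1$, then $f$ is divisible, as a polynomial, by $x_1^r$ and by $x_2^r$, so $(\omega_1\omega_2)^{-r} f \in \mathcal{P}_{N-2r}(\reftri)$; by \cref{lem:em-gamma1-cont-high} it follows that $\mathcal{E}_k^{[1]}\!\left((\omega_1\omega_2)^{-r} f\right) \in \mathcal{P}_{N-2r+k}(\reftet)$, and multiplying by the degree-$2r$ polynomial $(x_1 x_2)^r$ gives $\mathcal{S}_{k,r}^{[1]}(f) \in \mathcal{P}_{N+k}(\reftet)$.
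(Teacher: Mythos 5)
Your trace identities and the polynomial-preservation argument are fine (the latter coincides with the paper's: $f\circ\mathfrak{I}_1=(\omega_1\omega_2)^r g$ with $g\in\mathcal{P}_{N-2r}(\reftri)$, then \cref{lem:em-gamma1-cont-high}). The gap is in the continuity estimate \cref{eq:smr-gamma1-continuity-high}, which is the heart of the lemma. Your plan rests on two claims that are not available. First, you model the argument on a supposed proof of \cref{lem:mmr-gamma1-cont-high} by localization and reduction to a whole-space companion operator; but that lemma is not proved that way, and more importantly no such template exists to copy: \cref{sec:whole-space-cont,sec:whole-space-lp-cont} treat only the \emph{unweighted} operator $\tilde{\mathcal{E}}_k$ (with a weight on the data in \cref{thm:tilde-em-cont-low}, never on the operator). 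Second, for the piece $f_0$ supported near the vertex $\gamma_{12}\cap\gamma_{13}$ — which you yourself identify as carrying essentially all the work — you simply assert the boundedness of a doubly weighted whole-space analogue of $\mathcal{S}_{k,r}^{[1]}$ "of the kind proved" earlier. That estimate is precisely the new difficulty (two interacting weights, including the borderline cases $\sigma p=1$ encoded in the $\inorm{\mathfrak{E},r}{\cdot}{s,p,\Gamma_1}$ norm), and no proof or reduction is offered, so the proposal defers the main step rather than establishing it. A smaller defect: a two-piece splitting $f=f_0+f_\infty$ does not give "at most one of $\gamma_{12},\gamma_{13}$ close to $\supp f_\infty$"; you would need a finer partition (near $\gamma_{12}$ only, near $\gamma_{13}$ only, near the vertex, interior), and you would also have to check that multiplication by the cutoffs is bounded on $W^{s,p}_{\mathfrak{E},r}(\Gamma_1)$ including its distance-weighted correction terms.

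For comparison, the paper avoids any weighted whole-space operator and any localization. It introduces the three-parameter operator $\mathcal{S}_{k,r,q}^{[1]}$ in \cref{eq:skrq-def} and proves the key telescoping identities \cref{eq:proof:mmr-id,eq:proof:smrq-id}, which trade one power of the weight for one extra power of $z$ (raising $k$ by one); iterating these gives
\begin{align*}
\mathcal{S}_{k, r+1}^{[1]}(f) = (k+1)(k+2)\, \mathcal{S}_{k+2, r}^{[1]}[\omega_1 \omega_2 b]\bigl((\omega_1 \omega_2)^{-1} f\bigr) + (k+1)\sum_{i=1}^{2} \mathcal{S}_{k+1, r}^{[1]}[\omega_i b]\bigl(\omega_i^{-1} f\bigr) + \mathcal{S}_{k, r}^{[1]}(f),
\end{align*}
and the proof proceeds by induction on $r$, estimating each term through a case analysis in $s$: the weighted-data $L^p$ bounds \cref{eq:smrq-gamma1-continuity-low} (built on \cref{thm:tilde-em-cont-low}) in the low-regularity ranges, and the Hardy-type results \cref{lem:omega1-inv-mapping,cor:omega-inv-wspr-norm} to transfer $\omega_i^{-1}$ onto $f$ with control in the $\inorm{\mathfrak{E},r}{\cdot}{s,p}$ norms in the higher ranges. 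If you want to salvage your outline, you would either have to prove the doubly weighted whole-space estimate you invoke (essentially redoing Sections \ref{sec:whole-space-cont}--\ref{sec:whole-space-lp-cont} with the weight inside the operator), or replace the near-vertex step by an inductive/commutator argument of the above type.
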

	The proof of \cref{lem:smr-gamma1-cont-high} appears in \cref{sec:proof-lem:smr-gamma1-cont-high}. We define the analogous operator associated to $\Gamma_3$ as follows.
	\begin{align*}
		\mathcal{S}_{k, r}^{[3]}(f)(\bdd{x}, z) := \mathcal{S}_{k, r}^{[1]}(f \circ \mathfrak{I}_3) \circ \mathfrak{R}_{13}(\bdd{x}, z) \qquad \forall (\bdd{x}, z) \in \reftet,
	\end{align*}
	where $\mathfrak{I}_3(\bdd{x}) := (0, x_2, x_1)$ and $\mathfrak{R}_{13}(\bdd{x}, z) := (z, x_2, x_1)$ for all $(\bdd{x}, z) \in \reftet$. Thanks to the chain rule and the smoothness of the mappings $\mathfrak{I}_3$ and $\mathfrak{R}_{13}$, the continuity and interpolation properties of $\mathcal{S}_{k, r}^{[3]}$ follow immediately from \cref{lem:smr-gamma1-cont-high}.
	\begin{corollary}
		\label{cor:smr-gamma3-cont-high}
		Let $b \in C_c^{\infty}(\reftri)$, $k, r \in \mathbb{N}_0$, $(s, p) \in \mathcal{A}_k$, and $\mathfrak{E} = \{ \gamma_{13}, \gamma_{23} \}$. Then, for all $f \in W^{s-k-\frac{1}{p}, p}_{\mathfrak{E}, r}(\Gamma_3)$, there holds
		\begin{subequations}
			\begin{alignat}{2}
				\label{eq:smr3-gamma3-interp}
				\partial_{\unitvec{n}}^m \mathcal{S}_{k, r}^{[3]}(f)|_{\Gamma_3} &= \delta_{km} \left( \int_{\reftri} b(\bdd{x}) \d{\bdd{x}} \right) f, \qquad & &0 \leq m \leq k, \\
				\label{eq:smr3-gamma12-zero}
				\partial_{\unitvec{n}}^{j} \mathcal{S}_{k, r} ^{[3]}(f)|_{\Gamma_i} &= 0, \qquad & &0 \leq j < \min\left\{ r, s - \frac{1}{p} \right\}, \ 1 \leq i \leq 2
			\end{alignat}
		\end{subequations}
		and 
		\begin{align}
			\label{eq:smr3-gamma3-continuity-high}
			\| \mathcal{S}_{k, r}^{[3]}(f) \|_{s, p, \reftet} \lesssim_{b, k, r, s, p} \inorm{\mathfrak{E}, r}{f}{s-k-\frac{1}{p}, p, \Gamma_3}.
		\end{align}
		Moreover, if $f \in \mathcal{P}_N(\Gamma_3)$, $N \in \mathbb{N}_0$, satisfies $D_{\Gamma}^{l} f|_{\gamma_{13}} = D_{\Gamma}^{l} f|_{\gamma_{23}} = 0$ for $0 \leq l \leq r-1$, then $\mathcal{S}_{k, r}^{[3]}(f) \in \mathcal{P}_{N+k}(\reftet)$.
	\end{corollary}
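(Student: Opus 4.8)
The plan is to deduce the corollary from \cref{lem:smr-gamma1-cont-high} by the affine change of variables $\mathfrak{R}_{13}$, the point being that $\mathfrak{R}_{13}(\bdd{x}, z) = (z, x_2, x_1)$ is the restriction to $\reftet$ of an isometry of $\mathbb{R}^3$ (an orthogonal affine involution). First I would record the geometry: $\mathfrak{R}_{13}$ maps $\reftet$ bijectively onto itself, interchanges the faces $\Gamma_1$ and $\Gamma_3$, maps $\Gamma_2$ and $\Gamma_4$ onto themselves, fixes the edge $\gamma_{13}$, and interchanges $\gamma_{12}$ and $\gamma_{23}$; in particular the edge set $\mathfrak{E} = \{\gamma_{13}, \gamma_{23}\}$ of $\Gamma_3$ from the statement is carried onto the edge set $\{\gamma_{12}, \gamma_{13}\}$ of $\Gamma_1$ appearing in \cref{lem:smr-gamma1-cont-high}. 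Since $\mathfrak{I}_3 = \mathfrak{R}_{13} \circ \mathfrak{I}_1$, writing $\tilde f := f \circ (\mathfrak{R}_{13}|_{\Gamma_1}) : \Gamma_1 \to \mathbb{R}$ gives $\tilde f \circ \mathfrak{I}_1 = f \circ \mathfrak{I}_3$ and hence the operator identity $\mathcal{S}_{k, r}^{[3]}(f) = \mathcal{S}_{k, r}^{[1]}(\tilde f) \circ \mathfrak{R}_{13}$ on $\reftet$, which is the bridge between the two results. Because $\mathfrak{R}_{13}$ is orthogonal it sends unit outward normals to unit outward normals and unit edge tangents to unit edge tangents, so precomposition with $\mathfrak{R}_{13}$ commutes with normal differentiation on the faces and with tangential differentiation along the edges, with no Jacobian factors.

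Next I would check that the hypothesis and all the relevant norms transfer. Since $\mathfrak{R}_{13}$ is an isometry it preserves the distance-to-edge weights $\dist(\cdot, \cup_{\gamma \in \mathfrak{E}} \gamma)$ and intertwines the tangential-derivative correction terms in the definition of the weighted spaces, so together with the fact that $\mathfrak{I}_1$ and $\mathfrak{I}_3$ are affine bijections of the fixed reference triangle onto their target faces, one obtains that $f \in W_{\mathfrak{E}, r}^{s-k-\frac{1}{p}, p}(\Gamma_3)$ if and only if $\tilde f \in W_{\{\gamma_{12}, \gamma_{13}\}, r}^{s-k-\frac{1}{p}, p}(\Gamma_1)$, with $\inorm{\mathfrak{E}, r}{f}{s-k-\frac{1}{p}, p, \Gamma_3} \approx_{s, p} \inorm{\{\gamma_{12}, \gamma_{13}\}, r}{\tilde f}{s-k-\frac{1}{p}, p, \Gamma_1}$, and likewise $\|v \circ \mathfrak{R}_{13}\|_{s, p, \reftet} \approx_{s, p} \|v\|_{s, p, \reftet}$; moreover $\mathfrak{R}_{13}$ preserves polynomials and their total degree.

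Given these preliminaries, the four assertions follow mechanically by applying \cref{lem:smr-gamma1-cont-high} to $\tilde f$ and composing with $\mathfrak{R}_{13}$. The continuity bound \cref{eq:smr-gamma1-continuity-high} for $\tilde f$ together with the two norm equivalences yields \cref{eq:smr3-gamma3-continuity-high}; the interpolation identity \cref{eq:smr-gamma1-interp} on $\Gamma_1$, composed with $\mathfrak{R}_{13}|_{\Gamma_3}$ and using $\tilde f \circ (\mathfrak{R}_{13}|_{\Gamma_3}) = f$ (as $\mathfrak{R}_{13}$ is an involution), yields \cref{eq:smr3-gamma3-interp}; and the vanishing identities \cref{eq:smr-gamma2-zero} on $\Gamma_2$ and $\Gamma_3$ for $\mathcal{S}_{k, r}^{[1]}(\tilde f)$, composed with $\mathfrak{R}_{13}$ restricted to $\Gamma_2$ and $\Gamma_1$ respectively, yield \cref{eq:smr3-gamma12-zero}. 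For the polynomial statement, if $f \in \mathcal{P}_N(\Gamma_3)$ with $D_{\Gamma}^l f|_{\gamma_{13}} = D_{\Gamma}^l f|_{\gamma_{23}} = 0$ for $0 \le l \le r-1$, then $\tilde f \in \mathcal{P}_N(\Gamma_1)$ and, since $\mathfrak{R}_{13}$ maps $\gamma_{12}$ onto $\gamma_{23}$ and $\gamma_{13}$ onto $\gamma_{13}$ and intertwines surface gradients, $D_{\Gamma}^l \tilde f|_{\gamma_{12}} = D_{\Gamma}^l \tilde f|_{\gamma_{13}} = 0$ for $0 \le l \le r-1$, so \cref{lem:smr-gamma1-cont-high} gives $\mathcal{S}_{k, r}^{[1]}(\tilde f) \in \mathcal{P}_{N+k}(\reftet)$ and therefore $\mathcal{S}_{k, r}^{[3]}(f) = \mathcal{S}_{k, r}^{[1]}(\tilde f) \circ \mathfrak{R}_{13} \in \mathcal{P}_{N+k}(\reftet)$. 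I do not expect a genuine obstacle here; the only step demanding care is the bookkeeping of how $\mathfrak{R}_{13}$ permutes the faces and edges of $\reftet$, together with the routine verification that the weighted trace spaces $W_{\mathfrak{E}, r}^{s, p}$, including their distance weights and tangential-derivative correction terms, are invariant under this isometric change of variables.
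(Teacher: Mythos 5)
Your proposal is correct and follows exactly the route the paper intends: the corollary is stated as an immediate consequence of \cref{lem:smr-gamma1-cont-high} via the chain rule and the smoothness of $\mathfrak{I}_3$ and $\mathfrak{R}_{13}$, and your argument simply spells out that transfer (the identity $\mathcal{S}_{k,r}^{[3]}(f)=\mathcal{S}_{k,r}^{[1]}(\tilde f)\circ\mathfrak{R}_{13}$, the face/edge bookkeeping under the orthogonal involution $\mathfrak{R}_{13}$, and the invariance of the weighted norms, traces, and polynomial degrees) in full detail. No gaps.
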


	We also have the following analogue of \cref{lem:vanish-trace-reg}.
	\begin{lemma}
		\label{lem:vanish-trace-reg-2}
		Let $k \in \mathbb{N}_0$, $(s, p) \in \mathcal{A}_k$, $1 \leq l \leq k$, and $1 \leq i, j \leq 4$ with $i \neq j$ be given. Suppose that $F = (f^0, f^1, \ldots, f^k) \in \Tr_k^{s,p}(\Gamma_i \cup \Gamma_j)$ satisfies
		\begin{enumerate}
			\item[(i)] $F = (0, 0, \ldots, 0)$ on $\Gamma_i$;
			\item[(ii)] $f_j^m = 0$ on $\Gamma_j$ for $0 \leq m \leq l-1$.
		\end{enumerate}
		Then, there holds $f^l_j \in W_{\gamma_{ij}, k+1}^{s - l - \frac{1}{p}, p}(\Gamma_j)$ and
		\begin{align}
			\label{eq:trace-math-reg-2}
			\inorm{\gamma_{ij}, k+1}{f_j^l}{s-l-\frac{1}{p}, p, \Gamma_{j}} &\lesssim_{k, s, p} \|F\|_{\Tr_k^{s, p}, \Gamma_i \cup \Gamma_j}. 
		\end{align}
	\end{lemma}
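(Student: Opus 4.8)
The plan is to reduce \cref{lem:vanish-trace-reg-2} to \cref{lem:vanish-trace-reg} plus a single additional weighted estimate. Write $t:=s-l-\frac1p=\mu+\sigma$ with $\mu\in\mathbb{N}_0$ and $\sigma\in[0,1)$. By \cref{lem:vanish-trace-reg} we already have $f_j^l\in W^{t,p}(\Gamma_j)\cap W_{\gamma_{ij}}^{\min\{t,k+1\},p}(\Gamma_j)$ — exactly the membership condition in the definition \cref{eq:wsp-er-def} of $W_{\gamma_{ij},k+1}^{t,p}(\Gamma_j)$ — together with $\inorm{\gamma_{ij}}{f_j^l}{t,p,\Gamma_j}\lesssim_{k,s,p}\|F\|_{\Tr_k^{s,p},\Gamma_i\cup\Gamma_j}$ when $t\le k+1$ and $\|f_j^l\|_{t,p,\Gamma_j}\lesssim_{k,s,p}\|F\|_{\Tr_k^{s,p},\Gamma_i\cup\Gamma_j}$ when $t>k+1$. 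Inspecting the definition of the norm $\inorm{\gamma_{ij},k+1}{\cdot}{t,p,\Gamma_j}$, this already yields \cref{eq:trace-math-reg-2} except for the single weighted term
\begin{align*}
\mathcal{W}:=\left\|\dist(\cdot,\gamma_{ij})^{-\sigma}\,\frac{\partial^{\mu-k}D_{\Gamma}^{k}f_j^l}{\partial\unitvec{t}_{ij}^{\mu-k}}\right\|_{p,\Gamma_j},
\end{align*}
which appears only when $t>k+1$ and $\sigma p=1$; in that regime $\mu\ge k+1$ and $s=\mu+l+\frac2p$, so $(s-k-l'-n)p=2$ precisely when $l'+n=\mu-k+l$, while in every other case $\inorm{\gamma_{ij},k+1}{\cdot}{t,p,\Gamma_j}$ collapses to $\inorm{\gamma_{ij}}{\cdot}{\min\{t,k+1\},p,\Gamma_j}$ or to $\|\cdot\|_{t,p,\Gamma_j}$ and there is nothing left to do.

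To bound $\mathcal{W}$ I would argue as in Step 3 of the proof of \cref{lem:vanish-trace-reg}, but at the higher differentiation order $\mu$ (rather than the $\min\{s-l-\frac2p,k+1\}$ used there). Since $\unitvec{b}_{ji}=\unitvec{t}_{ij}\times\unitvec{n}_j$, the pair $\{\unitvec{t}_{ij},\unitvec{b}_{ji}\}$ is an orthonormal frame of the tangent plane of $\Gamma_j$, and writing $D_\Gamma=\unitvec{t}_{ij}\,\partial_{\unitvec{t}_{ij}}+\unitvec{b}_{ji}\,\partial_{\unitvec{b}_{ji}}$ shows that the components of $\partial^{\mu-k}_{\unitvec{t}_{ij}}D_\Gamma^{k}f_j^l$ are the derivatives $\partial^{\alpha_1}_{\unitvec{t}_{ij}}\partial^{\alpha_2}_{\unitvec{b}_{ji}}f_j^l$ with $\alpha_1+\alpha_2=\mu$ and $0\le\alpha_2\le k$, so $\alpha_1\ge\mu-k\ge1$. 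For each such pair one manipulates the definition \cref{eq:review:sigmak-def} exactly as in Steps 1--2 of the proof of \cref{lem:vanish-trace-reg}, but contracting $\mathfrak{D}_j^{k}(F)$ with copies of \emph{both} $\unitvec{t}_{ij}$ and $\unitvec{b}_{ji}$ (and with $\unitvec{n}_j^{\otimes l}$), so that all $l$ ``extra'' differentiations are absorbed into the tangential-derivative count; this produces an identity on $\reftri$ of the form
\begin{align*}
\frac{\partial^{\mu}f_j^l}{\partial\unitvec{t}_{ij}^{\alpha_1}\partial\unitvec{b}_{ji}^{\alpha_2}}\circ\bdd{F}_{ji}
&=c\,\unitvec{n}_j^{\otimes l}\otimes\unitvec{t}_{ij}^{\otimes(k-l-b)}\otimes\unitvec{b}_{ji}^{\otimes b} \\
&\quad\cdot\left(\frac{\partial^{a'+b'}\mathfrak{D}_j^{k}(F)}{\partial\unitvec{t}_{ij}^{a'}\partial\unitvec{b}_{ji}^{b'}}\circ\bdd{F}_{ji}-\frac{\partial^{a'+b'}\mathfrak{D}_i^{k}(F)}{\partial\unitvec{t}_{ij}^{a'}\partial\unitvec{b}_{ij}^{b'}}\circ\bdd{F}_{ij}\right),
\end{align*}
with $a'+b'=\mu-k+l$ and nonnegative $b,a',b'$, where the $\mathfrak{D}_i^{k}(F)$ term is inserted for free since it vanishes by hypothesis (i). Pulling back along $\bdd{F}_{ji}$, which sends the edge $\gamma_2$ of $\reftri$ onto $\gamma_{ij}$ so that $\dist(\cdot,\gamma_{ij})\circ\bdd{F}_{ji}\approx x_2$, the distance-integral identity \cref{eq:proof:trace-match-dist-integral} expresses $\mathcal{W}^p$ as a finite sum of integrals of the right-hand side above against $\d{\bdd{x}}/x_2$; after expanding $\unitvec{n}_j=a_1\unitvec{b}_{ij}+a_2\unitvec{b}_{ji}$ each of these is dominated by an edge integral $\mathcal{I}_{ij}^p$ of the type in \cref{eq:review:sigmak-cont-int} with critical parameter $(s-k-l'-n)p=2$, $l'+n=\mu-k+l$ and $0\le l'\le k$ — i.e., by a term of $\|F\|_{\Tr_k^{s,p},\Gamma_i\cup\Gamma_j}^p$. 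Summing over the finitely many $(\alpha_1,\alpha_2)$ and combining with the first paragraph gives \cref{eq:trace-math-reg-2}.

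The main obstacle is the tensorial bookkeeping just indicated: (i) verifying that \emph{every} component $\partial^{\alpha_1}_{\unitvec{t}_{ij}}\partial^{\alpha_2}_{\unitvec{b}_{ji}}f_j^l$ with $\alpha_2\le k$ can be re-expressed through the top-order operator $\mathfrak{D}_j^{k}(F)$ rather than a lower-order operator $\mathfrak{D}_j^{m}(F)$, $m<k$ — which forces the use of $\unitvec{t}_{ij}$-contractions in addition to $\unitvec{b}_{ji}$-contractions and requires checking that a nonnegative choice of $b,a',b'$ always exists when $\mu\ge k+1$ — and (ii) matching the resulting contraction, which involves $\unitvec{n}_j$, with the specific contractions $\unitvec{b}_{ji}^{\otimes l'}\cdot\partial^{l'+n}\mathfrak{D}_i^{k}(F)/\partial\unitvec{t}_{ij}^n\partial\unitvec{b}_{ij}^{l'}$ recorded in the $\Tr_k^{s,p}$-norm, again via the expansion $\unitvec{n}_j=a_1\unitvec{b}_{ij}+a_2\unitvec{b}_{ji}$. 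One should also dispatch the degenerate cases — notably $p=2$, where $\sigma p=1$ forces $s\in\mathbb{Z}$ — exactly as in \cref{lem:vanish-trace-reg}.
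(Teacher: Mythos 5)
Your proposal is correct and takes essentially the paper's own route: the paper's proof of this lemma is literally ``apply \cref{eq:proof:trace-match-dist-integral} and \cref{eq:proof:trace-id-high}'', i.e.\ reuse the identities from the proof of \cref{lem:vanish-trace-reg} at the critical order $(s-k-l'-n)p=2$, which is exactly what you do after reducing everything except the weighted term to \cref{lem:vanish-trace-reg}. Your additional bookkeeping for the components $\partial_{\unitvec{t}_{ij}}^{\alpha_1}\partial_{\unitvec{b}_{ji}}^{\alpha_2}f_j^l$ with $\alpha_2<k-l$ --- which \cref{eq:proof:trace-id-high} does not literally cover, and which must be raised to $\mathfrak{D}_j^{k}$ by spending $\unitvec{t}_{ij}$-contractions (possible since $\mu\ge k+1$; taking $b=\min\{\alpha_2,k-l\}$ gives $b'\le l$, the cross terms in the $\unitvec{n}_j$-expansion die by hypothesis (ii), and the surviving $\unitvec{b}_{ij}^{\otimes l}$-contraction is dominated by the norm's $l'=b'$ edge integral, whose $\Gamma_i$-entry vanishes by (i)) --- is a sound refinement of a step the paper leaves implicit.
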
	
	\begin{proof}
		The result follows from applying inequality \cref{eq:proof:trace-match-dist-integral} and  identity \cref{eq:proof:trace-id-high}.
	\end{proof}

	We now construct the lifting operator $\mathcal{L}_{k}^{[3]}$ in the same fashion as $\mathcal{L}_k^{[2]}$ \cref{eq:l2-def}, replacing the use of $\mathcal{M}_{m, k}^{[2]}$ with $\mathcal{S}_{m, k}^{[3]}$.	
	\begin{lemma}
		\label{lem:l3}
		Let $b \in C_c^{\infty}(\reftri)$ with $\int_{\reftri} b(\bdd{x}) \d{\bdd{x}} = 1$, $k \in \mathbb{N}_0$, and $\mathcal{S} = \{1,2,3\}$. For $F = (f^0, f^1, \ldots, f^k) \in L^p(\Gamma_{\mathcal{S}})^{k+1}$, we formally define the following operators:
		\begin{subequations}
			\label{eq:l3-def}
			\begin{alignat}{2}
				\mathcal{L}_{k, 0}^{[3]}(F) &:= \mathcal{L}_k^{[2]}(F) + \mathcal{S}_{0, k+1}^{[3]}(f_3^0 - \mathcal{L}_k^{[2]}(F)|_{\Gamma_3} ), \qquad & & \\
				\mathcal{L}_{k, m}^{[3]}(F) &:= \mathcal{L}_{k, m-1}^{[3]}(F) + \mathcal{S}_{m, k+1}^{[3]}( f_3^m - \partial_{\unitvec{n}}^{m} \mathcal{L}_{k, m-1}^{[3]}(F)|_{\Gamma_3} ), \qquad & &1 \leq m \leq k, \\
				\mathcal{L}_{k}^{[3]}(F) &:= \mathcal{L}_{k, k}^{[3]}(F).
			\end{alignat}
		\end{subequations}
		Then, for all $(s, p) \in \mathcal{A}_k$ and $F \in \Tr_k^{s, p}(\Gamma_{\mathcal{S}})$, there holds
		\begin{align}
			\label{eq:l3-interp-continuity-high}
			\partial_{\unitvec{n}}^m \mathcal{L}_k^{[3]}(F)|_{\Gamma_j} = f_j^m, \quad 0 \leq m \leq k, \ j \in \mathcal{S}, \quad \text{and} \quad  \| \mathcal{L}_k^{[3]}(F) \|_{s, p, \reftet} \lesssim_{b, k, s, p} 
			\| F \|_{\Tr_k^{s, p}, \Gamma_{\mathcal{S}}}.
		\end{align}		
		Moreover, if $F$ satisfies \cref{eq:review:sigmak-cont-poly} and for some $N \in \mathbb{N}_0$, then $\mathcal{L}_{k}^{[3]}(F) \in \mathcal{P}_N(\reftet)$.
	\end{lemma}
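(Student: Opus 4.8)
The plan is to replay the proof of \cref{lem:l2} almost line for line, with the intermediate operator $\mathcal{M}_{m,k+1}^{[2]}$ replaced by $\mathcal{S}_{m,k+1}^{[3]}$ (whose interpolation and continuity properties are recorded in \cref{cor:smr-gamma3-cont-high}) and \cref{lem:vanish-trace-reg} replaced by \cref{lem:vanish-trace-reg-2}. The only structurally new feature is that $\Gamma_3$ shares \emph{two} edges, $\gamma_{13}$ and $\gamma_{23}$, with the already-treated faces, so at each step \cref{lem:vanish-trace-reg-2} has to be invoked twice --- once with $(i,j)=(1,3)$ and once with $(i,j)=(2,3)$ --- and the two one-edge conclusions are then fused into a statement about the two-edge space $W^{\cdot,p}_{\mathfrak{E},k+1}(\Gamma_3)$, $\mathfrak{E}=\{\gamma_{13},\gamma_{23}\}$, via the intersection identity \cref{eq:wsp-e-intersection-id-2}. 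I would induct on the index $m$ in \cref{eq:l3-def}, the inductive claim being that $\mathcal{L}_{k,m}^{[3]}(F)$ is well-defined, satisfies $\partial_{\unitvec{n}}^l\mathcal{L}_{k,m}^{[3]}(F)|_{\Gamma_i}=f_i^l$ for $0\le l\le k$ and $i\in\{1,2\}$ and $\partial_{\unitvec{n}}^l\mathcal{L}_{k,m}^{[3]}(F)|_{\Gamma_3}=f_3^l$ for $0\le l\le m$, obeys $\|\mathcal{L}_{k,m}^{[3]}(F)\|_{s,p,\reftet}\lesssim_{b,k,s,p}\|F\|_{\Tr_k^{s,p},\Gamma_{\mathcal{S}}}$, and lies in $\mathcal{P}_N(\reftet)$ whenever $F$ satisfies \cref{eq:review:sigmak-cont-poly}.

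At the step of \cref{eq:l3-def} lifting $g_3^m:=f_3^m-\partial_{\unitvec{n}}^m\mathcal{L}_{k,m-1}^{[3]}(F)|_{\Gamma_3}$ (with the convention $\mathcal{L}_{k,-1}^{[3]}(F):=\mathcal{L}_k^{[2]}(F)$), the crucial point is that, by \cref{lem:l2}, $\mathcal{L}_k^{[2]}(F)$ already reproduces the \emph{full} trace on \emph{both} $\Gamma_1$ and $\Gamma_2$, so, by the inductive hypothesis, the residual trace $G$ with entries $g_i^l:=f_i^l-\partial_{\unitvec{n}}^l\mathcal{L}_{k,m-1}^{[3]}(F)|_{\Gamma_i}$ vanishes identically on $\Gamma_1$ and $\Gamma_2$ and in its first $m$ components on $\Gamma_3$; by \cref{thm:trace-theorem}, $G\in\Tr_k^{s,p}(\Gamma_{\mathcal{S}})$ with $\|G\|_{\Tr_k^{s,p},\Gamma_{\mathcal{S}}}\lesssim\|F\|_{\Tr_k^{s,p},\Gamma_{\mathcal{S}}}$. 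Applying \cref{lem:vanish-trace-reg-2} with $(i,j)=(1,3)$ and with $(i,j)=(2,3)$ (the relevant index being $l=m$, read as $l=0$ at the base step, where \cref{lem:vanish-trace-reg-2} still applies since its proof makes no essential use of $l\ge1$) and combining via \cref{eq:wsp-e-intersection-id-2} gives $g_3^m\in W^{s-m-\frac{1}{p},p}_{\mathfrak{E},k+1}(\Gamma_3)$ with $\inorm{\mathfrak{E},k+1}{g_3^m}{s-m-\frac{1}{p},p,\Gamma_3}\lesssim\|F\|_{\Tr_k^{s,p},\Gamma_{\mathcal{S}}}$. Hence \cref{cor:smr-gamma3-cont-high} applies with $r=k+1$ to show that $\mathcal{S}_{m,k+1}^{[3]}(g_3^m)$, and therefore $\mathcal{L}_{k,m}^{[3]}(F)$, is well-defined with the asserted norm bound; the interpolation identities \cref{eq:smr3-gamma3-interp,eq:smr3-gamma12-zero} then upgrade the trace-matching statement to include $\Gamma_3$ up to order $m$, using that $(s-k)p>1$ forces $\min\{k+1,s-\frac{1}{p}\}>k$, so that all normal derivatives of $\mathcal{S}_{m,k+1}^{[3]}(g_3^m)$ of order $\le k$ vanish on $\Gamma_1$ and $\Gamma_2$. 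Running $m$ from $0$ to $k$ then yields \cref{eq:l3-interp-continuity-high}.

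The polynomial-preservation claim rides along the same induction: if $F$ satisfies \cref{eq:review:sigmak-cont-poly} then $F\in\Tr_k^{s,p}(\Gamma_{\mathcal{S}})$ for all $(s,p)\in\mathcal{A}_k$ by \cref{lem:review:polys-trace-space} and $\mathcal{L}_k^{[2]}(F)\in\mathcal{P}_N(\reftet)$ by \cref{lem:l2}, so each $g_3^m$ is a polynomial of degree $N-m$ on $\Gamma_3$ which, on letting $s\to\infty$ in \cref{lem:vanish-trace-reg-2}, satisfies $D_\Gamma^l g_3^m|_{\gamma_{13}}=D_\Gamma^l g_3^m|_{\gamma_{23}}=0$ for $0\le l\le k$; the final assertion of \cref{cor:smr-gamma3-cont-high} then gives $\mathcal{S}_{m,k+1}^{[3]}(g_3^m)\in\mathcal{P}_{(N-m)+m}(\reftet)=\mathcal{P}_N(\reftet)$, whence $\mathcal{L}_{k,m}^{[3]}(F)\in\mathcal{P}_N(\reftet)$. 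I expect the only genuinely delicate point to be the one already flagged: checking at every level that the residual $g_3^m$ lies in the two-edge space $W^{\cdot,p}_{\mathfrak{E},k+1}(\Gamma_3)$ with no loss at the common vertex $\gamma_{13}\cap\gamma_{23}$. This is precisely where it is essential that $\mathcal{L}_k^{[2]}$ matched the trace on \emph{both} $\Gamma_1$ and $\Gamma_2$ (so that \cref{lem:vanish-trace-reg-2} is available with either $i=1$ or $i=2$), and where \cref{eq:wsp-e-intersection-id-2} does the work of assembling the two one-edge bounds into the single two-edge estimate that \cref{cor:smr-gamma3-cont-high} takes as input; in contrast with the proof of \cref{lem:l2}, no case distinction on the size of $s$ is needed here, since both \cref{lem:vanish-trace-reg-2} and \cref{cor:smr-gamma3-cont-high} are stated uniformly with the weighted norm, and the remaining normal-derivative bookkeeping is routine.
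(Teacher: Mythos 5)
Your proposal is correct and takes essentially the same route as the paper's own proof: induction on $m$, observing that the residual traces vanish on $\Gamma_1$ and $\Gamma_2$ and in the first $m$ components on $\Gamma_3$, applying \cref{lem:vanish-trace-reg-2} with $(i,j)=(1,3)$ and $(2,3)$ and fusing the two one-edge conclusions through \cref{eq:wsp-e-intersection-id-2}, then invoking \cref{cor:smr-gamma3-cont-high} for well-definedness, the norm bound, the trace identities, and (with $s$ taken large to get the vanishing edge derivatives up to order $k$) the polynomial preservation. Your explicit remarks on the base-step index $l=0$ and on why $\min\{k+1,s-\tfrac{1}{p}\}>k$ kills the normal derivatives of the correction on $\Gamma_1,\Gamma_2$ are, if anything, slightly more careful than the paper's wording.
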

	\begin{proof}
		Let $k \in \mathbb{N}_0$, $(s, p) \in \mathcal{A}_k$, and $f \in \Tr_k^{s, p}(\Gamma_{\mathcal{S}})$ be given. Let $\mathfrak{E} = \{ \gamma_{13}, \gamma_{23} \}$. 
		
		\noindent \textbf{Step 1: $m = 0$.} Thanks to \cref{lem:l2}, the traces $G = (g^0, g^1, \ldots, g^k)$ given by 
		\begin{align*}
			g_i^{l} := f_i^l - \partial_{\unitvec{n}} \mathcal{L}_{k}^{[2]}(F)|_{\Gamma_i}, \qquad 0 \leq l \leq k, \ 1 \leq i \leq 3,
		\end{align*}
		satisfy the hypotheses of \cref{lem:vanish-trace-reg-2} with $(i, j) \in \{  (1, 3), (2,3)\}$ and $l=1$. Thanks to \cref{eq:wsp-e-intersection-id-2} and \cref{lem:vanish-trace-reg-2}, $g^0_3 \in W_{\mathfrak{E}, k+1}^{s - \frac{1}{p}, p}(\Gamma_3)$. Consequently, $\mathcal{S}_{0, k+1}^{[3]}(g_3^0)$ is well-defined by \cref{cor:smr-gamma3-cont-high}, and hence $\mathcal{L}_{k, 0}^{[3]}(F)$ is well-defined with
		\begin{align*}
			\| \mathcal{L}_{k, 0}^{[3]}(F) \|_{s, p, \reftet} \lesssim_{b,k,s,p} \| \mathcal{L}_k^{[2]}(F)\|_{s, p, \reftet} + \inorm{\mathfrak{E}, k+1}{f_3^0 - \mathcal{L}_k^{[2]}(F)}{s-\frac{1}{p}, p, \Gamma_3}.
		\end{align*} 
		Applying \cref{eq:l2-interp-continuity-high} and \cref{eq:smr3-gamma3-interp} gives
		\begin{align*}
			\partial_{\unitvec{n}}^{l} \mathcal{L}_{k, 0}^{[3]}(F)|_{\Gamma_i} = f_i^l, \qquad 0 \leq l \leq k, \ 1 \leq i \leq 2, \qquad \mathcal{L}_{k, 0}^{[3]}(F)|_{\Gamma_3} = f_3^0,
		\end{align*}
		and applying \cref{eq:wsp-e-intersection-id}, \cref{eq:l2-interp-continuity-high}, and \cref{eq:trace-math-reg} gives
		\begin{align*}
			\| \mathcal{L}_{k, 0}^{[3]}(F) \|_{s, p, \reftet} \lesssim_{b,k,s,p} \|F\|_{\Tr_k^{s, p}, \Gamma_{\mathcal{S}}}.
		\end{align*}
		Moreover, if $F$ satisfies \cref{eq:review:sigmak-cont-poly} for some $N \in \mathbb{N}_0$, then $\mathcal{L}_k^{[2]}(F) \in \mathcal{P}_N(\reftet)$ by \cref{lem:l2}, and so the trace $G$ satisfies \cref{eq:review:sigmak-cont-poly} and $G \in \Tr^{s, p}_k(\Gamma_{\mathcal{S}})$ for all $(s, p) \in \mathcal{A}_k$ thanks to \cref{lem:review:polys-trace-space}. By \cref{eq:wsp-e-intersection-id-2} and \cref{lem:vanish-trace-reg-2}, $g_3^0 \in W_{\mathfrak{E}}^{k + 1, p}(\Gamma_3)$ for all $p \in (1, \infty)$, and so $D_{\Gamma}^l g^0_3|_{\gamma_{13}} = D_{\Gamma}^l g^0_3|_{\gamma_{23}} = 0$ for $0 \leq l \leq k$. Thanks to \cref{cor:smr-gamma3-cont-high}, $\mathcal{L}_{k, 0}^{[3]}(F) \in \mathcal{P}_{N}(\reftet)$.
		
		\noindent \textbf{Step 2: Induction on $m$. } Assume that for some $m$ such that $0 \leq m \leq k-1$, $\mathcal{L}^{[3]}_{k, m}(F)$ is well-defined and satisfies
		\begin{subequations}
			\label{eq:proof:l3km-interp}
			\begin{alignat}{2}
				\partial_{\unitvec{n}}^{l} \mathcal{L}_{k, m}^{[3]}(F)|_{\Gamma_i} &= f_i^l, \qquad & &0 \leq l \leq k, \ 1 \leq i \leq 2, \\ \partial_{\unitvec{n}}^{l} \mathcal{L}_{k, m}^{[3]}(F)|_{\Gamma_3} &= f_3^l, \qquad & &0 \leq l \leq m,
			\end{alignat}
		\end{subequations}
		and
		\begin{align}
			\label{eq:proof:l3km-cont}
			\| \mathcal{L}_{k, m}^{[3]}(F) \|_{s, p, \reftet} \lesssim_{b,k,s,p} \|F\|_{\Tr_k^{s, p}, \Gamma_{\mathcal{S}}}.
		\end{align}
		Additionally assume that if $F$ satisfies \cref{eq:review:sigmak-cont-poly}  for some $N \in \mathbb{N}_0$, then $\mathcal{L}_{k, m}^{[3]}(F) \in \mathcal{P}_N(\reftet)$.
		
		Thanks to \cref{eq:proof:l3km-interp}, the traces $G = (g^0, g^1, \ldots, g^k)$ given by 
		\begin{align*}
			g_i^{l} := f_i^l - \partial_{\unitvec{n}}^{l} \mathcal{L}_{k, m}^{[3]}(F)|_{\Gamma_i}, \qquad 0 \leq l \leq k, \ 1 \leq i \leq 3,
		\end{align*}
		satisfy the hypotheses of \cref{lem:vanish-trace-reg-2} with $(i, j) \in \{  (1, 3), (2,3)\}$ and $l=m+1$. Thanks to \cref{eq:wsp-e-intersection-id-2} and \cref{lem:vanish-trace-reg-2}, there holds $g^{m+1}_3 \in W_{\mathfrak{E},k+1}^{s-m-1-\frac{1}{p},  p}(\Gamma_3)$. Consequently, $\mathcal{S}_{m+1, k+1}^{[3]}(g_3^{m+1})$ is well-defined by \cref{cor:smr-gamma3-cont-high}, and hence $\mathcal{L}_{k, m+1}^{[3]}(F)$ is well-defined with
		\begin{multline*}
			\| \mathcal{L}_{k, m+1}^{[3]}(F) \|_{s, p, \reftet} \lesssim_{b,k,s,p} \| \mathcal{L}_{k,m}^{[3]}(F)\|_{s, p, \reftet} \\
			+ \inorm{\mathfrak{E},k+1}{f_3^{m+1} - \partial_{\unitvec{n}}^{m+1}\mathcal{L}_{k,m}^{[3]}(F)}{s-m-1-\frac{1}{p}, p, \Gamma_3}.
		\end{multline*} 
		Applying \cref{eq:proof:l3km-interp} and \cref{eq:smr3-gamma3-interp} gives \cref{eq:proof:l3km-interp} for $m+1$, while applying \cref{eq:wsp-e-intersection-id-2}, \cref{eq:proof:l3km-cont}, and \cref{eq:trace-math-reg-2} gives \cref{eq:proof:l3km-cont} for $m+1$.
		
		Moreover, if $F$ satisfies \cref{eq:review:sigmak-cont-poly} for some $N \in \mathbb{N}_0$, then $\mathcal{L}_{k, m}^{[3]}(F) \in \mathcal{P}_N(\reftet)$ by assumption and so the trace $G$ satisfies \cref{eq:review:sigmak-cont-poly} and $G \in \Tr^{s, p}_k(\Gamma_{\mathcal{S}})$ for all $(s, p) \in \mathcal{A}_k$ thanks to \cref{lem:review:polys-trace-space}. By \cref{eq:wsp-e-intersection-id} and \cref{lem:vanish-trace-reg}, $g_3^{m+1} \in W_{\mathfrak{E}}^{k + 1, p}(\Gamma_3)$ for all $p \in (1,\infty)$, and so $D_{\Gamma}^l g^{m+1}_3|_{\gamma_{13}} = D_{\Gamma}^l g_3^{m+1}|_{\gamma_{23}} = 0$ for $0 \leq l \leq k$. Thanks to \cref{cor:smr-gamma3-cont-high}, $\mathcal{L}_{k, m+1}^{[3]}(F) \in \mathcal{P}_{N}(\reftet)$.
	\end{proof}
	
	\subsection{Lifting from four faces}	
	\label{sec:four-faces}
	
	To complete the construction of the lifting operator from the entire boundary, we
	define one final single face lifting operator from $\Gamma_1$ that vanishes on the remaining faces. Given nonnegative integers $k, r \in \mathbb{N}_0$, a smooth compactly supported function $b \in C_c^{\infty}(\reftri)$, and a function $f : \reftri \to \mathbb{R}$, we define the operator $\mathcal{R}_{k, r}^{[1]}$ formally by the rule
	\begin{align}
		\label{eq:rkr-def}
		\begin{aligned}
				&\mathcal{R}_{k, r}^{[1]}(f)(\bdd{x}, z)  \\
			&\quad := (x_1 x_2 (1-x_1-x_2-z))^r \mathcal{E}_k^{[1]}((\omega_1 \omega_2 \omega_3)^{-r} f)(\bdd{x}, z)  \\
			&\quad = (x_1 x_2 (1-x_1-x_2-z))^r \frac{(-z)^k}{k!} \int_{\reftri} \left. \frac{ b(\bdd{y}) f(\bdd{w}) \d{\bdd{y}}}{(\omega_1 \omega_2 \omega_3)^r(\bdd{w})} \right|_{\bdd{w} = \bdd{x} + z\bdd{y}}  \ \forall (\bdd{x}, z) \in \reftet.
		\end{aligned}
	\end{align}
	Note that when $r = 0$, we have $\mathcal{R}_{k, r}^{[1]} = \mathcal{E}_k^{[1]}$. For functions $f : \Gamma_1 \to \mathbb{R}$, we again abuse notation and set $\mathcal{R}_{k, r}^{[1]}(f) := \mathcal{R}_{k, r}^{[1]}(f \circ \mathfrak{I}_1)$. The weighted spaces $W_{\mathfrak{E}, r}^{s, p}(\Gamma_1)$ again play a role in the continuity of $\mathcal{R}_{k, r}^{[1]}$ as the following result shows.
	\begin{lemma}
		\label{lem:rmr-gamma1-cont-high}
		Let $b \in C_c^{\infty}(\reftri)$, $k, r \in \mathbb{N}_0$, $(s, p) \in \mathcal{A}_k$, and $\mathfrak{E} = \{ \gamma_{12}, \gamma_{13}, \gamma_{14} \}$. Then, for all $f \in W_{\mathfrak{E}, r}^{s-k - \frac{1}{p}, p}(\Gamma_1)$, there holds
		\begin{subequations}
			\begin{alignat}{2}
				\label{eq:rmr-gamma1-interp}
				\partial_{\unitvec{n}}^m \mathcal{R}_{k, r}^{[1]}(f)|_{\Gamma_1} &= \delta_{km} \left( \int_{\reftri} b(\bdd{x}) \d{\bdd{x}} \right) f, \qquad & &0 \leq m \leq k, \\
				\label{eq:rmr-gamma2-zero}
				\partial_{\unitvec{n}}^{j} \mathcal{R}_{k, r} ^{[1]}(f)|_{\Gamma_i} &= 0, \qquad & &0 \leq j < \min\left\{ r, s - \frac{1}{p} \right\}, \ 2 \leq i \leq 4,
			\end{alignat}
		\end{subequations}
		and 
		\begin{align}
			\label{eq:rmr-gamma1-continuity-high}
			\| \mathcal{R}_{k, r}^{[1]}(f) \|_{s, p, \reftet} \lesssim_{b, k, r, s, p} 	\inorm{\mathfrak{E},r}{f}{s-k-\frac{1}{p}, p, \Gamma_1}.
		\end{align}
		Moreover, if $f \in \mathcal{P}_N(\Gamma_1)$, $N \in \mathbb{N}_0$, satisfies $D_{\Gamma}^{l} f|_{\partial \Gamma_1} = 0$ for $0 \leq l \leq r-1$, then $\mathcal{R}_{k, r}^{[1]}(f) \in \mathcal{P}_{N+k}(\reftet)$.
	\end{lemma}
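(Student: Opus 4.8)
The plan is to establish \cref{eq:rmr-gamma1-interp,eq:rmr-gamma2-zero} and the polynomial-preservation claim by elementary manipulations paralleling the proofs of \cref{lem:mmr-gamma1-cont-high,lem:smr-gamma1-cont-high}, and then to prove the continuity bound \cref{eq:rmr-gamma1-continuity-high} --- the substantive part --- by adapting and extending the arguments behind those two lemmas. The common starting point is the factorization $\mathcal{R}_{k,r}^{[1]}(f) = P_r\,\mathcal{E}_k^{[1]}(g)$, where $P_r(\bdd{x},z) := (x_1 x_2 (1-x_1-x_2-z))^r$ is a polynomial of degree $3r$ and $g := (\omega_1\omega_2\omega_3)^{-r} f$. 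Observe that $P_r$ restricts to $(\omega_1\omega_2\omega_3)^r$ on $\Gamma_1$ (the $\omega_i$ restrict there to the barycentric coordinates of $\reftri$), that it vanishes to order exactly $r$ on each of $\Gamma_2$, $\Gamma_3$, $\Gamma_4$ (one factor of $P_r$ for each), and that $\mathfrak{E} = \{\gamma_{12},\gamma_{13},\gamma_{14}\}$ corresponds under $\mathfrak{I}_1$ to the full boundary of $\reftri$, on whose three edges $\omega_2$, $\omega_1$, $\omega_3$ respectively vanish. The hypothesis $f \in W^{s-k-\frac{1}{p},p}_{\mathfrak{E},r}(\Gamma_1)$ is exactly what guarantees, via Hardy-type inequalities transverse to each edge of $\mathfrak{E}$, that $g \in W^{s-k-\frac{1}{p},p}(\Gamma_1)$ with $\|g\|_{s-k-\frac{1}{p},p,\Gamma_1} \lesssim \inorm{\mathfrak{E},r}{f}{s-k-\frac{1}{p},p,\Gamma_1}$, so that \cref{lem:em-gamma1-cont-high} applies to $g$.

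Granting this, \cref{eq:rmr-gamma1-interp} follows from Leibniz's rule together with \cref{eq:em-gamma1-interp}: since $\partial_{\unitvec{n}}^{m'}\mathcal{E}_k^{[1]}(g)|_{\Gamma_1}$ vanishes whenever $m' \neq k$, for $0 \le m \le k$ the only surviving term of $\partial_{\unitvec{n}}^m(P_r\,\mathcal{E}_k^{[1]}(g))|_{\Gamma_1}$ is the one --- present only when $m = k$ --- in which all $k$ normal derivatives fall on $\mathcal{E}_k^{[1]}(g)$ and none on $P_r$, where $P_r|_{\Gamma_1}(\omega_1\omega_2\omega_3)^{-r} \equiv 1$, yielding $\delta_{mk}\left( \int_{\reftri} b(\bdd{x}) \d{\bdd{x}} \right) f$. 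Identity \cref{eq:rmr-gamma2-zero} follows because $\mathcal{E}_k^{[1]}(g) \in W^{s,p}(\reftet)$ by \cref{lem:em-gamma1-cont-high} while $P_r$ vanishes to order $r$ on $\Gamma_i$, $i \in \{2,3,4\}$, using the standard fact that multiplication by a smooth factor vanishing to order $r$ annihilates the traces of the first $\min\{r, s-\frac{1}{p}\}$ normal derivatives of a $W^{s,p}$ function. For the polynomial case, if $f \in \mathcal{P}_N(\Gamma_1)$ vanishes together with its tangential derivatives up to order $r-1$ on $\partial\Gamma_1$, then, as a polynomial on $\reftri$, it is divisible by $(\omega_1\omega_2\omega_3)^r$, so $g \in \mathcal{P}_{N-3r}(\reftri)$; hence $\mathcal{E}_k^{[1]}(g) \in \mathcal{P}_{N-3r+k}(\reftet)$ by \cref{lem:em-gamma1-cont-high}, and $\mathcal{R}_{k,r}^{[1]}(f) = P_r\,\mathcal{E}_k^{[1]}(g) \in \mathcal{P}_{N+k}(\reftet)$.

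For the continuity estimate \cref{eq:rmr-gamma1-continuity-high} I would proceed in the spirit of \cref{sec:whole-space-cont,sec:whole-space-lp-cont,sec:cont}: extend $g$ from $\reftri$ to $\mathbb{R}^2$ with control of its Sobolev norm, realize $\mathcal{R}_{k,r}^{[1]}$ (modulo the prefactor $P_r$ and a fixed cutoff) as a member of the whole-space convolution family whose $L^p$-based Sobolev boundedness is established there, and then reconcile the two weights. The inner singular factor $(\omega_1\omega_2\omega_3)^{-r}$, evaluated at $\bdd{x}+z\bdd{y}$, is absorbed using the vanishing of $f$ on $\partial\reftri$ encoded in $W_{\mathfrak{E},r}^{s-k-\frac{1}{p},p}(\Gamma_1)$ --- via Hardy's inequality transverse to each of $\gamma_{12}$, $\gamma_{13}$, $\gamma_{14}$ --- while the outer polynomial factor $P_r$ restores the corresponding decay; the delicate bookkeeping is that these two balance exactly in the regime $s-k-\frac{1}{p} \le r$, whereas for $s-k-\frac{1}{p} > r$ the extra distance-weighted term in the definition of $\inorm{\mathfrak{E},r}{\cdot}{\cdot}$ is precisely what closes the borderline case $\sigma p = 1$. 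The genuinely new difficulty relative to $\mathcal{M}_{k,r}^{[1]}$ (one edge) and $\mathcal{S}_{k,r}^{[1]}$ (two edges), and the step I expect to be hardest, is the behaviour near the three vertices of $\Gamma_1$, where two edges of $\mathfrak{E}$ meet and the weights $\omega_i^{-r}$ compound --- all the more since $\Gamma_4$, unlike $\Gamma_2$ and $\Gamma_3$, is not parallel to a coordinate plane, so the outer factor $(1-x_1-x_2-z)^r$ and the inner factor $\omega_3^{-r}(\bdd{x}+z\bdd{y})$ do not literally coincide near $\Gamma_4$. I would handle this with a partition of unity on $\reftet$ subordinate to conical neighbourhoods of the vertices of $\Gamma_1$, tubular neighbourhoods of the relative interiors of the edges of $\mathfrak{E}$, and the interior, so that on each patch the estimate reduces to an already controlled situation --- in the interior to \cref{lem:em-gamma1-cont-high}, near the relative interior of an edge to a localized version of \cref{lem:mmr-gamma1-cont-high}, and near a vertex to a localized two-edge estimate of the kind in \cref{lem:smr-gamma1-cont-high} --- and reassemble the global bound using the norm equivalences \cref{eq:wsp-e-intersection-id,eq:wsp-e-intersection-id-2}.
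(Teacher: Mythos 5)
Your handling of the trace identities and of polynomial preservation is essentially the paper's (the divisibility argument for the polynomial case is exactly right), but the pivot of the rest of your proof is the claim that $f\in W^{s-k-\frac{1}{p},p}_{\mathfrak{E},r}(\Gamma_1)$ forces $g:=(\omega_1\omega_2\omega_3)^{-r}f\in W^{s-k-\frac{1}{p},p}(\Gamma_1)$ with $\|g\|_{s-k-\frac{1}{p},p,\Gamma_1}\lesssim\inorm{\mathfrak{E},r}{f}{s-k-\frac{1}{p},p,\Gamma_1}$, and this is false. Membership in $W^{t,p}_{\mathfrak{E},r}$ only encodes vanishing of traces up to order $\min\{t,r\}$, and each division by a single $\omega_i$ already costs one order of smoothness --- that is exactly the content of the Hardy-type \cref{lem:omega1-inv-mapping}, $\|\omega_i^{-1}f\|_{s,p,\reftri}\lesssim_{s,p}\|\partial_i f\|_{s,p,\reftri}$. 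For $t<r$ the quotient need not even lie in $L^p$, and for $t>r$ one can take, e.g., $r=1$ and $f$ equal to $\omega_1^{1+\alpha}$ near the midpoint of $\gamma_1$ (cut off away from the other edges) with $0<\alpha<1$ and $\alpha+\frac{1}{p}<t<1+\alpha+\frac{1}{p}$: then $f\in W^{t,p}_{\mathfrak{E},1}(\reftri)$ but $\omega_1^{-1}f\simeq\omega_1^{\alpha}\notin W^{t,p}(\reftri)$. Were your claim true, \cref{lem:em-gamma1-cont-high} applied to $g$ would settle this lemma (and \cref{lem:mmr-gamma1-cont-high,lem:smr-gamma1-cont-high} would be trivial), and none of the weighted $L^p$ machinery of \cref{sec:whole-space-lp-cont} would be needed. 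The trace identity \cref{eq:rmr-gamma2-zero}, which you justify via $\mathcal{E}_k^{[1]}(g)\in W^{s,p}(\reftet)$, is still recoverable by direct computation from the integral formula, as the paper does, but your stated justification does not stand.

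Because of this the continuity estimate \cref{eq:rmr-gamma1-continuity-high} is not proved, and the localization plan does not supply a substitute mechanism: $\mathcal{R}^{[1]}_{k,r}$ averages $f$ over the simplex $\bdd{x}+z\reftri$, so at every point $(\bdd{x},z)$ the integrand carries all three singular factors $\omega_i(\bdd{x}+z\bdd{y})^{-r}$ simultaneously; a partition of unity in $(\bdd{x},z)$ does not decouple them, \cref{lem:mmr-gamma1-cont-high,lem:smr-gamma1-cont-high} are global statements whose truncation generates commutator terms carrying the same weights, and you would still have to reconcile the outer factor $(1-x_1-x_2-z)^r$ with the shifted weight $\omega_3^{-r}(\bdd{x}+z\bdd{y})$, as you yourself note. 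The paper resolves the three-weight interaction algebraically rather than geometrically: the partial-fraction identity of \cref{lem:partial-frac-decomp} writes $(\omega_1\omega_2\omega_3)^{-r}$ as a sum of terms each involving only two of the weights, whence $\mathcal{R}^{[1]}_{k,r}(f)$ is expressed exactly as a finite sum of polynomial multiples of rotated two-edge operators $\mathcal{S}^{[1],(ij)}_{k,l}(\omega_i^{n}f)$; the bound then follows from \cref{lem:smr-gamma1-cont-high} and its images under the affine maps in \cref{eq:proof:frakf2-def}, the boundedness of multiplication by the barycentric factors on $W^{s,p}(\reftet)$ and by $\omega_i^{n}$ on $W^{t,p}_{\mathfrak{E},r}(\reftri)$, and \cref{eq:wsp-e-intersection-id-2}. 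Some identity of this kind --- absent from your proposal --- is the key missing idea; without it, or a fully worked alternative, the main estimate does not close.
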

	The proof of \cref{lem:rmr-gamma1-cont-high} appears in \cref{sec:proof-lem:rmr-gamma1-cont-high}. The analogous operator associated to $\Gamma_4$ is given by
	\begin{align*}
		\mathcal{R}_{k, r}^{[4]}(f)(\bdd{x}, z) := 3^{-\frac{k}{2}} \mathcal{R}_{k, r}^{[1]}(f \circ \mathfrak{I}_4) \circ \mathfrak{R}_{14}(\bdd{x}, z) \qquad \forall (\bdd{x}, z) \in \reftet,
	\end{align*}
	where $\mathfrak{I}_4(\bdd{x}) :=  (x_1, x_2, 1 - x_1 - x_2)$ and $\mathfrak{R}_{14}(\bdd{x}, z) := (x_1, x_2, 1-x_1-x_2-z)$ for all $(\bdd{x}, z) \in \reftet$. Thanks to the chain rule and the smoothness of the mappings $\mathfrak{I}_4$ and $\mathfrak{R}_{14}$, the continuity and interpolation properties of $\mathcal{R}_{k, r}^{[4]}$ follow immediately from \cref{lem:rmr-gamma1-cont-high}.
	\begin{corollary}
		\label{cor:rmr-gamma4-cont-high}
		Let $b \in C_c^{\infty}(\reftri)$, $k, r \in \mathbb{N}_0$, $(s, p) \in \mathcal{A}_k$, and $\mathfrak{E} = \{ \gamma_{14}, \gamma_{24}, \gamma_{34} \}$. Then, for all $f \in W_{\mathfrak{E}, r}^{s-k- \frac{1}{p},  p}(\Gamma_4)$, there holds
		\begin{subequations}
			\begin{alignat}{2}
				\label{eq:rmr4-gamma4-interp}
				\partial_{\unitvec{n}}^m \mathcal{R}_{k, r}^{[4]}(f)|_{\Gamma_4} &= \delta_{km} \left( \int_{\reftri} b(\bdd{x}) \d{\bdd{x}} \right) f, \qquad & &0 \leq m \leq k, \\
				\label{eq:rmr4-gamma14-zero}
				\partial_{\unitvec{n}}^{j} \mathcal{R}_{k, r} ^{[4]}(f)|_{\Gamma_i} &= 0, \qquad & &0 \leq j < \min\left\{ r, s - \frac{1}{p} \right\}, \ 1 \leq i \leq 3
			\end{alignat}
		\end{subequations}
		and 
		\begin{align}
			\label{eq:rmr4-gamma4-continuity-high}
			\| \mathcal{R}_{k, r}^{[4]}(f) \|_{s, p, \reftet} \lesssim_{b, k, r, s, p} \inorm{\mathfrak{E}, r}{f}{s-k-\frac{1}{p}, p, \Gamma_4}.
		\end{align}
		Moreover, if $f \in \mathcal{P}_N(\Gamma_4)$, $N \in \mathbb{N}_0$, satisfies $D_{\Gamma}^{l} f|_{\partial \Gamma_4} = 0$ for $0 \leq l \leq r-1$, then $\mathcal{R}_{k, r}^{[4]}(f) \in \mathcal{P}_{N+k}(\reftet)$.
	\end{corollary}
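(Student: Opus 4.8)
The plan is to deduce the corollary from \cref{lem:rmr-gamma1-cont-high} by the affine change of variables $\mathfrak{R}_{14}$, exactly as \cref{cor:mmr-gamma2-cont-high} and \cref{cor:smr-gamma3-cont-high} are obtained from their respective one-face lemmas; the one genuinely new feature is the normalizing factor $3^{-k/2}$. First I would record the relevant geometry. The map $\mathfrak{R}_{14}(\bdd{x}, z) = (x_1, x_2, 1 - x_1 - x_2 - z)$ is an affine involution of $\reftet$ with $|\det D\mathfrak{R}_{14}| = 1$ that interchanges the faces $\Gamma_1 \leftrightarrow \Gamma_4$ and fixes $\Gamma_2$ and $\Gamma_3$ setwise; consequently it carries the three edges $\gamma_{12}, \gamma_{13}, \gamma_{14}$ of $\Gamma_1$ onto the three edges $\gamma_{24}, \gamma_{34}, \gamma_{14}$ of $\Gamma_4$. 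Moreover $\mathfrak{R}_{14} \circ \mathfrak{I}_1 = \mathfrak{I}_4$, so the parametrizations of $\Gamma_1$ and $\Gamma_4$ by $\reftri$ are compatible with $\mathfrak{R}_{14}$. Since $\mathfrak{R}_{14}$ and the $\mathfrak{I}_j$ are bi-Lipschitz affine maps, composition with them induces bounded isomorphisms on $W^{s, p}(\reftet)$ and on the weighted trace spaces $W^{s, p}_{\mathfrak{E}, r}(\Gamma_j)$ — surface derivatives transform by the chain rule with bounded coefficients and the distance-to-edge factors $\dist(\cdot, \bigcup_{\gamma \in \mathfrak{E}} \gamma)^{-\sigma}$ transform up to multiplicative constants — and composition preserves $\mathcal{P}_M(\reftet)$ for every $M \in \mathbb{N}_0$.

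Granting these facts, write $h := f \circ \mathfrak{R}_{14}$ (a function on $\Gamma_1$, since $\mathfrak{R}_{14}$ maps $\Gamma_1$ onto $\Gamma_4$) and $\mathfrak{E}' := \{ \gamma_{12}, \gamma_{13}, \gamma_{14} \}$; using $\mathfrak{R}_{14} \circ \mathfrak{I}_1 = \mathfrak{I}_4$ one checks $\mathcal{R}_{k, r}^{[4]}(f) = 3^{-k/2}\, \mathcal{R}_{k, r}^{[1]}(h) \circ \mathfrak{R}_{14}$. The continuity bound \cref{eq:rmr4-gamma4-continuity-high} then follows from
\begin{align*}
	\| \mathcal{R}_{k, r}^{[4]}(f) \|_{s, p, \reftet} \lesssim_{s, p} \| \mathcal{R}_{k, r}^{[1]}(h) \|_{s, p, \reftet} \lesssim_{b, k, r, s, p} \inorm{\mathfrak{E}', r}{h}{s - k - \frac1p, p, \Gamma_1} \lesssim_{s, p} \inorm{\mathfrak{E}, r}{f}{s - k - \frac1p, p, \Gamma_4},
\end{align*}
the middle inequality being \cref{eq:rmr-gamma1-continuity-high} (applied on $\Gamma_1$, whose edge set is $\mathfrak{E}'$) and the outer ones the isomorphisms above. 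The polynomial claim is handled the same way: the hypothesis $D_{\Gamma}^{l} f|_{\partial \Gamma_4} = 0$, $0 \le l \le r-1$, transfers by the chain rule to $h$ on $\partial \Gamma_1$, so \cref{lem:rmr-gamma1-cont-high} gives $\mathcal{R}_{k, r}^{[1]}(h) \in \mathcal{P}_{N+k}(\reftet)$, and composition with the affine $\mathfrak{R}_{14}$ preserves the degree.

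For the interpolation and vanishing identities I would argue through the chain rule for traces, with $v := \mathcal{R}_{k, r}^{[1]}(h)$. For \cref{eq:rmr4-gamma14-zero}: by \cref{eq:rmr-gamma2-zero} the trace of $v$ vanishes on each of $\Gamma_2, \Gamma_3, \Gamma_4$, whence all of its tangential derivatives vanish there, so \emph{all} partial derivatives of $v$ of order less than $\min\{ r, s - \frac1p \}$ vanish on $\Gamma_2 \cup \Gamma_3 \cup \Gamma_4$; this property is invariant under the affine map $\mathfrak{R}_{14}$, which sends $\Gamma_1 \mapsto \Gamma_4$, $\Gamma_2 \mapsto \Gamma_2$, $\Gamma_3 \mapsto \Gamma_3$, and hence the same derivatives of $v \circ \mathfrak{R}_{14}$, and of $\mathcal{R}_{k, r}^{[4]}(f)$, vanish on $\Gamma_1 \cup \Gamma_2 \cup \Gamma_3$, which is \cref{eq:rmr4-gamma14-zero}. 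For \cref{eq:rmr4-gamma4-interp}, let $A$ be the linear part of $\mathfrak{R}_{14}$; the chain rule gives $\partial_{\unitvec{n}}^{m}(v \circ \mathfrak{R}_{14}) = (\partial_{A \unitvec{n}_4}^{m} v) \circ \mathfrak{R}_{14}$, and since $A \unitvec{n}_4 = (1, 1, -3)/\sqrt{3} = \bdd{\tau} + \sqrt{3}\, \unitvec{n}$ with $\bdd{\tau}$ tangent to $\Gamma_1$ and $\unitvec{n}$ the outward unit normal of $\Gamma_1$, one may expand $\partial_{A \unitvec{n}_4}^{m} = \sum_{j=0}^{m} \binom{m}{j} (\sqrt{3})^{j}\, \partial_{\bdd{\tau}}^{m-j} \partial_{\unitvec{n}}^{j}$. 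Restricting to $\Gamma_1$, using that $\partial_{\bdd{\tau}}^{m-j}$ commutes with restriction, and invoking \cref{eq:rmr-gamma1-interp}, namely $\partial_{\unitvec{n}}^{j} v|_{\Gamma_1} = \delta_{jk} \big( \int_{\reftri} b \big) h$ for $0 \le j \le k$, every term with $j \ne k$ drops out, so for $0 \le m \le k$ the restriction equals $0$ when $m < k$ and $(\sqrt{3})^{k} \big( \int_{\reftri} b \big) h$ when $m = k$. Multiplying by $3^{-k/2}$ and using $h \circ \mathfrak{R}_{14} = f$ on $\Gamma_4$ (as $\mathfrak{R}_{14}$ is an involution) yields $\partial_{\unitvec{n}}^{m} \mathcal{R}_{k, r}^{[4]}(f)|_{\Gamma_4} = \delta_{mk} \big( \int_{\reftri} b \big) f$, which is \cref{eq:rmr4-gamma4-interp}.

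The change-of-variables ingredients — the norm equivalences, the polynomial invariance, and the transfer of the vanishing conditions — are routine. The only step that requires genuine care is the last chain-rule computation: checking that the power $3^{-k/2}$ built into the definition of $\mathcal{R}_{k, r}^{[4]}$ is exactly the factor that cancels the $(\sqrt{3})^{k} = 3^{k/2}$ produced by differentiating $k$ times along the non-normal direction $A\unitvec{n}_4 = (1, 1, -3)/\sqrt{3}$. I expect this bookkeeping — which is absent in the $\Gamma_2$ and $\Gamma_3$ cases, where the corresponding map is a coordinate permutation and hence preserves the normal direction — to be the main, if modest, obstacle.
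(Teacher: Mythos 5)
Your proposal is correct and takes the same route as the paper, which simply observes that the corollary follows from \cref{lem:rmr-gamma1-cont-high} by the chain rule and the smoothness of $\mathfrak{I}_4$ and $\mathfrak{R}_{14}$; you have merely written out the change-of-variables bookkeeping that the paper leaves implicit. In particular, your computation $A\unitvec{n}_4 = \bdd{\tau} + \sqrt{3}\,\unitvec{n}_1$ and the resulting cancellation of $3^{k/2}$ by the prefactor $3^{-k/2}$ is exactly the reason that factor appears in the definition of $\mathcal{R}_{k,r}^{[4]}$ (and is absent for $\mathcal{M}_{k,r}^{[2]}$ and $\mathcal{S}_{k,r}^{[3]}$, where the maps are coordinate permutations), so the verification is complete.
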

	
	Finally, we construct the lifting operator $\mathcal{L}_{k}^{[4]}$ in the same fashion as $\mathcal{L}_k^{[3]}$ \cref{eq:l3-def}, replacing the use of $\mathcal{S}_{m, k+1}^{[3]}$ with $\mathcal{R}_{m, k+1}^{[4]}$.	
	\begin{lemma}
		\label{lem:l4}
		Let $b \in C_c^{\infty}(\reftri)$ with $\int_{\reftri} b(\bdd{x}) \d{\bdd{x}} = 1$ and $k \in \mathbb{N}_0$. For $F = (f^0, f^1, \ldots, f^k) \in L^p(\partial K)^{k+1}$, we formally define the following operators:
		\begin{subequations}
			\label{eq:l4-def}
			\begin{alignat}{2}
				\mathcal{L}_{k, 0}^{[4]}(F) &:= \mathcal{L}_k^{[3]}(F) + \mathcal{R}_{0, k+1}^{[4]}(f_4^0 - \mathcal{L}_k^{[3]}(F)|_{\Gamma_4} ), \qquad & & \\
				\mathcal{L}_{k, m}^{[4]}(F) &:= \mathcal{L}_{k, m-1}^{[4]}(F) + \mathcal{R}_{m, k+1}^{[4]}( f_4^m - \partial_{\unitvec{n}}^{m} \mathcal{L}_{k,m-1}^{[4]}(F)|_{\Gamma_4} ), \qquad & &1 \leq m \leq k, \\
				\mathcal{L}_k^{[4]}(F) &:= \mathcal{L}_{k, k}^{[4]}(F). \qquad & &
			\end{alignat}
		\end{subequations}
		Then, for all $(s, p) \in \mathcal{A}_k$ and $F \in \Tr_k^{s, p}(\partial \reftet)$, there holds
		\begin{align}
			\label{eq:l4-interp-continuity-high}
			\partial_{\unitvec{n}}^m \mathcal{L}_k^{[4]}(F)|_{\partial \reftet} = f^m, \quad 0 \leq m \leq k,  \quad \text{and} \quad \| \mathcal{L}_k^{[4]}(F) \|_{s, p, \reftet} \lesssim_{b, k, s, p} 
			\| F \|_{\Tr_k^{s, p}, \partial \reftet}.
		\end{align}
		Moreover, if $F$ satisfies \cref{eq:review:sigmak-cont-poly} for some $N \in \mathbb{N}_0$, then $\mathcal{L}_k^{[4]}(F) \in \mathcal{P}_N(\reftet)$.
	\end{lemma}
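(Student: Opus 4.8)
The plan is to mirror the proof of \cref{lem:l3} almost verbatim, replacing $\Gamma_3$ by $\Gamma_4$, the three‑face lifting $\mathcal{L}_k^{[3]}$ by the four‑face operator under construction, the operator $\mathcal{S}_{m,k+1}^{[3]}$ by $\mathcal{R}_{m,k+1}^{[4]}$, and the edge set $\mathfrak{E}=\{\gamma_{13},\gamma_{23}\}$ by $\mathfrak{E}=\{\gamma_{14},\gamma_{24},\gamma_{34}\}$. One proceeds by induction on the internal index $m$ in \cref{eq:l4-def}, showing at each stage that $\mathcal{L}_{k,m}^{[4]}(F)$ is well‑defined, satisfies the interpolation identities $\partial_{\unitvec{n}}^l \mathcal{L}_{k,m}^{[4]}(F)|_{\Gamma_i}=f_i^l$ for $0\le l\le k$ and $i\in\{1,2,3\}$ together with $\partial_{\unitvec{n}}^l \mathcal{L}_{k,m}^{[4]}(F)|_{\Gamma_4}=f_4^l$ for $0\le l\le m$, obeys $\|\mathcal{L}_{k,m}^{[4]}(F)\|_{s,p,\reftet}\lesssim_{b,k,s,p}\|F\|_{\Tr_k^{s,p},\partial\reftet}$, and lies in $\mathcal{P}_N(\reftet)$ whenever $F$ satisfies \cref{eq:review:sigmak-cont-poly}.

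For the base case $m=0$, \cref{lem:l3} supplies $\mathcal{L}_k^{[3]}(F)$ with the correct traces on $\Gamma_1\cup\Gamma_2\cup\Gamma_3$ and with $\|\mathcal{L}_k^{[3]}(F)\|_{s,p,\reftet}\lesssim_{b,k,s,p}\|F\|_{\Tr_k^{s,p},\partial\reftet}$. Hence the correction trace $G=(g^0,\ldots,g^k)$ with $g_i^l:=f_i^l-\partial_{\unitvec{n}}^l\mathcal{L}_k^{[3]}(F)|_{\Gamma_i}$ lies in $\Tr_k^{s,p}(\partial\reftet)$ (by \cref{thm:trace-theorem} applied to $\mathcal{L}_k^{[3]}(F)$) and vanishes identically on $\Gamma_1$, $\Gamma_2$, $\Gamma_3$. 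Applying \cref{lem:vanish-trace-reg-2} to $G$ restricted to $\Gamma_i\cup\Gamma_4$ for $(i,j)\in\{(1,4),(2,4),(3,4)\}$ gives $g_4^0\in W_{\gamma_{i4},k+1}^{s-\frac1p,p}(\Gamma_4)$ for each such $i$; combining via \cref{eq:wsp-e-intersection-id-2} with $\mathfrak{E}=\{\gamma_{14},\gamma_{24},\gamma_{34}\}$ yields $g_4^0\in W_{\mathfrak{E},k+1}^{s-\frac1p,p}(\Gamma_4)$ with $\inorm{\mathfrak{E},k+1}{g_4^0}{s-\frac1p,p,\Gamma_4}\lesssim_{k,s,p}\|F\|_{\Tr_k^{s,p},\partial\reftet}$. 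Therefore $\mathcal{R}_{0,k+1}^{[4]}(g_4^0)$, hence $\mathcal{L}_{k,0}^{[4]}(F)$, is well‑defined by \cref{cor:rmr-gamma4-cont-high}, and \cref{eq:rmr4-gamma4-continuity-high} gives the norm bound. Since $\mathcal{R}_{0,k+1}^{[4]}(g_4^0)$ has vanishing normal derivatives of all orders $<k+1$ on $\Gamma_1,\Gamma_2,\Gamma_3$ by \cref{eq:rmr4-gamma14-zero}, the traces there are unchanged, while \cref{eq:rmr4-gamma4-interp} gives $\mathcal{L}_{k,0}^{[4]}(F)|_{\Gamma_4}=f_4^0$. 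For the polynomial case, if $F$ satisfies \cref{eq:review:sigmak-cont-poly} then $F\in\Tr_k^{s,p}(\partial\reftet)$ by \cref{lem:review:polys-trace-space} and $\mathcal{L}_k^{[3]}(F)\in\mathcal{P}_N(\reftet)$ by \cref{lem:l3}, so $G$ is a piecewise polynomial trace satisfying \cref{eq:review:sigmak-cont-poly}; \cref{lem:vanish-trace-reg-2} then places $g_4^0$ in $W_{\mathfrak{E}}^{k+1,p}(\Gamma_4)$ for every $p\in(1,\infty)$, so $D_{\Gamma}^l g_4^0|_{\partial\Gamma_4}=0$ for $0\le l\le k$, and the polynomial clause of \cref{cor:rmr-gamma4-cont-high} gives $\mathcal{L}_{k,0}^{[4]}(F)\in\mathcal{P}_N(\reftet)$.

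The inductive step $m\to m+1$ (for $0\le m\le k-1$) repeats this argument verbatim: the interpolation identities at stage $m$ ensure the correction trace $G$ with $g_i^l:=f_i^l-\partial_{\unitvec{n}}^l\mathcal{L}_{k,m}^{[4]}(F)|_{\Gamma_i}$ vanishes on $\Gamma_1,\Gamma_2,\Gamma_3$ and has $g_4^l=0$ for $0\le l\le m$, so \cref{lem:vanish-trace-reg-2} applies on each $\Gamma_i\cup\Gamma_4$, $i\in\{1,2,3\}$, with $l=m+1$; \cref{eq:wsp-e-intersection-id-2} and \cref{eq:trace-math-reg-2} give $g_4^{m+1}\in W_{\mathfrak{E},k+1}^{s-m-1-\frac1p,p}(\Gamma_4)$ with norm controlled by $\|F\|_{\Tr_k^{s,p},\partial\reftet}$, and \cref{cor:rmr-gamma4-cont-high} then produces $\mathcal{R}_{m+1,k+1}^{[4]}(g_4^{m+1})$, hence $\mathcal{L}_{k,m+1}^{[4]}(F)$, with all the required properties (interpolation from \cref{eq:rmr4-gamma4-interp}--\cref{eq:rmr4-gamma14-zero}, continuity from \cref{eq:rmr4-gamma4-continuity-high}, polynomial preservation as above). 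Once $m$ reaches $k$ we obtain $\mathcal{L}_k^{[4]}(F)=\mathcal{L}_{k,k}^{[4]}(F)$ satisfying $\partial_{\unitvec{n}}^m\mathcal{L}_k^{[4]}(F)|_{\partial\reftet}=f^m$ for $0\le m\le k$ together with the norm bound of \cref{eq:l4-interp-continuity-high} and the polynomial property.

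The step requiring the most care is the verification that the correction datum $g_4^{m+1}$ belongs to the weighted space $W_{\mathfrak{E},k+1}^{s-m-1-\frac1p,p}(\Gamma_4)$ with $\mathfrak{E}$ comprising \emph{all three} edges of $\Gamma_4$ — equivalently, that the hypotheses of \cref{lem:vanish-trace-reg-2} hold simultaneously for the pairs $(\Gamma_1,\Gamma_4)$, $(\Gamma_2,\Gamma_4)$, $(\Gamma_3,\Gamma_4)$. This is precisely where it matters that $\mathcal{L}_k^{[3]}$ produced matching traces on the whole of $\Gamma_1\cup\Gamma_2\cup\Gamma_3$, so that $G$ vanishes there, and it is why $W_{\mathfrak{E},k+1}^{s,p}$ was designed to split as an intersection of single‑edge spaces via \cref{eq:wsp-e-intersection-id-2}; everything else is a mechanical transcription of the proofs of \cref{lem:l2} and \cref{lem:l3}.
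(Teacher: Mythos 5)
Your proposal is correct and follows essentially the same route as the paper's proof: induction on the internal index $m$, using \cref{lem:l3} for the base data, \cref{lem:vanish-trace-reg-2} on the pairs $(\Gamma_i,\Gamma_4)$, $i=1,2,3$, combined through \cref{eq:wsp-e-intersection-id-2}, and then \cref{cor:rmr-gamma4-cont-high} for well-definedness, the trace identities, the norm bound, and polynomial preservation. Your explicit remark that \cref{eq:rmr4-gamma14-zero} leaves the traces on $\Gamma_1,\Gamma_2,\Gamma_3$ unchanged (valid since $(s,p)\in\mathcal{A}_k$ gives $s-\tfrac1p>k$) is a detail the paper leaves implicit, but the argument is the same.
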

	\begin{proof}
		Let $k \in \mathbb{N}_0$, $(s, p) \in \mathcal{A}_k$, and $f \in \Tr_k^{s, p}(\partial \reftet)$ be given and set $\mathfrak{E} := \{ \gamma_{14}, \gamma_{24}, \gamma_{34} \}$. 
		
		\noindent \textbf{Step 1: $m = 0$.} Thanks to \cref{lem:l3}, the traces $G = (g^0, g^1, \ldots, g^k)$ given by 
		\begin{align*}
			g_i^{l} := f_i^l - \partial_{\unitvec{n}} \mathcal{L}_{k}^{[3]}(F)|_{\Gamma_i}, \qquad 0 \leq l \leq k, \ 1 \leq i \leq 4,
		\end{align*}
		satisfies the hypotheses of \cref{lem:vanish-trace-reg-2} with $(i, j) \in \{  (1, 4), (2,4), (3,4)\}$ and $l=1$. Thanks to \cref{eq:wsp-e-intersection-id-2} and \cref{lem:vanish-trace-reg-2}, $g^0_4 \in W_{\mathfrak{E},k+1}^{ s-\frac{1}{p}, p}(\Gamma_4)$. Consequently, $\mathcal{R}_{0, k+1}^{[4]}(g_4^0)$ is well-defined by \cref{cor:rmr-gamma4-cont-high}, and hence $\mathcal{L}_{k, 0}^{[4]}(F)$ is well-defined with
		\begin{align*}
			\| \mathcal{L}_{k, 0}^{[4]}(F) \|_{s, p, \reftet} \lesssim_{b,k,s,p} \| \mathcal{L}_k^{[3]}(F)\|_{s, p, \reftet} + \inorm{\mathfrak{E},k+1}{f_4^0 - \mathcal{L}_k^{[3]}(F)}{s-\frac{1}{p}, p, \Gamma_4}.
		\end{align*} 
		Applying \cref{eq:l3-interp-continuity-high} and \cref{eq:rmr4-gamma4-interp} gives
		\begin{align*}
			\partial_{\unitvec{n}}^{l} \mathcal{L}_{k, 0}^{[4]}(F)|_{\Gamma_i} = f_i^l, \qquad 0 \leq l \leq k, \ 1 \leq i \leq 3, \qquad \mathcal{L}_{k, 0}^{[4]}(F)|_{\Gamma_3} = f_4^0,
		\end{align*}
		and applying \cref{eq:wsp-e-intersection-id-2}, \cref{eq:l3-interp-continuity-high}, and \cref{eq:trace-math-reg-2} gives
		\begin{align*}
			\| \mathcal{L}_{k, 0}^{[4]}(F) \|_{s, p, \reftet} \lesssim_{b,k,s,p} \|F\|_{\Tr_k^{s, p}, \partial \reftet}.
		\end{align*}
		Moreover, if $F$ satisfies \cref{eq:review:sigmak-cont-poly} for some $N \in \mathbb{N}_0$, then $\mathcal{L}_k^{[3]}(F) \in \mathcal{P}_N(\reftet)$ by \cref{lem:l2}, and so the trace $G$ satisfies \cref{eq:review:sigmak-cont-poly} and $G \in \Tr^{s, p}_k(\partial \reftet)$ for all $(s, p) \in \mathcal{A}_k$ thanks to \cref{lem:review:polys-trace-space}. By \cref{eq:wsp-e-intersection-id-2} and \cref{lem:vanish-trace-reg-2}, $g_4^0 \in W_{\mathfrak{E}}^{k + 1, p}(\Gamma_4)$ for all $p \in (1,\infty)$, and so $D_{\Gamma}^l g^0_4|_{\partial \Gamma_4} = 0$ for $0 \leq l \leq k$. Thanks to \cref{cor:rmr-gamma4-cont-high}, $\mathcal{L}_{k, 0}^{[4]}(F) \in \mathcal{P}_{N}(\reftet)$.
		
		\noindent \textbf{Step 2: Induction on $m$. } Assume that for some $0 \leq m \leq k-1$, $\mathcal{L}^{[4]}_{k, m}(F)$ is well-defined and satisfies
		\begin{subequations}
			\label{eq:proof:l4km-interp}
			\begin{alignat}{2}
				\partial_{\unitvec{n}}^{l} \mathcal{L}_{k, m}^{[4]}(F)|_{\Gamma_i} &= f_i^l, \qquad & & 0 \leq l \leq k, \ 1 \leq i \leq 3, \\
				\partial_{\unitvec{n}}^{l} \mathcal{L}_{k, m}^{[4]}(F)|_{\Gamma_4} &= f_4^l, \qquad & &0 \leq l \leq m,
			\end{alignat}
		\end{subequations}
		and
		\begin{align}
			\label{eq:proof:l4km-cont}
			\| \mathcal{L}_{k, m}^{[4]}(F) \|_{s, p, \reftet} \lesssim_{b,k,s,p} \|F\|_{\Tr_k^{s, p}, \partial \reftet}.
		\end{align}
		Additionally assume that if $F$ satisfies \cref{eq:review:sigmak-cont-poly} for some $N \in \mathbb{N}_0$, then $\mathcal{L}_{k, m}^{[4]}(F) \in \mathcal{P}_N(\reftet)$.
		
		Thanks to \cref{eq:proof:l4km-interp}, the traces $G = (g^0, g^1, \ldots, g^k)$ given by 
		\begin{align*}
			g_i^{l} := f_i^l - \partial_{\unitvec{n}}^{l} \mathcal{L}_{k, m}^{[4]}(F)|_{\Gamma_i}, \qquad 0 \leq l \leq k, \ 1 \leq i \leq 4,
		\end{align*}
		satisfies the hypotheses of \cref{lem:vanish-trace-reg-2} with $(i, j) \in \{  (1, 4), (2,4), (3,4) \}$ and $l=m+1$. Thanks to \cref{eq:wsp-e-intersection-id-2} and \cref{lem:vanish-trace-reg-2}, $g^{m+1}_4 \in W_{\mathfrak{E},k+1}^{s-m-1-\frac{1}{p}, p}(\Gamma_4)$. Consequently, $\mathcal{R}_{m+1, k+1}^{[4]}(g_4^{m+1})$ is well-defined by \cref{cor:rmr-gamma4-cont-high}, and hence $\mathcal{L}_{k, m+1}^{[4]}(F)$ is well-defined with
		\begin{multline*}
			\| \mathcal{L}_{k, m+1}^{[4]}(F) \|_{s, p, \reftet} \lesssim_{b,k,s,p} \| \mathcal{L}_{k,m}^{[4]}(F)\|_{s, p, \reftet} \\
			+ \inorm{\mathfrak{E},k+1}{f_4^{m+1} - \partial_{\unitvec{n}}^{m+1}\mathcal{L}_{k,m}^{[4]}(F)}{s-m-1-\frac{1}{p}, p, \Gamma_4}.
		\end{multline*} 
		Applying \cref{eq:proof:l4km-interp} and \cref{eq:rmr4-gamma4-interp} gives \cref{eq:proof:l4km-interp} for $m+1$, while applying \cref{eq:wsp-e-intersection-id-2}, \cref{eq:proof:l4km-cont}, and \cref{eq:trace-math-reg-2} gives \cref{eq:proof:l4km-cont} for $m+1$.
		
		Moreover, if $F$ satisfies \cref{eq:review:sigmak-cont-poly} for some $N \in \mathbb{N}_0$, then $\mathcal{L}_{k, m}^{[4]}(F) \in \mathcal{P}_N(\reftet)$ by assumption and so the trace $G$ satisfies \cref{eq:review:sigmak-cont-poly} and $G \in \Tr^{s, p}_k(\partial \reftet)$ for all $(s, p) \in \mathcal{A}_k$ thanks to \cref{lem:review:polys-trace-space}. By \cref{eq:wsp-e-intersection-id-2} and \cref{lem:vanish-trace-reg-2}, $g_4^{m+1} \in W_{\mathfrak{E}}^{k + 1, p}(\Gamma_4)$ for all $p \in (1,\infty)$, and so $D_{\Gamma}^l g^{m+1}_4|_{ \partial \Gamma_4 } = 0$ for $0 \leq l \leq k$. Thanks to \cref{cor:rmr-gamma4-cont-high}, $\mathcal{L}_{k, m+1}^{[4]}(F) \in \mathcal{P}_{N}(\reftet)$.
	\end{proof}

	\subsection{Proof of \cref{thm:main-result}}
	\label{sec:proof-thm:main-result}
	
	Let $b \in C_c^{\infty}(\reftri)$ be any smooth function satisfying  $\int_{\reftri} b(\bdd{x}) \d{\bdd{x}} = 1$. Then, $\mathcal{L}_k := \mathcal{L}_k^{[4]}$, where $\mathcal{L}_{k}^{[4]}$ is defined in \cref{eq:l4-def} satisfies the desired properties thanks to \cref{lem:l4}. \hfill \proofbox
	
	\section{Whole space operators}
	\label{sec:whole-space-cont}
	
	In this section, we examine the continuity properties of the following operators, which are the whole space extensions of the lifting operators $\mathcal{E}_k^{[1]}$ \cref{eq:ek1-def}: Given $k \in \mathbb{N}_0$, $\chi \in C^{\infty}_c(\mathbb{R})$, and $b \in C^{\infty}_c(\mathbb{R}^2)$ and a function $f : \mathbb{R}^2 \to \mathbb{R}$, we define the lifting operator $\tilde{\mathcal{E}}_k$ by the rule
	\begin{align}
		\label{eq:tilde-ek-def}
		\tilde{\mathcal{E}}_k(f)(\bdd{x}, z) := \chi(z) z^k \int_{\mathbb{R}^2} b(\bdd{y}) f(\bdd{x} + z\bdd{y}) \d{\bdd{y}} \qquad \forall (\bdd{x}, z) \in \mathbb{R}^2 \times \mathbb{R}.
	\end{align}
	We use the notation $\tilde{\mathcal{E}}_k[\chi, b]$ when we want to make the dependence on $\chi$ and $b$ explicit. The advantage of working with the operator $\tilde{\mathcal{E}}_k$ is that we shall capitalize on the abundance of equivalent $W^{s, p}(U)$-norms when $U$ is all of $\mathbb{R}^d$ or the half-space $\mathbb{R}^{d}_+ = \mathbb{R}^{d-1} \times (0, \infty)$, $d > 1$. In particular, we recall the following norm-equivalence on $W^{s, p}(U)$, $0 < s < 1$, $1 < p < \infty$, with $U = \mathbb{R}^d$ or $\mathbb{R}^d_+$ (see e.g. \cite[Theorems 6.38 \& 6.61]{Leoni23}):	
	\begin{align}
		\label{eq:wsp-tensorized-norm}
		|f|_{s, p, U}^p \approx_{s, p, d}  \sum_{i=1}^{d} \int_{0}^{\infty} \int_{U} \frac{ |f(\bdd{x} + t \unitvec{e}_i) - f(\bdd{x})|^{p} }{ t^{1+sp} } \d{\bdd{x}} \d{t} \qquad \forall f \in W^{s, p}(U). 
	\end{align}
	The main result of this section is the following analogue of \cref{lem:em-gamma1-cont-high}.
	\begin{theorem}
		\label{thm:tilde-em-cont-high}
		Let $\chi \in C^{\infty}_c(\mathbb{R})$ with $\supp \chi \in (-2, 2)$, $b \in C^{\infty}_c(\mathbb{R}^2)$ with $\supp b \subset \reftri$, and $k \in \mathbb{N}_0$ be given. Then, for $(s, p) \in \mathcal{A}_k \cup (k+\frac{1}{2}, 2)$, there holds
		\begin{align}
			\label{eq:tilde-em-cont-high}
			\| \tilde{\mathcal{E}}_k(f) \|_{s, p, \mathbb{R}^3_+} \lesssim_{\chi, b, k, s, p} \|f\|_{s-k-\frac{1}{p}, p, \mathbb{R}^2} \qquad \forall f \in W^{s-k-\frac{1}{p}, p}(\mathbb{R}^2). 
		\end{align}  
	\end{theorem}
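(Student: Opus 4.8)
The plan is to pass to the convolution form of the operator and then estimate the norm of the output piece by piece. Write $s = m + \sigma$ with $m \in \mathbb{N}_0$, $\sigma \in [0,1)$, and set $\mu := s - k - \tfrac{1}{p}$; thus $\mu \ge 0$, with $\mu > 0$ exactly when $(s,p) \in \mathcal{A}_k$ and $\mu = 0$ in the borderline Hilbert case $(s,p) = (k+\tfrac{1}{2},2)$. The substitution $\bdd{w} = \bdd{x} + z\bdd{y}$ in \cref{eq:tilde-ek-def} turns $\tilde{\mathcal{E}}_k$ into the convolution form $\tilde{\mathcal{E}}_k(f)(\bdd{x},z) = \chi(z)\, z^k\, (b_z * f)(\bdd{x})$, where $b_z(\bdd{\xi}) := z^{-2}b(\bdd{\xi}/z)$ is the rescaling of $b$ at scale $z$. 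Since $\supp \chi \subset (-2,2)$, it then suffices to bound $\|D^\alpha \tilde{\mathcal{E}}_k(f)\|_{L^p(\mathbb{R}^3_+)}$ for each multi-index $\alpha$ with $|\alpha| \le m$, together with the Gagliardo $\sigma$-seminorm of $D^\alpha \tilde{\mathcal{E}}_k(f)$ for $|\alpha| = m$ when $\sigma > 0$ (and, in the borderline case, for $|\alpha| = k$ with $\sigma = \tfrac{1}{2}$, $p = 2$); for the latter I would use the tensorized characterization \cref{eq:wsp-tensorized-norm} applied to $h = D^\alpha \tilde{\mathcal{E}}_k(f)$ with exponent $\sigma$, handling the tangential ($\bdd{x}$-) and the normal ($z$-) difference quotients separately.

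The engine of the argument is that differentiation acts very transparently on this form: the chain rule gives $\partial_{x_i}(b_z * g) = z^{-1}((\partial_i b)_z * g)$ and $\partial_z(b_z * g) = z^{-1}(q_z * g)$ with $q := -\nabla\cdot(\bdd{y}b) \in C_c^\infty(\reftri)$, while for sufficiently regular $f$ one may instead keep a derivative on $f$, e.g. $\partial_{x_i}\tilde{\mathcal{E}}_k[\chi,b]f = \tilde{\mathcal{E}}_k[\chi,b](\partial_i f)$. Iterating with the Leibniz rule, every $D^\alpha \tilde{\mathcal{E}}_k(f)$ becomes a finite linear combination of model terms
\begin{align*}
\chi^{(a)}(z)\, z^{\, k - |\alpha| + a}\, (\psi_z * f)(\bdd{x}), \qquad \psi \in C_c^\infty(\reftri), \quad 0 \le a \le |\alpha|,
\end{align*}
where — and this is the decisive bookkeeping — $\psi$ carries a number of vanishing moments at least equal to the number of tangential derivatives that hit $b_z$, and, by splitting off the appropriate number of derivatives onto $f$ when needed, one can always arrange that the total ``order of smoothing available'', counting vanishing moments of $\psi$ plus retained regularity of $f$, together with the $z$-exponent $k - |\alpha| + a$, matches $\mu$. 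I would make this relation explicit, since it is precisely what makes the ensuing $z$-integrals converge.

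Each model term is then estimated by the classical trace-theorem mechanism. Because $\psi_z$ is supported in a ball of radius $\sim z$ and $\psi$ has, say, $n$ vanishing moments, one writes $(\psi_z * f)(\bdd{x}) = \int \psi_z(\bdd{y})\, ( f(\bdd{x}-\bdd{y}) - P_{n-1}[f,\bdd{x}](\bdd{x}-\bdd{y}) )\,\d{\bdd{y}}$, with $P_{n-1}[f,\bdd{x}]$ the degree-$(n-1)$ Taylor polynomial of $f$ at $\bdd{x}$, applies H\"older in $\bdd{y}$ over $B(\bdd{0},cz)$ (producing a factor $z^{-2/p}$), inserts the resulting pointwise bound into the $L^p(\mathbb{R}^3_+)$-integral or into the difference-quotient integral of \cref{eq:wsp-tensorized-norm}, and then exchanges the $z$-integral with the $\bdd{y}$-integral (and, in the fractional case, with the difference variable) by Fubini. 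What remains is an elementary power integral in $z$ — whose convergence is guaranteed precisely by the constraint of the previous step, that is, by $(s-k)p > 1$ or by the Hilbert borderline — multiplied by a Sobolev–Slobodeckij seminorm $|f|_{\theta,p,\mathbb{R}^2}$ with $\theta \le \mu$, which is $\lesssim \|f\|_{\mu,p,\mathbb{R}^2}$. For those $\alpha$ and those terms where the $z$-exponent is already nonnegative, the cruder mollifier bound $\|\psi_z * f\|_{L^p(\mathbb{R}^2)} \lesssim z^{\min(n,\mu)}\|f\|_{\mu,p,\mathbb{R}^2}$ suffices and no Fubini argument is needed.

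The main obstacle is the case analysis for the fractional-seminorm bounds: controlling the $\sigma$-seminorm of the top-order derivatives forces difference quotients in the normal direction $z$ — in which the scale of the convolution kernel itself varies, the genuinely ``hard'' direction of the trace theorem — and one must verify, over all combinations of tangential versus normal differentiations and in every borderline-exponent situation (in particular $(s,p) = (k+\tfrac{1}{2},2)$, where one of the power integrals is exactly critical and is cleanest to treat by a separate Plancherel argument), that the $z$-integral still converges and the exponent lands at $\theta \le \mu$. The regime $m > k$, where the bare $z$-exponent $k - |\alpha| + a$ is negative, causes no additional trouble: then $f$ is correspondingly more regular, and retaining the right number of derivatives on $f$ in the differentiation step absorbs the negative powers; alternatively one may first peel off derivatives via $\partial_{x_i}\tilde{\mathcal{E}}_k[\chi,b]f = \tilde{\mathcal{E}}_{k-1}[\chi,\partial_i b]f$ and $\partial_z\tilde{\mathcal{E}}_k[\chi,b]f = \tilde{\mathcal{E}}_k[\chi',b]f + \tilde{\mathcal{E}}_{k-1}[\chi,q+kb]f$ to reduce to the classical $k=0$ lifting estimate.
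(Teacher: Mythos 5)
Your proposal is correct in outline and rests on the same skeleton as the paper's proof: your differentiation identities (up to signs) $\partial_{x_i}\tilde{\mathcal{E}}_k[\chi,b](f)=-\tilde{\mathcal{E}}_{k-1}[\chi,\partial_i b](f)$ and $\partial_z\tilde{\mathcal{E}}_k[\chi,b](f)=\tilde{\mathcal{E}}_k[\chi',b](f)+\tilde{\mathcal{E}}_{k-1}[\chi,q+kb](f)$, iterated while optionally leaving derivatives on $f$, are exactly the paper's derivative identity \cref{eq:tilde-em-derivative-id} (whose divergence-form kernels supply the vanishing moments you invoke), and your separate Plancherel treatment of $(s,p)=(k+\tfrac{1}{2},2)$ is the paper's \cref{lem:tilde-e0-cont-h12l2}. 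The difference lies in how the estimates are assembled. The paper never bounds fractional seminorms of terms that still carry derivatives of $f$: it proves the integer-order bounds \cref{eq:proof:tilde-em-integer-bound} using an $L^p$ bound for lifted derivatives (\cref{eq:tilde-e0-derivative-cont}, which is a one-vanishing-moment instance of your Taylor-remainder mechanism), treats $k+1/p\le s<k+1$ by taking $|\alpha|=k$ so that only the classical $k=0$ fractional lifting with $1/p<\sigma<1$ is needed, and recovers all noninteger $s\ge k+1$ by interpolation between integer orders. You instead propose direct Taylor-subtraction/H\"older/Fubini estimates of every Gagliardo seminorm, including the $z$-direction difference quotients of the top-order derivatives in all regimes; this is feasible (it amounts to re-proving the classical lifting theorem the paper cites) and yields a self-contained, interpolation-free argument, but it concentrates the work precisely in the case analysis you yourself flag as the main obstacle, whereas the paper's route confines the delicate fractional analysis to the $k=0$ window plus the single Plancherel endpoint. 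One small caution: your crude bound $\|\psi_z*f\|_{p,\mathbb{R}^2}\lesssim z^{\min(n,\mu)}\|f\|_{\mu,p,\mathbb{R}^2}$ relies on the sup-modulus-of-continuity estimate implied by $W^{\mu,p}(\mathbb{R}^2)$ and, as you note, cannot be applied pointwise in $z$ at the critical exponents, so the Fubini form of the argument is genuinely required there.
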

	The proof of \cref{thm:tilde-em-cont-high} appears in \cref{sec:proof-tilde-em-cont-high}.

    \subsection{Continuity of $\tilde{\mathcal{E}}_0$}
    
    We begin by recording the particular case of \cref{thm:tilde-em-cont-high} with $k=0$, which follows from the same arguments as in the proof of \cite[Theorem 9.21]{Leoni23}. 
    \begin{lemma}
    	Let $\chi \in C^{\infty}_c(\mathbb{R})$ with $\supp \chi \in (-2, 2)$ and $b \in C^{\infty}_c(\mathbb{R}^2)$ with $\supp b \subset \reftri$. Then, for $1 < p < \infty$ and $1/p < s < 1$, there holds
    	\begin{align}
    		\label{eq:tilde-e0-cont-lowreg}
    		\| \tilde{\mathcal{E}}_0(f) \|_{s, p, \mathbb{R}^3_+} \lesssim_{\chi, b, s, p} \|f\|_{s-\frac{1}{p}, p, \mathbb{R}^2} \qquad \forall f \in W^{s-\frac{1}{p}, p}(\mathbb{R}^2). 
    	\end{align}
    \end{lemma}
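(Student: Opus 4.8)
The plan is to bound separately the $L^p$-norm of $u := \tilde{\mathcal{E}}_0(f)$ on $\mathbb{R}^3_+$ and its Gagliardo seminorm $|u|_{s,p,\mathbb{R}^3_+}$, each by $\|f\|_{\sigma,p,\mathbb{R}^2}$, where $\sigma := s - \frac1p$; since $1/p < s < 1$ we have $\sigma \in (0,1-\tfrac1p)\subset(0,1)$, so $f\in W^{\sigma,p}(\mathbb{R}^2)$, and as $\|f\|_{p,\mathbb{R}^2}\le\|f\|_{\sigma,p,\mathbb{R}^2}$ these two bounds imply \cref{eq:tilde-e0-cont-lowreg}. The essential preliminary step is the change of variables $\bdd{w}=\bdd{x}+z\bdd{y}$ (valid for $z>0$), which exhibits $\tilde{\mathcal{E}}_0$ as a mollification, $u(\bdd{x},z)=\chi(z)\,(f*b_z)(\bdd{x})$, where $b_z(\bdd{\xi}) := z^{-2}b(-\bdd{\xi}/z)$ is supported in $\{|\bdd{\xi}|<z\}$ (because $\supp b\subset\reftri$) and satisfies $\int_{\mathbb{R}^2}b_z=\int_{\mathbb{R}^2}b$. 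The $L^p$-bound is then immediate from Young's inequality and $\supp\chi\subset(-2,2)$: $\|u\|_{p,\mathbb{R}^3_+}^p=\int_0^2|\chi(z)|^p\|f*b_z\|_{p,\mathbb{R}^2}^p\d{z}\lesssim_{\chi,b}\|f\|_{p,\mathbb{R}^2}^p$.

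For the seminorm I would first apply the tensorized-norm equivalence \cref{eq:wsp-tensorized-norm} on $\mathbb{R}^3_+$, reducing the estimate to controlling the three directional difference quotients $\int_0^\infty\int_{\mathbb{R}^3_+}t^{-1-sp}\,|u((\bdd{x},z)+t\unitvec{e}_i)-u(\bdd{x},z)|^p\d{(\bdd{x},z)}\d{t}$, $i=1,2,3$, and in each I would split the $t$-integration at $t=z$. For the horizontal directions $i\in\{1,2\}$ the increment equals $\chi(z)\,\big((\Delta_t^i f)*b_z\big)(\bdd{x})$ with $\Delta_t^i f := f(\cdot+t\unitvec{e}_i)-f$; when $t\le z$ I would rewrite this as $\chi(z)\,(f*\Delta_t^i b_z)(\bdd{x})$ and use that $\Delta_t^i b_z$ has zero mean, is supported in $\{|\bdd{\eta}|\lesssim z\}$, and satisfies $\|\Delta_t^i b_z\|_{L^1}\lesssim t/z$, giving $\|(\Delta_t^i f)*b_z\|_{p,\mathbb{R}^2}\lesssim (t/z)\,z^{-2}\int_{|\bdd{\eta}|\lesssim z}\|f(\cdot-\bdd{\eta})-f\|_{p,\mathbb{R}^2}\d{\bdd{\eta}}$; when $t>z$ I would use only Young's inequality, $\|(\Delta_t^i f)*b_z\|_{p,\mathbb{R}^2}\le\|b\|_{L^1}\|\Delta_t^i f\|_{p,\mathbb{R}^2}$. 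For the vertical direction $i=3$ I would split $u(\bdd{x},z+t)-u(\bdd{x},z)=[\chi(z+t)-\chi(z)](f*b_{z+t})(\bdd{x})+\chi(z)\big(f*(b_{z+t}-b_z)\big)(\bdd{x})$; the first term is handled by $\chi$ being Lipschitz together with the trivial bound for $f*b_{z+t}$, while for the second, when $t\le z$ I would write $b_{z+t}-b_z=-\int_0^t(z+\tau)^{-1}c_{z+\tau}\d{\tau}$ with $c_\lambda(\bdd{\xi}) := \lambda^{-2}c(-\bdd{\xi}/\lambda)$ and $c := \dive(\bdd{\eta}\,b(\bdd{\eta}))$ a zero-mean kernel (coming from $\partial_\lambda b_\lambda=-\lambda^{-1}c_\lambda$), and when $t>z$ I would use that $b_{z+t}-b_z$ itself has zero mean, with support in $\{|\bdd{\eta}|\lesssim z+t\lesssim t\}$.

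In every case the resulting estimate has the form of a power of $t$ (produced by a zero-mean cancellation) times either $\|\Delta_t^i f\|_{p,\mathbb{R}^2}$ or an average $z^{-2}\int_{|\bdd{\eta}|\lesssim z}\|f(\cdot-\bdd{\eta})-f\|_{p,\mathbb{R}^2}\d{\bdd{\eta}}$ (or its analogue at scale $t$), with the variable $z$ constrained to $(0,\min\{t,2\})$ by $\supp\chi$. The crucial point is that carrying out the $z$-integration over this window, and then the $t$-integration over $(0,z)$ or $(z,2)$, contributes extra powers of $t$; using the identity $sp-1=\sigma p$ and the convergence of $\int_0^z t^{p-1-sp}\d{t}\lesssim z^{p-sp}$ and $\int_0^1 t^{p-1-sp}\d{t}$ (both finite precisely because $s<1$), one reduces everything — after a Jensen step to replace the inner $\bdd{\eta}$-average of $\|\cdot\|_{p,\mathbb{R}^2}$ by the average of $\|\cdot\|_{p,\mathbb{R}^2}^p$ — to $\int_{\mathbb{R}^2}\|f(\cdot-\bdd{\eta})-f\|_{p,\mathbb{R}^2}^p\,|\bdd{\eta}|^{-(2+\sigma p)}\d{\bdd{\eta}}\approx_{\sigma,p}|f|_{\sigma,p,\mathbb{R}^2}^p$, together with harmless contributions $\lesssim\|f\|_{p,\mathbb{R}^2}^p$ arising from the range $t>2$.

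I expect the main obstacle to be precisely this bookkeeping: the crude bounds (Young's/Minkowski together with translation invariance) control $|u|_{s,p,\mathbb{R}^3_+}$ only by $|f|_{s,p,\mathbb{R}^2}$, which requires $1/p$ more smoothness than $f$ possesses, so the missing gain has to be recovered from the $z$-integration restricted to $(0,\min\{t,2\})$ in combination with the zero-mean cancellations of $\Delta_t^i b_z$ and $\partial_\lambda b_\lambda$; tracking the resulting powers of $t$, and exploiting that $s<1$ makes the relevant $t$-integrals near the origin converge, is the delicate part. This is exactly the structure of the proof of \cite[Theorem 9.21]{Leoni23}, which can be followed essentially line for line.
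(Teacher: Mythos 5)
Your proposal is correct and takes essentially the same approach as the paper: the paper disposes of this lemma by simply invoking the argument of \cite[Theorem 9.21]{Leoni23}, and your sketch (the mollification form $u=\chi(z)(f*b_z)$, the tensorized seminorm \cref{eq:wsp-tensorized-norm}, the split of the $t$-integration at $t=z$, and the zero-mean cancellations of $\Delta_t^i b_z$ and $\partial_\lambda b_\lambda$) is a faithful reconstruction of exactly that argument, with the exponent bookkeeping ($sp-1=\sigma p$, convergence of the $t$-integrals since $s<1$) coming out correctly.
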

   	When $p=2$, the above result is also true for $s = 1/2$ as the following lemma shows.
   	\begin{lemma}
   		\label{lem:tilde-e0-cont-h12l2}
   		Let $\chi \in C^{\infty}_c(\mathbb{R})$ with $\supp \chi \in (-2, 2)$ and $b \in C^{\infty}_c(\mathbb{R}^2)$ with $\supp b \subset \reftri$. Then, there holds
   		\begin{align}
   			\label{eq:tilde-e0-cont-h12l2}
   			\| \tilde{\mathcal{E}}_0(f) \|_{\frac{1}{2}, 2, \mathbb{R}^3_+} \lesssim_{\chi, b} \|f\|_{2, \mathbb{R}^2} \qquad \forall f \in L^{2}(\mathbb{R}^2). 
   		\end{align}
	\end{lemma}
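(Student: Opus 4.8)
The plan is to pass to the Fourier side in the tangential variable $\bdd{x}\in\mathbb{R}^2$ and to exploit the equivalent tensorized norm \cref{eq:wsp-tensorized-norm} for $W^{\frac{1}{2},2}(\mathbb{R}^3_+)$. First I would rewrite the operator \cref{eq:tilde-ek-def} in convolution form: for $z>0$ the substitution $\bdd{y}\mapsto(\bdd{w}-\bdd{x})/z$ gives $\tilde{\mathcal{E}}_0(f)(\bdd{x},z)=\chi(z)\,(b_z*f)(\bdd{x})$ with $b_z(\bdd{x}):=z^{-2}b(\bdd{x}/z)$, so that the partial Fourier transform in $\bdd{x}$ is $\widehat{\tilde{\mathcal{E}}_0(f)}(\bdd{\xi},z)=\chi(z)\,\hat b(z\bdd{\xi})\,\hat f(\bdd{\xi})$; in particular $\tilde{\mathcal{E}}_0(f)$ is well defined for $f\in L^2(\mathbb{R}^2)$. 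The structural fact I would use repeatedly is that $\hat b$ is a Schwartz function, since $b\in C_c^\infty(\mathbb{R}^2)$, so $|\hat b(\bdd{\eta})|+|\nabla\hat b(\bdd{\eta})|\lesssim_b(1+|\bdd{\eta}|)^{-M}$ for every $M$. By \cref{eq:wsp-tensorized-norm} with $d=3$, $s=\tfrac{1}{2}$, $p=2$, $U=\mathbb{R}^3_+$, it then suffices to bound $\|\tilde{\mathcal{E}}_0(f)\|_{2,\mathbb{R}^3_+}^2$ together with the three difference-quotient integrals coming from translations in the coordinate directions $\unitvec{e}_1,\unitvec{e}_2,\unitvec{e}_3$, which are equivalent to $|\tilde{\mathcal{E}}_0(f)|_{\frac{1}{2},2,\mathbb{R}^3_+}^2$.

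The $L^2$ bound and the two \emph{tangential} quotient bounds are routine once one applies Plancherel in $\bdd{x}$ and Tonelli. Writing $\bdd{\xi}=(\xi_1,\xi_2)$, the quotient in direction $\unitvec{e}_i$, $i\in\{1,2\}$, reduces to
\[
\int_{\mathbb{R}^2}|\hat f(\bdd{\xi})|^2\Bigl(\int_0^\infty|\chi(z)|^2|\hat b(z\bdd{\xi})|^2\,\d{z}\Bigr)\Bigl(\int_0^\infty\frac{|e^{\mathrm{i}t\xi_i}-1|^2}{t^2}\,\d{t}\Bigr)\d{\bdd{\xi}},
\]
where the inner $t$-integral equals $\pi|\xi_i|$, while rescaling $z=\sigma/|\bdd{\xi}|$ and using the rapid decay of $\hat b$ gives $\int_0^\infty|\chi(z)|^2|\hat b(z\bdd{\xi})|^2\,\d{z}\lesssim_{\chi,b}|\bdd{\xi}|^{-1}$ for $\bdd{\xi}\neq 0$; since $|\xi_i|\le|\bdd{\xi}|$, this term is $\lesssim_{\chi,b}\|f\|_{2,\mathbb{R}^2}^2$. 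The $L^2$ bound is the same computation with $\int_0^\infty|\chi(z)|^2\,\d{z}$ replacing the $z$-integral and no $t$-integral.

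The \emph{normal} ($\unitvec{e}_3$) quotient is the crux and the step I expect to require the most care: estimating the kernel directly via Young's inequality produces an integral that diverges logarithmically as $z\to 0^+$, so one must stay on the Fourier side. After Plancherel in $\bdd{x}$ this term equals $\int_{\mathbb{R}^2}|\hat f(\bdd{\xi})|^2\,Q(\bdd{\xi})\,\d{\bdd{\xi}}$ with
\[
Q(\bdd{\xi}):=\int_0^\infty\int_0^\infty \frac{|g(z+t,\bdd{\xi})-g(z,\bdd{\xi})|^2}{t^2}\,\d{z}\,\d{t},\qquad g(z,\bdd{\xi}):=\chi(z)\hat b(z\bdd{\xi}),
\]
and it remains to show $\sup_{\bdd{\xi}\in\mathbb{R}^2}Q(\bdd{\xi})<\infty$. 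I would split $g(z+t,\bdd{\xi})-g(z,\bdd{\xi})=\chi(z+t)[\hat b((z+t)\bdd{\xi})-\hat b(z\bdd{\xi})]+[\chi(z+t)-\chi(z)]\hat b(z\bdd{\xi})$. The second piece contributes at most $\|\hat b\|_\infty^2$ times the tensorized $H^{1/2}(0,\infty)$-seminorm of $\chi$, which is finite and independent of $\bdd{\xi}$. For the first piece, the decisive point is that the difference-quotient form $\iint_{(0,\infty)^2}t^{-2}|\varphi(z+t)-\varphi(z)|^2\,\d{z}\,\d{t}$ is invariant under the dilation $(z,t)\mapsto(z/|\bdd{\xi}|,t/|\bdd{\xi}|)$ (the domain $(0,\infty)$ being a cone); after this rescaling the first piece is at most $\|\chi\|_\infty^2$ times the same form applied to the one-variable profile $w\mapsto\hat b(w\,\bdd{\xi}/|\bdd{\xi}|)$, which is dominated by its $H^1(\mathbb{R})$-norm and hence bounded uniformly over all directions $\bdd{\xi}/|\bdd{\xi}|$, since $\hat b$ is Schwartz. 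Assembling the four estimates yields \cref{eq:tilde-e0-cont-h12l2}. In short, the argument hinges on replacing physical-space kernel bounds by this frequency-space computation and on the scale invariance of the $H^{1/2}$ seminorm.
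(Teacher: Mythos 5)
Your proposal is correct and follows essentially the same route as the paper's proof: partial Fourier transform in $\bdd{x}$ via the convolution identity $\hat{g}_0(\bdd{\xi},z)=\hat b(z\bdd{\xi})\hat f(\bdd{\xi})$, the tensorized norm \cref{eq:wsp-tensorized-norm}, the same splitting of the difference $\chi(z+t)g(z+t)-\chi(z)g(z)$, and a scaling argument reducing everything to direction-uniform bounds on the Schwartz function $\hat b$. The only (cosmetic) difference is that you treat the term involving $\chi(z+t)-\chi(z)$ on the Fourier side by the half-line $H^{1/2}$-type seminorm of $\chi$ times $\|\hat b\|_{\infty}$, whereas the paper handles it in physical space with Hardy's inequality and Young's inequality for the strip $0<z<2$.
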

	\begin{proof}
		By density, it suffices to consider $f \in C^{\infty}_{c}(\mathbb{R}^2)$. For $k \in \mathbb{N}_0$ define
		\begin{align}
			\label{eq:g-def}
			g_k(\bdd{x}, z) := z^k \int_{\mathbb{R}^2} b(\bdd{y}) f(\bdd{x} + z\bdd{y}) \d{\bdd{y}}, \qquad (\bdd{x}, z) \in \mathbb{R}^3.
		\end{align}	
		
		\noindent \textbf{Step 1: $H^{1/2}(\mathbb{R}^3_+)$ bound for $g_0$. } Thanks to \cref{eq:wsp-tensorized-norm}, there holds
		\begin{align*}
			| g_0 |_{\frac{1}{2}, 2, \mathbb{R}_+^3}^2 \approx \int_{0}^{\infty} |g_0(\cdot, z)|_{\frac{1}{2},2,\mathbb{R}^2}^2 \d{z} + \int_{\mathbb{R}^2} |g_0(\bdd{x}, \cdot)|_{\frac{1}{2}, 2, \mathbb{R}_+}^2 \d{\bdd{x}} =: I_1 + I_2. 
		\end{align*}
		We now follow the steps in the proof of \cite[Theorem 2.2]{Bern95}. Let $\hat{\cdot}$ denote the Fourier transform with respect to the $\bdd{x}$-variable. Then,
		\begin{align*}
			I_1 \approx \int_{0}^{\infty} \int_{\mathbb{R}^2} |\bdd{\xi}| \cdot |\hat{g}_0(\bdd{\xi}, z)|^2 \d{\bdd{\xi}} \d{z} &= \int_{0}^{\infty} \int_{\mathbb{R}^2} |\bdd{\xi}| \cdot |\hat{b}(\bdd{\xi} z) \hat{f}(\bdd{\xi})|^2 \d{\bdd{\xi}} \d{z} \\
			&= \int_{\mathbb{R}^2} \left( |\bdd{\xi}| \cdot \| \hat{b}(\bdd{\xi} \cdot ) \|_{2, \mathbb{R}_+}^2 \right) |\hat{f}(\bdd{\xi})|^2 \d{\bdd{\xi}},
		\end{align*}
		where we used the following convolution identity for $z > 0$:
		\begin{align}
			\label{eq:proof:tilde-e0-rewrite}
			g_0(\bdd{x}, z) = \int_{\mathbb{R}^2} z^{-2} b\left(\frac{\bdd{y} - \bdd{x}}{z}\right) f(\bdd{y}) \d{\bdd{y}} \implies \hat{g}_0(\bdd{\xi}, z) = \hat{b}(\bdd{\xi} z) \hat{f}(\bdd{\xi}).
		\end{align}
		Similarly, there holds
		\begin{align*}
			I_2 \approx \int_{\mathbb{R}^2} |\hat{g}(\bdd{\xi}, \cdot)|^2_{\frac{1}{2}, 2, \mathbb{R}_+} \d{\bdd{\xi}} = \int_{\mathbb{R}^2} | \hat{b}(\bdd{\xi} \cdot ) |_{\frac{1}{2}, 2, \mathbb{R}_+}^2 |\hat{f}(\bdd{\xi})|^2 \d{\bdd{\xi}}.
		\end{align*}		
		Thanks to a change of variables, we obtain
		\begin{align*}
			|\bdd{\xi}| \cdot \| \hat{b}(\bdd{\xi} \cdot ) \|_{2, \mathbb{R}_+}^2 +  | \hat{b}(\bdd{\xi} \cdot ) |_{\frac{1}{2}, 2, \mathbb{R}_+}^2 
			&\leq \sup_{\bdd{\omega} \in \mathbb{S}^2} \left(  |\bdd{\xi}| \cdot \| \hat{b}(|\bdd{\xi}| \bdd{\omega} \cdot ) \|_{2, \mathbb{R}_+}^2 + | \hat{b}(|\bdd{\xi}| \bdd{\omega} \cdot ) |_{\frac{1}{2}, 2, \mathbb{R}_+}^2 \right) \\
			&= \sup_{\bdd{\omega} \in \mathbb{S}^2} \| \hat{b}(\bdd{\omega} \cdot ) \|_{\frac{1}{2}, 2, \mathbb{R}_+}^2,
		\end{align*}
		which is finite since $\hat{b}$ is a Schwartz function, and so $| g_0 |_{\frac{1}{2}, 2, \mathbb{R}_+^3} \lesssim_{b}   \|f\|_{2, \mathbb{R}^2}.$
		
		\textbf{Step 2: $H^{1/2}(\mathbb{R}^3_+)$ bound on $\tilde{\mathcal{E}}_0(f)$. } For $i=1,2$, there holds
		\begin{multline*}
			\int_{0}^{\infty} \int_{\mathbb{R}^3_+} \frac{ |\tilde{\mathcal{E}}_0(f)(\bdd{x} + t\unitvec{e}_i, z) - \tilde{\mathcal{E}}_0(f)(\bdd{x}, z)|^2  }{ t^{2} } \d{\bdd{x}} \d{z} \d{t} \\
			= \int_{0}^{\infty} \int_{\mathbb{R}^3_+} |\chi(z)|^2 \frac{ |g_0(\bdd{x} + t\unitvec{e}_i, z) - g_0(\bdd{x}, z)|^2  }{ t^{2} } \d{\bdd{x}} \d{z} \d{t} \lesssim_{\chi, b} \|f\|_{2, \mathbb{R}^2}.
		\end{multline*}
		where we used \cref{eq:wsp-tensorized-norm} and step 1.	Thanks to the relation
		\begin{multline*}
			|\tilde{\mathcal{E}}_0(f)(\bdd{x}, z + t) - \tilde{\mathcal{E}}_0(f)(\bdd{x}, z)|^2 \lesssim |\chi(z + t)|^2 |g_0(\bdd{x}, z + t) - g_0(\bdd{x}, t)|^2 \\ + |\chi(z+t) - \chi(z)|^2 | g_0(\bdd{x}, z)|^2,
		\end{multline*}
		we obtain
		\begin{multline*}
			\int_{0}^{\infty} \int_{\mathbb{R}^3_+}  \frac{|\tilde{\mathcal{E}}_0(f)(\bdd{x}, z + t) - \tilde{\mathcal{E}}_0(f)(\bdd{x}, z)|^2}{t^2} \d{\bdd{x}} \d{z} \d{t} \\
			 \lesssim \|\chi\|_{\infty, \mathbb{R}_+}^2 |g_0|_{\frac{1}{2}, 2, \mathbb{R}_+^3}^2 + \int_{0}^{\infty} \int_{\mathbb{R}^3_+}  \frac{|\chi(z+t) - \chi(z)|^2}{t^2} |g_0(\bdd{x}, z)|^2 \d{\bdd{x}} \d{z} \d{t}.
		\end{multline*}
		Now, applying Hardy's inequality \cite[Theorem 327]{Hardy52} gives
		\begin{align*}
			\int_{0}^{\infty} \frac{|\chi(z+t) - \chi(z)|^2}{t^2} \d{t} 
			&= \int_{0}^{\infty} \left( \frac{1}{t} \int_{z}^{z+t} \chi'(r) \d{r} \right)^2 \d{t} \\
			&= \int_{0}^{\infty} \left( \frac{1}{t} \int_{0}^{t} \chi'(r+z) \d{r} \right)^2 \d{t} 
			\lesssim \|\chi'(\cdot + z)\|_{2, \mathbb{R}_+}^2,
		\end{align*}
		and so 
		\begin{align*}
			\int_{0}^{\infty} \int_{\mathbb{R}^3_+}  \frac{|\chi(z+t) - \chi(z)|^2}{t^2} |g_0(\bdd{x}, z)|^2 \d{\bdd{x}} \d{z} \d{t} &\lesssim \int_{\mathbb{R}_+^3} \| \chi'(\cdot + z)\|_{2, \mathbb{R}_+}^2 |g_0(\bdd{x}, z)|^2 \d{\bdd{x}} \d{z} \\
			&\leq \|\chi'\|_{2, \mathbb{R}_+}^2 \int_{0}^{2} \int_{\mathbb{R}^2} |g_0(\bdd{x}, z)|^2 \d{\bdd{x}} \d{z}.
		\end{align*}
		Applying Young's inequality to the convolution form of $g_0$,  \cref{eq:proof:tilde-e0-rewrite} then gives
		\begin{align*}
			\int_{0}^{2} \int_{\mathbb{R}^2} |g_0(\bdd{x}, z)|^2 \d{\bdd{x}} \d{z} \leq 2 \| b \|_{1, \mathbb{R}^2}^2 \|f\|_{2, \mathbb{R}^2}^2.
		\end{align*}
		Inequality \cref{eq:tilde-e0-cont-h12l2} now follows on collecting results and applying \cref{eq:wsp-tensorized-norm}.
	\end{proof}
   	We shall also need the stability of the lifting of the derivative of a smooth function.   	
	\begin{lemma}
		Let $\chi \in C^{\infty}_c(\mathbb{R})$ and $b \in C^{\infty}_c(\mathbb{R}^2)$. For $1 < p < \infty$, there holds
		\begin{align}
			\label{eq:tilde-e0-derivative-cont}
			\sum_{i=1}^{2} \| \tilde{\mathcal{E}}_0(\partial_i f) \|_{p, \mathbb{R}^3_+} \lesssim_{\chi, b, p} \|f\|_{1-\frac{1}{p}, p, \mathbb{R}^2} \qquad \forall f \in C_c^{\infty}(\mathbb{R}^2). 
		\end{align}
	\end{lemma}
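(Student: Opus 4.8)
The plan is to recognise $\tilde{\mathcal{E}}_0(\partial_i f)(\cdot, z)$ as the convolution of $f$ with a rescaled, \emph{mean-zero} kernel built from $\partial_i b$, and then to use the cancellation afforded by the vanishing mean to recover precisely the Slobodeckij seminorm of order $1-\tfrac1p$. Since $f \in C_c^{\infty}(\mathbb{R}^2)$, no density step is needed and all manipulations (integration by parts, Fubini) are justified.

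\textbf{Step 1 (convolution form).} Substituting $\bdd{w} = \bdd{x} + z\bdd{y}$ in \cref{eq:tilde-ek-def} with $k = 0$ gives $\tilde{\mathcal{E}}_0(g)(\bdd{x}, z) = \chi(z)\,(g * \psi_z)(\bdd{x})$ with $\psi_z(\bdd{u}) := z^{-2} b(-\bdd{u}/z)$. Taking $g = \partial_i f$ and transferring the derivative onto the kernel, one gets $\tilde{\mathcal{E}}_0(\partial_i f)(\bdd{x}, z) = \chi(z)\,(f * K_z^{(i)})(\bdd{x})$, where
\[
K_z^{(i)}(\bdd{u}) := \partial_i \psi_z(\bdd{u}) = -z^{-3} (\partial_i b)(-\bdd{u}/z).
\]
Because $\partial_i b$ is compactly supported, $\int_{\mathbb{R}^2} K_z^{(i)} = 0$, $\|K_z^{(i)}\|_{1, \mathbb{R}^2} = z^{-1} \|\partial_i b\|_{1, \mathbb{R}^2}$, $\|K_z^{(i)}\|_{\infty, \mathbb{R}^2} \lesssim_b z^{-3}$, and $\supp K_z^{(i)} \subseteq \{ |\bdd{u}| \le R z\}$ for some $R$ depending only on $b$.

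\textbf{Step 2 (exploit cancellation).} Using $\int_{\mathbb{R}^2} K_z^{(i)} = 0$ we write
\[
(f * K_z^{(i)})(\bdd{x}) = \int_{\mathbb{R}^2} K_z^{(i)}(\bdd{x} - \bdd{y})\,\bigl(f(\bdd{y}) - f(\bdd{x})\bigr)\,\d{\bdd{y}}.
\]
Applying H\"older's inequality with respect to the measure $|K_z^{(i)}(\bdd{x}-\bdd{y})|\,\d{\bdd{y}}$, whose total mass is $\lesssim_b z^{-1}$, and then using the $L^\infty$ bound and support bound on $K_z^{(i)}$, yields
\[
|(f * K_z^{(i)})(\bdd{x})|^p \lesssim_{b,p} z^{-(p-1)}\, z^{-3} \int_{|\bdd{x}-\bdd{y}| \le R z} |f(\bdd{y}) - f(\bdd{x})|^p \,\d{\bdd{y}}.
\]

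\textbf{Step 3 (integrate in $z$ and $\bdd{x}$).} Inserting this into $\|\tilde{\mathcal{E}}_0(\partial_i f)\|_{p, \mathbb{R}^3_+}^p = \int_0^\infty \int_{\mathbb{R}^2} |\chi(z)|^p |(f * K_z^{(i)})(\bdd{x})|^p \,\d{\bdd{x}}\,\d{z}$ and applying Fubini, the resulting $z$-integral for fixed $\bdd{x}, \bdd{y}$ runs over $z \gtrsim |\bdd{x}-\bdd{y}|$; since $\chi$ is bounded and the power $z^{-(p-1)-3} = z^{-p-2}$ satisfies $-p-2 < -1$, one gets $\int |\chi(z)|^p z^{-p-2}\,\d{z} \lesssim_{\chi, p} |\bdd{x}-\bdd{y}|^{-(p+1)}$. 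Therefore
\[
\|\tilde{\mathcal{E}}_0(\partial_i f)\|_{p, \mathbb{R}^3_+}^p \lesssim_{\chi, b, p} \iint_{\mathbb{R}^2 \times \mathbb{R}^2} \frac{|f(\bdd{x}) - f(\bdd{y})|^p}{|\bdd{x}-\bdd{y}|^{p+1}}\,\d{\bdd{x}}\,\d{\bdd{y}} = |f|_{1-\frac1p, p, \mathbb{R}^2}^p,
\]
since $\bigl(1 - \tfrac1p\bigr)p + 2 = p+1$. Summing over $i = 1, 2$ and using $|f|_{1-\frac1p,p,\mathbb{R}^2} \le \|f\|_{1-\frac1p, p, \mathbb{R}^2}$ gives \cref{eq:tilde-e0-derivative-cont}.

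The only delicate point is the bookkeeping of powers of $z$: the scaling of the differentiated kernel produces a non-integrable factor $z^{-1}$, and it is exactly the mean-zero cancellation (replacing $f(\bdd{y})$ by $f(\bdd{y}) - f(\bdd{x})$) that converts this into the \emph{critical} fractional seminorm $|f|_{1-1/p, p, \mathbb{R}^2}$; one must also verify that the support restriction $|\bdd{x}-\bdd{y}| \lesssim z$ is tuned so that the $z$-integral converges at its lower endpoint and produces the homogeneity $|\bdd{x}-\bdd{y}|^{-(p+1)}$.
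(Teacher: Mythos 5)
Your proof is correct. It rests on the same core mechanism as the paper's — integrate by parts to put the derivative on $b$, which produces a dangerous factor $z^{-1}$, and then use a cancellation property of $\partial_i b$ to replace $f$ by a difference of $f$, after which H\"older and Fubini give the critical fractional seminorm — but the execution is genuinely different in two respects. First, you use the full two-dimensional mean-zero property $\int_{\mathbb{R}^2}\partial_i b = 0$ of the rescaled kernel $K_z^{(i)}$, whereas the paper only uses the one-dimensional cancellation $\int_{\mathbb{R}}(\partial_i b)(\bdd{y})\,\mathrm{d}y_i = 0$, so that the difference of $f$ it produces is purely in the $\unitvec{e}_i$ direction. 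Second, and as a consequence, your $z$-integration (using the support condition $|\bdd{x}-\bdd{y}|\lesssim z$, the $L^1$/$L^\infty$ bounds on $K_z^{(i)}$, and $\int_{c}^{\infty}z^{-p-2}\,\mathrm{d}z \approx c^{-(p+1)}$) lands directly on the isotropic Gagliardo seminorm $|f|_{1-\frac1p,p,\mathbb{R}^2}$, while the paper's change of variables $t = y_i z$ lands on the one-directional difference-quotient functional and then invokes the tensorized norm equivalence \cref{eq:wsp-tensorized-norm}. Your route therefore avoids any appeal to \cref{eq:wsp-tensorized-norm} and is self-contained modulo the standard definition of the Slobodeckij seminorm; the paper's route keeps everything in the directional framework it reuses throughout \cref{sec:whole-space-cont}, which is why it is phrased that way there. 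Your power bookkeeping ($z^{-(p-1)-3}=z^{-p-2}$, and $(1-\tfrac1p)p+2 = p+1$) and the justification of the lower limit of the $z$-integral via the kernel support are all in order, and boundedness of $\chi$ suffices exactly as you use it.
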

	\begin{proof}
		Let $1 < p < \infty$, $f \in C_c^{\infty}(\mathbb{R}^2)$, and $i \in \{1,2\}$. Integrating by parts gives
		\begin{align*}
			\tilde{\mathcal{E}}_0(\partial_i f)(\bdd{x}, z) 
			=  \chi(z) \int_{\mathbb{R}^2} b(\bdd{y}) (\partial_i f)(\bdd{x} + z \bdd{y}) \d{\bdd{y}}
			&= \frac{\chi(z)}{z} \int_{\mathbb{R}^2} b(\bdd{y}) \partial_{y_i} \{ f(\bdd{x} + z \bdd{y}) \} \d{\bdd{y}} \\ 
			&= -\frac{\chi(z)}{z} \int_{\mathbb{R}^2} (\partial_i b)(\bdd{y}) f(\bdd{x} + z \bdd{y}) \d{\bdd{y}}.
		\end{align*}
		Since $b \in C^{\infty}_c(\mathbb{R}^2)$, there holds $\int_{\mathbb{R}} (\partial_i b)(\bdd{y}) \d{y_i} = 0$, and so
		\begin{align*}
			\tilde{\mathcal{E}}_0(\partial_i f)(\bdd{x}, z) 
			&= \chi(z) \int_{\mathbb{R}^2} (\partial_i b)(\bdd{y}) \frac{ f(\bdd{x} + z(\bdd{y} - y_i \bdd{e}_i)) - f(\bdd{x} + z\bdd{y}) }{ z } \d{\bdd{y}}.
		\end{align*}
		Applying H\"{o}lder's inequality, we obtain
		\begin{align*}
			|\tilde{\mathcal{E}}_0(\partial_i f)(\bdd{x}, z)|^p \lesssim_{\chi, b, p}  \int_{\mathbb{R}^2} \left|y_i (\partial_i b)(\bdd{y})\right| \left| \frac{ f(\bdd{x} + z(\bdd{y} - y_i \bdd{e}_i)) - f(\bdd{x} + z\bdd{y}) }{ y_i z  } \right|^p \d{\bdd{y}}.
		\end{align*}
		Integrating over $\mathbb{R}^3_+$ then gives
		\begin{align*}
			\| \tilde{\mathcal{E}}_0(\partial_i f) \|_{p, \mathbb{R}^3_+}^p
			&\lesssim_{\chi, b, p} \int_{\mathbb{R}^5_+} \left|y_i (\partial_i b)(\bdd{y})\right| \left| \frac{ f(\bdd{x} + z(\bdd{y} - y_i \bdd{e}_i)) - f(\bdd{x} + z\bdd{y}) }{ y_i z  } \right|^p \d{\bdd{y}} \d{\bdd{x}} \d{z} \\
			&\leftstackrel{\tilde{x}_j = x_j + (1-\delta_{ij}) y_j z }{=} \int_{\mathbb{R}^5_+} \left|y_i (\partial_i b)(\bdd{y})\right| \left| \frac{ f(\tilde{\bdd{x}}) - f( \tilde{\bdd{x}} +  y_i z \bdd{e}_i ) }{ y_i z } \right|^p \d{\bdd{y}} \d{\tilde{\bdd{x}}} \d{z} \\
			&\leftstackrel{t = y_i z}{\lesssim}  \| \partial_i b\|_{1, \mathbb{R}^2} \int_{\mathbb{R}^3_+} \frac{ |f(\tilde{\bdd{x}}) - f(\tilde{\bdd{x}} + t\bdd{e}_i)|^p }{ t^p  }  \d{\tilde{\bdd{x}}} \d{t}.
		\end{align*}
		Inequality \cref{eq:tilde-e0-derivative-cont} now follows from summing over $i$ and applying \cref{eq:wsp-tensorized-norm}.
	\end{proof}
    
    \subsection{Continuity of $\tilde{\mathcal{E}}_k$}
    
    We now show how the continuity of the operator $\tilde{\mathcal{E}}_{0}$ can be used to deduce the continuity of $\tilde{\mathcal{E}}_k$ for $k \in \mathbb{N}_0$. We begin with a partial result.
    \begin{lemma}
   		Let $\chi \in C^{\infty}_c(\mathbb{R})$ and $b \in C^{\infty}_c(\mathbb{R}^2)$ be as in \cref{thm:tilde-em-cont-high} and $k \in \mathbb{N}_0$ be given. Then, for $1 < p < \infty$, there holds
    	\begin{align}
    		\label{eq:tilde-em-lp-continuity}
   			\| \tilde{\mathcal{E}}_k(f) \|_{p, \mathbb{R}^3_+} \lesssim_{\chi, b, k, p} \|f\|_{p, \mathbb{R}^2} \qquad \forall f \in C^{\infty}_c(\mathbb{R}^2),
    	\end{align}
    	and for $1/p < s < 1$ or $(s, p) = (\frac{1}{2}, 2)$, there holds
    	\begin{align}
    		\label{eq:tilde-em-cont-high-partial}
    		\| \tilde{\mathcal{E}}_k(f) \|_{s, p, \mathbb{R}^3_+} \lesssim_{\chi, b, k, s, p} \|f\|_{s-\frac{1}{p}, p, \mathbb{R}^2} \qquad \forall f \in C^{\infty}_c(\mathbb{R}^2). 
    	\end{align}  
   	\end{lemma}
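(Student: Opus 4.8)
The plan is to reduce the lemma entirely to the already-established case $k=0$. The key observation is that the monomial factor $z^k$ in the definition \cref{eq:tilde-ek-def} can be absorbed into the temporal cutoff: if we set $\tilde\chi(z) := z^k \chi(z)$, then $\tilde\chi \in C^\infty_c(\mathbb{R})$ with $\supp\tilde\chi \subseteq \supp\chi \subset (-2,2)$, and directly from the definitions
\[
\tilde{\mathcal{E}}_k[\chi, b](f) = \tilde{\mathcal{E}}_0[\tilde\chi, b](f) \qquad \text{on } \mathbb{R}^2 \times \mathbb{R} .
\]
Thus every continuity estimate established for $\tilde{\mathcal{E}}_0$ with an arbitrary admissible cutoff transfers verbatim to $\tilde{\mathcal{E}}_k$, with the implicit constant now additionally depending on $k$ through $\tilde\chi$.

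With this reduction, the high-regularity bound \cref{eq:tilde-em-cont-high-partial} is immediate: for $1/p < s < 1$ it follows from \cref{eq:tilde-e0-cont-lowreg} applied with the cutoff $\tilde\chi$ in place of $\chi$, and for the endpoint $(s,p) = (\tfrac12,2)$ it follows in the same way from \cref{lem:tilde-e0-cont-h12l2}. For the $L^p$-bound \cref{eq:tilde-em-lp-continuity} I would argue directly, since none of the recorded $\tilde{\mathcal{E}}_0$-results is phrased as an $L^p\to L^p$ statement: for $z > 0$, the change of variables $\bdd{w} = \bdd{x} + z\bdd{y}$ recasts $\tilde{\mathcal{E}}_k(f)(\cdot, z)$ as the convolution $\tilde\chi(z)\,(K_z * f)$ with $K_z(\bdd{u}) := z^{-2} b(-\bdd{u}/z)$, exactly as in the identity \cref{eq:proof:tilde-e0-rewrite}; since $\|K_z\|_{1,\mathbb{R}^2} = \|b\|_{1,\mathbb{R}^2}$, Young's inequality gives $\|\tilde{\mathcal{E}}_k(f)(\cdot, z)\|_{p,\mathbb{R}^2} \le |\tilde\chi(z)|\,\|b\|_{1,\mathbb{R}^2}\,\|f\|_{p,\mathbb{R}^2}$, and raising to the $p$-th power and integrating over $z \in (0,\infty)$ (the integrand being supported in $(0,2)$) yields \cref{eq:tilde-em-lp-continuity} with constant $\|\tilde\chi\|_{p,\mathbb{R}_+}\,\|b\|_{1,\mathbb{R}^2}$.

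There is no real obstacle here: the entire content of the lemma lies in recognizing that $z^k\chi(z)$ is again an admissible cutoff, so the case $k \geq 1$ is literally the case $k = 0$ for a modified $\chi$. The only point requiring a word of care is that the support condition $\supp\chi \subset (-2,2)$ is preserved under multiplication by $z^k$ (multiplication can only shrink the support), so that the hypotheses of \cref{eq:tilde-e0-cont-lowreg} and \cref{lem:tilde-e0-cont-h12l2} remain in force for $\tilde\chi$.
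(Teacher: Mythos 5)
Your proposal is correct and follows essentially the same route as the paper: absorb $z^k$ into the cutoff via $\tilde\chi(z)=z^k\chi(z)$ so that $\tilde{\mathcal{E}}_k[\chi,b]=\tilde{\mathcal{E}}_0[\tilde\chi,b]$, then invoke \cref{eq:tilde-e0-cont-lowreg} and \cref{lem:tilde-e0-cont-h12l2} for the fractional bounds. The only cosmetic difference is that you obtain the $L^p$ estimate from the convolution identity \cref{eq:proof:tilde-e0-rewrite} via Young's inequality, whereas the paper applies Jensen's inequality to the same identity — an equivalent elementary step.
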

   	\begin{proof}
   		Let $k \in \mathbb{N}_0$, $1 < p < \infty$, and $f \in C^{\infty}_c(\mathbb{R}^2)$. Since the function $\tilde{\chi} := z^k \chi \in C^{\infty}_c(\mathbb{R})$ with $\supp \tilde{\chi} = \supp \chi$, we have $\tilde{\mathcal{E}}_{k}[\chi, b](f) = \tilde{\mathcal{E}}_0[\tilde{\chi}, b](f)$. Consequently, it suffices to prove \cref{eq:tilde-em-lp-continuity,eq:tilde-em-cont-high-partial} in the case $k=0$. To this end, we apply Jensen's inequality to the identity \cref{eq:proof:tilde-e0-rewrite} to obtain
   		\begin{align*}
   			\| \tilde{\mathcal{E}}_0(f) \|_{p, \mathbb{R}^3_+}^p 
   			&\leq \|f\|_{p, \mathbb{R}^2}^p \int_{\mathbb{R}} |\chi(z)|^p \left( \int_{\mathbb{R}^2} \left| z^{-2} b\left(\frac{\bdd{x}}{z}\right) \right| \d{\bdd{x}} \right)^p \d{z} \\
   			&= \|f\|_{p, \mathbb{R}^2}^p \|b\|_{1, \mathbb{R}^2}^p \|\chi(z)\|_{1, \mathbb{R}}^p,
   		\end{align*}
   		and \cref{eq:tilde-em-lp-continuity} follows. Inequality \cref{eq:tilde-em-cont-high-partial} for $1/p < s < 1$ is an immediate consequence of \cref{eq:tilde-e0-cont-lowreg}, while the case  $(s, p) = (\frac{1}{2}, 2)$ follows from \cref{lem:tilde-e0-cont-h12l2}.
   	\end{proof}	
    
    For more precise results, we shall show the effect of taking partial derivatives of $\tilde{\mathcal{E}}_k(f)$ on the index $k$ and on the function $f$. To this end, we recall an integration-by-parts formula for tensors. Given two $d$-dimensional tensors $S$ and $T$, let $S : T$ denote the usual tensor contraction
    \begin{align*}
    	S : T := S_{i_1 i_2 \cdots i_d} T_{i_1 i_2 \cdots i_d},
    \end{align*}
    where we are using Einstein summation notation. Given a $d$-dimensional tensor $S$ with $d \geq 0$ and $k \geq 0$, let $D^k S$ denote the $k$-th derivative tensor of $S$:
    \begin{align*}
    	(D^k S)_{i_1 i_2 \cdots i_{d+k}} := \partial_{i_{d+1}} \partial_{i_{d+2}} \cdots \partial_{i_{d+k}} S_{i_1 i_2 \cdots i_d},
    \end{align*}
    and let $\dive S$ denote the $(d-1)$-dimensional tensor given by
    \begin{align*}
    	(\dive S)_{i_1 i_2 \cdots i_{d-1}} := \partial_{j} S_{i_1 i_2 \cdots i_{d-1} j},
    \end{align*}
    while $\dive^k S$, $0 \leq k \leq d$, denotes $k$ applications of $\dive$ to $S$. With this notation, we have the following integration by parts formula for symmetric, smooth, compactly supported tensors $S$ and $T$ of dimension $d$ and $0 \leq k \leq d$, respectively:
    \begin{align*}
    	\int_{\mathbb{R}^2} S : D^{d-k} T  \d{\bdd{x}} = (-1)^{d-k} \int_{\mathbb{R}^2} \dive^{d-k} S : T \d{\bdd{x}}. 
    \end{align*}
    With this notation in hand, we have the following identity that shows that the derivatives of $\tilde{\mathcal{E}}_k(f)$ are linear combinations of liftings of derivatives of $f$.
    \begin{lemma}
    	Let $\chi \in C^{\infty}_c(\mathbb{R})$, $b \in C^{\infty}_c(\mathbb{R}^2)$, and $k \in \mathbb{N}_0$ be given.
    	For all $\alpha \in \mathbb{N}_0^{3}$ and $f \in C^{\infty}_c(\mathbb{R}^2)$, there holds
    	\begin{multline}
    		\label{eq:tilde-em-derivative-id}
    		D^{\alpha} \tilde{\mathcal{E}}_k(f)(\bdd{x}, z) \\ = \sum_{i=0}^{\alpha_3} \chi_i(z) z^{\max\{k+i-|\alpha|, 0\}} \int_{\mathbb{R}^2} B_{ki\alpha}(\bdd{y}) : (D^{\max\{ |\alpha|-k-i, 0\}} f)(\bdd{x} + z\bdd{y}) \d{\bdd{y}} 
    	\end{multline}
    	for suitable $\chi_i \in C_c^{\infty}(\mathbb{R})$ and $\max\{ |\alpha|-i-k, 0\}$-dimensional tensors $B_{ki\alpha}$ with entries in  $C^{\infty}_c(\mathbb{R}^2)$.
    \end{lemma}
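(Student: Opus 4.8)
The plan is to prove \cref{eq:tilde-em-derivative-id} by induction on $|\alpha|$. The base case $|\alpha|=0$ is immediate from \cref{eq:tilde-ek-def}: the sum has the single term $i=0$ with $\chi_0=\chi$ and the $0$-dimensional tensor $B_{k00}=b$, since $\max\{k,0\}=k$ and $\max\{-k,0\}=0$. For the inductive step I would assume the identity for a given $\alpha\in\mathbb{N}_0^3$ and differentiate the expansion term-by-term (differentiation under the integral being justified since $f\in C_c^\infty(\mathbb{R}^2)$ and the $\bdd y$-integrands are smooth and compactly supported) to obtain it for $\alpha+\unitvec{e}_j$, $j\in\{1,2,3\}$. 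Writing $p_i:=\max\{k+i-|\alpha|,0\}$ and $m_i:=\max\{|\alpha|-k-i,0\}$ for the $z$-power and the order of the derivative tensor on $f$ in the $i$-th term, the key structural observation is the dichotomy $p_i m_i=0$: in every term either there is no power of $z$ or there is no derivative on $f$. The engine for moving derivatives is the elementary identity $\partial_{y_l}[f(\bdd x+z\bdd y)]=z\,(\partial_l f)(\bdd x+z\bdd y)$, which together with integration by parts in $\bdd y$ trades one derivative of $f$ for one derivative of the (compactly supported) kernel at the cost of one power of $z$.

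For a tangential derivative ($j\in\{1,2\}$), differentiating the $i$-th term under the integral leaves $\chi_i$ and the $z$-power untouched and inserts a $\partial_j$ on $f$. If $p_i=0$ I would absorb this $\partial_j$ into the derivative tensor, writing $B_{ki\alpha}:(\partial_j D^{m_i}f)$ as $\tilde B:(D^{m_i+1}f)$ for a suitable $(m_i+1)$-dimensional tensor $\tilde B$ (entries taken from those of $B_{ki\alpha}$ together with $0$); since $m_i+1=\max\{|\alpha|+1-k-i,0\}$ and $0=\max\{k+i-|\alpha|-1,0\}$ this is exactly the $i$-th term of the expansion for $\alpha+\unitvec{e}_j$. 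If instead $p_i>0$ (so $m_i=0$), I would integrate by parts in $\bdd y$ to reach $-\chi_i(z)z^{p_i-1}\int(\partial_j B_{ki\alpha})(\bdd y)\,f(\bdd x+z\bdd y)\,\d{\bdd y}$, which is the $i$-th term for $\alpha+\unitvec{e}_j$ because $p_i-1=\max\{k+i-|\alpha|-1,0\}$; summing over $i$ finishes this case.

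For the normal derivative ($j=3$) the index set grows from $\{0,\dots,\alpha_3\}$ to $\{0,\dots,\alpha_3+1\}$. The product rule splits the derivative of the $i$-th term into three pieces: (a) the derivative hits $\chi_i$, giving a term of the required form that I reindex to the target index $i+1$ — the exponents match since $p_i=\max\{k+(i+1)-|\alpha|-1,0\}$ and $m_i=\max\{|\alpha|+1-k-(i+1),0\}$; (b) the derivative hits $z^{p_i}$, which is nonzero only when $p_i\ge1$ (hence $m_i=0$) and lowers the $z$-power to $p_i-1$, staying at index $i$; (c) the derivative hits $f(\bdd x+z\bdd y)$, producing $\sum_l y_l(\partial_l D^{m_i}f)(\bdd x+z\bdd y)$, i.e.\ the contraction of $D^{m_i+1}f$ against $\bdd y$. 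Piece (c) is then handled exactly as in the tangential case: if $p_i=0$, keep the derivative on $f$ and absorb $\bdd y$ into the tensor $B_{ki\alpha}\otimes\bdd y$ (whose entries, being products of compactly supported functions with the coordinate functions $y_l$, lie in $C_c^\infty(\mathbb{R}^2)$); if $p_i\ge1$, integrate by parts in $\bdd y$ to move the derivative onto $\bdd y\,B_{ki\alpha}$, lowering the $z$-power by one. Finally I would collect, for each target index $i'$, the contributions of (a) with $i=i'-1$, (b) with $i=i'$, and (c) with $i=i'$; the dichotomy guarantees these all carry the same $z$-power $\max\{k+i'-|\alpha|-1,0\}$ and the same $f$-order $\max\{|\alpha|+1-k-i',0\}$, so their sum defines the required $\chi_{i'}\in C_c^\infty(\mathbb{R})$ and $C_c^\infty(\mathbb{R}^2)$-valued tensor $B_{ki'(\alpha+\unitvec{e}_3)}$, completing the induction.

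I expect the only genuine difficulty to be the bookkeeping: tracking how each differentiation shifts the index $i$, the power of $z$, and the order of the derivative tensor on $f$, and in particular ensuring that integration by parts in $\bdd y$ is invoked exactly when a strictly positive power of $z$ is present to absorb the $z^{-1}$ it generates. The invariant $p_i m_i=0$ built into the statement is precisely what makes this automatic — it is the property the induction propagates — and once it is in place every individual computation (differentiation under the integral, one integration by parts, a tensor relabelling) is routine.
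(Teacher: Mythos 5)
Your induction is a genuinely different route from the paper's own proof: the paper computes pure $z$-derivatives and pure $\bdd{x}$-derivatives of the $\chi$-free kernel $g_k$ from \cref{eq:g-def} (using the same mechanism you use, namely trading one derivative of $f$ for one derivative of the kernel against one power of $z$ via integration by parts in $\bdd{y}$), and only at the very end applies the Leibniz rule in $z$ to $\chi(z)g_k$, which is what produces the sum over $i$ with $\chi_i$ a multiple of $\chi^{(i)}$. Your individual computations — the dichotomy $p_im_i=0$, the absorption of an extra derivative or of the factor $\bdd{y}$ into the tensor when the $z$-power is zero, the integration by parts when it is positive, and the exponent bookkeeping in pieces (a), (b), (c) — are all correct.

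The gap is in the final collection step of the $j=3$ case. For an intermediate target index $1\le i'\le\alpha_3$ you must add contribution (a), which carries the prefactor $\chi_{i'-1}'(z)$, to contributions (b) and (c), which carry the prefactor $\chi_{i'}(z)$. A sum of the form $\chi_a(z)\,z^{p}\int B_a:(D^{m}f)(\bdd{x}+z\bdd{y})\d{\bdd{y}}+\chi_b(z)\,z^{p}\int B_b:(D^{m}f)(\bdd{x}+z\bdd{y})\d{\bdd{y}}$ with $\chi_a$, $\chi_b$ linearly independent is not, in general, of the form $\chi(z)\,z^{p}\int B:(D^{m}f)(\bdd{x}+z\bdd{y})\d{\bdd{y}}$ for a single pair $(\chi,B)$, so ``their sum defines the required $\chi_{i'}$ and $B_{ki'(\alpha+\unitvec{e}_3)}$'' does not follow, and the inductive hypothesis in the exact one-product-per-index form of the lemma is not re-established. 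The repair is easy and should be made explicit: strengthen the induction hypothesis to record that each $\chi_i$ may be taken to be a constant multiple of $\chi^{(i)}$ (true in the base case, untouched by tangential derivatives, and propagated by your step, since then $\chi_{i'-1}'$ and $\chi_{i'}$ are both multiples of $\chi^{(i')}$); with that, all contributions at index $i'$ share the common factor $\chi^{(i')}(z)\,z^{\max\{k+i'-|\alpha|-1,0\}}$ and you simply sum the tensors. Alternatively, factor $\chi$ out from the start, run your induction on the $\chi$-free kernel $g_k$ (where no index $i$ appears), and apply the Leibniz rule in $z$ once at the end — which is precisely the paper's argument.
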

    \begin{proof}
    	Let $f \in C^{\infty}_c(\mathbb{R}^2)$ and let $g_k$ be defined as in \cref{eq:g-def}. For integers $m \geq k$, there holds
    	\begin{align*}
    		\partial_z^m g_k(\bdd{x}, z) &= \sum_{j=0}^{k} c_{kmj} z^{k-j} \int_{\mathbb{R}^2} b(\bdd{y}) (D^{m-j} f)(\bdd{x} + z\bdd{y}) : \bdd{y}^{\otimes m-j} \d{\bdd{y}} \\
    		&=  \sum_{j=0}^{k} c_{kmj} \int_{\mathbb{R}^2}  (b(\bdd{y}) \bdd{y}^{\otimes m-j}) : D^{k-j}_{\bdd{y}} \{ (D^{m-k} f)(\bdd{x} + z\bdd{y}) \} \d{\bdd{y}} \\
    		&=  \int_{\mathbb{R}^2} \left\{  \sum_{j=0}^{k} (-1)^{k-j} c_{kmj} \dive^{k-j} (b(\bdd{y})  \bdd{y}^{\otimes m-j}) \right\} : (D^{m-k} f)(\bdd{x} + z\bdd{y})  \d{\bdd{y}} \\
    		&=: \int_{\mathbb{R}^2} B_{km}(\bdd{y}) :  (D^{m-k} f)(\bdd{x} + z\bdd{y})  \d{\bdd{y}},
    	\end{align*}
    	where $c_{kmj}$ are suitable constants, $\bdd{y}^{\otimes n}$ is the tensor product of $n$ copies of $\bdd{y}$, and $D_{\bdd{y}}$ denotes the derivative operator with respect to $\bdd{y}$. For $0 \leq m < k$, there holds
    	\begin{align*}
    		\partial_z^m g_k(\bdd{x}, z) &= \sum_{j=0}^{m} c_{kmj} z^{k-j} \int_{\mathbb{R}^2} b(\bdd{y}) (D^{m-j} f)(\bdd{x} + z\bdd{y}) : \bdd{y}^{\otimes m-j} \d{\bdd{y}} \\
    		&=  \sum_{j=0}^{m} c_{kmj} z^{k-m} \int_{\mathbb{R}^2}  (b(\bdd{y}) \bdd{y}^{\otimes m-j}) : D^{m-j}_{\bdd{y}} \{ f(\bdd{x} + z\bdd{y}) \}  \d{\bdd{y}} \\
    		&= z^{k-m} \int_{\mathbb{R}^2} \left\{  -\sum_{j=0}^{m} (-1)^{m-j} c_{kmj} \dive^{m-j} (b(\bdd{y})  \bdd{y}^{\otimes m-j}) \right\} : f(\bdd{x} + z\bdd{y})  \d{\bdd{y}} \\
    		&=: z^{k-m} \int_{\mathbb{R}^2} B_{km}(\bdd{y}) : f(\bdd{x} + z\bdd{y})  \d{\bdd{y}}.
    	\end{align*}
    	Consequently, there holds
    	\begin{align*}
    		\partial_z^m g_k(\bdd{x}, z) &= z^{\max\{ k-m, 0 \}} \int_{\mathbb{R}^2} B_{km}(\bdd{y}) : (D^{\max\{ m-k, 0\}} f)(\bdd{x} + z\bdd{y}) \d{\bdd{y}} \qquad \forall m \in \mathbb{N}_0.
    	\end{align*}
    	Now let $\beta \in \mathbb{N}_0^2$ with $|\beta| \geq k$. Then, there holds
    	\begin{multline*}
    		D_{\bdd{x}}^{\beta} g_k(\bdd{x}, z) = z^{k} \int_{\mathbb{R}^2} b(\bdd{y}) (D^{\beta} f)(\bdd{x} + z\bdd{y}) \d{\bdd{y}} = \int_{\mathbb{R}^2} b(\bdd{y}) D^{\tilde{\beta}}_{\bdd{y}} \{ (D^{\beta-\tilde{\beta}}f)(\bdd{x} + z\bdd{y}) \} \d{\bdd{y}} \\
    		= (-1)^{|\tilde{\beta}|}\int_{\mathbb{R}^2} (D^{\tilde{\beta}} b)(\bdd{y}) (D^{\beta-\tilde{\beta}}f)(\bdd{x} + z\bdd{y}) \d{\bdd{y}} =: \int_{\mathbb{R}^2} {B}_{k \beta}(\bdd{y}) (D^{\beta-\tilde{\beta}}f)(\bdd{x} + z\bdd{y}) \d{\bdd{y}}, 
    	\end{multline*}
    	where $D_{\bdd{x}}^{\beta} := \partial_{x_1}^{\beta_1} \partial_{x_2}^{\beta_2}$ and $\tilde{\beta} \in \mathbb{N}_0^2$ is any fixed multi-index such that $|\tilde{\beta}| = k$ and $\beta - \tilde{\beta} \in \mathbb{N}_0^2$. Similar arguments show that for $\beta \in \mathbb{N}_0^2$ with $|\beta| < k$, there holds
    	\begin{align*}
    		D_{\bdd{x}}^{\beta} g_k(\bdd{x}, z) &= (-1)^{|\beta|} z^{k-|\beta|} \int_{\mathbb{R}^2} (D^{\beta} b)(\bdd{y}) f(\bdd{x} + z\bdd{y}) \d{\bdd{y}} \\
    		&=: z^{k-|\beta|} \int_{\mathbb{R}^2} {B}_{k \beta}(\bdd{y}) f(\bdd{x} + z\bdd{y}) \d{\bdd{y}}.
    	\end{align*}	
    	Collecting results, for any $\alpha \in \mathbb{N}_0^3$, there holds
    	\begin{align*}
    		D^{\alpha} g_k(\bdd{x}, z) = z^{\max\{k-|\alpha|, 0\}} \int_{\mathbb{R}^2} B_{k\alpha}(\bdd{y}) : (D^{\max\{ |\alpha|-k, 0\}} f)(\bdd{x} + z\bdd{y}) \d{\bdd{y}} 
    	\end{align*}
    	for suitable $\max\{ |\alpha|-k, 0\}$-dimensional tensors $B_{k\alpha}$ with entries in  $C^{\infty}_c(\mathbb{R}^2)$. Equality \cref{eq:tilde-em-derivative-id} now follows from the product rule.
    \end{proof}
	
	\subsection{Proof of \cref{thm:tilde-em-cont-high}}
	\label{sec:proof-tilde-em-cont-high}
    	Let $k \in \mathbb{N}_0$, $1 < p < \infty$, and $f \in C^{\infty}_c(\mathbb{R}^2)$. For $\alpha \in \mathbb{N}_0^3$, \cref{eq:tilde-em-derivative-id} gives
    	\begin{align}
    		\label{eq:proof:deriv-tildeem-norms}
    		\| D^{\alpha} \tilde{\mathcal{E}}_k(f) \|_{\sigma, p, \mathbb{R}^3_+} \leq \sum_{ \substack{0 \leq i \leq \alpha_3 \\ \beta \in \mathbb{N}_0^2 \\ |\beta| = \max\{|\alpha|-k-i, 0\} } } \| \tilde{\mathcal{E}}_{ \max\{k+i-|\alpha|, 0\} }[\chi_i, b_{ki\beta}](D^{\beta} f) \|_{\sigma, p, \mathbb{R}^3_+},
    	\end{align}
    	where $\chi_i \in C^{\infty}_c(\mathbb{R})$ and $b_{ki\beta} \in C^{\infty}_c(\mathbb{R}^2)$ are suitable functions depending on $\chi$ and $b$ respectively and $0 \leq \sigma < 1$. 
    	
    	\noindent \textbf{Step 1: $L^p$ bounds on derivatives. } For $k + i - |\alpha| \geq 0$ (so that $|\beta| = 0$), \cref{eq:tilde-em-lp-continuity} gives
    	\begin{align*}
    		\|\tilde{\mathcal{E}}_{ \max\{k+i-|\alpha|, 0\} }[\chi_i, b_{ki\beta}](D^{\beta} f) \|_{p, \mathbb{R}^3_+} = \|\tilde{\mathcal{E}}_{ k+i-|\alpha| }[\chi_i, b_{ki\beta}](f) \|_{p, \mathbb{R}^3_+} \lesssim_{\chi, b, k, p} \|f\|_{p, \mathbb{R}^2}.
    	\end{align*}
    	For $k + i - |\alpha| < 0$ (so that $|\alpha| \geq k$ and $|\beta| \geq 1$), there exists $j \in \{1, 2\}$ such that $\beta_j \geq 1$, and so we apply \cref{eq:tilde-e0-derivative-cont} to obtain
    	\begin{align*}
    		\|\tilde{\mathcal{E}}_{  0 }[\chi_i, b_{ki\beta}](D^{\beta} f) \|_{p, \mathbb{R}^3_+} 
    		= \|\tilde{\mathcal{E}}_{ 0 }[\chi_i, b_{ki\beta}](\partial_j D^{\beta - \unitvec{e}_j} f) \|_{p, \mathbb{R}^3_+} 
    		&\lesssim_{\chi, b, k, p} \| D^{\beta - \unitvec{e}_j} f \|_{1-\frac{1}{p}, p, \mathbb{R}^2} \\
    		 &\leq \|f\|_{|\alpha|-k-i-\frac{1}{p}, p, \mathbb{R}^2}.
    	\end{align*}
    	Consequently, for all $f \in C^{\infty}_c(\mathbb{R}^2)$, there holds
    	\begin{align}
    		\label{eq:proof:tilde-em-integer-bound}
    		\| \tilde{\mathcal{E}}_k(f) \|_{m, p, \mathbb{R}^3_+} \lesssim_{\chi, b, k, m, p} \|f\|_{m-k-\frac{1}{p}, p, \mathbb{R}^2}, \qquad m \in \{k+1, k+2, \ldots\}.
    	\end{align}
    	By density, \cref{eq:proof:tilde-em-integer-bound} holds for all $f \in W^{m-k-1/p, p}(\mathbb{R}^2)$.
    	
    	\noindent \textbf{Step 2: The case $s \geq k+1$. } Inequality \cref{eq:tilde-em-cont-high} for real  $s \geq k+1$ with $(s, p) \in \mathcal{A}_k$ follows from \cref{eq:proof:tilde-em-integer-bound} using a standard interpolation argument.
    	
    	\noindent \textbf{Step 3: The case $k+1/p \leq s < k+1$. } For $s = k + \sigma$, where $1/p < \sigma < 1$ or $(\sigma, p) = (1/2, 2)$, we take $|\alpha| = k$ in \cref{eq:proof:deriv-tildeem-norms} and apply \cref{eq:tilde-em-cont-high-partial} to obtain
    	\begin{align*}
    		|\tilde{\mathcal{E}}_k(f)|_{s, p, \mathbb{R}^3_+} \leq  \sum_{ 0 \leq i \leq \alpha_3   } \| \tilde{\mathcal{E}}_{ i }[\chi_i, b_{ki\beta}](f) \|_{\sigma, p, \mathbb{R}^3_+} \lesssim_{\chi, b, k, p, s} \|f\|_{\sigma-\frac{1}{p}, p, \mathbb{R}^2},
    	\end{align*}
    	which completes the proof. \proofbox

    \section{Weighted $L^p$ continuity of whole-space operators}
    \label{sec:whole-space-lp-cont}
    
    In the previous section in \cref{thm:tilde-em-cont-high}, we established that the lifting operators $\tilde{\mathcal{E}}_k$ are continuous from $W^{s-k-1/p, p}(\mathbb{R}^2)$ to $W^{s, p}(\mathbb{R}^3_+)$ provided that $s > k + 1/p$. We now turn to the stability of the operator $\tilde{\mathcal{E}}_k$ with respect to lower-order Sobolev spaces. In particular, we seek to obtain bounds on $\| \tilde{\mathcal{E}}_k(f)\|_{s, p, \mathcal{O}_1}$ for $0 \leq s < k+1/p$, where $\mathcal{O}_1 := (0, \infty)^3 \supset \reftet$ is the first octant. It turns out that one suitable space for the lifted function $f$ is a weighted $L^p$ space. Let $\mathcal{Q}_1 = (0, \infty)^2 \supset \reftri$ denote the first quadrant and let $\rho \in L^{\infty}(\mathcal{Q}_1)$ be a weight function that satisfying $\rho > 0$ almost everywhere. Then, for $1 < p < \infty$, define
    \begin{align}
    	\label{eq:weighted-lp-space-def}
    	L^p(\mathcal{Q}_1; \rho \d{\bdd{x}}) := \left\{ f \text{ measurable} : \int_{\mathcal{Q}_1} |f(\bdd{x})|^p \rho(\bdd{x}) \d{\bdd{x}} < \infty \right\}.
   	\end{align}
   	The weights that will appear in our estimates are powers of $\omega_1$ and $\omega_2$ defined in \cref{eq:omegai-def}. In particular, the main result of this section is as follows.	
	\begin{theorem}
		\label{thm:tilde-em-cont-low}
		Let  $\chi \in C^{\infty}_c(\mathbb{R})$ and $b \in C^{\infty}_c(\mathbb{R}^2)$ be as in \cref{thm:tilde-em-cont-high} and $k \in \mathbb{N}_0$ be given. For $1 < p < \infty$ and $0 \leq s < k + 1/p$, there holds
		\begin{align}
			\label{eq:tilde-em-cont-low}
			\| \tilde{\mathcal{E}}_k(f)\|_{s, p, \mathcal{O}_1} \lesssim_{\chi, b, k, s, p} \| \omega_1^{\frac{1}{p} + k - s} f\|_{p, \mathcal{Q}_1} \qquad \forall f \in L^p(\mathcal{Q}_1, \omega_1^{1 + (k-s)p} \d{\bdd{x}}).
		\end{align}
	\end{theorem}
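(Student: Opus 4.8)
\emph{Proof plan.} Write $s=m+\sigma$ with $m\in\mathbb{N}_0$ and $\sigma\in[0,1)$; since $s<k+1/p\le k+1$ we have $m\le k$. By density of $C_c^\infty(\mathcal{Q}_1)$ in $L^p(\mathcal{Q}_1,\omega_1^{1+(k-s)p}\d{\bdd{x}})$ (the weight is bounded) it suffices to prove the bound for smooth $f$. Expanding $\|\tilde{\mathcal{E}}_k(f)\|_{s,p,\mathcal{O}_1}$ into integer seminorms of order $\le m$ and the fractional seminorm of the $m$-th derivatives, every multi-index $\alpha$ occurring has $|\alpha|\le k$, so identity \cref{eq:tilde-em-derivative-id} writes $D^\alpha\tilde{\mathcal{E}}_k(f)$ as a finite sum of terms $\tilde{\mathcal{E}}_{k'}[\chi_i,B_i](f)$ with $k'=k+i-|\alpha|\ge k-m\ge 0$, $\chi_i\in C_c^\infty(\mathbb{R})$, $\supp\chi_i\subseteq(-2,2)$, and \emph{scalar} functions $B_i\in C_c^\infty(\mathbb{R}^2)$ whose supports — obtained from $b$ by differentiation and multiplication by polynomials — lie in $\supp b\subset\reftri$. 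Because $\omega_1\le 1$ and the weight exponents so produced are all at least $\tfrac1p+k-s$ (so that $\|\omega_1^{(\,\cdot\,)}f\|_{p,\mathcal{Q}_1}\le\|\omega_1^{\frac1p+k-s}f\|_{p,\mathcal{Q}_1}$), it suffices to prove: for every $k'\in\mathbb{N}_0$, every $0\le\sigma<1$ with $\sigma<k'+\tfrac1p$, and every $\chi,b$ as in the statement,
\begin{equation}
	\label{eq:plan-reduced}
	\|\tilde{\mathcal{E}}_{k'}[\chi,b](f)\|_{\sigma,p,\mathcal{O}_1}\;\lesssim_{\chi,b,k',\sigma,p}\;\|\omega_1^{\frac1p+k'-\sigma}f\|_{p,\mathcal{Q}_1}.
\end{equation}

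For $\sigma=0$ this follows by a direct computation: Jensen's inequality against $|b(\bdd{y})|\d{\bdd{y}}$, Fubini, and the substitution $\bdd{w}=\bdd{x}+z\bdd{y}$ reduce the left-hand side to
\begin{equation*}
	\|\tilde{\mathcal{E}}_{k'}(f)\|_{p,\mathcal{O}_1}^p\lesssim_{\chi,b}\int_{\reftri}|b(\bdd{y})|\int_{\mathcal{Q}_1}|f(\bdd{w})|^p\Bigl(\int_0^{\min\{2,\,w_1/y_1,\,w_2/y_2\}}z^{k'p}\,\d{z}\Bigr)\d{\bdd{w}}\,\d{\bdd{y}},
\end{equation*}
and the decisive point is that $\supp b$ is a \emph{compact} subset of the open triangle $\reftri$, hence $y_1\ge\delta$ on $\supp b$ for some $\delta\in(0,\tfrac13)$; the inner integral is therefore $\lesssim_{\delta,k',p}\min\{2,w_1/\delta\}^{k'p+1}\lesssim\omega_1(\bdd{w})^{1+k'p}$, which is precisely the weight $\omega_1^{1/p+k'}$. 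The same observation gives the \emph{locality} used below: for $(\bdd{x},z)\in\mathcal{O}_1$, $\tilde{\mathcal{E}}_{k'}(f)(\bdd{x},z)$ depends only on $f|_{\{w_1\in[\delta(x_1+z),\,x_1+z]\}}$ — i.e. on $f$ sampled, by averages on the scale $z\le(x_1+z)/\delta$, only near distance $x_1+z$ from the face $\{x_1=0\}$.

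For $0<\sigma<1$ I would argue by a dyadic decomposition near that face. Let $U_\ast:=\{(\bdd{x},z)\in\mathcal{O}_1:x_1+z\ge\tfrac12\}$ and $B_j:=\{(\bdd{x},z)\in\mathcal{O}_1:2^{-j-1}<x_1+z<2^{-j}\}$, $j\ge 0$. On $U_\ast$ the relevant values of $f$ lie in $\{w_1\gtrsim_\delta 1\}$, where $\omega_1\approx 1$, so it is enough to invoke the scale-uniform low-regularity bound
\begin{equation}
	\label{eq:plan-uniform}
	\|\tilde{\mathcal{E}}_{k'}[\chi',b](g)\|_{\sigma,p,\,\mathbb{R}^2\times(0,\infty)}\lesssim\|g\|_{p,\mathbb{R}^2},\qquad 0\le\sigma<k'+\tfrac1p,
\end{equation}
which holds uniformly for $\chi'$ in a bounded subset of $C_c^\infty((-2,2))$ and follows from the smoothing estimate $\|\tilde{\mathcal{E}}_{k'}[\chi',b](g)(\cdot,z)\|_{\sigma,p,\mathbb{R}^2}\lesssim z^{k'-\sigma}\|g\|_{p,\mathbb{R}^2}$ together with $\int_0^2 z^{(k'-\sigma)p}\,\d{z}<\infty$ and a Hardy-type estimate in the $z$-direction as in the proof of \cref{lem:tilde-e0-cont-h12l2}. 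On each $B_j$ rescale by $\Phi_j(\bdd{x},z)=2^j(\bdd{x},z)$, which carries $B_j$ onto the fixed region $\widehat{B}:=\{1/2<\widehat{x}_1+\widehat{z}<1\}\cap(0,\infty)^3$; the rescaled function $\widehat{u}_j:=\tilde{\mathcal{E}}_{k'}(f)\circ\Phi_j^{-1}$ equals $2^{-jk'}\tilde{\mathcal{E}}_{k'}[\psi\chi(2^{-j}\,\cdot\,),b]\bigl(f(2^{-j}\,\cdot\,)\bigr)$ on $\widehat{B}$, where $\psi\in C_c^\infty((-2,2))$ is a fixed cutoff equal to $1$ on $(-1,1)$ (legitimate since $\widehat{z}<1$ on $\widehat{B}$) and $\{\psi\chi(2^{-j}\,\cdot\,)\}_j$ is bounded in $C_c^\infty((-2,2))$. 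Since $\widehat{w}_1=\widehat{x}_1+\widehat{z}\widehat{y}_1\in(c_\delta,1)$ on $\widehat{B}$, cutting off $f(2^{-j}\,\cdot\,)$ to $\{c_\delta/2<w_1<2\}$ and applying \cref{eq:plan-uniform} yields $\|2^{jk'}\widehat{u}_j\|_{\sigma,p,\widehat{B}}\lesssim\|f(2^{-j}\,\cdot\,)\|_{p,\{c_\delta/2<w_1<2\}}$; undoing the dilations (a routine scaling computation) gives $\|\tilde{\mathcal{E}}_{k'}(f)\|_{\sigma,p,B_j}\lesssim 2^{-j(\frac1p+k'-\sigma)}\|f\|_{p,\{w_1\approx 2^{-j}\}}$, and summing $p$-th powers over $j$ reproduces $\|\omega_1^{\frac1p+k'-\sigma}f\|_{p,\mathcal{Q}_1}^p$. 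Finally, to assemble $\|\tilde{\mathcal{E}}_{k'}(f)\|_{\sigma,p,\mathcal{O}_1}^p$ from the shell norms one controls pairs of points in non-adjacent shells $B_j,B_{j'}$ using $|\bdd{X}-\bdd{Y}|\gtrsim 2^{-\min\{j,j'\}}$ and the fact that $B_{j'}$ is $2^{-j'}$-thin in the $(x_1,z)$-variables, which after summing two geometric series bounds that contribution by the weighted $L^p$ estimate of the previous paragraph.

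The main obstacle is the fractional estimate itself. Unlike the case $\sigma=0$, it cannot be reduced by Jensen/Fubini to a \emph{norm} of $f$ — the naive bound for an $x_1$-directional difference of $\tilde{\mathcal{E}}_{k'}(f)$ controls only a \emph{seminorm} of $f$ — so one genuinely needs the scale localization $z\lesssim x_1+z$ furnished by $\supp b\Subset\reftri$. The two delicate ingredients are (i) the scale-uniform bound \cref{eq:plan-uniform}, in which the hypothesis $\sigma<k'+1/p$ is exactly what makes $\int_0 z^{(k'-\sigma)p}\,\d{z}$ converge, and (ii) the bookkeeping in the gluing step, namely verifying that the cross-shell pieces of the Gagliardo seminorm are absorbed by the weighted $L^p$ bound and that the powers of $2^j$ produced by the rescalings reassemble precisely into $\omega_1^{\frac1p+k'-\sigma}$.
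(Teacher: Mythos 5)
Your argument is correct in substance, but it follows a genuinely different route from the paper's. The paper also starts from the derivative identity \cref{eq:tilde-em-derivative-id}, but then proceeds by (i) a weighted $L^p$ bound (\cref{eq:weighted-3d-lp-gen-t}, proved with Hardy's inequality) for $s=0$, (ii) interpolation of weighted $L^p$ spaces for non-integer $s\le k$, and (iii) for the range $k<s<k+1/p$ a reduction to $\tilde{\mathcal{E}}_0$ and the key weighted estimate \cref{lem:tilde-e0-cont-low}, which is proved by directly estimating the tensorized difference-quotient norm \cref{eq:wsp-o1-norm-equiv} via the translation bounds \cref{eq:weighted-3d-x-dir-gen,eq:weighted-3d-y-dir} and the auxiliary integral inequality \cref{eq:inverse-weighted-z-by-weighted-lp}. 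You instead exploit the locality forced by $\supp b\Subset\reftri$ (so that $w_1\approx x_1+z$ on the kernel's support), prove the $\sigma=0$ case by a clean Jensen/Fubini/change-of-variables computation, and obtain the fractional case by a dyadic decomposition in $x_1+z$, rescaling each shell to unit size and invoking a scale-invariant unweighted bound $\tilde{\mathcal{E}}_{k'}:L^p(\mathbb{R}^2)\to W^{\sigma,p}(\mathbb{R}^3_+)$ for $\sigma<k'+1/p$ (which does follow from the $z$-smoothing estimate together with the Hardy-type treatment of the moving cutoff as in \cref{lem:tilde-e0-cont-h12l2}); the weight $\omega_1^{\frac1p+k-s}$ then emerges from the scaling factors, and your reduction also dispenses with interpolation entirely. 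What your approach buys is conceptual transparency about where the exponent comes from and independence from weighted-space interpolation; what the paper's buys is staying at the level of elementary integral inequalities whose ingredients (\cref{eq:weighted-3d-lp-gen-t}, \cref{eq:inverse-weighted-z-by-weighted-lp}) are reused elsewhere. Two points you should tighten when writing this up: state and use your uniform half-space bound only for $\sigma<\min\{1,k'+1/p\}$ (that is all you need, and for $\sigma\ge1$ the statement would require derivative estimates), and in the gluing step work with overlapping (enlarged) shells so that pairs of nearby points lying in adjacent shells are covered by the local rescaled estimates; only pairs in well-separated shells can be dispatched by $|\bdd{X}-\bdd{Y}|\gtrsim 2^{-\min\{j,j'\}}$ combined with the shell-localized weighted $L^p$ bound. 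With those routine adjustments the plan goes through.
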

	The proof proceeds in several steps and appears in \cref{sec:proof-tilde-em-cont-low}.
	
	\subsection{Auxiliary results}
	
	We begin by recording a number of technical lemmas. Throughout the rest of the section we use the notation $\fint_{\mathcal{O}} f d{\bdd{x}} := |\mathcal{O}|^{-1} \int _{\mathcal{O}} f d{\bdd{x}}$.
	\begin{lemma}
		For $1 \leq p < \infty$ and $0 < h < \infty$, there holds
		\begin{align}
			\label{eq:work:lp-norm-average-1d}
			\int_{0}^{\infty} \left| \fint_{x}^{x+h} f(y) \d{y} \right|^p \d{x} \leq h^{p-1} \int_{0}^{\infty} |f(x)|^p \d{x} \qquad \forall f \text{ measurable}.
		\end{align}
	\end{lemma}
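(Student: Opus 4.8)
The plan is to prove \cref{eq:work:lp-norm-average-1d} by an entirely elementary two‑step argument: a pointwise (in $x$) application of H\"older's inequality in the variable $y$, followed by Tonelli's theorem to carry out the integration in $x$. Writing $\fint_x^{x+h} f(y)\,\d{y} = h^{-1}\int_x^{x+h} f(y)\,\d{y}$, I would first apply H\"older's inequality on the interval $(x,x+h)$ with the conjugate exponents $p$ and $p' = p/(p-1)$ (the case $p=1$ being just the triangle inequality), exploiting that the constant function $1$ has $\|1\|_{L^{p'}(x,x+h)}^{p} = h^{p-1}$. This produces, for each $x > 0$, the pointwise bound
\[
	\left| \fint_x^{x+h} f(y)\,\d{y} \right|^p \le \frac{1}{h^p}\left( \int_x^{x+h} |f(y)|\,\d{y} \right)^p \le \frac{h^{p-1}}{h^p}\int_x^{x+h} |f(y)|^p\,\d{y} = \frac{1}{h}\int_x^{x+h} |f(y)|^p\,\d{y}.
\]

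Next I would integrate this inequality over $x \in (0,\infty)$. The function $(x,y) \mapsto |f(y)|^p\,\mathbf{1}_{\{x < y < x+h\}}$ is nonnegative and measurable, so Tonelli's theorem justifies interchanging the order of integration with no integrability hypothesis on $f$ — both sides of \cref{eq:work:lp-norm-average-1d} are allowed to be $+\infty$. Since $\{x \ge 0 : x < y < x+h\} = (\max\{0,y-h\},\,y)$ has Lebesgue measure $\min\{y,h\}$, this yields
\[
	\int_0^\infty \left| \fint_x^{x+h} f(y)\,\d{y} \right|^p \d{x} \le \frac{1}{h}\int_0^\infty |f(y)|^p\,\min\{y,h\}\,\d{y} \le \int_0^\infty |f(y)|^p\,\d{y},
\]
which is \cref{eq:work:lp-norm-average-1d} (one bounds $\min\{y,h\}\le h$ in the last step, or retains the factor $\min\{y,h\}$ when a sharper form is needed later).

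I do not expect any genuine obstacle here: the statement is a standard averaging/Hardy‑type inequality, and the argument above is essentially self‑contained. The only points that require a little care are the bookkeeping of the powers of $h$ (a factor $h^{p-1}$ from H\"older on an interval of length $h$, the normalising $h^{-p}$ hidden in the definition of $\fint$, and the factor $h$ from the measure of the overlap region in the $x$–integral), and using Tonelli rather than Fubini so that merely ``$f$ measurable'' suffices. If one prefers, the whole computation can also be phrased for $\int_x^{x+h} f(y)\,\d y$ in place of $\fint_x^{x+h} f(y)\,\d y$, in which case the H\"older step directly exhibits the factor $h^{p-1}$ before the $x$–integration.
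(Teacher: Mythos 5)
Your argument is correct and is essentially the paper's own proof: H\"older's inequality on $(x,x+h)$ followed by an interchange of the order of integration (Tonelli), with the overlap set $\{x:\ x<y<x+h\}$ of measure $\min\{y,h\}$.

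One point is worth flagging. What you actually establish is the bound with constant $1$, namely $\int_0^\infty\bigl|\fint_x^{x+h}f(y)\,\d{y}\bigr|^p\d{x}\le\int_0^\infty|f(y)|^p\d{y}$, and your closing claim that this ``is'' \cref{eq:work:lp-norm-average-1d} is accurate only for $h\ge 1$. For $h<1$ your estimate does not imply the stated one --- but nothing could: with the average $\fint$ on the left, the factor $h^{p-1}$ on the right is false for small $h$ (take $f=\mathbf{1}_{(0,1)}$ and $p=2$: the left side is at least $1-h$ while the right side equals $h$). Your constant-$1$ bound is the correct form of the lemma; the $h^{p-1}$ in the statement traces back to a bookkeeping slip in the paper's own proof, where H\"older's inequality yields the factor $h^{-1}$ (that is, $h^{-p}\cdot h^{p-1}$) in front of $\int_0^\infty\int_x^{x+h}|f(y)|^p\d{y}\,\d{x}$ rather than the $h^{p-2}$ written there. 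None of the subsequent applications are affected, since there $h=z\in(0,2)$ and only a constant depending on $p$ is required (e.g.\ the $2^{p-1}$ appearing in the proof of \cref{eq:weighted-3d-lp-gen-t} and the $\lesssim_p$ in \cref{eq:inverse-weighted-z-by-weighted-lp}), which your version supplies with room to spare.
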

	\begin{proof}
		The result follows on applying H\"{o}lder's inequality and changing the order of integration:
		\begin{align*}
			\int_{0}^{\infty} \left| \frac{1}{h} \int_{x}^{x+h} f(y) \d{y} \right|^p \d{x}  &\leq h^{p-2} \int_{0}^{\infty} \int_{x}^{x+h} |f(y)|^p \d{y} \\ 
			&= h^{p-2} \left( \int_{0}^{h} \int_{0}^{y} + \int_{h}^{\infty} \int_{y-h}^{y} \right) |f(y)|^p \d{x} \d{y}.
		\end{align*}
	\end{proof}
	
	\begin{lemma}
		Let $1 < p < \infty$, $0 < s < 1$, and $0 \leq a \leq \infty$. Then, there holds
		\begin{align}
			\label{eq:fraction-by-weighted-derivative-interval}
			\int_{(0, a)^2} \frac{|f(x) - f(y)|^p}{|x-y|^{1+sp}} \d{x} \d{y} \lesssim_{s, p} \int_{(0, a)} x^{(1-s)p} |f'(x)|^p \d{x} \qquad \forall f \in W^{1, p}_{\mathrm{loc}}((0, a)).
		\end{align}
	\end{lemma}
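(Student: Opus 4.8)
The plan is to reduce to smooth $f$ by a routine density/truncation argument (if the right-hand side of \eqref{eq:fraction-by-weighted-derivative-interval} is infinite there is nothing to prove), and then to exploit a single split of the domain of integration together with two one-dimensional Hardy inequalities. By symmetry,
\begin{align*}
	\int_{(0,a)^2}\frac{|f(x)-f(y)|^p}{|x-y|^{1+sp}}\,dx\,dy = 2\int_0^a\int_0^x\frac{|f(x)-f(y)|^p}{(x-y)^{1+sp}}\,dy\,dx,
\end{align*}
and I would split the inner $y$-integral at $y=x/2$, so that the region $\{0<y<x<a\}$ becomes the union of $R_1:=\{x/2<y<x\}$ (where $x$ and $y$ are comparable) and $R_2:=\{0<y\le x/2\}$ (where $x-y\asymp x$). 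The only structural fact used throughout is $|f(x)-f(y)|\le\int_y^x|f'(t)|\,dt$ for $y<x$.

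On $R_1$, substituting $u=x-y\in(0,x/2)$ and $t=x-v$ turns the inner integral into $\int_0^{x/2}u^{-1-sp}\bigl(\int_0^u|f'(x-v)|\,dv\bigr)^p\,du$, and the classical Hardy inequality \cite[Theorem 327]{Hardy52} (applied to $|f'(x-\cdot)|$ extended by zero) bounds this by $C_{s,p}\int_{x/2}^x(x-t)^{(1-s)p-1}|f'(t)|^p\,dt$. Integrating in $x\in(0,a)$, interchanging the order of integration, and using $\int_t^{2t}(x-t)^{(1-s)p-1}\,dx=\frac{t^{(1-s)p}}{(1-s)p}$ (here $(1-s)p>0$) yields the bound $C_{s,p}\int_0^a t^{(1-s)p}|f'(t)|^p\,dt$ for the $R_1$-contribution. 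On $R_2$ we have $x-y>x/2$, so the $R_2$-contribution is at most $2^{1+sp}\int_0^a x^{-1-sp}\int_0^{x/2}|f(x)-f(y)|^p\,dy\,dx$; writing $|f(x)-f(y)|\le\int_y^{x/2}|f'|+\int_{x/2}^x|f'|$ and $(a+b)^p\le2^{p-1}(a^p+b^p)$ splits this into two pieces.

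For the piece carrying $\int_{x/2}^x|f'|$ (which is independent of $y$), Jensen's inequality gives $\bigl(\int_{x/2}^x|f'|\bigr)^p\le(x/2)^{p-1}\int_{x/2}^x|f'|^p$, after which interchanging the order of integration and using $\int_t^{2t}x^{(1-s)p-1}\,dx\lesssim_{s,p}t^{(1-s)p}$ again produces $\lesssim_{s,p}\int_0^a t^{(1-s)p}|f'(t)|^p\,dt$; one could alternatively route this through \eqref{eq:work:lp-norm-average-1d}. For the piece carrying $\int_y^{x/2}|f'|$, I would apply the "dual" one-dimensional Hardy inequality
\begin{align*}
	\int_0^{b}\Bigl(\int_y^{b}g(t)\,dt\Bigr)^p\,dy \le p^p\int_0^{b}t^p\,g(t)^p\,dt,
\end{align*}
which follows by integrating $\frac{d}{dy}\bigl(y(\int_y^b g)^p\bigr)$ over $(0,b)$ and applying H\"older, with $b=x/2$; then interchange the order of integration and use $\int_{2t}^a x^{-1-sp}\,dx\le\frac{(2t)^{-sp}}{sp}$, so that the weight $t^p\cdot t^{-sp}=t^{(1-s)p}$ emerges. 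Summing the three contributions and multiplying by $2$ gives \eqref{eq:fraction-by-weighted-derivative-interval}, with constant depending only on $s$ and $p$.

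The main obstacle — and the reason for the two-region split rather than a single application of Hardy in the variable $y$ — is that a direct application of Hardy to $\int_0^x(x-y)^{-1-sp}(\int_y^x|f'|)^p\,dy$ and integration in $x$ bounds the left-hand side by $C_{s,p}\int_0^a(a-t)^{(1-s)p}|f'(t)|^p\,dt$, i.e.\ with the wrong weight $(a-t)^{(1-s)p}$, which is not controlled by $t^{(1-s)p}$ and diverges when $a=\infty$. Isolating the near-diagonal region (where $x\asymp y$, so the factor $(x-t)$ is genuinely small and the Hardy weight localizes to scale $\asymp t$) from the well-separated region (where the kernel is $\asymp x^{-1-sp}$ and the telescoping integral $\int_y^x|f'|$ must itself be split at the midscale $x/2$ so that its "left half" and "right half" can be handled by the dual and the classical Hardy inequalities respectively) is exactly the device that recovers the correct $t$-weight uniformly in $a\in(0,\infty]$.
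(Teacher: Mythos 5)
Your proof is correct, but it follows a genuinely different route from the paper. The paper symmetrizes the double integral so that the \emph{larger} variable is integrated innermost, i.e.\ it writes the left-hand side as $2\int_0^a\int_y^a (x-y)^{-1-sp}\bigl|\int_y^x f'(t)\,\mathrm{d}t\bigr|^p\,\mathrm{d}x\,\mathrm{d}y$, translates $\tilde x = x-y$, and applies the weighted Hardy inequality once in the variable $x$ for each fixed $y$; this places the weight $(x-y)^{(1-s)p-1}$ on $|f'(x)|^p$, and the subsequent $y$-integration over $(0,x)$ produces exactly $x^{(1-s)p}$ — no splitting is needed, and the argument is uniform in $a$ (it is the paper's adaptation of \cite[Theorem 1.28]{Leoni23}). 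The ``obstacle'' you describe is real but is an artifact of having fixed $x$ and applied Hardy in the lower variable $y$: the weight then attaches to $(x-t)$ and the outer integration yields the useless factor $(a-t)^{(1-s)p}$. Rather than switching the roles of the variables, you compensate with a near-diagonal/off-diagonal split at $y=x/2$, handling the region $x/2<y<x$ by the classical Hardy inequality (where the localization $x\in(t,2t)$ recovers the weight $t^{(1-s)p}$), and the region $y\le x/2$ by bounding the kernel by $x^{-1-sp}$, splitting $\int_y^x|f'|$ at $x/2$, and using Jensen for the right half and the dual Hardy inequality $\int_0^b\bigl(\int_y^b g\bigr)^p\mathrm{d}y\lesssim_p\int_0^b t^p g(t)^p\,\mathrm{d}t$ for the left half. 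All of these steps are sound (the interchanges of integration order and the elementary integrals $\int_t^{2t}(x-t)^{(1-s)p-1}\mathrm{d}x$, $\int_t^{2t}x^{(1-s)p-1}\mathrm{d}x$, $\int_{2t}^a x^{-1-sp}\mathrm{d}x$ are computed correctly, and absolute continuity of the $W^{1,p}_{\mathrm{loc}}$ representative justifies $|f(x)-f(y)|\le\int_y^x|f'|$ without any density argument), and the constants depend only on $s$ and $p$, uniformly in $a\in(0,\infty]$. In short: your three-piece dyadic argument buys robustness and uses only the two classical one-dimensional Hardy inequalities, at the cost of extra bookkeeping; the paper's single application of the weighted Hardy inequality in the upper variable is the shorter path and would have resolved your ``wrong weight'' difficulty directly.
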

	\begin{proof}
		The proof follows the same arguments as those used in the proof of \cite[Theorem 1.28]{Leoni23}, which considers the case $a = \infty$. The full details are given below. 
		
		By symmetry, there holds
		\begin{align*}
			\int_{(0, a)^2} \frac{|f(x) - f(y)|^p}{|x-y|^{1+sp}} \d{x} \d{y} &= 2 \int_{0}^{a} \int_{y}^{a} \frac{|f(x) - f(y)|^p}{(x-y)^{1+sp}} \d{x} \d{y} \\
			&= 2 \int_{0}^{a} \int_{y}^{a} \frac{1}{(x-y)^{1+sp}} \left| \int_{y}^{x} f'(t) \d{t} \right|^p \d{x} \d{y}.
		\end{align*}
		Performing a change of variable and applying Hardy's inequality \cite[Theorem 1.3]{Leoni23}, we obtain
		\begin{align*}
			\int_{y}^{a} \frac{1}{(x-y)^{1+sp}} \left| \int_{y}^{x} f'(t) \d{t} \right|^p \d{x}  &\leftstackrel{\substack{\tilde{x}=x-y \\ \tau = t-y}}{\leq} \int_{0}^{a-y} \frac{1}{\tilde{x}^{1+sp}} \left( \int_{0}^{\tilde{x}}  |\tau f'(y + \tau)| \frac{\d{\tau}}{\tau} \right)^p \d{\tilde{x}} \\
			&\leq \frac{1}{s^p}\int_{0}^{a-y} \frac{|f'(y + \tilde{x})|^p}{\tilde{x}^{1+(s-1)p}}    \d{\tilde{x}} \\
			&\leftstackrel{x = \tilde{x} + y}{=} \frac{1}{s^p}\int_{y}^{a} \frac{|f'(x)|^p}{(x-y)^{1+(s-1)p}} \d{x}.
		\end{align*}
		Thus,
		\begin{align*}
			\int_{(0, a)^2} \frac{|f(x) - f(y)|^p}{|x-y|^{1+sp}} \d{x} \d{y} &\leq \frac{2}{s^p} \int_{0}^{a} \int_{y}^{a} \frac{|f'(x)|^p}{(x-y)^{1+(s-1)p}}    \d{x} \d{y} \\
			&= \frac{2}{s^p}\int_{0}^{a} |f'(x)|^p \int_{0}^{x} \frac{1}{(x-y)^{1+(s-1)p}}  \d{y} \d{x} \\
			&= \frac{2}{s^p(1-s)p} \int_{0}^{a} x^{(1-s)p}  |f'(x)|^p \d{x},
		\end{align*}
		which completes the proof.
	\end{proof}
	
	\begin{lemma}
		Let $1 < p < \infty$ and $0 < s < 1/p$. For all $f \in L^p(\mathcal{Q}_1; \omega_1^{1-sp} \d{\bdd{x}})$, there holds
		\begin{align}
			\label{eq:inverse-weighted-z-by-weighted-lp}
			\int_{ \mathcal{Q}_1 } \int_{0}^{2} \int_{x_1}^{x_1 + z} \int_{x_2}^{x_2 + z} \frac{1}{z^{2+sp}}  |f(\bdd{y})|^p \d{y_2} \d{y_1}  \d{z} \d{\bdd{x}} \lesssim_{s, p} \| \omega_1^{\frac{1}{p}-s} f \|_{p, \mathcal{Q}_1}^{p}.
		\end{align} 
	\end{lemma}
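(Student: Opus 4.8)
The plan is to apply Tonelli's theorem to interchange the order of integration, integrating out the variables $\bdd{x} \in \mathcal{Q}_1$ and $z \in (0,2)$ for fixed $\bdd{y}$. Since the integrand is nonnegative, there are no integrability concerns in doing so. For fixed $\bdd{y} \in \mathcal{Q}_1$ and fixed $z \in (0,2)$, the constraints $x_1 < y_1 < x_1 + z$ together with $x_1 > 0$ force $x_1 \in (\max\{y_1 - z, 0\}, y_1)$, an interval of length $\min\{y_1, z\}$, and similarly $x_2$ ranges over an interval of length $\min\{y_2, z\}$. Since these two constraints decouple, the $\bdd{x}$-integral contributes the factor $\min\{y_1, z\}\min\{y_2, z\}$, and so the left-hand side equals
\begin{align*}
	\int_{\mathcal{Q}_1} |f(\bdd{y})|^p \left( \int_0^2 \frac{\min\{y_1, z\}\,\min\{y_2, z\}}{z^{2+sp}} \d{z} \right) \d{\bdd{y}}.
\end{align*}

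Next I would bound the inner $z$-integral pointwise in $\bdd{y}$. Discarding the factor $\min\{y_2, z\} \le z$ reduces it to $\int_0^2 \min\{y_1, z\}\, z^{-1-sp} \d{z}$, a one-variable integral that can be evaluated explicitly by splitting the domain at $z = \min\{y_1, 2\}$. On $(0, \min\{y_1,2\})$ the integrand is $z^{-sp}$, which is integrable precisely because $0 < sp < 1$; on $(\min\{y_1,2\}, 2)$ one uses the elementary estimate $y_1 \int_{y_1}^{2} z^{-1-sp}\d{z} \le (sp)^{-1} y_1^{1-sp}$. Combining the two pieces gives $\int_0^2 \min\{y_1, z\}\, z^{-1-sp}\d{z} \le C_{s,p}\, \min\{y_1, 2\}^{1-sp}$. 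Since $1 - sp > 0$, we have $\min\{y_1, 2\}^{1-sp} \le 2^{1-sp}\,\min\{y_1, 1\}^{1-sp} = 2^{1-sp}\,\omega_1(y_1)^{1-sp}$, so that the parenthesized factor above is bounded by a constant depending only on $s$ and $p$ times $\omega_1(y_1)^{1-sp}$. Substituting this back yields the claimed inequality.

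There is no genuine obstacle in this argument; the only points requiring care are the computation of the length of the admissible $x_i$-intervals (a direct consequence of intersecting $(y_i - z, y_i)$ with $(0,\infty)$) and keeping the exponents in their convergent ranges, which is exactly what the hypothesis $0 < s < 1/p$ guarantees. The factor $\min\{y_2, z\}$ is simply bounded above by $z$ and discarded, since only the weight $\omega_1$ in the variable $y_1$ is needed on the right-hand side.
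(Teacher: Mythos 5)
Your proof is correct. The only substantive check is the measure computation — for fixed $\bdd{y}$ and $z$ the admissible $x_i$ form an interval of length $\min\{y_i, z\}$ — and that is right, as is the evaluation of $\int_0^2 \min\{y_1, z\}\, z^{-1-sp} \d{z} \lesssim_{s,p} \min\{y_1, 2\}^{1-sp} \lesssim \omega_1(y_1)^{1-sp}$, which uses $sp<1$ exactly where it must. Your route differs from the paper's only in organization: you integrate out $\bdd{x}$ completely first (a single Tonelli pass) and then evaluate one explicit $z$-integral, discarding the harmless factor $\min\{y_2,z\}\le z$, whereas the paper first disposes of the $x_2$/$y_2$ integration via its averaging estimate \cref{eq:work:lp-norm-average-1d} and then, for fixed $x_2$, integrates in $z$ before $x_1$, producing the intermediate kernel $(y_1-x_1)^{-sp}$ which is then integrated in $x_1$ to give the same $\min\{y_1,2\}^{1-sp}$ weight. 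The two arguments are mathematically equivalent Fubini rearrangements; yours is self-contained and slightly more transparent, while the paper's reuses a lemma it needs elsewhere anyway. No gap.
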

	\begin{proof}
		Applying \cref{eq:work:lp-norm-average-1d}  and using that $0 < z < 2$ gives
		\begin{align*}
			\int_{0}^{\infty} \fint_{x_2}^{x_2 + z} \left( \int_{x_1}^{x_1+z} |f(\bdd{y})|^p \d{y_1} \right) \d{y_2} \d{x_2} \lesssim_{p} \int_{0}^{\infty}  \int_{x_1}^{x_1 + z} |f(y_1, x_2)|^p \d{y_1} \d{x_2}.
		\end{align*}
		Moreover, there holds
		\begin{align*}
			&\int_{0}^{\infty} \int_{0}^{2} \int_{x_1}^{x_1 + z} \frac{1}{z^{1+sp}} |f(y_1, x_2)|^p \d{y_1} \d{z} \d{x_1}  \\
			&\qquad = \int_{0}^{\infty} \int_{x_1}^{x_1+2}  |f(y_1, x_2)|^p \int_{y_1-x_1}^{2} \frac{1}{z^{1+sp}} \d{z} \d{y_1} \d{x_1}  \\
			&\qquad\lesssim_{s, p} \int_{0}^{\infty} \int_{x_1}^{x_1+2} (y_1-x_1)^{-sp} |f(y_1, x_2)|^p \d{y_1} \d{x_1}  \\
			&\qquad= \left( \int_{0}^{2} \int_{0}^{y_1} + \int_{2}^{\infty} \int_{y_1-2}^{y_1} \right) (y_1-x_1)^{-sp} |f(y_1, x_2)|^p \d{x_1} \d{y_1}  \\
			&\qquad\lesssim_{s, p} \int_{0}^{\infty} \min\{ y_1, 2 \}^{1-sp} |f(y_1, x_2)|^p \d{y_1}.
		\end{align*}
		The result now follows on integrating over $0 < x_2 < \infty$.
	\end{proof}

	\begin{lemma}
		Let  $\chi \in C^{\infty}_c(\mathbb{R})$ and $b \in C^{\infty}_c(\mathbb{R}^2)$ be as in \cref{thm:tilde-em-cont-high} and $1 < p < \infty$. Let $k \in \mathbb{N}_0$ and $f \in L^p(\mathcal{Q}_1; \omega_1^{1+kp} \d{\bdd{x}})$. 	For $0 < t < 2$, there holds
		\begin{align}
			\label{eq:weighted-3d-lp-gen-t}
			\int_{0}^{t} \int_{\mathcal{Q}_1} |g_k(\bdd{x}, z)|^p \d{\bdd{x}} \d{z} \lesssim_{b, k, p} \int_{\mathcal{Q}_1} \min\{x_1, t\}^{1+kp} |f(\bdd{x})|^p \d{\bdd{x}}, 
		\end{align}
		where $g_k$ is defined in \cref{eq:g-def}
	\end{lemma}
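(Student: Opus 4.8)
The plan is to reduce the estimate to a pointwise bound on $|g_k|^p$ by a local average of $|f|^p$, then interchange the order of integration by Tonelli's theorem, and finish with a one–variable integral in $z$. The one feature of the hypotheses that must not be wasted is that $\supp b$ is a \emph{compact} subset of the \emph{open} triangle $\reftri$, so that $\epsilon_0 := \dist(\supp b, \{y_1 = 0\}) \in (0,1)$ and $\supp b \subset [\epsilon_0,1]\times[0,1]$. After disposing of the trivial case $b \equiv 0$, I would apply Jensen's inequality in \cref{eq:g-def} with respect to the probability measure $\|b\|_{1,\mathbb{R}^2}^{-1}|b(\bdd{y})|\d{\bdd{y}}$ on $\reftri$ to obtain, for all $(\bdd{x},z) \in \mathcal{Q}_1 \times (0,\infty)$,
\begin{align*}
	|g_k(\bdd{x},z)|^p \leq \|b\|_{1,\mathbb{R}^2}^{p-1}\, z^{kp}\int_{\reftri}|b(\bdd{y})|\,|f(\bdd{x}+z\bdd{y})|^p\d{\bdd{y}}.
\end{align*}
(That $g_k(\bdd{x},z)$ is finite a.e.\ follows since $\omega_1(\bdd{x}+z\bdd{y}) \geq \omega_1(\bdd{x}) > 0$ for $\bdd{x}\in\mathcal{Q}_1$ and $\bdd{y}\in\reftri$, so $f$ is integrable on the bounded set $\bdd{x}+z\reftri \subset \mathcal{Q}_1$.)

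Next I would integrate this inequality over $(\bdd{x},z)\in\mathcal{Q}_1\times(0,t)$; bring the $\bdd{y}$-integral outside the $\bdd{x}$-integral by Tonelli; perform the unit-Jacobian change of variables $\bdd{w} = \bdd{x}+z\bdd{y}$ in the $\bdd{x}$-integral for fixed $(\bdd{y},z)$, under which $\mathcal{Q}_1$ maps onto $(zy_1,\infty)\times(zy_2,\infty)\subset\mathcal{Q}_1$; and use Tonelli once more to reach
\begin{align*}
	\int_0^t\!\!\int_{\mathcal{Q}_1}|g_k(\bdd{x},z)|^p\d{\bdd{x}}\d{z} \leq \|b\|_{1,\mathbb{R}^2}^{p-1}\|b\|_{\infty,\mathbb{R}^2}\int_{\mathcal{Q}_1}|f(\bdd{w})|^p\left(\int_0^t z^{kp}\big|\{\bdd{y}\in\supp b : zy_1<w_1,\ zy_2<w_2\}\big|\d{z}\right)\d{\bdd{w}}.
\end{align*}
Because $y_1\geq\epsilon_0$ on $\supp b$, the inner set is empty once $z\geq w_1/\epsilon_0$, and for $z<w_1/\epsilon_0$ it is contained in $\{\bdd{y} : \epsilon_0\leq y_1<\min\{1,w_1/z\},\ 0\leq y_2<1\}$, so its measure is at most $\min\{1,\,w_1/z\}$. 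Hence the parenthesized factor is bounded by $\int_0^{\min\{t,\,w_1/\epsilon_0\}} z^{kp}\min\{1,\,w_1/z\}\d{z}$.

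The proof then finishes with the elementary inequality
\begin{align*}
	\int_0^{\min\{t,\,w_1/\epsilon_0\}} z^{kp}\min\{1,\,w_1/z\}\d{z} \lesssim_{k,p,\epsilon_0} \min\{w_1,t\}^{1+kp},
\end{align*}
valid for all $w_1,t>0$, which I would prove by splitting the integral at $z=w_1$ (where $\min\{1,w_1/z\}$ changes value) and separating the cases $\min\{t,w_1/\epsilon_0\}\leq w_1$ and $\min\{t,w_1/\epsilon_0\}>w_1$. In the first case the integrand is just $z^{kp}$ on the whole range and $\min\{t,w_1/\epsilon_0\}\leq\min\{w_1,t\}$, so the integral is $\tfrac{1}{kp+1}\min\{t,w_1/\epsilon_0\}^{kp+1}$, which is controlled. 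In the second case one has $t > w_1$ (since $\epsilon_0<1$), hence $\min\{w_1,t\}=w_1$, and one bounds $w_1\int_{w_1}^{\min\{t,w_1/\epsilon_0\}}z^{kp-1}\d{z}$ using $\min\{t,w_1/\epsilon_0\}\leq w_1/\epsilon_0$ — so the upper limit is only a constant multiple of $w_1$ — yielding a constant depending on $k,p,\epsilon_0$ (hence on $b$) times $w_1^{1+kp}$; the borderline $k=0$ produces a harmless $\log(1/\epsilon_0)$. Substituting this back into the previous display gives \cref{eq:weighted-3d-lp-gen-t}.

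I expect the only genuine subtlety to be the recognition that the separation $\epsilon_0>0$ of $\supp b$ from the edge $\{y_1=0\}$ is exactly what truncates the $z$-integration at $z\sim w_1$ rather than at $z\sim t$: without that truncation the naive estimate yields a term of order $w_1\,t^{kp}$, which is far larger than the target $w_1^{1+kp}$ when $w_1\ll t$. Everything else is routine manipulation with Tonelli's theorem and a single-variable integral.
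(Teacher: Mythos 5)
Your proof is correct, but it follows a genuinely different route from the paper's. The paper argues pointwise in $(\bdd{x},z)$: using $z \le \min\{x_1+zy_1,t\}/y_1$ on $\supp b$, it absorbs the factor $z^k$ into the weight $\min\{u_1,t\}^k$ under the integral, bounds $|g_k(\bdd{x},z)|$ by a constant times the average of $\min\{u_1,t\}^k|f(\bdd{u})|$ over the square $(x_1,x_1+z)\times(x_2,x_2+z)$, and then invokes the $L^p$-boundedness of one-dimensional averaging (the lemma containing \cref{eq:work:lp-norm-average-1d}) in $x_2$ and Hardy's inequality in the $x_1$/$z$ variables, finishing by exchanging the order of integration. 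You instead apply Jensen's inequality to put the $p$-th power inside the $\bdd{y}$-integral, use Tonelli and the unit-Jacobian translation $\bdd{w}=\bdd{x}+z\bdd{y}$ to reduce the whole estimate to the kernel mass $\int_0^t z^{kp}\,\bigl|\{\bdd{y}\in\supp b: zy_1<w_1,\ zy_2<w_2\}\bigr|\d{z}$, and control that by a one-variable computation in which the truncation of the $z$-integration at $z\sim w_1/\epsilon_0$, with $\epsilon_0=\dist(\supp b,\{y_1=0\})>0$, is precisely what yields $\min\{w_1,t\}^{1+kp}$ instead of the too-large $w_1t^{kp}$ (with the harmless $\log(1/\epsilon_0)$ at $k=0$, as you note). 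Both arguments ultimately exploit the same structural fact that $\supp b$ is compactly contained in $\reftri$ and hence separated from the edge $\{y_1=0\}$ — the paper uses it to bound $|y_1^{-k}b(\bdd{y})|$, you use it to cut off the $z$-integral — but your version is more elementary and self-contained, avoiding Hardy's inequality and the averaging lemma at the cost of constants depending explicitly on $\epsilon_0$ (still only a dependence on $b$, so consistent with $\lesssim_{b,k,p}$), whereas the paper's route reuses machinery it has already set up for the neighbouring lemmas.
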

	\begin{proof}
		Let $k \in \mathbb{N}_0$, $1 < p < \infty$, and $f \in L^p(\mathcal{Q}_1; \omega_1^{1+kp} \d{\bdd{x}})$ be given. Let $z \in (0, t)$. Then, for $\bdd{x} \in \mathcal{Q}_1$ and $\bdd{y} \in (0, 1)^2$, there holds $z \leq \min\{ x_1 + z y_1, t \}/y_1$, and so
		\begin{align*}
			|g_k(\bdd{x}, z)| 
			&\leq   \int_{(0, 1)^2} \min\{ x_1 + z y_1, t \}^k |y_1^{-k} b(\bdd{y})|  |f(\bdd{x} + z\bdd{y})| \d{\bdd{y}} \\ 
			&\leftstackrel{\bdd{u} = \bdd{x} + z\bdd{y}}{\lesssim_{b,k}}  \fint_{x_2}^{x_2 + z} \fint_{x_1}^{x_1 + z} \min\{ u_1, t \}^k |f(\bdd{u})| \d{u_1} \d{u_2}  .
		\end{align*}
		Integrating over $x_2 \in (0, \infty)$ and applying \cref{eq:work:lp-norm-average-1d} to the function
		\begin{align*}
			\tilde{f}(u_2; x_1, z) = \fint_{x_1}^{x_1+z}  |\tilde{\omega}_1^k f(\bdd{u})| \d{u_1}, \qquad \text{where } \tilde{\omega}_1(\bdd{u}) :=  \min\{ u_1, t \}^k
		\end{align*}
		and using that $0 < z < t < 2$ gives
		\begin{align*}
			\int_{0}^{\infty} |g_k(\bdd{x}, z) |^p \d{x_2} 
			&\leq \int_{0}^{\infty} \left( \fint_{x_2}^{x_2 + z} \tilde{f}(u_2; x_1, z) \d{u_2} \right)^p \d{x_2} \\
			&\leq 2^{p-1} \int_{0}^{\infty}  \left( \fint_{x_1}^{x_1 + z}   |(\tilde{\omega}_1^k f)(u_1, x_2)| \d{u_1} \right)^p \d{x_2}.
		\end{align*}
		Hardy's inequality \cite[Theorem 327]{Hardy52} then shows that, for  $0 < x_2 < \infty$, there holds
		\begin{align*}
			\int_{0}^{t} \left( \fint_{x_1}^{x_1 + z} |(\tilde{\omega}_1^k f)(u_1, x_2)| \d{u_1} \right)^p  \d{z} \quad 
			&\leftstackrel{v = x_1 + z}{=} \int_{x_1}^{x_1 + t} \left( \fint_{x_1}^{v} |(\tilde{\omega}_1^k f) (u_1, x_2)| \d{u_1} \right)^p  \d{v} \\
			&\lesssim_p \int_{x_1}^{x_1 + t} |(\tilde{\omega}_1^k f)(v, x_2)|^p \d{v},
		\end{align*}
		and so 
		\begin{align*}
			\int_{0}^{t} \int_{0}^{\infty} |g_k(\bdd{x}, z) |^p \d{x_2} \d{z} \lesssim_{b, k, p} \int_{0}^{\infty} \int_{x_1}^{x_1 + t}  |(\tilde{\omega}_1^k f)(v, x_2)|^p \d{v} \d{x_2}.
		\end{align*}
		Integrating over $x_1$ and changing the order of integration gives
		\begin{align*}
			\int_{0}^{t} \int_{\mathcal{Q}_1} |g_k(\bdd{x}, z)|^p \d{\bdd{x}} \d{z} 
			&\lesssim_{b, k, p} \int_{\mathcal{Q}_1} \int_{x_1}^{x_1 + t} |(\tilde{\omega}_1^k f)(v, x_2)|^p \d{v} \d{\bdd{x}}  \\
			&= \int_{0}^{\infty} \left( \int_{0}^{t} \int_{0}^{v} + \int_{t}^{\infty} \int_{v-t}^{v} \right) |(\tilde{\omega}_1^{k} f)(v, x_2)|^p \d{x_1} \d{v} \d{x_2}  \\
			&\leq \int_{\mathcal{Q}_1} \tilde{\omega}_1(v, x_2)^{1+kp} |f(v, x_2)|^p \d{v} \d{x_2},
		\end{align*}
		which completes the proof.
	\end{proof}

	\subsection{Continuity of $\tilde{\mathcal{E}}_0$}
	
	In this section, we prove \cref{thm:tilde-em-cont-low} in the case $k=0$. We will utilize the following equivalent norm on $W^{s, p}(\mathcal{O}_1)$.
	
	\begin{lemma}
		For all $p \in (1,\infty)$, $s \in (0, 1)$, and $f \in W^{s, p}(\mathcal{O}_1)$,  there holds
		\begin{align}
			\label{eq:wsp-o1-norm-equiv}
			\| f \|_{s, p, \mathcal{O}_1}^p \approx_{s, p} \| f \|_{p, \mathcal{O}_1}^p + \sum_{i=1}^{3} \int_{0}^{1} \int_{\mathcal{O}_1} \frac{ |f(\bdd{x} + t\unitvec{e}_i) - f(\bdd{x})|^p }{t^{1+sp}} \d{\bdd{x}} \d{t}.
		\end{align}
	\end{lemma}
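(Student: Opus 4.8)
The plan is to prove the two inequalities behind the symbol $\approx_{s,p}$ in \cref{eq:wsp-o1-norm-equiv} separately, and since the term $\|f\|_{p,\mathcal{O}_1}^p$ occurs verbatim on the right-hand side, it suffices to compare the Gagliardo seminorm $|f|_{s,p,\mathcal{O}_1}^p=\iint_{\mathcal{O}_1\times\mathcal{O}_1}|f(\bdd{x})-f(\bdd{y})|^p\,|\bdd{x}-\bdd{y}|^{-sp-3}\d{\bdd{x}}\d{\bdd{y}}$ with $\mathcal{J}(f):=\sum_{i=1}^{3}\int_0^1\int_{\mathcal{O}_1}|f(\bdd{x}+t\unitvec{e}_i)-f(\bdd{x})|^p\,t^{-1-sp}\d{\bdd{x}}\d{t}$, establishing $|f|_{s,p,\mathcal{O}_1}^p\lesssim_{s,p}\|f\|_{p,\mathcal{O}_1}^p+\mathcal{J}(f)$ and $\mathcal{J}(f)\lesssim_{s,p}|f|_{s,p,\mathcal{O}_1}^p$. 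The one structural feature that makes this work for the octant, exactly as for $\mathbb{R}^d$ in \cref{eq:wsp-tensorized-norm}, is that $\mathcal{O}_1=(0,\infty)^3$ is a product of half-lines: hence for $\bdd{x}\in\mathcal{O}_1$, $t>0$ and any $i$, both $\bdd{x}+t\unitvec{e}_i$ and the whole box $\prod_j(x_j,x_j+t)$ lie in $\mathcal{O}_1$, and for any $\bdd{x},\bdd{y}\in\mathcal{O}_1$ the axis-parallel ``staircase'' path joining them stays in $\mathcal{O}_1$, because each of its nodes has every coordinate equal to either $x_j$ or $y_j$.

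For the bound $|f|_{s,p,\mathcal{O}_1}^p\lesssim_{s,p}\|f\|_{p,\mathcal{O}_1}^p+\mathcal{J}(f)$ I would split the Gagliardo integral at $|\bdd{x}-\bdd{y}|=1$. On $\{|\bdd{x}-\bdd{y}|\ge 1\}$ one uses $|f(\bdd{x})-f(\bdd{y})|^p\le 2^{p-1}(|f(\bdd{x})|^p+|f(\bdd{y})|^p)$ together with $\int_{|\bdd{h}|\ge 1}|\bdd{h}|^{-sp-3}\d{\bdd{h}}<\infty$ to get a bound by $\|f\|_{p,\mathcal{O}_1}^p$. On $\{|\bdd{x}-\bdd{y}|<1\}$ substitute $\bdd{y}=\bdd{x}+\bdd{h}$, telescope $f(\bdd{x}+\bdd{h})-f(\bdd{x})=\sum_{j=1}^3\bigl(f(\bdd{x}^{(j)})-f(\bdd{x}^{(j-1)})\bigr)$ along the staircase nodes $\bdd{x}^{(j)}=\bdd{x}+\sum_{i\le j}h_i\unitvec{e}_i\in\mathcal{O}_1$, and estimate $|f(\bdd{x}+\bdd{h})-f(\bdd{x})|^p\le 3^{p-1}\sum_j|f(\bdd{x}^{(j)})-f(\bdd{x}^{(j-1)})|^p$. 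For the $j$-th term, the substitution $\bdd{w}=\bdd{x}^{(j-1)}$ (unit Jacobian) makes the integrand a function of $(\bdd{w},h_j)$ only; integrating out the two transverse increments via $\int_{\mathbb{R}^2}(h_j^2+|\bdd{u}|^2)^{-(sp+3)/2}\d{\bdd{u}}\lesssim_{s,p}|h_j|^{-sp-1}$ and then splitting $h_j=\pm t$ with $0<t<1$ (the negative part reducing to the positive one after $\bdd{w}\mapsto\bdd{w}-t\unitvec{e}_j$) produces exactly a constant times the $j$-th term of $\mathcal{J}(f)$.

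For the reverse bound $\mathcal{J}(f)\lesssim_{s,p}|f|_{s,p,\mathcal{O}_1}^p$, fix $i$ and for $\bdd{x}\in\mathcal{O}_1$, $0<t<1$ set $R=R(\bdd{x},t)=\prod_j(x_j,x_j+t)\subset\mathcal{O}_1$. Writing $f(\bdd{x}+t\unitvec{e}_i)-f(\bdd{x})=\bigl(f(\bdd{x}+t\unitvec{e}_i)-\fint_R f\bigr)-\bigl(f(\bdd{x})-\fint_R f\bigr)$ and using that $\bdd{x}$ and $\bdd{x}+t\unitvec{e}_i$ are corners of $\bar R$, Jensen's inequality gives $|f(\bdd{a})-\fint_R f|^p\le t^{-3}\int_R|f(\bdd{a})-f(\bdd{z})|^p\d{\bdd{z}}$ for $\bdd{a}\in\{\bdd{x},\bdd{x}+t\unitvec{e}_i\}$. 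Substituting this in, integrating in $(\bdd{x},t)$ --- and, for the second piece, first translating $\bdd{x}\mapsto\bdd{x}+t\unitvec{e}_i$, which maps into $\mathcal{O}_1$ --- and then applying Fubini in $(t,\bdd{z})$ finishes the argument: membership $\bdd{z}\in R(\bdd{x},t)$ forces $t>\|\bdd{z}-\bdd{x}\|_\infty\ge 3^{-1/2}|\bdd{z}-\bdd{x}|$, so $\int_{\|\bdd{z}-\bdd{x}\|_\infty}^{\infty}t^{-4-sp}\d{t}\lesssim_{s,p}|\bdd{z}-\bdd{x}|^{-sp-3}$, and each piece collapses to a constant times $|f|_{s,p,\mathcal{O}_1}^p$ because the resulting double integral runs over $\bdd{z}\in\mathcal{O}_1$ and $\bdd{x}\in\mathcal{O}_1$ (respectively $\bdd{x}+t\unitvec{e}_i\in\mathcal{O}_1$).

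The routine parts are the elementary one-dimensional weighted estimates and the power counting; the only point requiring care --- and the sole place the argument departs from the classical identity \cref{eq:wsp-tensorized-norm} on $\mathbb{R}^d$ --- is verifying, in each of the two reductions, that every spatial point produced along the way (the staircase nodes, the box $R(\bdd{x},t)$, the translated centre $\bdd{x}+t\unitvec{e}_i$) stays inside $\mathcal{O}_1$, so that the majorising integral is genuinely the Gagliardo seminorm over $\mathcal{O}_1$ and not over all of $\mathbb{R}^3$. This is precisely what the product structure of the octant guarantees.
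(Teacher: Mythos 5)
Your proposal is correct, but it takes a genuinely different route from the paper. The paper's proof is a two-line reduction: it invokes \cite[Theorem 6.38]{Leoni23} to get the equivalence of $|f|_{s,p,\mathcal{O}_1}$ with the directional-increment seminorm $\sum_i\int_0^\infty\int_{\mathcal{O}_1}|f(\bdd{x}+t\unitvec{e}_i)-f(\bdd{x})|^p t^{-1-sp}\d{\bdd{x}}\d{t}$ on the octant (the analogue of \cref{eq:wsp-tensorized-norm}), and then observes that the tail $t\ge 1$ of that integral is bounded by $\|f\|_{p,\mathcal{O}_1}^p$, which yields \cref{eq:wsp-o1-norm-equiv}. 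You instead prove both inequalities from scratch: the bound of the Gagliardo seminorm by $\|f\|_p^p+\mathcal{J}(f)$ via the far/near splitting at $|\bdd{x}-\bdd{y}|=1$, staircase telescoping (whose nodes stay in $\mathcal{O}_1$ by the product structure), and integration of the transverse increments; and the reverse bound $\mathcal{J}(f)\lesssim_{s,p}|f|_{s,p,\mathcal{O}_1}^p$ via box averages over $R(\bdd{x},t)\subset\mathcal{O}_1$, Jensen, and Fubini with the power count $\int_{\|\bdd{z}-\bdd{x}\|_\infty}^\infty t^{-4-sp}\d{t}\lesssim|\bdd{z}-\bdd{x}|^{-sp-3}$. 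All the domain-membership checks you flag (staircase nodes, the box, the translated corner) are indeed the only delicate points, and they go through. What each approach buys: the paper's argument is short but leans on an external theorem whose applicability to $\mathcal{O}_1$ (rather than $\mathbb{R}^d$ or $\mathbb{R}^d_+$, as stated in \cref{eq:wsp-tensorized-norm}) is left to the citation; your argument is self-contained, makes explicit why the product structure of the octant is what drives the result, in effect reproves the relevant case of the cited theorem, and gives the slightly stronger one-sided bound $\mathcal{J}(f)\lesssim|f|_{s,p,\mathcal{O}_1}^p$ without the $L^p$ term, at the cost of being considerably longer.
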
	
	\begin{proof}
		Let $f \in W^{s, p}(\mathcal{O}_1)$. Thanks to \cite[Theorem 6.38]{Leoni23}, there holds
		\begin{align*}
			| f |_{s, p, \mathcal{O}_1}^p \approx_{s, p} \sum_{i=1}^{3} \int_{0}^{\infty} \int_{\mathcal{O}_1} \frac{ |f(\bdd{x} + t\unitvec{e}_i) - f(\bdd{x})|^p }{t^{1+sp}} \d{\bdd{x}} \d{t},
		\end{align*}
		and \cref{eq:wsp-o1-norm-equiv} now follows on noting that
		\begin{align*}
			\sum_{i=1}^{3} \int_{1}^{\infty} \int_{\mathcal{O}_1} \frac{ |f(\bdd{x} + t\unitvec{e}_i) - f(\bdd{x})|^p }{t^{1+sp}} \d{\bdd{x}} \d{t} \lesssim_{s, p} \|f\|_{p, \mathcal{O}_1}^{p}.
		\end{align*}
	\end{proof}		
	
	We now estimate each term in \cref{eq:wsp-o1-norm-equiv}. The first result deals with terms involving translations in the first two coordinate directions.	
	\begin{lemma}
		Let $\chi \in C^{\infty}_c(\mathbb{R})$ and $b \in C^{\infty}_c(\mathbb{R}^2)$ be as in \cref{thm:tilde-em-cont-high}.
		For $1 < p < \infty$, $0 < s < 1/p$, and $1 \leq i \leq 2$, there holds
		\begin{align}
			\label{eq:weighted-3d-x-dir-gen}
			\int_{0}^{1} \int_{\mathcal{O}_1} \frac{|\tilde{\mathcal{E}}_0(f)(\bdd{x} + t\bdd{e}_i, z) - \tilde{\mathcal{E}}_0(f)(\bdd{x}, z)|^p}{t^{1+sp}} \d{\bdd{x}} \d{z} \d{t} \lesssim_{\chi, b, s, p} \| \omega_1^{\frac{1}{p} - s} f \|_{p, \mathcal{Q}_1}^p  
		\end{align}
		for all $f \in C^{\infty}_c(\mathcal{Q}_1)$.
	\end{lemma}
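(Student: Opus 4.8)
The plan is to split the inner integral over $t$ according to whether $t \geq z$ or $t < z$ and to reduce each piece to \cref{eq:inverse-weighted-z-by-weighted-lp}. Fix $1 < p < \infty$, $0 < s < 1/p$, $i \in \{1, 2\}$, and $f \in C^\infty_c(\mathcal{Q}_1)$. For the range $t \geq z$ I would discard the cancellation entirely: combining the triangle inequality with the elementary pointwise bound
\begin{align*}
|\tilde{\mathcal{E}}_0(f)(\bdd{x}, z)|^p \lesssim_{\chi, b, p} \mathbf{1}_{\{z < 2\}}\, z^{-2} \int_{x_1}^{x_1 + z} \int_{x_2}^{x_2 + z} |f(\bdd{u})|^p\d{\bdd{u}},
\end{align*}
which follows from Jensen's inequality and the inclusion $\supp b \subset \reftri \subset (0, 1)^2$, and noting that $\bdd{x} \mapsto \bdd{x} + t\bdd{e}_i$ maps $\mathcal{Q}_1$ into itself so that the translated contribution is dominated by the same expression after a change of variables, the $t \geq z$ part is bounded (after integrating out $t$ via $\int_z^\infty t^{-1-sp}\d{t} = (sp)^{-1} z^{-sp}$ and using the cutoff $z < 2$) by $\int_{\mathcal{Q}_1} \int_0^2 z^{-2-sp} \int_{x_1}^{x_1 + z} \int_{x_2}^{x_2 + z} |f(\bdd{u})|^p\d{\bdd{u}}\,\d{z}\,\d{\bdd{x}}$, which is exactly the left-hand side of \cref{eq:inverse-weighted-z-by-weighted-lp}.

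For the range $t < z$ I would instead exploit the cancellation. The substitution $\bdd{y} \mapsto \bdd{y} - (t/z)\bdd{e}_i$ in the integral defining $\tilde{\mathcal{E}}_0(f)(\bdd{x} + t\bdd{e}_i, z)$ yields
\begin{align*}
\tilde{\mathcal{E}}_0(f)(\bdd{x} + t\bdd{e}_i, z) - \tilde{\mathcal{E}}_0(f)(\bdd{x}, z) = \chi(z) \int_{\mathbb{R}^2} \Bigl[ b\Bigl( \bdd{y} - \tfrac{t}{z}\bdd{e}_i \Bigr) - b(\bdd{y}) \Bigr] f(\bdd{x} + z\bdd{y})\d{\bdd{y}}.
\end{align*}
On this region $t/z < 1$, so the bracketed factor has modulus at most $(t/z)\|\partial_i b\|_{\infty, \mathbb{R}^2}$ and is supported in $\reftri \cup (\reftri + (t/z)\bdd{e}_i)$, hence in the fixed rectangle $S_1 := (0, 2) \times (0, 1)$ when $i = 1$ and $S_2 := (0, 1) \times (0, 2)$ when $i = 2$. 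Jensen's inequality then gives the pointwise bound $\lesssim_{\chi, b, p} \mathbf{1}_{\{z < 2\}} (t/z)^p z^{-2} \int_{\bdd{x} + zS_i} |f(\bdd{u})|^p\d{\bdd{u}}$, and dividing by $t^{1+sp}$ and integrating $t$ over $(0, \min\{1, z\})$ leaves a factor $\lesssim_{s, p} z^{-2-sp}$ (only $s < 1$ is used here), reducing matters to $\int_{\mathcal{Q}_1} \int_0^2 z^{-2-sp} \int_{\bdd{x} + zS_i} |f(\bdd{u})|^p\d{\bdd{u}}\,\d{z}\,\d{\bdd{x}}$. Splitting the integral over the longer side of $S_i$ into two intervals of length $z$ and then translating the corresponding outer integration variable by $z$ (which again maps $(0, \infty)$ into itself) reduces this to the left-hand side of \cref{eq:inverse-weighted-z-by-weighted-lp} as well.

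I expect the only delicate bookkeeping to occur in the case $t < z$: one must track the rectangles $S_i$ produced by the translation and check that their extra length is absorbed cleanly into the precise form required by \cref{eq:inverse-weighted-z-by-weighted-lp}, handling $i = 1$ and $i = 2$ separately since the weight $\omega_1$ only involves the first coordinate (in both cases the final estimate is nonetheless the $\omega_1$-weighted one). No smoothness of $f$ beyond measurability is actually used, so the hypothesis $f \in C^\infty_c(\mathcal{Q}_1)$ only serves to guarantee finiteness of all integrals; the hypothesis $s < 1/p$ enters solely through \cref{eq:inverse-weighted-z-by-weighted-lp}, while $s < 1$ is all that the one-dimensional $t$-integrations require.
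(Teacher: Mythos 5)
Your proof is correct, and it is worth noting where it coincides with and departs from the paper's argument. The decomposition of the $t$-integration according to $t\lessgtr z$ is the same as the paper's splitting into the terms $A_i$ (where $z<t$) and $B_i$ (where $t<z$), and your treatment of the $t<z$ region is essentially the paper's Part (b): write the translated lifting via the shift $\bdd{y}\mapsto\bdd{y}-(t/z)\unitvec{e}_i$ (equivalently, via the convolution identity \cref{eq:proof:tilde-e0-rewrite}), gain the factor $t/z$ from the smoothness of $b$, apply H\"older/Jensen, integrate out $t$ using only $s<1$, and conclude with \cref{eq:inverse-weighted-z-by-weighted-lp}; your extra bookkeeping with the rectangles $S_i$ (splitting the long side into two length-$z$ intervals and translating the outer variable, which only shrinks the domain since the integrand is nonnegative) is sound and replaces the paper's use of the support of $\partial_i b$ inside the square. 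Where you genuinely differ is the $t\geq z$ region: the paper applies its weighted $L^p$ bound \cref{eq:weighted-3d-lp-gen-t} to the difference $f(\cdot+t\unitvec{e}_i)-f(\cdot)$ and then evaluates the resulting $t$-integrals with the weight $\min\{x_1,t\}$ by hand, which forces a separate computation for $i=1$ and $i=2$ because the weight involves only the first coordinate; you instead discard the cancellation at the level of $\tilde{\mathcal{E}}_0$, use the elementary Jensen bound $|\tilde{\mathcal{E}}_0(f)(\bdd{x},z)|^p\lesssim_{\chi,b,p}\mathbf{1}_{\{z<2\}}z^{-2}\int_{x_1}^{x_1+z}\int_{x_2}^{x_2+z}|f|^p$, integrate $\int_z^\infty t^{-1-sp}\,\mathrm{d}t\lesssim_{s,p} z^{-sp}$, and funnel this half through \cref{eq:inverse-weighted-z-by-weighted-lp} as well. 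This buys a shorter, unified argument in which the restriction $s<1/p$ enters only through \cref{eq:inverse-weighted-z-by-weighted-lp} and the case distinction between $i=1$ and $i=2$ disappears from that half, at the cost of not producing the sharper intermediate $\min\{x_1,t\}$-weighted estimate that the paper records along the way (which is not needed for \cref{eq:weighted-3d-x-dir-gen}). Your closing remarks are also accurate: smoothness of $f$ is not used beyond guaranteeing finiteness, and the hypothesis $s<1$ suffices for the one-dimensional $t$-integrations.
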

	\begin{proof}
		Let $1 < p < \infty$, $0 < s < 1/p$, and $f \in C^{\infty}_c(\mathcal{Q}_1)$ be given. Let $g_0(\cdot,\cdot)$ be as in \cref{eq:g-def} with $k=0$ and let $\tilde{g}(\bdd{x}, z, t) := g_0(\bdd{x} + t\unitvec{e}_i, z) - g_0(\bdd{x}, z)$.
		
		\noindent \textbf{Step 1. } Let $1 \leq i \leq 2$. We will show that
		\begin{align}
			\label{eq:proof:weighted-3d-x-dir-v}
			\int_{0}^{1} \int_{0}^{2} \int_{ \mathcal{Q}_1 } \frac{|\tilde{g}(\bdd{x}, z, t)|^p}{t^{1+sp}} \d{\bdd{x}} \d{z} \d{t} \lesssim_{b, s, p} \| \omega_1^{1/p - s} f \|_{p, \mathcal{Q}_1}^p.
		\end{align}
		We begin by decomposing the above integral into two terms:
		\begin{multline*}
			\int_{0}^{1} \int_{0}^{2} \int_{ \mathcal{Q}_1 } \frac{|\tilde{g}(\bdd{x}, z, t)|^p}{t^{1+sp}} \d{\bdd{x}} \d{z} \d{t} \\
			= \left( \int_{0}^{1} \int_{0}^{t} \int_{ \mathcal{Q}_1 } + \int_{0}^{1} \int_{t}^{2} \int_{ \mathcal{Q}_1 }  \right) \frac{|\tilde{g}(\bdd{x}, z, t)|^p}{t^{1+sp}} \d{\bdd{x}} \d{z} \d{t}  
			=: A_i + B_i.
		\end{multline*}
		
		\noindent \textbf{Part (a): $A_i$. } Let $0 < t < 1$. Then,  $f(\cdot + t\unitvec{e}_i) - f(\cdot) \in L^p(\mathcal{Q}_1, \omega_1\d{\bdd{x}})$ and
		\begin{align*}
			\tilde{g}(\bdd{x}, z, t) = \int_{ \mathcal{Q}_1 } b(\bdd{y}) \left[ f(\bdd{x} + z\bdd{y} + t\unitvec{e}_i) - f(\bdd{x} + z\bdd{y})\right]  \d{\bdd{y}}.
		\end{align*}
		Integrating \cref{eq:weighted-3d-lp-gen-t} over $0 < t < 1$ then gives
		\begin{align*}
			A_i \lesssim_{b, p} \int_{\mathcal{Q}_1 }  \int_{0}^{1} \min\{x_1, t\} \frac{|f(\bdd{x} + t\unitvec{e}_i)|^p + |f(\bdd{x})|^p}{t^{1+sp}} \d{t} \d{\bdd{x}}.
		\end{align*}
		For $i = 1$, there holds
		\begin{align*}
			\int_{0}^{\infty} \int_{0}^{1} t^{-sp} |f(\bdd{x} + t\bdd{e}_1)|^p \d{t} \d{x_1} 
			& \quad \leftstackrel{\tilde{x}_1 = x_1 + t}{=} \int_{0}^{1} \int_{t}^{\infty} t^{-sp} |f(\tilde{x}_1, x_2)|^p \d{\tilde{x}_1} \d{t} \\
			& = \left( \int_{0}^{1} \int_{0}^{\tilde{x}_1} + \int_{1}^{\infty} \int_{0}^{1} \right) t^{-sp} |f(\tilde{x}_1, x_2)|^{p} \d{t} \d{\tilde{x}_1} \\
			& \lesssim_{s,p} \int_{0}^{\infty} \min\{\tilde{x}_1, 1\}^{1-sp} |f(\tilde{x}_1, x_2)|^{p} \d{\tilde{x}_1}.
		\end{align*}
		On the other hand, note that for any $0 \leq u < \infty$ and $\tilde{f}(\bdd{x}) = f(\bdd{x} + u\unitvec{e}_2)$, there holds 
		\begin{align*}
			& \int_{0}^{\infty} \int_{0}^{1} \min\{x_1, t\} t^{-(1+sp)} |\tilde{f}(\bdd{x})|^p \d{t} \d{x_1} \\
			&\qquad = \left( \int_{0}^{1} \int_{0}^{x_1} + \int_{1}^{\infty} \int_{0}^{1} \right) t^{-sp} |\tilde{f}(\bdd{x})|^p \d{t} \d{x_1} + \int_{0}^{1} \int_{x_1}^{1}  t^{-(1+sp)} x_1 |\tilde{f}(\bdd{x})|^p \d{t} \d{x_1} \\
			&\qquad\lesssim_{s,p} \int_{0}^{\infty} \min\{x_1, 1\}^{1-sp} |\tilde{f}(\bdd{x})|^p  \d{x_1} + \int_{0}^{1} ( x_1^{-sp} - 1) x_1 |\tilde{f}(\bdd{x})|^{p} \d{x_1} \\
			&\qquad\lesssim \int_{0}^{\infty} \min\{x_1, 1\}^{1-sp} |f(x_1, x_2 + u)|^p \d{x_1}.
		\end{align*}
		The bound $A_i \lesssim_{b, s, p} \| \omega_1^{\frac{1}{p} - s} f \|_{p, \mathcal{Q}_1}^p$ now follows on performing a change of variables and collecting results.
		
		\noindent \textbf{Part (b): $B_i$. } Using identity \cref{eq:proof:tilde-e0-rewrite}, we obtain
		\begin{align*}
			\tilde{g}(\bdd{x}, z, t) &= z^{-2} \int_{\mathbb{R}^2} \left[ b\left(\frac{\bdd{y}-\bdd{x}-t\unitvec{e}_i}{z} \right) - b\left( \frac{\bdd{y}-\bdd{x}}{z} \right) \right] f(\bdd{y}) \d{\bdd{y}} \\
			&= -z^{-3} \int_{\mathbb{R}^2} \int_{0}^{t} (\partial_i b)\left( \frac{\bdd{y}-\bdd{x}-r\unitvec{e}_i}{z} \right) f(\bdd{y}) \d{r}  \d{\bdd{y}}.
		\end{align*}
		Writing $z^{-2} |\partial_i b| = (z^{-2} |\partial_i b|)^{1-1/p} (z^{-2} |\partial_i b|)^{1/p}$ and applying H\"{o}lder's inequality gives
		\begin{align*}
			|\tilde{g}(\bdd{x}, z, t)|^p 
			&\leq \left( \int_{0}^{t} \int_{\mathbb{R}^2} \frac{1}{z^2} |\partial_i b|\left(\frac{\bdd{y}-\bdd{x}-r\unitvec{e}_i}{z} \right) \d{\bdd{y}} \d{r} \right)^{p-1} \\
			&\qquad \times \frac{1}{z^{p+2}} \int_{0}^{t} \int_{\mathbb{R}^2} |\partial_i b| \left( \frac{\bdd{y}-\bdd{x}-r\unitvec{e}_i}{z} \right) |f(\bdd{y})|^p \d{\bdd{y}} \d{r} \\
			&\lesssim_{b, p} \frac{t^{p-1}}{z^{p+2}} \int_{0}^{t} \int_{\mathbb{R}^2} |\partial_i b| \left( \frac{\bdd{y}-\bdd{x}-r\unitvec{e}_i}{z} \right) |f(\bdd{y})|^p \d{\bdd{y}} \d{r}. 
		\end{align*}
		Integrating over $\bdd{x}$ gives
		\begin{align*}
			\int_{ \mathcal{Q}_1 } |\tilde{g}(\bdd{x}, z, t)|^p  \d{\bdd{x}} 
			&\quad \leftstackrel{\tilde{\bdd{x}} = \bdd{x}+r\unitvec{e}_i}{\leq} \frac{t^{p-1}}{z^{p+2}} \int_{0}^{t} \int_{ \mathcal{Q}_1 } \int_{\mathbb{R}^2} |\partial_i b| \left( \frac{\bdd{y}-\tilde{\bdd{x}}}{z} \right) |f(\bdd{y})|^p \d{\bdd{y}} \d{\tilde{\bdd{x}}} \d{r}  \\
			&\quad\leq \frac{t^{p}}{z^{p+2}} \int_{ \mathcal{Q}_1 } \int_{\mathbb{R}^2} |\partial_i b| \left( \frac{\bdd{y}-\tilde{\bdd{x}}}{z} \right) |f(\bdd{y})|^p \d{\bdd{y}} \d{\tilde{\bdd{x}}} \\
			&\quad\lesssim_{b} \frac{t^{p}}{ z^{p + 2}} \int_{ \mathcal{Q}_1 } \int_{x_1}^{x_1 + z} \int_{x_2}^{x_2 + z}  |f(\bdd{y})|^p \d{y_2} \d{y_1} \d{\bdd{x}}.
		\end{align*}
		Integrating over $z$ and $t$, we obtain
		\begin{align*}
			B_i &\lesssim_{b, p} \int_{0}^{1} \int_{t}^{2}  \frac{t^{(1-s)p - 1}}{z^{p+2}} \int_{ \mathcal{Q}_1 } \int_{x_1}^{x_1 + z} \int_{x_2}^{x_2 + z} |f(\bdd{y})|^p \d{y_2} \d{y_1} \d{\bdd{x}} \d{z} \d{t} \\
			&= \int_{0}^{2} \int_{0}^{z} \frac{t^{(1-s)p - 1}}{z^{p+2}} \int_{ \mathcal{Q}_1 } \int_{x_1}^{x_1 + z} \int_{x_2}^{x_2 + z}  |f(\bdd{y})|^p \d{y_2} \d{y_1} \d{\bdd{x}} \d{t} \d{z} \\
			&= \frac{1}{p(1-s)} \int_{ \mathcal{Q}_1 } \int_{0}^{2} \int_{x_1}^{x_1 + z} \int_{x_2}^{x_2 + z} \frac{1}{z^{2+sp}}  |f(\bdd{y})|^p \d{y_2} \d{y_1}  \d{z} \d{\bdd{x}}.
		\end{align*}
		Applying \cref{eq:inverse-weighted-z-by-weighted-lp}, we obtain $B_i \lesssim_{b, s, p} \| \omega_1^{\frac{1}{p}-s} f \|_{p, \mathcal{Q}_1}^p$, which completes the proof of \cref{eq:proof:weighted-3d-x-dir-v}.
		
		\noindent \textbf{Step 2. } Since $\supp \chi \subset B(0, 2)$, there holds
		\begin{multline*}
			\int_{0}^{1} \int_{\mathcal{O}_1} \frac{|\tilde{\mathcal{E}}_0(f)(\bdd{x} + t\unitvec{e}_i, z) - \tilde{\mathcal{E}}_0(f)(\bdd{x}, z)|^p}{t^{1+sp}} \d{\bdd{x}} \d{z} \d{t}  \\ 
			= \int_{0}^{1} \int_{0}^{2} \int_{\mathcal{Q}_1} |\chi(z)|^p \frac{|\tilde{g}(\bdd{x}, z, t)|^p}{t^{1+sp}} \d{\bdd{x}} \d{z} \d{t} 
			\lesssim_{\chi, b, s, p} \int_{ \mathcal{Q}_1 }  \min\{x_1, 1\}^{1-sp} f(\bdd{x}) \d{\bdd{x}},
		\end{multline*}
		which completes the proof.	
	\end{proof}

	The next result deals with the term involving a translation in the $z$-direction.	
	\begin{lemma}
		Let $\chi \in C^{\infty}_c(\mathbb{R})$ and $b \in C^{\infty}_c(\mathbb{R}^2)$ be as in \cref{thm:tilde-em-cont-high}. For $1 < p < \infty$ and $0 < s < 1/p$, there holds
		\begin{align}
			\label{eq:weighted-3d-y-dir}
			\int_{0}^{1} \int_{\mathcal{O}_1} \frac{|\tilde{\mathcal{E}}_0(f)(\bdd{x}, z+t) - \tilde{\mathcal{E}}_0(f)(\bdd{x}, z)|^p}{t^{1+sp}} \d{\bdd{x}} \d{z} \d{t} \lesssim_{\chi, b, s, p} \| \omega_1^{\frac{1}{p}-s} f \|_{p, \mathcal{Q}_1}^p
		\end{align}
		for all $f \in C^{\infty}_c(\mathcal{Q}_1)$.
	\end{lemma}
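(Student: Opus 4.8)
The plan is to mirror the proof of the preceding lemma, isolating the two sources of $z$-smoothness of $\tilde{\mathcal{E}}_0(f)$: the cut-off $\chi$ and the dilation inside $g_0$. Write $\tilde{\mathcal{E}}_0(f)(\bdd{x},z) = \chi(z)g_0(\bdd{x},z)$ with $g_0$ given by \cref{eq:g-def} for $k=0$, and decompose
\begin{align*}
\tilde{\mathcal{E}}_0(f)(\bdd{x},z+t) - \tilde{\mathcal{E}}_0(f)(\bdd{x},z) = \chi(z+t)\bigl[g_0(\bdd{x},z+t) - g_0(\bdd{x},z)\bigr] + \bigl[\chi(z+t) - \chi(z)\bigr]g_0(\bdd{x},z) =: \mathrm{I} + \mathrm{II}.
\end{align*}
For the term $\mathrm{II}$ the estimate $|\chi(z+t)-\chi(z)| \le \|\chi'\|_{\infty}t$ gives $|\mathrm{II}|^p t^{-1-sp} \lesssim_\chi t^{(1-s)p-1}|g_0(\bdd{x},z)|^p$, which is integrable in $t \in (0,1)$ because $(1-s)p > 0$; since the integrand vanishes for $z \ge 2$, \cref{eq:weighted-3d-lp-gen-t} with $k=0$ applied on $(0,2)$, together with the elementary bounds $\min\{x_1,2\} \le 2\min\{x_1,1\}^{1-sp} = 2\,\omega_1^{(1/p-s)p}$ and $\min\{x_1,2t\} \le 2\min\{x_1,t\}$ (both used repeatedly below), controls the $\mathrm{II}$-contribution by $\|\omega_1^{1/p-s}f\|_{p,\mathcal{Q}_1}^p$.

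For the term $\mathrm{I}$, use $|\chi(z+t)| \le \|\chi\|_\infty$ and note that the integrand vanishes unless $z+t < 2$; split the $z$-integral at $z = t$. On $0 < z < t$ one uses $|g_0(\bdd{x},z+t) - g_0(\bdd{x},z)|^p \lesssim_p |g_0(\bdd{x},z+t)|^p + |g_0(\bdd{x},z)|^p$, applies \cref{eq:weighted-3d-lp-gen-t} to each piece over $z \in (0,t)$ (the first after the shift $v = z+t$), and finishes with $\int_0^1 t^{-1-sp}\min\{x_1,t\}\,\d{t} \lesssim_{s,p} \min\{x_1,1\}^{1-sp}$. On $t < z < 2-t$ I would invoke the identity obtained by one integration by parts in $\bdd{y}$,
\begin{align*}
\partial_z g_0(\bdd{x},z) = \frac{1}{z}\,g_0[\tilde b](\bdd{x},z), \qquad \tilde b := -\bigl(2b + \bdd{y}\cdot\nabla b\bigr) \in C^\infty_c(\mathbb{R}^2), \quad \supp \tilde{b} \subset \reftri, \quad \int_{\mathbb{R}^2}\tilde b\,\d{\bdd{y}} = 0,
\end{align*}
where $g_0[\tilde b]$ is \cref{eq:g-def} with $b$ replaced by $\tilde b$, so that $g_0(\bdd{x},z+t) - g_0(\bdd{x},z) = \int_z^{z+t} v^{-1} g_0[\tilde b](\bdd{x},v)\,\d{v}$. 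Jensen's inequality, Fubini (the resulting $z$-interval has length $\le t$ and $v < z+t < 2$ on it), and the bound $\min\{v,1\}^{p(1-s)}/v^p \le \min\{v,1\}^{-sp}$ then reduce this part of $\mathrm{I}$ to the inequality $\int_0^2 \min\{v,1\}^{-sp}\psi(v)\,\d{v} \lesssim_{b,s,p} \|\omega_1^{1/p-s}f\|_{p,\mathcal{Q}_1}^p$, where $\psi(v) := \|g_0[\tilde b](\cdot,v)\|_{p,\mathcal{Q}_1}^p$.

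This last inequality is the crux, and is where $s < 1/p$ enters decisively. Since $\supp \tilde{b} \subset \reftri$, \cref{eq:weighted-3d-lp-gen-t} with $k=0$ applies to $g_0[\tilde b]$ and yields $\Psi(\tau) := \int_0^\tau \psi \lesssim_b \Phi(\tau) := \int_{\mathcal{Q}_1}\min\{x_1,\tau\}|f|^p$ for $0 < \tau < 2$. Splitting $\int_0^2 \min\{v,1\}^{-sp}\psi = \int_0^1 v^{-sp}\psi + \int_1^2 \psi$, the second term is $\le \Psi(2) \lesssim_b \Phi(2) \lesssim \|\omega_1^{1/p-s}f\|_{p,\mathcal{Q}_1}^p$; for the first, integration by parts (using $v^{-sp}\Psi(v) \lesssim v^{1-sp}\|f\|_{p,\mathcal{Q}_1}^p \to 0$ as $v \to 0^+$, valid since $1-sp > 0$) gives $\int_0^1 v^{-sp}\psi \lesssim_{s,p} \Psi(1) + \int_0^1 v^{-sp-1}\Psi(v)\,\d{v}$, and Fubini combined with $\int_0^1 v^{-sp-1}\min\{x_1,v\}\,\d{v} \lesssim_{s,p} \min\{x_1,1\}^{1-sp}$ closes it. Collecting the $\mathrm{I}$- and $\mathrm{II}$-contributions gives \cref{eq:weighted-3d-y-dir}. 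The main obstacle is precisely the inverse weight $v^{-1}$ produced by differentiating the dilation in $g_0$: it is what forces the split at $z = t$ and the delicate weighted one-dimensional estimates, and it is the hypothesis $sp < 1$ that keeps every one of those integrals convergent.
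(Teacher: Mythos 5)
Your proof is correct, and while it shares the paper's outer skeleton (the add-and-subtract with the cutoff, so that the term $[\chi(z+t)-\chi(z)]g_0(\bdd{x},z)$ is handled exactly as in the paper via $|\chi(z+t)-\chi(z)|\lesssim_\chi t$ and \cref{eq:weighted-3d-lp-gen-t}), your treatment of the main term $\chi(z+t)[g_0(\bdd{x},z+t)-g_0(\bdd{x},z)]$ follows a genuinely different route. The paper works pointwise in $\bdd{x}$: it invokes the one-dimensional Hardy-type bound \cref{eq:fraction-by-weighted-derivative-interval} to replace the fractional difference quotient in $z$ by the weighted derivative $z^{(1-s)p}|\partial_z g_0|^p$, computes $\partial_z g_0$ by differentiating the dilated convolution kernel in \cref{eq:proof:tilde-e0-rewrite}, and then concludes with the box-average estimate \cref{eq:inverse-weighted-z-by-weighted-lp}. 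You instead observe that $\partial_z g_0(\bdd{x},z)=z^{-1}g_0[\tilde b](\bdd{x},z)$ with $\tilde b=-(2b+\bdd{y}\cdot\nabla b)$ still supported in $\reftri$ (this is the same computation as the paper's, repackaged as an operator of the same family with a modified kernel), split the $z$-integral at $z=t$, handle $z<t$ crudely by \cref{eq:weighted-3d-lp-gen-t} plus $\int_0^1 t^{-1-sp}\min\{x_1,t\}\,\d t\lesssim_{s,p}\min\{x_1,1\}^{1-sp}$, and for $z>t$ use the fundamental theorem of calculus, Jensen, and Fubini to reduce to the weighted one-dimensional inequality $\int_0^2\min\{v,1\}^{-sp}\psi(v)\,\d v\lesssim\|\omega_1^{1/p-s}f\|_{p,\mathcal{Q}_1}^p$, which you close by applying \cref{eq:weighted-3d-lp-gen-t} to $g_0[\tilde b]$ together with an integration by parts in $v$. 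The net effect is that you bypass both auxiliary lemmas \cref{eq:fraction-by-weighted-derivative-interval} and \cref{eq:inverse-weighted-z-by-weighted-lp}, at the cost of the extra split at $z=t$ and the hand-rolled $\Psi$--$\Phi$ bookkeeping; the paper's route is shorter because those two lemmas are already available, whereas yours is more self-contained, reusing only \cref{eq:weighted-3d-lp-gen-t}. All the steps check out (the hypothesis $sp<1$ is indeed what makes the boundary term and the integrals $\int_0^1 v^{-sp-1}\min\{x_1,v\}\,\d v$ and $\int_0^1 t^{(1-s)p-1}\,\d t$ finite); the zero-mean property of $\tilde b$ that you note is true but never actually used.
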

	\begin{proof}
		Let $1 < p < \infty$, $0 < s < 1/p$, and $f \in C^{\infty}_c(\mathcal{Q}_1)$ be given. Let $g_0(\cdot, \cdot)$ be defined as in \cref{eq:g-def}.

		\noindent \textbf{Step 1. } We will first show that
		\begin{align}
			\label{eq:proof:weighted-3d-y-dir-vxz}
			\int_{0}^{1} \int_{0}^{2} \int_{\mathcal{Q}} \frac{|g_0(\bdd{x}, z+t) - g_0(\bdd{x}, z)|^p}{t^{1+sp}} \d{\bdd{x}} \d{z} \d{t} \lesssim_{b, s, p} \| \omega_1^{\frac{1}{p}-s} f \|_{p, \mathcal{Q}_1}^p.
		\end{align}
		Applying \cref{eq:fraction-by-weighted-derivative-interval} gives
		\begin{align}
			\label{eq:proof:z-semi-by-partialz}
			\int_{0}^{1} \int_{0}^{2} \frac{|g_0(\bdd{x}, z+t) - g_0(\bdd{x}, z)|^p}{t^{1+sp}} \d{z} \d{t} &\leq \int_{0}^{2} z^{(1-s)p} |\partial_z g_0(\bdd{x}, z)|^p \d{z}.
		\end{align}
		Applying identity \cref{eq:proof:tilde-e0-rewrite}, we obtain
		\begin{align*}
			\partial_z g_0(\bdd{x}, z) &= \int_{\mathbb{R}^2} \partial_z \left\{ z^{-2} b\left( \frac{\bdd{y}-\bdd{x}}{z} \right) \right\} f(\bdd{y}) \d{\bdd{y}} \\
			&= -\int_{\mathbb{R}^2} \left\{ 2 z^{-3} b\left( \frac{\bdd{y}-\bdd{x}}{z} \right) + z^{-4} Db\left( \frac{\bdd{y}-\bdd{x}}{z} \right) \cdot (\bdd{y}-\bdd{x}) \right\} f(\bdd{y}) \d{y_1} \d{y_2}.
		\end{align*}
		For $\bdd{y} \in (x_1, x_1+z) \times (x_2, x_2 + z)$, there holds
		\begin{align*}
			\left| 2z^{-3} b\left( \frac{\bdd{y}-\bdd{x}}{z} \right) + z^{-4} Db\left( \frac{\bdd{y}-\bdd{x}}{z} \right) \cdot (\bdd{y}-\bdd{x}) \right| &\lesssim_{b} z^{-3},
		\end{align*}
		where we used that $b$ and $D b$ are uniformly bounded. Since $\supp b \subset (0, 1)^2$, we obtain
		\begin{align*}
			\int_{0}^{2} z^{(1-s)p}|\partial_z g_0(\bdd{x}, z) | \d{z} \lesssim_{b} \int_{0}^{2} \frac{1}{z^{2+sp}}  \int_{x_2}^{x_2 + z} \int_{x_1}^{x_1+z}  |f(\bdd{y})| \d{y_1} \d{y_2} \d{z}.
		\end{align*}
		Inequality \cref{eq:proof:weighted-3d-y-dir-vxz} now follows on integrating \cref{eq:proof:z-semi-by-partialz} over $\bdd{x} \in \mathcal{Q}_1$ and applying \cref{eq:inverse-weighted-z-by-weighted-lp}.
		
		\noindent \textbf{Step 2. } For $0 < t < 2$ and $\bdd{x} \in \mathcal{Q}_1$, we add and subtract $\chi(z+t)g_0(\bdd{x}, t)$ to obtain
		\begin{multline*}
			|\tilde{\mathcal{E}}_0(f)(\bdd{x}, z+t) - \tilde{\mathcal{E}}_0(f)(\bdd{x}, z)|^p \lesssim_{p} |\chi(z + t)|^p |g_0(\bdd{x}, z + t) - g_0(\bdd{x}, t)|^p \\ + |\chi(z+t) - \chi(z)|^p | g_0(\bdd{x}, z)|^p.
		\end{multline*}
		For the first term, we use that $\supp \chi \in (-2, 2)$ and apply \cref{eq:proof:weighted-3d-y-dir-vxz} to obtain
		\begin{multline*}
			\int_{0}^{1} \int_{ \mathcal{O}_1 } |\chi(z + t)|^p \frac{|g_0(\bdd{x}, z + t) - g_0(\bdd{x}, t)|^p}{t^{1+sp}} \d{\bdd{x}} \d{z} \d{t} \\
			\lesssim_{\chi, p}  \int_{0}^{1} \int_{0}^{2} \int_{ \mathcal{Q}_1} \frac{|g_0(\bdd{x}, z + t) - g_0(\bdd{x}, t)|^p}{t^{1+sp}} \d{\bdd{x}} \d{z} \d{t} 
			\lesssim_{b, s, p} \| \omega_1^{\frac{1}{p}-s} f\|_{p, \mathcal{Q}_1}^p.
		\end{multline*}
		For the second term, we again use that $\supp \chi \in (-2, 2)$ as well as the assumption $0 < s < 1/p$:
		\begin{align*}
			&\int_{0}^{1} \int_{ \mathcal{O}_1 }  \frac{|\chi(z+t) - \chi(z)|^p}{t^{1+sp}} | g_0(\bdd{x}, z)|^p \d{\bdd{x}} \d{z} \d{t} \\
			&\qquad = \int_{0}^{1} \int_{0}^{2}  \int_{ \mathcal{Q}_1 } \frac{1}{t^{1+sp}} \left| \int_{z}^{z+t} \chi'(r) \d{r} \right|^p | g_0(\bdd{x}, z)|^p \d{\bdd{x}} \d{z} \d{t} \\
			&\qquad \lesssim_{\chi, p} \int_{0}^{1} \int_{0}^{2}  \int_{ \mathcal{Q}_1 } t^{(1-s)p-1} | g_0(\bdd{x}, z)|^p \d{\bdd{x}} \d{z} \d{t} \\
			&\qquad \lesssim_{s, p} \int_{0}^{2}  \int_{ \mathcal{Q}_1 } | g_0(\bdd{x}, z)|^p \d{\bdd{x}} \d{z}.
		\end{align*}
		Inequality \cref{eq:weighted-3d-y-dir} now follows from \cref{eq:weighted-3d-lp-gen-t}.
	\end{proof}
	
	We now obtain \cref{thm:tilde-em-cont-low} in the case $k=0$.
	\begin{lemma}
		\label{lem:tilde-e0-cont-low}
		Let  $\chi \in C^{\infty}_c(\mathbb{R})$ and $b \in C^{\infty}_c(\mathbb{R}^2)$ be as in \cref{thm:tilde-em-cont-high}. For $1 < p < \infty$ and $0 \leq s < 1/p$, there holds
		\begin{align}
			\label{eq:tilde-e0-cont-low}
			\| \tilde{\mathcal{E}}_0(f)\|_{s, p, \mathcal{O}_1} \lesssim_{\chi, b, k, s, p} \| \omega_1^{\frac{1}{p} - s} f\|_{p, \mathcal{Q}_1} \qquad \forall f \in L^p(\mathcal{Q}_1, \omega_1^{1 - sp} \d{\bdd{x}}).
		\end{align}
	\end{lemma}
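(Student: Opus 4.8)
The plan is to assemble \eqref{eq:tilde-e0-cont-low} from the norm equivalence \eqref{eq:wsp-o1-norm-equiv} together with the three preceding lemmas, which already contain all of the substantive work. First I would reduce to $f \in C^{\infty}_c(\mathcal{Q}_1)$ by density: since $0 \le s < 1/p$ forces $1 - sp > 0$, the weight $\omega_1^{1-sp}$ is bounded above by $1$, so $C^{\infty}_c(\mathcal{Q}_1)$ is dense in $L^p(\mathcal{Q}_1;\omega_1^{1-sp}\d{\bdd{x}})$; once \eqref{eq:tilde-e0-cont-low} holds on this dense subspace, the general case follows by a routine limiting argument.

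Next I would dispatch the endpoint $s = 0$ separately, since \eqref{eq:wsp-o1-norm-equiv} requires $s \in (0,1)$. Here $\| \tilde{\mathcal{E}}_0(f)\|_{0, p, \mathcal{O}_1} = \| \tilde{\mathcal{E}}_0(f)\|_{p, \mathcal{O}_1}$, and because $\supp \chi \subset (-2, 2)$ we have $|\tilde{\mathcal{E}}_0(f)(\bdd{x}, z)| \le \|\chi\|_{\infty}\,|g_0(\bdd{x}, z)|$ for $0 < z < 2$ and $\tilde{\mathcal{E}}_0(f)(\bdd{x}, z) = 0$ otherwise; hence \eqref{eq:weighted-3d-lp-gen-t} with $k = 0$ and $t = 2$ gives $\| \tilde{\mathcal{E}}_0(f)\|_{p, \mathcal{O}_1}^p \lesssim_{\chi, b, p} \int_{\mathcal{Q}_1} \min\{x_1, 2\}\, |f(\bdd{x})|^p \d{\bdd{x}} \lesssim \|\omega_1^{1/p} f\|_{p, \mathcal{Q}_1}^p$, using $\min\{x_1, 2\} \le 2\,\omega_1(\bdd{x})$. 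Since $\omega_1 \le 1$ and $sp = 0$, this is exactly the claimed bound.

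For $0 < s < 1/p$ I would apply \eqref{eq:wsp-o1-norm-equiv} to $\tilde{\mathcal{E}}_0(f)$, which gives
\[
\| \tilde{\mathcal{E}}_0(f) \|_{s, p, \mathcal{O}_1}^p \approx_{s, p} \| \tilde{\mathcal{E}}_0(f) \|_{p, \mathcal{O}_1}^p + \sum_{i=1}^{3} \int_{0}^{1} \int_{\mathcal{O}_1} \frac{ |\tilde{\mathcal{E}}_0(f)(\bdd{x} + t\unitvec{e}_i, z) - \tilde{\mathcal{E}}_0(f)(\bdd{x}, z)|^p }{ t^{1+sp} } \d{\bdd{x}} \d{z} \d{t}.
\]
The $L^p$ term is controlled by $\| \omega_1^{1/p} f \|_{p, \mathcal{Q}_1}^p \le \| \omega_1^{1/p - s} f \|_{p, \mathcal{Q}_1}^p$ exactly as in the $s = 0$ case (again using $\omega_1 \le 1$). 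The two translation terms with $i = 1, 2$ are bounded by \eqref{eq:weighted-3d-x-dir-gen}, and the term with $i = 3$ by \eqref{eq:weighted-3d-y-dir}; both cited estimates are valid precisely in the range $0 < s < 1/p$. Summing the four contributions yields \eqref{eq:tilde-e0-cont-low} for $f \in C^{\infty}_c(\mathcal{Q}_1)$, and density completes the proof.

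I do not anticipate a genuine obstacle here: all the delicate weighted estimates have been absorbed into the four cited lemmas, and the only things requiring (minor) care are the density of $C^{\infty}_c(\mathcal{Q}_1)$ in the weighted space — immediate because $\omega_1^{1-sp} \le 1$ — and the bookkeeping of the weight exponents, where one repeatedly trades $\omega_1^{1/p}$ for the larger $\omega_1^{1/p - s}$ via $\omega_1 \le 1$.
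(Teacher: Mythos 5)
Your proposal is correct and follows essentially the same route as the paper: the $s=0$ case via \cref{eq:weighted-3d-lp-gen-t} with $t=2$, and the case $0<s<1/p$ via the norm equivalence \cref{eq:wsp-o1-norm-equiv} combined with \cref{eq:weighted-3d-x-dir-gen,eq:weighted-3d-y-dir}, plus a density argument in the weighted $L^p$ space. The extra bookkeeping you supply (the bound $\min\{x_1,2\}\lesssim\omega_1$ and the trade $\omega_1^{1/p}\le\omega_1^{1/p-s}$) is exactly what the paper leaves implicit.
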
	
	\begin{proof}
		The case $s = 0$ follows on taking $t=2$ in \cref{eq:weighted-3d-lp-gen-t} and using the fact that $\| \tilde{\mathcal{E}}_0(f) \|_{p, \mathcal{O}_1} \lesssim_{\chi, p} \| g \|_{p, \mathcal{Q}_1 \times (0, 2)}$, where $g$ is defined in \cref{eq:g-def}. The case $0 < s < 1/p$ follows from the norm equivalence \cref{eq:wsp-o1-norm-equiv}, the bounds \cref{eq:weighted-3d-x-dir-gen,eq:weighted-3d-y-dir}, and the density of $C^{\infty}_c(\mathcal{O}_1)$ in $L^p(\mathcal{O}_1, \omega_1^{1-sp} \d{\bdd{x}})$.
	\end{proof}
	
	\subsection{Proof of \cref{thm:tilde-em-cont-low}} 
	\label{sec:proof-tilde-em-cont-low}
	
	Let $k \in \mathbb{N}_0$, $1 < p < \infty$.
	
	\noindent \textbf{Step 1: $s=0$. } Taking $t=2$ in \cref{eq:weighted-3d-lp-gen-t} and using the fact that $\| \tilde{\mathcal{E}}_k(f) \|_{p, \mathcal{O}_1} \lesssim_{\chi, p} \| g_k \|_{p, \mathcal{Q}_1 \times (0, 2)}$, where $g_k$ is defined in \cref{eq:g-def}, we obtain \cref{eq:tilde-em-cont-low} in the case $s = 0$.
	
	\noindent \textbf{Step 2: $s \in \{1, 2, \ldots, k\}$. } Let $f \in C^{\infty}_c(\mathcal{Q}_1)$. Applying \cref{eq:tilde-em-derivative-id} with $|\alpha| \leq k$, we obtain
	\begin{align}
		\label{eq:proof:tilde-ek-sigma-norm-deriv}
		\| D^{\alpha} \tilde{\mathcal{E}}_k(f)\|_{\sigma, p, \mathcal{O}_1} \leq \sum_{0 \leq i \leq \alpha_3} \| \tilde{\mathcal{E}}_{k+i-|\alpha|}[\chi_i, b_{ki}](f)\|_{\sigma, p, \mathcal{O}_1},
	\end{align}
	where $\chi_i \in C^{\infty}_c(\mathbb{R})$ and $b_{ki} \in C^{\infty}_c(\mathbb{R}^2)$ are suitable functions depending on $\chi$ and $b$ respectively and $0 \leq \sigma < 1$. Applying \cref{eq:tilde-em-cont-low} with $s=0$ then gives
	\begin{align*}
		| \tilde{\mathcal{E}}_k(f)|_{s, p, \mathcal{O}_1} \lesssim_{\chi, b, k, s, p} \| \omega_1^{\frac{1}{p} + k - s} f \|_{p, \mathcal{Q}_1},
	\end{align*}
	where we used that $k + i - |\alpha| \geq k-m$ for $0 \leq i \leq \alpha_3$. By density, \cref{eq:tilde-em-cont-low} holds for $s \in \{0, 1, \ldots, k\}$.
	
	\noindent \textbf{Step 3: $0 \leq s \leq k$. } This case follows from  interpolating Step 2 (see e.g. \cite[Theorem 14.2.3]{Brenner08} and \cite[Theorem 5.4.1]{BerLof76}).
	
	\noindent \textbf{Step 4: $k < s < k+1/p$. } Let $\sigma = s - k$ so that $0 < \sigma < 1/p$. Setting $\tilde{\chi}_{ik\alpha}(z) := z^{k+i-|\alpha|} \chi_i \in C^{\infty_c}(\mathcal{Q}_1)$ so that $\supp \tilde{\chi}_{ik\alpha} \in (-2, 2)$ and applying \cref{eq:proof:tilde-ek-sigma-norm-deriv,eq:tilde-e0-cont-low} then gives
	\begin{align*}
		\| D^{\alpha} \tilde{\mathcal{E}}_k(f)\|_{\sigma, p, \mathcal{O}_1} \leq \sum_{0 \leq i \leq \alpha_3}  \| \tilde{\mathcal{E}}_{0}[\tilde{\chi}_{ik\alpha}, b_{ki}](f)\|_{\sigma, p, \mathcal{O}_1} \lesssim_{\chi, b, k, \sigma, p} \| \omega_1^{\frac{1}{p}-\sigma} f \|_{p, \mathcal{Q}_1}.
	\end{align*}
	Inequality \cref{eq:tilde-em-cont-low} now follows. \hfill \proofbox

	\section{Continuity of fundamental operators}
	\label{sec:cont}
	
	In this section, we prove the continuity and interpolation properties of the four fundamental operators $\mathcal{E}_k^{[1]}$ defined in \cref{eq:ek1-def}, $\mathcal{M}_{k, r}^{[1]}$ defined in \cref{eq:mkr-def}, $\mathcal{S}_{k, r}^{[1]}$ defined in \cref{eq:skr-def}, and $\mathcal{R}_{k, r}^{[1]}$ defined in \cref{eq:rkr-def}. We begin with the properties of $\mathcal{E}_k^{[1]}$, which rely on the results of \cref{sec:whole-space-cont}. Then, in \cref{sec:weighted-cont}, we show that the four fundamental operators are continuous from weighted $L^p$ spaces \cref{eq:weighted-lp-space-def} to $W^{s, p}(\reftet)$ for small $s$, which will be useful for the analysis of $\mathcal{M}_{k, r}^{[1]}$, $\mathcal{S}_{k, r}^{[1]}$, and $\mathcal{R}_{k, r}^{[1]}$. This section concludes with the proofs of \cref{lem:mmr-gamma1-cont-high,lem:smr-gamma1-cont-high,lem:rmr-gamma1-cont-high}.
	
	\subsection{Proof of \cref{lem:em-gamma1-cont-high}}
	\label{sec:proof-em-gamma1-cont-high}
	
	\textbf{Step 1: Continuity \cref{eq:em-gamma1-continuity-high}. } Let $\tilde{b}$ denote the extension by zero of $b$ to $\mathbb{R}^2$ and let $\chi \in C^{\infty}_c(\mathbb{R})$ with $\chi \equiv 1$ on $(-1, 1)$ and $\supp \chi \in (-2, 2)$. Let $f \in W^{s-k-\frac{1}{p}, p}(\reftri)$ be given and let $\tilde{f}$ denote a bounded extension $f$ to  $\mathbb{R}^2$ satisfying $\|\tilde{f}\|_{s, p, \mathbb{R}^2} \lesssim_{s, p} \|f\|_{s, p, \reftri}$; see e.g. \cite{Devore93}. Thanks to the identity
	\begin{align}
		\label{eq:proof:em-tilde-em-id}
		\mathcal{E}_k(f) =  \frac{(-1)^k}{k!} \tilde{\mathcal{E}}_k(f)[\chi, \tilde{b}](f) \qquad \text{on } K,
	\end{align}
	where $\tilde{E}_k$ is defined in \cref{eq:tilde-ek-def}, inequality
	\cref{eq:em-gamma1-continuity-high} immediately follows from \cref{eq:tilde-em-cont-high} and the smoothness of the mapping $\mathfrak{I}_1$ defined in \cref{eq:ek1-gamma1-def}.
	
	\noindent \textbf{Step 2: Trace property \cref{eq:em-gamma1-interp}. } Direct computation shows that \cref{eq:em-gamma1-interp} holds.

	\noindent \textbf{Step 3: Polynomial preservation. } If $f \in \mathcal{P}_N(\Gamma_1)$, $N \in \mathbb{N}_0$, then direct inspection reveals that $\mathcal{E}_k^{[1]}(f) \in \mathcal{P}_{N+k}(\reftet)$. \proofbox
	
	\subsection{Weighted continuity}
	\label{sec:weighted-cont}
	
	We begin with the continuity of $\mathcal{E}_k^{[1]}$.
	\begin{lemma}
		Let $b \in C_c^{\infty}(\reftri)$, $k \in \mathbb{N}_0$, $1 < p < \infty$, and $0 \leq s < k+1/p$ or $(s, p) = (k+\frac{1}{2}, 2)$. Then, for all $t_1, t_2, t_3 \in [0, \infty)$ such that $t_1 + t_2 + t_3 = k - s + 1/p$, there holds
		\begin{align}
			\label{eq:em-gamma1-continuity-low}
			\| \mathcal{E}_k^{[1]}(f) \|_{s, p, \reftet} \lesssim_{b, k, s, p}  \| \omega_1^{t_1} \omega_2^{t_2} \omega_3^{t_3} f \|_{p, \reftri} \qquad \forall f \in L^p(\reftri; (\omega_1^{t_1} \omega_2^{t_2} \omega_3^{t_3})^p \d{\bdd{x}}),
		\end{align}
		where $\omega_i$ are defined in \cref{eq:omegai-def}.
	\end{lemma}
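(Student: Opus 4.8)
The plan is to reduce to the whole-space operator $\tilde{\mathcal{E}}_k$ via the identity \cref{eq:proof:em-tilde-em-id}, invoke the weighted bound \cref{thm:tilde-em-cont-low} to control the single weight $\omega_1$, transport this to $\omega_2$ and $\omega_3$ by affine symmetries of $\reftet$, and finally pass to the product $\omega_1^{t_1}\omega_2^{t_2}\omega_3^{t_3}$ by an elementary partition of $\reftri$. First I would dispose of the exceptional case $(s,p)=(k+\tfrac12,2)$: here $t_1+t_2+t_3=0$ forces $t_1=t_2=t_3=0$, so the asserted bound reduces to $\|\mathcal{E}_k^{[1]}(f)\|_{k+\frac12,2,\reftet}\lesssim\|f\|_{2,\reftri}$. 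This follows from \cref{eq:proof:em-tilde-em-id}, the monotonicity $\|\cdot\|_{k+\frac12,2,\reftet}\le\|\cdot\|_{k+\frac12,2,\mathbb{R}^3_+}$ (valid since $\reftet\subset\mathbb{R}^3_+$ and the relevant norm is a sum of integrals of nonnegative integrands), extension of $f$ by zero to $\mathbb{R}^2$, and \cref{thm:tilde-em-cont-high} applied with $(s,p)=(k+\tfrac12,2)$, for which the right-hand exponent $s-k-\tfrac1p$ equals $0$.

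For the remaining range $0\le s<k+\tfrac1p$ set $\tau:=k-s+\tfrac1p>0$. By density of $L^{\infty}(\reftri)$ in $L^p(\reftri;(\omega_1^{t_1}\omega_2^{t_2}\omega_3^{t_3})^p\,\d{\bdd{x}})$ — the weights being bounded and positive a.e. — it suffices to prove the estimate for $f\in L^{\infty}(\reftri)$. Extending $f$ by zero and using \cref{eq:proof:em-tilde-em-id} together with $\|\cdot\|_{s,p,\reftet}\le\|\cdot\|_{s,p,\mathcal{O}_1}$ (since $\reftet\subset\mathcal{O}_1$), \cref{thm:tilde-em-cont-low} yields $\|\mathcal{E}_k^{[1]}[b](f)\|_{s,p,\reftet}\lesssim_{b,k,s,p}\|\omega_1^{\tau}f\|_{p,\reftri}$. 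To obtain the analogous bounds with $\omega_2$ and $\omega_3$, I would record the following conjugation identity: if $\Phi$ is an affine automorphism of $\reftet$ mapping the face $\Gamma_1=\{z=0\}$ onto itself and fixing the opposite vertex $\unitvec{e}_3$, then the third component of $\Phi$ is $z$, and — because $\Phi(\unitvec{e}_3)=\unitvec{e}_3$ pins down the translation part — a direct substitution $\bdd{y}\mapsto T\bdd{y}$ in the defining integral of $\mathcal{E}_k^{[1]}$ shows that
\begin{align*}
 \mathcal{E}_k^{[1]}[b](f)\circ\Phi = \mathcal{E}_k^{[1]}[b\circ T](f\circ T) \qquad \text{on } \reftet,
\end{align*}
where $T:\reftri\to\reftri$ is the affine involution induced by $\Phi|_{\Gamma_1}$ through $\mathfrak{I}_1$ and $b\circ T\in C_c^{\infty}(\reftri)$. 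Choosing $\Phi(x_1,x_2,z)=(x_2,x_1,z)$ gives $T$ with $\omega_1\circ T=\omega_2$, while choosing $\Phi(x_1,x_2,z)=(1-x_1-x_2-z,\,x_2,\,z)$ gives $T(\bdd{x})=(1-x_1-x_2,\,x_2)$ with $\omega_1\circ T=\omega_3$. Combining the conjugation identity with the affine invariance of $\|\cdot\|_{s,p,\reftet}$ under the fixed map $\Phi$, the single-weight bound applied to $\mathcal{E}_k^{[1]}[b\circ T]$, and the change of variables $\bdd{x}\mapsto T\bdd{x}$ (unit Jacobian) in the weighted norm, I get
\begin{align*}
 \|\mathcal{E}_k^{[1]}[b](f)\|_{s,p,\reftet}\lesssim_{b,k,s,p}\|\omega_j^{\tau}f\|_{p,\reftri},\qquad j=1,2,3.
\end{align*}

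It remains to pass to the product weight. Set $w_j:=\omega_j^{\tau}$ and $\theta_j:=t_j/\tau\ge0$, so $\theta_1+\theta_2+\theta_3=1$ and $w_1^{\theta_1}w_2^{\theta_2}w_3^{\theta_3}=\omega_1^{t_1}\omega_2^{t_2}\omega_3^{t_3}$. Partition $\reftri=R_1\cup R_2\cup R_3$ into disjoint measurable sets on which, respectively, $w_j\le w_i$ for every $i$ (breaking ties arbitrarily). On $R_j$ one has $w_1^{\theta_1}w_2^{\theta_2}w_3^{\theta_3}\ge w_j^{\theta_1+\theta_2+\theta_3}=w_j$, so by linearity of $\mathcal{E}_k^{[1]}$ and the previous display applied to each $f\,\mathbf{1}_{R_j}\in L^{\infty}(\reftri)$,
\begin{align*}
 \|\mathcal{E}_k^{[1]}(f)\|_{s,p,\reftet}\le\sum_{j=1}^{3}\|\mathcal{E}_k^{[1]}(f\,\mathbf{1}_{R_j})\|_{s,p,\reftet}\lesssim_{b,k,s,p}\sum_{j=1}^{3}\|w_j\,f\,\mathbf{1}_{R_j}\|_{p,\reftri}\le 3\,\|\omega_1^{t_1}\omega_2^{t_2}\omega_3^{t_3}f\|_{p,\reftri},
\end{align*}
which is \cref{eq:em-gamma1-continuity-low}; density then removes the restriction $f\in L^{\infty}(\reftri)$. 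I expect the conjugation identity for the non-permutation automorphism $\Phi(x_1,x_2,z)=(1-x_1-x_2-z,x_2,z)$ to be the only step requiring real care: one must check that after the substitution $\bdd{y}\mapsto T\bdd{y}$ the ray point built from the first two components of $\Phi(\bdd{x},z)$ plus $z\,T\bdd{y}$ equals $T(\bdd{x}+z\bdd{y})$, which hinges precisely on the translation part of $\Phi$ being the one forced by $\Phi(\unitvec{e}_3)=\unitvec{e}_3$. Everything else is bookkeeping.
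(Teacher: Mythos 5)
Your proof is correct. For the single-weight bounds it follows the same route as the paper: reduce to the whole-space operator through the identity \cref{eq:proof:em-tilde-em-id} and apply \cref{thm:tilde-em-cont-low} (resp.\ \cref{thm:tilde-em-cont-high} for $(s,p)=(k+\tfrac12,2)$, where, as you note, $t_1=t_2=t_3=0$ is forced), then transport the $\omega_1$-estimate to $\omega_2$ and $\omega_3$ by conjugating with affine automorphisms of $\reftet$ that preserve the third coordinate; your involution $(x_1,x_2,z)\mapsto(1-x_1-x_2-z,x_2,z)$ with $T(\bdd{x})=(1-x_1-x_2,x_2)$ plays the role of the paper's maps $\mathfrak{G}_2,\mathfrak{F}_2$, and the conjugation identity $\mathcal{E}_k^{[1]}[b](f)\circ\Phi=\mathcal{E}_k^{[1]}[b\circ T](f\circ T)$ does check out (the substitution $\bdd{y}\mapsto T\bdd{y}$ works exactly because $T^2=\mathrm{id}$ and the translation part is pinned by $\Phi(\unitvec{e}_3)=\unitvec{e}_3$). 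Where you genuinely diverge is the passage from the three single weights to the product weight: the paper performs two interpolations between weighted $L^p$ spaces (citing \cite[Theorem 5.4.1]{BerLof76}), first between the $\omega_1$- and $\omega_2$-weighted spaces and then against the $\omega_3$-weighted space, whereas you partition $\reftri$ according to which $\omega_j^{\tau}$ is smallest and use the pointwise domination $\omega_j^{\tau}\le\omega_1^{t_1}\omega_2^{t_2}\omega_3^{t_3}$ on that piece, splitting $f=\sum_j f\mathbf{1}_{R_j}$ and paying only a factor $3$. This is more elementary and self-contained — it needs nothing beyond monotonicity of $t\mapsto t^{\theta}$ for $\theta\ge 0$ and works precisely because all $t_i\ge 0$ sum to the fixed total $\tau$ — while the paper's interpolation argument is the one that would survive weight configurations not pointwise comparable to a single $\omega_j$ on each piece of a partition. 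Two cosmetic remarks: the detour through $L^{\infty}(\reftri)$ is not actually needed, since \cref{thm:tilde-em-cont-low} applies directly to zero-extensions of functions in the weighted space and each $f\mathbf{1}_{R_j}$ already lies in the relevant single-weight space; and the constants for $j=2,3$ depend on $b\circ T$, which is fixed once $b$ is, so the dependence claimed in \cref{eq:em-gamma1-continuity-low} is preserved.
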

	\begin{proof}
		Let $t = k-s+1/p$.
		
		\textbf{Step 1: $t_2 = t_3 = 0$. } Let $\tilde{b}$ denote the extension by zero of $b$ to $\mathbb{R}^2$ and let $\chi \in C^{\infty}_c(\mathbb{R})$ with $\chi \equiv 1$ on $(-1, 1)$ and $\supp \chi \in (-2, 2)$. Let $f \in C^{\infty}_c(\reftri)$ be given and let $\tilde{f}$ denote the extension by zero of $f$ to  $\mathbb{R}^2$. Thanks to the identity \cref{eq:proof:em-tilde-em-id}, \cref{eq:em-gamma1-continuity-low} with $t_2 = t_3 = 0$ follows from \cref{eq:tilde-em-cont-low} and a standard density argument. The case $(s, p) = (k+\frac{1}{2}, 2)$ follows from a similar argument using \cref{eq:tilde-em-cont-high}.
		
		\textbf{Step 2: $t_1 = t_3 = 0$. } We define transformations $\mathfrak{F}_1 : \reftri \to \reftri$ and $\mathfrak{G}_1 : \reftet \to \reftet$ as follows:
		\begin{align}
			\label{eq:proof:frakf1-def}
			\mathfrak{F}_1(\bdd{x}) := (x_2, x_1) \quad \text{and} \quad \mathfrak{G}_1(\bdd{x}, z) := (x_2, x_1, z) \qquad (\bdd{x}, z) \in \reftet.
		\end{align}
		Then, a change of variable shows that $\mathcal{E}_k^{[1]}(f) \circ \mathfrak{G}_1 = \mathcal{E}_k^{[1]}[b \circ \mathfrak{F}_1](f \circ \mathfrak{F}_1)$, 	and so
		\begin{align*}
			\| \mathcal{E}_k^{[1]}(f) \|_{s, p, \reftet} = \| \mathcal{E}_k^{[1]}(f) \circ \mathfrak{G}_1 \|_{s, p, \reftet} \lesssim_{b, k, s, p} \| \omega_1^{t} (f \circ \mathfrak{F}_1) \|_{p, \reftri} = \| \omega_2^t f \|_{p, \reftri},
		\end{align*}
		where we applied Step 1 in the middle inequality.
	
		\noindent \textbf{Step 3: $t_3 = 0$. } Applying Steps 1 and 2 and interpolating between $L^p(\reftri; \omega_1^{tp} d\bdd{x})$ and $L^p(\reftri; \omega_2^{tp} d\bdd{x})$ (see e.g. \cite[Theorem 5.4.1]{BerLof76}) then gives \cref{eq:em-gamma1-continuity-low}.
	
		\noindent \textbf{Step 4: $t_1 = t_2 = 0$. } We define transformations $\mathfrak{F}_2 : \reftri \to \reftri$ and $\mathfrak{G}_2 : \reftet \to \reftet$ as follows:
		\begin{align}
			\label{eq:proof:frakf2-def}
			\mathfrak{F}_2(\bdd{x}) := (x_2, 1-x_1-x_2) \quad \text{and} \quad \mathfrak{G}_2(\bdd{x}, z) := (1-x_1-x_2-z, x_1, z) \qquad (\bdd{x}, z) \in \reftet.
		\end{align}
		A change of variables then gives $\mathcal{E}_k^{[1]}(f) \circ \mathfrak{G}_2 = \mathcal{E}_k^{[1]}[b \circ \mathfrak{F}_2](f \circ \mathfrak{F}_2)$,
		and so
		\begin{align*}
			\| \mathcal{E}_k^{[1]}(f) \|_{t, p, \reftet} \lesssim \| \mathcal{E}_k^{[1]}(f) \circ \mathfrak{G}_2 \|_{t, p, \reftet} \lesssim_{b, k, t, p} \| \omega_1^t (f \circ \mathfrak{F}_2) \|_{p, \reftri} = \| \omega_3^t f \|_{p, \reftri},
		\end{align*}
		where we applied Step 1 in the middle inequality.
	
		\noindent \textbf{Step 5: General case. } Applying Steps 3 and 4 and interpolating between $L^p(\reftri; (\omega_1^{r_1} \omega_2^{r_2})^p d\bdd{x})$ with $r_1, r_2 \in \mathbb{R}_+$ with $r_1 + r_2 = t$ and $L^p(\reftri; \omega_3^{tp}d\bdd{x})$ (see e.g. \cite[Theorem 5.4.1]{BerLof76}) gives
		\cref{eq:em-gamma1-continuity-low}.
	\end{proof}
	
	We now turn to the continuity of $\mathcal{M}_{k, r}^{[1]}$.
	\begin{lemma}
		Let $b \in C_c^{\infty}(\reftri)$, $k, r \in \mathbb{N}_0$, $1 < p < \infty$, and $0 \leq s < k+1/p$ or $(s, p) = (k+\frac{1}{2}, 2)$. Then, for all $t_1, t_2, t_3 \in [0, \infty)$ such that $t_1 + t_2 + t_3 = k - s + 1/p$, there holds
		\begin{align}
			\label{eq:mmr-gamma1-continuity-low}
			\| \mathcal{M}_{k, r}^{[1]}(f) \|_{s, p, \reftet} \lesssim_{b, k, r, s, p}  \| \omega_1^{t_1} \omega_2^{t_2} \omega_3^{t_3} f \|_{p, \reftri} \qquad \forall f \in L^p(\reftri; (\omega_1^{t_1} \omega_2^{t_2} \omega_3^{t_3})^p \d{\bdd{x}}),
		\end{align}
		where $\omega_i$ are defined in \cref{eq:omegai-def}.
	\end{lemma}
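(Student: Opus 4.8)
The plan is to mirror the proof of \eqref{eq:em-gamma1-continuity-low}, carrying along the additional weight that distinguishes $\mathcal{M}_{k,r}^{[1]}$ from $\mathcal{E}_k^{[1]}$. By density it suffices to treat $f\in C_c^\infty(\reftri)$. First, the three-weight statement reduces to the corner estimates with all of the weight placed on a single $\omega_i$, by interpolating the weighted $L^p$-spaces $L^p(\reftri;(\omega_1^{t_1}\omega_2^{t_2}\omega_3^{t_3})^p\d{\bdd{x}})$ over the simplex $\{t_1+t_2+t_3=k-s+\tfrac1p,\ t_i\ge0\}$, as in \cite[Theorem~5.4.1]{BerLof76}; the corner with all weight on $\omega_3$ reduces to the one with all weight on $\omega_1$ via the affine automorphism of $\reftri$ that interchanges the vertices $\bdd{a}_1,\bdd{a}_3$ and fixes the coordinate $x_2$ (lifted to $(\bdd{x},z)\mapsto(1-x_1-x_2-z,\,x_2,\,z)$ on $\reftet$), under which $\mathcal{M}_{k,r}^{[1]}$ is carried to an operator of the same form while $\omega_1$ and $\omega_3$ are exchanged. (Unlike $\mathcal{E}_k^{[1]}$, the operator $\mathcal{M}_{k,r}^{[1]}$ is \emph{not} invariant under all permutations of the vertices of $\reftri$, since the weight $\omega_2$ is attached to the edge $\gamma_{12}$.) It remains to establish the two estimates with $(t_1,t_2,t_3)$ equal to $(k-s+\tfrac1p,0,0)$ and to $(0,k-s+\tfrac1p,0)$.

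For both, start from the identity $\mathcal{M}_{k,r}^{[1]}(f)=x_2^{\,r}\mathcal{E}_k^{[1]}(\omega_2^{-r}f)$ and differentiate. Because $s<k+\tfrac1p\le k+1$ we only ever need derivatives $D^{\alpha}$ with $|\alpha|\le\lfloor s\rfloor\le k$, and because $\partial_{x_2}^m(x_2^{\,r})=0$ for $m>r$, the Leibniz rule together with the $\mathcal{E}_k^{[1]}$-analogue of the derivative identity \eqref{eq:tilde-em-derivative-id} yields
\begin{align*}
	D^{\alpha}\mathcal{M}_{k,r}^{[1]}(f)(\bdd{x},z)=\sum_{l=0}^{\min\{r,\alpha_2\}}\;\sum_{i}c_{l,i}\,z^{a_{l,i}}\int_{\reftri}B_{l,i}(\bdd{y})\;\kappa_{r-l}(\bdd{x},z,\bdd{y})\;(\omega_2^{-l}f)(\bdd{x}+z\bdd{y})\,\d{\bdd{y}},
\end{align*}
where $\kappa_j:=\bigl(x_2/(x_2+zy_2)\bigr)^{j}\in(0,1]$, the $B_{l,i}$ are tensors with entries in $C_c^\infty(\reftri)$, and $a_{l,i}\ge k-|\alpha|+l\ge0$. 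The essential algebraic observation is that when the $l$ Leibniz derivatives strip $x_2^{\,r}$ down to $x_2^{\,r-l}$, the surviving singular factor inherited from $\mathcal{E}_k^{[1]}(\omega_2^{-r}\cdot)$ is $\omega_2(\bdd{x}+z\bdd{y})^{-r}=(x_2+zy_2)^{-r}$, and
\begin{align*}
	x_2^{\,r-l}\,(x_2+zy_2)^{-r}=\Bigl(\tfrac{x_2}{x_2+zy_2}\Bigr)^{r-l}(x_2+zy_2)^{-l}=\kappa_{r-l}\,(x_2+zy_2)^{-l},\qquad 0\le l\le r;
\end{align*}
that is, the prefactor $x_2^{\,r}$ exactly reduces the residual negative power of $\omega_2(\bdd{x}+z\bdd{y})$ to at most $l\le r$ and splits off a bounded factor $\kappa_{r-l}\le1$.

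Each summand is therefore, up to the bounded factor $\kappa_{r-l}$ in the integrand and a harmless multiplier $z^{\,a_{l,i}-(k-|\alpha|+l)}\le1$, an $\mathcal{E}_{k-|\alpha|+l}^{[1]}$-type lifting applied to $\omega_2^{-l}f$, namely $z^{\,k-|\alpha|+l}\int_{\reftri}B(\bdd{y})(\omega_2^{-l}f)(\bdd{x}+z\bdd{y})\,\d{\bdd{y}}$. The remainder of the argument is exactly that of the proof of \eqref{eq:em-gamma1-continuity-low}: for integer $s=|\alpha|$ one invokes the weighted $L^p$ bound \eqref{eq:em-gamma1-continuity-low} (hence \cref{thm:tilde-em-cont-low} and the auxiliary inequalities of \cref{sec:whole-space-lp-cont}), and for $s=|\alpha|+\sigma$, $\sigma\in(0,1)$, one uses the fractional estimates \eqref{eq:weighted-3d-x-dir-gen}, \eqref{eq:weighted-3d-y-dir}; since $\kappa_{r-l}\le1$ it plays a role only in the ``commutator'' term of a finite difference, where the difference falls on $\kappa_{r-l}$ rather than on $f$ (this term vanishes if $l=r$), and there differencing $\kappa_{r-l}$ supplies precisely the one extra negative power of $\omega_2(\bdd{x}+z\bdd{y})$ needed to reorganize the term as a lifting of $\omega_2^{-(l+1)}f$ with $l+1\le r$, which is handled by the same estimates. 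Taking the weight exponents in the $\mathcal{E}_{k-|\alpha|+l}^{[1]}$-bound to be $(t_1,\,t_2+l,\,t_3)$ (respectively $(t_1,\,t_2+l+1,\,t_3)$ in the commutator term) turns the right-hand side into $\|\omega_1^{t_1}\omega_2^{t_2}\omega_3^{t_3}f\|_{p,\reftri}$, and the weight budget closes because $t_1+t_2+t_3=k-s+\tfrac1p$. The endpoint $(s,p)=(k+\tfrac12,2)$, for which $k-s+\tfrac1p=0$, is handled in the same way with \cref{lem:em-gamma1-cont-high}/\cref{thm:tilde-em-cont-high} used in place of \eqref{eq:em-gamma1-continuity-low}.

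The principal difficulty is the bookkeeping concealed in the displayed formula and its finite-difference analogues: one must track, through each differentiation and each difference $\bdd{x}\mapsto\bdd{x}+t\unitvec{e}_i$ (and the integrations by parts in $\bdd{y}$ underlying \eqref{eq:tilde-em-derivative-id}), that the singular factors produced by the combination $x_2^{\,r}/(x_2+zy_2)^r$ are always of the shape (bounded)$\,\times\,\omega_2(\bdd{x}+z\bdd{y})^{-m}$ with $m\le r$ — the budget being supplied by the prefactor $x_2^{\,r}$ for the $\omega_2(\bdd{x}+z\bdd{y})^{-1}$ factors released by differentiating $\kappa_j$, and by the power $z^{k}$ for the $z^{-1}$ factors (which is why one needs $|\alpha|\le k$, i.e.\ $s<k+\tfrac1p$). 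Once this accounting is in place, the weighted estimates of \cref{sec:whole-space-cont,sec:whole-space-lp-cont} apply with only routine modifications, the bounded factors $\kappa_{r-l}$ being carried along passively throughout.
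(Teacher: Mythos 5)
Your plan---reduce to single-weight corners by interpolation and a symmetry of $\reftet$, then differentiate $\mathcal{M}_{k,r}^{[1]}$ directly and feed the resulting terms into the whole-space estimates of \cref{sec:whole-space-cont,sec:whole-space-lp-cont}---is genuinely different from the paper's, but as written it has a gap. A first, repairable, problem is that your displayed expansion is not correct: the restriction $l\le\min\{r,\alpha_2\}$ fails. A $\partial_z$ derivative (or a $\partial_{x_2}$ or $\partial_z$ derivative that is integrated by parts in $\bdd{y}$) can land on the factor $(x_2+zy_2)^{-r}$ without consuming the prefactor $x_2^r$, so one obtains terms $z^{a}x_2^{r-d_b}(x_2+zy_2)^{-(r+d_c)}$ in which $l=d_b+d_c$ can be as large as $|\alpha|$, exceeding both $r$ and $\alpha_2$. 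Only the budget relation $a= k-|\alpha|+l$ survives, so the bookkeeping on which your later claims rest (in particular ``a lifting of $\omega_2^{-(l+1)}f$ with $l+1\le r$'') is not the one your argument actually produces.

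The serious gap is the appeal to \cref{thm:tilde-em-cont-low}, \eqref{eq:weighted-3d-x-dir-gen}, \eqref{eq:weighted-3d-y-dir} and, at the endpoint, to \cref{lem:tilde-e0-cont-h12l2}/\cref{thm:tilde-em-cont-high}. After differentiation your integrands carry the factor $\kappa_j(\bdd{x},z,\bdd{y})$, which depends jointly on $x_2$, $z$ and $\bdd{y}$, so the terms are \emph{not} operators of the form $\tilde{\mathcal{E}}_a[\chi,B]$ applied to $\omega_2^{-l}f$, and the cited results cannot simply be invoked: their proofs use the convolution structure (differences are moved onto the dilated kernel in the proofs of \eqref{eq:weighted-3d-x-dir-gen} and \eqref{eq:weighted-3d-y-dir}, and the $(\tfrac12,2)$ case rests on Plancherel via $\hat g_0(\bdd{\xi},z)=\hat b(\bdd{\xi}z)\hat f(\bdd{\xi})$ in \eqref{eq:proof:tilde-e0-rewrite}). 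Your commutator remedy is only sketched: the naive mean-value bound $|\kappa_j(x_2+t,z,\bdd{y})-\kappa_j(x_2,z,\bdd{y})|\lesssim_{b} t/z$ leaves, for the worst term ($|\alpha|=k$, $l=0$, $a=0$), a quantity of the form $\int_{\mathcal{O}_1} z^{-p}\bigl(\int|B(\bdd{y})f(\bdd{x}+z\bdd{y})|\d{\bdd{y}}\bigr)^p\d{\bdd{x}}\d{z}$, which the available weight of order $1-\sigma p<1$ cannot control; one must instead split via $\min\{1,t/z\}$ and rerun \eqref{eq:inverse-weighted-z-by-weighted-lp}-type arguments, which is substantially more than ``routine modifications''. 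Worse, at $(s,p)=(k+\tfrac12,2)$, which the lemma includes, $\sigma p=1$ and no weight is available, so this difference device cannot close, and \cref{lem:tilde-e0-cont-h12l2} is not available for the $\kappa_r$-modified kernel; your endpoint case is therefore unproven. The paper avoids all of this with the algebraic identity $\mathcal{M}_{k,r+1}^{[1]}(f)=(k+1)\,\mathcal{M}_{k+1,r}^{[1]}[\omega_2 b](\omega_2^{-1}f)+\mathcal{M}_{k,r}^{[1]}(f)$ (coming from $x_2/(x_2+zy_2)-1=-zy_2/(x_2+zy_2)$) and induction on $r$ with base case \eqref{eq:em-gamma1-continuity-low}: the extra power of $z$ exactly pays for the extra $\omega_2^{-1}$, no modified kernels appear, and the endpoint is inherited from the $\mathcal{E}_k^{[1]}$ bound. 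I recommend adopting that reduction.
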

	\begin{proof}
		Let $0 \leq s < k + 1/p$ or $(s, p) = (k+\frac{1}{2}, 2)$. We proceed by induction on $r$. The case $r=0$ follows from \cref{eq:em-gamma1-continuity-low}, so assume that \cref{eq:mmr-gamma1-continuity-low} holds for some $r \in \mathbb{N}_0$. Direction computation gives
		\begin{align*}
			\mathcal{M}_{k, r+1}^{[1]}(f)(\bdd{x}, z) - \mathcal{M}_{k, r}^{[1]}(f)(\bdd{x}, z) &= x_2^r \frac{(-z)^k}{k!} \int_{\reftri} b(\bdd{y}) \frac{f(\bdd{x} + z\bdd{y})}{(x_2 + z y_2)^r} \left( \frac{x_2}{x_2 + z y_2} - 1 \right)  \d{\bdd{y}}  \\
			&=  x_2^r \frac{(-z)^{k+1}}{k!} \int_{\reftri} y_2 b(\bdd{y}) \frac{f(\bdd{x} + z\bdd{y})}{(x_2 + z y_2)^{r+1}}   \d{\bdd{y}}  \\
			&= (k+1) \mathcal{M}_{k+1, r}^{[1]}[\omega_2 b](\omega_2^{-1} f)(\bdd{x}, z),
		\end{align*}
		which leads to the following identity
		\begin{align}
			\label{eq:proof:mmr-id}
			\mathcal{M}_{k, r+1}^{[1]}(f) = (k+1) \mathcal{M}_{k+1, r}^{[1]}[\omega_2 b](\omega_2^{-1} f) + \mathcal{M}_{k, r}^{[1]}(f).
		\end{align}
		Consequently, there holds
		\begin{align*}
			\| \mathcal{M}_{k, r+1}^{[1]}(f) \|_{s, p, \reftet} &\leq (k+1) \| \mathcal{M}_{k+1, r}^{[1]}[\omega_2 b](\omega_2^{-1} f) \|_{s, p, \reftet} + \| \mathcal{M}_{k, r}^{[1]}(f) \|_{s, p, \reftet}.
		\end{align*}
		Applying \cref{eq:mmr-gamma1-continuity-low} with $\tau_1 = t_1$, $\tau_2 = t_2 + 1$ and $\tau_3 = t_3$ gives 
		\begin{align*}
			\| \mathcal{M}_{k+1, r}^{[1]}[\omega_2 b](\omega_2^{-1} f) \|_{s, p, \reftet} \lesssim_{b, k, r, s, p} \|  \omega_1^{\tau_1} \omega_2^{\tau_2 - 1} \omega_3^{\tau_3} f \|_{p, \reftri} = \|  \omega_1^{t_1} \omega_2^{t_2} \omega_3^{t_3} f \|_{p, \reftri},
		\end{align*} 
		and so $\| \mathcal{M}_{k, r+1}^{[1]}(f) \|_{s, p, \reftet} \lesssim_{b, k, r, s, p} \|  \omega_1^{t_1} \omega_2^{t_2} \omega_3^{t_3} f \|_{p, \reftri}$, which completes the proof.
	\end{proof}

	It will be convenient to define a three-parameter version of $\mathcal{S}_{k, r}^{[1]}$ as follows:
	\begin{align}
		\label{eq:skrq-def}
		\mathcal{S}_{k, r, q}^{[1]}(f)(\bdd{x}, z) &:= x_1^q x_2^r \mathcal{E}_k^{[1]}(\omega_1^{-q} \omega_2^{-r} f)(\bdd{x}, z) 
	\end{align}
	for $k, r, q \in \mathbb{N}_0$. This three-parameter version satisfies the same continuity properties as $\mathcal{E}_k^{[1]}$ and $\mathcal{M}_{k, r}^{[1]}$.	
	\begin{lemma}
		Let $b \in C_c^{\infty}(\reftri)$, $k, r, q \in \mathbb{N}_0$, $1 < p < \infty$, and $0 \leq s < k+1/p$ or $(s, p) = (k+\frac{1}{2}, 2)$. Then, for all $t_1, t_2, t_3 \in [0, \infty)$ such that $t_1 + t_2 + t_3 = k - s + 1/p$, there holds
		\begin{alignat}{2}
			\label{eq:smrq-gamma1-continuity-low}
			\| \mathcal{S}_{k, r, q}^{[1]}(f) \|_{s, p, \reftet}  &\lesssim_{b, k, r, q, s, p}  \| \omega_1^{t_1} \omega_2^{t_2} \omega_3^{t_3} f \|_{p, \reftri} \qquad & &\forall f \in L^p(\reftri; (\omega_1^{t_1} \omega_2^{t_2} \omega_3^{t_3})^p \d{\bdd{x}}), 
		\end{alignat}
		where $\omega_i$ are defined in \cref{eq:omegai-def}.
	\end{lemma}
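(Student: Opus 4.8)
The plan is to prove \cref{eq:smrq-gamma1-continuity-low} by induction on $q+r$, reducing the three-parameter operator $\mathcal{S}_{k,r,q}^{[1]}$ to $\mathcal{E}_k^{[1]}$ through two recursion identities of the same type as \cref{eq:proof:mmr-id}. The base case $q=r=0$ is immediate, since $\mathcal{S}_{k,0,0}^{[1]} = \mathcal{E}_k^{[1]}$ by \cref{eq:skrq-def}, so that \cref{eq:smrq-gamma1-continuity-low} reduces to \cref{eq:em-gamma1-continuity-low}.

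For the inductive step, a direct computation starting from \cref{eq:skrq-def} and using the elementary identity $x_i/(x_i + z y_i) - 1 = -z y_i/(x_i + z y_i)$ gives, whenever $q \geq 1$,
\begin{align*}
	\mathcal{S}_{k, r, q}^{[1]}(f) = (k+1)\, \mathcal{S}_{k+1, r, q-1}^{[1]}[\omega_1 b](\omega_1^{-1} f) + \mathcal{S}_{k, r, q-1}^{[1]}(f),
\end{align*}
and, whenever $r \geq 1$,
\begin{align*}
	\mathcal{S}_{k, r, q}^{[1]}(f) = (k+1)\, \mathcal{S}_{k+1, r-1, q}^{[1]}[\omega_2 b](\omega_2^{-1} f) + \mathcal{S}_{k, r-1, q}^{[1]}(f),
\end{align*}
where one uses that $\omega_1 b, \omega_2 b \in C_c^{\infty}(\reftri)$ and that in $\mathcal{E}_k^{[1]}$ the reweighting factor $\omega_i^{-1}$ is composed with $\bdd{x} + z \bdd{y}$. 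Given $t_1, t_2, t_3 \in [0, \infty)$ with $t_1 + t_2 + t_3 = k - s + 1/p$, I would apply the first identity if $q \geq 1$ and the second if $q = 0$ and $r \geq 1$; every operator appearing on the right-hand side has parameter sum $q+r-1$, so the inductive hypothesis applies. For the correction term I would use the weight triple $(\tau_1, \tau_2, \tau_3) := (t_1 + 1, t_2, t_3)$ in the first case (resp. $(t_1, t_2 + 1, t_3)$ in the second), which is admissible because each $\tau_i \geq 0$ and $\tau_1 + \tau_2 + \tau_3 = (k+1) - s + 1/p$, and then I would absorb the reweighting via $\omega_1^{\tau_1}(\omega_1^{-1} f) = \omega_1^{t_1} f$ (resp. $\omega_2^{\tau_2}(\omega_2^{-1} f) = \omega_2^{t_2} f$); for the remaining term I would apply the inductive hypothesis directly with the unchanged triple $(t_1, t_2, t_3)$, whose sum $k - s + 1/p$ still matches since $k$ is unchanged. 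Note that the admissibility condition $0 \leq s < k + 1/p$ or $(s,p) = (k+\frac{1}{2}, 2)$ is preserved under $k \mapsto k+1$, since in both cases $s < (k+1) + 1/p$. Summing the two resulting bounds yields \cref{eq:smrq-gamma1-continuity-low}.

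The argument is almost entirely bookkeeping; the two places requiring genuine care are (i) verifying the recursion identities, in particular keeping the prefactors $x_i^{q}$, $x_i^{r}$ — which involve $\omega_i$ evaluated at $\bdd{x}$ — distinct from the reweighting $\omega_i^{-1} f$ composed with $\bdd{x} + z\bdd{y}$, and (ii) checking that the weight exponents remain nonnegative and continue to sum to $k' - s + 1/p$ at each stage. As in the proof for $\mathcal{M}_{k,r}^{[1]}$, it suffices to check the identities for $f \in C_c^{\infty}(\reftri)$ and conclude by density, the required a priori bounds being furnished by the inductive hypothesis.
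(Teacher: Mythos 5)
Your proposal is correct and is essentially the paper's argument: the key recursion identity you derive for $q\geq 1$ is exactly \cref{eq:proof:smrq-id}, and the weight bookkeeping ($\tau_1 = t_1+1$, absorption of $\omega_1^{-1}$, admissibility preserved under $k \mapsto k+1$) matches the paper's step. The only structural difference is that you run a single induction on $q+r$ terminating at $\mathcal{E}_k^{[1]}$ via \cref{eq:em-gamma1-continuity-low}, re-deriving the $\mathcal{M}$-type step through your second identity, whereas the paper inducts on $q$ alone and uses the already-established bound \cref{eq:mmr-gamma1-continuity-low} for $\mathcal{M}_{k,r}^{[1]} = \mathcal{S}_{k,r,0}^{[1]}$ as the base case — the same computation, merely arranged differently.
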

	\begin{proof}
		Let $0 \leq s < k + 1/p$ or $(s, p) = (k+\frac{1}{2}, 2)$. We proceed by induction on $q$. The case $q=0$ follows from \cref{eq:mmr-gamma1-continuity-low}, so assume that \cref{eq:smrq-gamma1-continuity-low} holds for some $q \in \mathbb{N}_0$. Direct computation gives
		\begin{align*}
			&\mathcal{S}_{k, r, q+1}^{[1]}(f)(\bdd{x}, z) - \mathcal{S}_{k, r, q}^{[1]}(f)(\bdd{x}, z)  \\
			&\qquad = x_1^q x_2^r \frac{(-z)^k}{k!} \int_{\reftri} b(\bdd{y}) \frac{f(\bdd{x} + z\bdd{y})}{(x_1 + z y_1)^q (x_2 + z y_2)^r} \left( \frac{x_1}{x_1 + z y_1} - 1 \right)  \d{\bdd{y}}  \\
			&\qquad=  x_1^q x_2^r \frac{(-z)^{k+1}}{k!} \int_{\reftri} y_1 b(\bdd{y}) \frac{f(\bdd{x} + z\bdd{y})}{(x_1 + z y_1)^{q+1} (x_2 + z y_2)^r}   \d{\bdd{y}}  \\
			&\qquad= (k+1) \mathcal{S}_{k+1, r, q}^{[1]}[\omega_1 b](\omega_1^{-1} f)(\bdd{x}, z),
		\end{align*}
		which leads to the following identity
		\begin{align}
			\label{eq:proof:smrq-id}
			\mathcal{S}_{k, r, q+1}^{[1]}(f) = (k+1) \mathcal{S}_{k+1, r, q}^{[1]}[\omega_1 b](\omega_1^{-1} f) + \mathcal{S}_{k, r, q}^{[1]}(f).
		\end{align}
		Consequently, there holds
		\begin{align*}
			\| \mathcal{S}_{k, r, q+1}^{[1]}(f) \|_{s, p, \reftet} &\leq (k+1) \| \mathcal{S}_{k+1, r, q}^{[1]}[\omega_1 b](\omega_1^{-1} f) \|_{s, p, \reftet} + \| \mathcal{M}_{k, r, q}^{[1]}(f) \|_{s, p, \reftet}.
		\end{align*}
		Applying \cref{eq:smrq-gamma1-continuity-low} with $\tau_1 = t_1 + 1$, $\tau_2 = t_2$ and $\tau_3 = t_3$ gives 
		\begin{align*}
			\| \mathcal{S}_{k+1, r, q}^{[1]}[\omega_1 b](\omega_1^{-1} f) \|_{s, p, \reftet} \lesssim_{b, k, r, q, s, p} \|  \omega_1^{\tau_1 - 1} \omega_2^{\tau_2} \omega_3^{\tau_3} f \|_{p, \reftri} = \|  \omega_1^{t_1} \omega_2^{t_2} \omega_3^{t_3} f \|_{p, \reftri},
		\end{align*} 
		and so $\| \mathcal{S}_{k, r, q+1}^{[1]}(f) \|_{s, p, \reftet} \lesssim_{b, k, r, q, s, p} \|  \omega_1^{t_1} \omega_2^{t_2} \omega_3^{t_3} f \|_{p, \reftri}$.
	\end{proof}
	
	\subsection{Proof of \cref{lem:mmr-gamma1-cont-high}}
	\label{sec:proof-mmr-gamma1-cont-high}
	
	\noindent	\textbf{Step 1: Continuity \cref{eq:mmr-gamma1-continuity-high}. } We first show that \cref{eq:mmr-gamma1-continuity-high} holds with $\Gamma_1$ replaced by $\reftri$ and $\gamma_{12}$ replaced by $\gamma_{2}$, where we recall that the edges of $\reftri$ are labeled as in \cref{fig:reference triangle}: For all $k, r \in \mathbb{N}_0$, $(s, p) \in \mathcal{A}_k \cup \{ (k+\frac{1}{2}, 2) \}$, and $f~\in~W^{s-k-\frac{1}{p},p}(\reftri) \cap W_{\gamma_{2}}^{\min\{s-k- \frac{1}{p}, r\} , p}(\reftri)$, there holds
	\begin{align}
		\label{eq:proof:mmr-gamma1-continuity-high}
		\| \mathcal{M}_{k, r}^{[1]}(f) \|_{s, p, \reftet} \lesssim_{b, k, r, s, p} \begin{cases}
			\|f\|_{2, \reftri} & \text{if } (s, p) = (k+\frac{1}{2}, 2), \\
			\inorm{\gamma_{2}}{f}{s-k-\frac{1}{p}, p, \reftri} & \text{if } k + \frac{1}{p} < s \leq k+r+\frac{1}{p}, \\
			\| f\|_{s-k-\frac{1}{p}, p, \reftri} & \text{if } s > k + r + \frac{1}{p}.
		\end{cases}
	\end{align}
	
	We proceed by induction on $r$. The case $r=0$ follows from \cref{eq:em-gamma1-continuity-high,eq:em-gamma1-continuity-low}, so assume that \cref{eq:proof:mmr-gamma1-continuity-high} holds for some $r \in \mathbb{N}_0$ and all $k \in \mathbb{N}_0$ and $(s, p) \in \mathcal{A}_k$. Let $k \in \mathbb{N}_0$, $(s, p) \in \mathcal{A}_k \cup \{(k+\frac{1}{2}, 2)\}$, and $f \in W^{s-k-\frac{1}{p},p}(\reftri) \cap W_{\gamma_{2}}^{\min\{s-k- \frac{1}{p}, r+1\} , p}(\reftri)$ be given. Thanks to \cref{eq:proof:mmr-id}, there holds
	\begin{align*}
		\| \mathcal{M}_{k, r+1}^{[1]}(f) \|_{s, p, \reftet} \leq (k+1) \| \mathcal{M}_{k+1, r}^{[1]}[\omega_2 b](\omega_2^{-1} f) \|_{s, p, \reftet} + \| \mathcal{M}_{k, r}^{[1]}(f) \|_{s, p, \reftet}.
	\end{align*}
	
	\noindent \textbf{Part (a):  $k+1/p \leq s \leq k+1+1/p$. } Thanks to \cref{lem:omega1-inv-mapping,}, there holds $\omega_2^{-1} f \in L^p(\reftri; \omega_2^{(k-s+1)p+1} \d{\bdd{x}})$ and \cref{eq:omega1-invs-wsp1-bound,eq:omega1-inv-wsp-bound} give
	\begin{align*}
		\| \omega_2^{k-s+1+\frac{1}{p}} \omega_2^{-1} f \|_{p, \reftri} = \| \omega_2^{k-s+\frac{1}{p}} f \|_{p, \reftri} \lesssim_{k,s,p} \inorm{\gamma_2}{f}{s-k-\frac{1}{p}, \reftri}.
	\end{align*}
	Consequently, we apply \cref{eq:mmr-gamma1-continuity-low} to obtain
	\begin{align*}
		\| \mathcal{M}_{k+1, r}^{[1]}[\omega_2 b](\omega_2^{-1} f) \|_{s, p, \reftet} \lesssim_{b, k, r, s, p} \| \omega_2^{k-s+\frac{1}{p}} f \|_{p, \reftri} \lesssim_{k, s, p} \inorm{\gamma_2}{f}{s-k-\frac{1}{p}, p, \reftri}.
	\end{align*} 
	
	\noindent \textbf{Part (b):  $k+1+1/p < s \leq k + r + 1 + 1/p$. } \Cref{lem:omega1-inv-mapping} shows that $\omega_2^{-1} f \in W_{\gamma_2}^{s-k-1-\frac{1}{p}, p}(\reftri)$ and \cref{eq:proof:mmr-gamma1-continuity-high} and \cref{eq:omega1-inv-wsps-bound} then give
	\begin{align*}
		\| \mathcal{M}_{k+1, r}^{[1]}[\omega_2 b](\omega_2^{-1} f) \|_{s, p, \reftet} \lesssim_{b, k, r, s, p} \inorm{\gamma_2}{\omega_2^{-1} f}{s-k-1-\frac{1}{p}, p, \reftri} \lesssim_{k, s, p} \inorm{\gamma_2}{f}{s-k-\frac{1}{p}, p, \reftri}.
	\end{align*} 
	
	\noindent \textbf{Part (c): $s > k + r + 1 + 1/p$. } Thanks to \cref{lem:omega1-inv-mapping}, there holds $\omega_2^{-1} f \in W^{s-k-1-\frac{1}{p}, p}(\reftri) \cap W^{r}_{\gamma_2}(\reftri)$, and so we apply \cref{eq:proof:mmr-gamma1-continuity-high} and \cref{eq:omega1-inv-wsp-bound} to obtain
	\begin{align*}
		\| \mathcal{M}_{k+1, r}^{[1]}[\omega_2 b](\omega_2^{-1} f) \|_{s, p, \reftet} \lesssim_{b, k, r, s, p} \|\omega_2^{-1} f\|_{s-k-1-\frac{1}{p}, p, \reftri} \lesssim_{k, s, p} \|f\|_{s-k-\frac{1}{p}, p, \reftri}.
	\end{align*}
	Inequality \cref{eq:proof:mmr-gamma1-continuity-high} for $r + 1$ now follows from the triangle inequality. The smoothness of the mapping $\mathfrak{I}_1$ defined in \cref{eq:ek1-gamma1-def} then gives \cref{eq:mmr-gamma1-continuity-high}.
	
	\noindent \textbf{Step 2: Trace properties \cref{eq:mmr-gamma1-interp,eq:mmr-gamma2-zero}. } Direct computation shows that \cref{eq:mmr-gamma1-interp,eq:mmr-gamma2-zero} hold.

	\noindent \textbf{Step 3: Polynomial preservation. } 	Suppose that $f \in \mathcal{P}_N(\Gamma_1)$, $N \in \mathbb{N}_0$, satisfies $D_{\Gamma}^{l} f|_{\gamma_{12}} = 0$ for $0 \leq l \leq r-1$. Then, $f \circ \mathfrak{I}_1 = \omega_2^r g$ for some $g \in \mathcal{P}_{N-r}(\reftri)$, and so $\mathcal{M}_{k, r}^{[1]}(f) = x_2^r \mathcal{E}_{k}^{[1]}(g) \in \mathcal{P}_{N+k}(\reftet)$ thanks to \cref{lem:em-gamma1-cont-high}. \hfill \proofbox
	
	\subsection{Proof of \cref{lem:smr-gamma1-cont-high}}
	\label{sec:proof-lem:smr-gamma1-cont-high}
	
	\noindent \textbf{Step 1: Continuity  \cref{eq:smr-gamma1-continuity-high}. } We first show that the following analogue of \cref{eq:smr-gamma1-continuity-high} holds: Let $b \in C_c^{\infty}(\reftri)$, $k, r \in \mathbb{N}_0$, $(s, p) \in \mathcal{A}_k \cup \{(k+1/2, 2)\}$, and $\mathfrak{E} = \{\gamma_1, \gamma_2\}$. For all $f \in W_{\mathfrak{E}, r}^{s-k- \frac{1}{p}, p}(\reftri)$, there holds
	\begin{align}
		\label{eq:proof:smr-gamma1-continuity-high-fix}
		\| \mathcal{S}_{k, r}^{[1]}(f) \|_{s, p, \reftet} \lesssim_{b, k, r, s, p} 
		\begin{cases}
			\|f\|_{2, \reftri} & \text{if } (s, p) = (k+\frac{1}{2}, 2), \\
		 	\inorm{\mathfrak{E}, r}{f}{s-k-\frac{1}{p}, p, \reftri} & \text{otherwise}.
		 \end{cases}
	\end{align}
	We proceed by induction on $r$. The case $r=0$ follows from \cref{eq:em-gamma1-continuity-high,eq:em-gamma1-continuity-low}. Now let $r \in \mathbb{N}_0$ be given, and assume that \cref{eq:proof:smr-gamma1-continuity-high-fix} holds for all $k \in \mathbb{N}_0$ and $(s, p) \in \mathcal{A}_k \cup \{(k+1/2, 2)\}$. 
	
	Let $k\in \mathbb{N}_0$, $(s, p) \in \mathcal{A}_k \cup \{(k+1/2, 2)\}$, and $f \in W_{\mathfrak{E}, r+1}^{s-k - \frac{1}{p}, p}(\reftri)$ be given. Then, applying \cref{eq:proof:mmr-id,eq:proof:smrq-id} gives
	\begin{align*}
		\mathcal{S}_{k, r+1}^{[1]}(f) &= (k+1) \mathcal{S}_{k+1, r+1, r}^{[1]}[\omega_1 b](\omega_1^{-1} f) + \mathcal{S}_{k, r+1, r}^{[1]}(f) \\
		&= x^r \left( (k+1) \mathcal{M}_{k+1, r+1}^{[1]}[\omega_1 b](\omega_1^{-(r+1)} f) + \mathcal{M}_{k, r+1}^{[1]}(\omega_1^{-r} f) \right) \\
		&= x^r \big[  (k+1)(k+2) \mathcal{M}_{k+2, r}^{[1]}[\omega_1 \omega_2 b](\omega_1^{-(r+1)} \omega_2^{-1} f) \\
		&\quad + \mathcal{M}_{k+1, r}^{[1]}[\omega_1 b](\omega_1^{-(r+1)} f) 
		+ \mathcal{M}_{k+1, r}^{[1]}[\omega_2 b](\omega_1^{-r} \omega_2^{-1} f) + \mathcal{M}_{k+1, r}^{[1]}(\omega_1^{-r} f) \big],
	\end{align*}
	where $S_{k, r, q}^{[1]}$ is defined in \cref{eq:skrq-def}, and so
	\begin{align*}
			\mathcal{S}_{k, r+1}^{[1]}(f) &= (k+1)(k+2) \mathcal{S}_{k+2, r}^{[1]}[\omega_1 \omega_2 b]((\omega_1 \omega_2)^{-1} f) \\
			&\qquad + (k+1) \left(  \mathcal{S}_{k+1, r}^{[1]}[\omega_1 b](\omega_1^{-1} f)  
			+ \mathcal{S}_{k+1, r}^{[1]}[\omega_2 b](\omega_2^{-1} f) \right) + \mathcal{S}_{k, r}^{[1]}(f).
	\end{align*}
	Consequently, we obtain
	\begin{align}
		\label{eq:proof:skr1-triangle-fix}
		\begin{aligned}
			\| \mathcal{S}_{k, r+1}^{[1]}(f) \|_{s, p, \reftet} &\lesssim_k  \| \mathcal{S}_{k, r}^{[1]}(f) \|_{s, p, \reftet} + \sum_{i=1}^{2}  \| \mathcal{S}^{[1]}_{k+1, r}[\omega_i b](\omega_i^{-1} f) \|_{s, p, \reftet} \\
			&\qquad + \| \mathcal{S}^{[1]}_{k+2, r}[\omega_1 \omega_2 b]((\omega_1 \omega_2)^{-1}f) \|_{s, p, \reftet} 
		\end{aligned}
	\end{align}
	
	\noindent \textbf{Part (a). } We first consider the terms $\| \mathcal{S}^{[1]}_{k+1, r}[\omega_i b](\omega_i^{-1} f) \|_{s, p, \reftet}$, $1 \leq i \leq 2$. For $k+1/p \leq s \leq k + 1 + 1/p$, \cref{lem:omega1-inv-mapping} shows that $\omega_i^{-1} f \in L^p(\reftri; \omega_i^{(k-s+1)p + 1} \d{\bdd{x}})$ and \cref{eq:omega1-invs-wsp1-bound,eq:omega1-inv-wsps-bound} gives
	\begin{align*}
		\| \omega_i^{k-s+1+\frac{1}{p}} \omega_i^{-1} f \|_{p, \reftri} = \| \omega_i^{k-s+\frac{1}{p}} f \|_{p, \reftri} \lesssim_{k, s, p} \inorm{\mathfrak{E}}{f}{s-k-\frac{1}{p}, \reftri} = \inorm{\mathfrak{E}, r+1}{f}{s-k-\frac{1}{p}, \reftri}
	\end{align*}
	for $1 \leq i \leq 2$. Applying \cref{eq:smrq-gamma1-continuity-low} then gives
	\begin{align}
		\label{eq:proof:sk1r-cont-fix}
		\| \mathcal{S}_{k+1, r}^{[1]}[\omega_i b](\omega_i^{-1} f) \|_{s, p, \reftet}  \lesssim_{b,k,r,s,p} \inorm{\mathfrak{E}, r+1}{f}{s-k-\frac{1}{p}, \reftri}, \qquad 1 \leq i \leq 2.
	\end{align}
	Now let $s > k + 1 + 1/p$. \Cref{cor:omega-inv-wspr-norm} shows that $\omega_i^{-1} f \in W_{\mathfrak{E}, r}^{s-k-1-\frac{1}{p}, p}(\reftri)$ and \cref{eq:omega1-inv-wspr-norm} then gives
	\begin{align*}
		\inorm{\mathfrak{E}, r}{\omega_i^{-1} f}{s-k-1-\frac{1}{p}, p, \reftri} \lesssim_{k, s, p, r} \inorm{\mathfrak{E}, r+1}{f}{s-k-\frac{1}{p}, p, \reftri}.
	\end{align*}
	Inequality \cref{eq:proof:sk1r-cont-fix} then follows from \cref{eq:proof:smr-gamma1-continuity-high-fix}. 
	
	\noindent \textbf{Part (b). } We now turn to the term $\| \mathcal{S}^{[1]}_{k+2, r}[\omega_1 \omega_2 b]((\omega_1 \omega_2)^{-1}f) \|_{s, p, \reftet}$. Assume first that $k + 1/p \leq s \leq k + 1 + 1/p$. \Cref{lem:omega1-inv-mapping} shows that $(\omega_1 \omega_2)^{-1} f \in L^p(\reftri; \omega_1^p \omega_2^{(k-s+1)p + 1} \d{\bdd{x}})$, and \cref{eq:omega1-invs-wsp1-bound,eq:omega1-inv-wsps-bound} give
	\begin{align*}
		\| \omega_1 \omega_2^{k-s+1+\frac{1}{p}} \omega_1^{-1} \omega_2^{-1} f \|_{p, \reftri}   \lesssim_{k,s,p} \inorm{\mathfrak{E}}{f}{s-k-\frac{1}{p}, \reftri} = \inorm{\mathfrak{E}, r+1}{f}{s-k-\frac{1}{p}, \reftri}.
	\end{align*}
	Applying \cref{eq:smrq-gamma1-continuity-low} then gives
	\begin{align}
		\label{eq:proof:sk2r-cont-fix}
		\| \mathcal{S}_{k+2, r}^{[1]}[\omega_1 \omega_2 b]((\omega_1 \omega_2)^{-1} f) \|_{s, p, \reftet}  \lesssim_{b,k,r,s,p} \inorm{\mathfrak{E}, r+1}{f}{s-k-\frac{1}{p}, \reftri}.
	\end{align}
	
	Now assume that $k + 1 + 1/p < s \leq k + 2 + 1/p$. Thanks to \cref{cor:omega-inv-wspr-norm}, $\omega_2^{-1} f \in W^{s-k-1-\frac{1}{p}, p}_{\gamma_1, r+1}(\reftri)$, and so \cref{lem:omega1-inv-mapping} gives $(\omega_1 \omega_2)^{-1} f \in  L^p(\reftri; \omega_1^{(k-s+2)p + 1} \d{\bdd{x}})$. Inequalities \cref{eq:omega1-inv-wspr-norm-2,eq:omega1-inv-wsps-bound} then give
	\begin{align*}
		\| \omega_1^{k-s+2+\frac{1}{p}} \omega_1^{-1} \omega_2^{-1} f \|_{p, \reftri}  \lesssim_{k,s,p} \inorm{\mathfrak{E}, r}{\omega_2^{-1} f}{s-k-1-\frac{1}{p}, \reftri} \lesssim_{k,s,p} \inorm{\mathfrak{E}, r+1}{f}{s-k-\frac{1}{p}, \reftri}.
	\end{align*}
	Applying \cref{eq:smrq-gamma1-continuity-low} then gives \cref{eq:proof:sk2r-cont-fix}. 
	
	Now assume that $s > k + 2 + 1/p$. Two applications of \cref{cor:omega-inv-wspr-norm} show that $(\omega_1 \omega_2)^{-1} f \in W^{s-k-2-\frac{1}{p}, p}_{\mathfrak{E}, r}(\reftri)$ and \cref{eq:omega1-inv-wspr-norm-2,eq:wsp-e-intersection-id-2} give
	\begin{align*}
		\inorm{\mathfrak{E}, r}{(\omega_1 \omega_2)^{-1}f}{s-k-2-\frac{1}{p}, \reftri} &\lesssim_{k, s, p, r} \inorm{\gamma_1, r}{\omega_1^{-1} f}{s-k-1-\frac{1}{p}, \reftri} + \inorm{\gamma_2, r+1}{\omega_1^{-1} f}{s-k-1-\frac{1}{p}, \reftri} \\
		&\lesssim_{k, s, p, r} \inorm{\gamma_1, r+1}{f}{s-k-\frac{1}{p}, \reftri} + \inorm{\gamma_2, r+1}{f}{s-k-\frac{1}{p}, \reftri} \\
		&\lesssim_{k, s, p} \inorm{\mathfrak{E}, r+1}{f}{s-k-\frac{1}{p}, \reftri}.
	\end{align*}	
	Applying \cref{eq:proof:smr-gamma1-continuity-high-fix} then gives \cref{eq:proof:sk2r-cont-fix}. Inequality \cref{eq:proof:smr-gamma1-continuity-high-fix} for $r+1$ now follows from the triangle inequality, \cref{eq:proof:sk1r-cont-fix}, and \cref{eq:proof:sk2r-cont-fix}. The smoothness of the mapping $\mathfrak{I}_1$ defined in \cref{eq:ek1-gamma1-def} then gives \cref{eq:smr-gamma1-continuity-high}. 
	
	\noindent \textbf{Step 2: Trace properties \cref{eq:smr-gamma1-interp,eq:smr-gamma2-zero}. } Direct computation shows that \cref{eq:smr-gamma1-interp,eq:smr-gamma2-zero} hold.
	
	\noindent \textbf{Step 3: Polynomial preservation. } 	Suppose that $f \in \mathcal{P}_N(\Gamma_1)$, $N \in \mathbb{N}_0$, satisfies $D_{\Gamma}^{l} f|_{\gamma_{12}} = D_{\Gamma}^{l} f|_{\gamma_{13}} = 0$ for $0 \leq l \leq r-1$. Then, $f \circ \mathfrak{I}_1 = (\omega_1 \omega_2)^r g$ for some $g \in \mathcal{P}_{N-2r}(\reftri)$, and so $\mathcal{S}_{k, r}^{[1]}(f) = (x_1 x_2)^r \mathcal{E}_{k}^{[1]}(g) \in \mathcal{P}_{N+k}(\reftet)$ thanks to \cref{lem:em-gamma1-cont-high}. \hfill \proofbox

	\subsection{Proof of \cref{lem:rmr-gamma1-cont-high}}
	\label{sec:proof-lem:rmr-gamma1-cont-high}
	
	\textbf{Step 1: Continuity \cref{eq:rmr-gamma1-continuity-high}. } We first show that the following analogue of \cref{eq:rmr-gamma1-continuity-high} holds: For $b \in C_c^{\infty}(\reftri)$, $k, r \in \mathbb{N}_0$, $(s, p) \in \mathcal{A}_k$, and $\mathfrak{E} = \{\gamma_1, \gamma_2, \gamma_3\}$, there holds 
	\begin{align}
		\label{eq:proof:rmr-gamma1-continuity-high}
		\| \mathcal{R}_{k, r}^{[1]}(f) \|_{s, p, \reftet} \lesssim_{b, k, r, s, p} \inorm{\mathfrak{E}, r}{f}{s-k-\frac{1}{p}, p, \reftri} \qquad \forall f \in W^{s-k-\frac{1}{p}, p}_{\mathfrak{E}, r}(\reftri).
	\end{align}
	\textbf{Part (a): Variants of $\mathcal{S}_{k, r}^{[1]}$. } 
	We begin with a brief aside. Let $\mathfrak{E}_{ij} = \{\gamma_i, \gamma_j\}$ for $1 \leq i < j \leq 3$. Formally define the following analogue of $\mathcal{S}_{k, r}^{[1]}$ \cref{eq:skr-def}:
	\begin{alignat*}{2}
		\mathcal{S}_{k, r}^{[1], (13)}(f)(\bdd{x}, z) &:= (x_1 (1-x_1-x_2-z))^r \mathcal{E}_k^{[1]}((\omega_1 \omega_3)^{-r} f)(\bdd{x}, z) \qquad & & \\
		&= \mathcal{S}_{k, r}^{[1]}[b \circ \mathfrak{F}_2](f \circ \mathfrak{F}_2) \circ \mathfrak{G}_2(\bdd{x}, z), \qquad & &(\bdd{x}, z) \in \reftet,
	\end{alignat*}
	where $\mathfrak{F}_2$ and $\mathfrak{G}_2$ are defined in \cref{eq:proof:frakf2-def}. Note that for any $s \geq 0$ and $r \in \mathbb{N}_0$, there holds $f \in W^{s, p}_{\mathfrak{E}_{13}, r}(\reftri)$ if and only if $f \circ \mathfrak{F}_2 \in W^{s, p}_{\mathfrak{E}_{12}, r}(\reftri)$. Thanks to \cref{lem:smr-gamma1-cont-high}, for $b \in C^{\infty}_c(\reftri)$, $k, r \in \mathbb{N}_0$, $(s, p) \in \mathcal{A}_k$, there holds
	\begin{align}
		\label{eq:proof:smr13-gamma1-continuity-high}
		\| \mathcal{S}_{k, r}^{[1], (13)}(f) \|_{s, p, \reftet} \lesssim_{b, k, r, s, p} \inorm{\mathfrak{E}_{13}, r}{f}{s-k-\frac{1}{p}, p, \reftri} \qquad \forall f \in W^{s-k-\frac{1}{p}, p}_{\mathfrak{E}_{13}, r}(\reftri),
	\end{align}
	where we used that $\|f\|_{t, p, \reftri} \approx_{t, p} \|f \circ \mathfrak{F}_2\|_{t, p, \reftri}$ and $\inorm{\mathfrak{E}_{13}, r}{f}{t, p, \reftri} \approx_{t, p} \inorm{\mathfrak{E}_{12}, r}{f \circ \mathfrak{F}_2}{t, p, \reftri}$. Analogous arguments show that the operator
	\begin{alignat*}{2}
		\mathcal{S}_{k, r}^{[1], (23)}(f)(\bdd{x}, z) &:= (x_2 (1-x_1-x_2-z))^r \mathcal{E}_k^{[1]}((\omega_2 \omega_3)^{-r} f)(\bdd{x}, z) \qquad & & \\
		&= \mathcal{S}_{k, r}^{[1]}[b \circ \mathfrak{F}_2^{-1}](f \circ \mathfrak{F}_2^{-1}) \circ \mathfrak{G}_2^{-1}(\bdd{x}, z) \qquad & &(\bdd{x}, z) \in \reftet
	\end{alignat*}
	satisfies the following for $b \in C^{\infty}_c(\reftri)$, $k, r \in \mathbb{N}_0$, $(s, p) \in \mathcal{A}_k$:
	\begin{align}
		\label{eq:proof:smr23-gamma1-continuity-high}
		\| \mathcal{S}_{k, r}^{[1], (23)}(f) \|_{s, p, \reftet} \lesssim_{b, k, r, s, p} \inorm{\mathfrak{E}_{23}, r}{f}{s-k-\frac{1}{p}, p, \reftri} \qquad \forall f \in W^{s-k-\frac{1}{p}, p}_{\mathfrak{E}_{23}, r}(\reftri).
	\end{align}
	 \textbf{Part (b): Key identity for $\mathcal{R}_{k, r}^{[1]}$. } Thanks to \cref{lem:partial-frac-decomp}, there holds
	 \begin{align*}
	 	\mathcal{R}_{k, r}^{[1]}(f) &= (x_1 x_2 (1-x_1-x_2-z))^r  \sum_{\substack{\alpha \in \mathbb{N}_0^3 \\ \alpha_j \leq k \\ |\alpha| \geq 2 }} \mathcal{E}_k^{[1]} \left( \frac{ c_{\alpha,1} f }{\omega_1^{\alpha_1} \omega_2^{\alpha_2}} + \frac{ c_{\alpha,2} f }{\omega_1^{\alpha_1} \omega_3^{\alpha_3}} + \frac{ c_{\alpha,3} f }{\omega_2^{\alpha_2} \omega_3^{\alpha_3}} \right) \\
	 	&= \sum_{1 \leq i < j \leq 3} \lambda_{m(i, j)}^{r} \sum_{l=1}^{r} (\lambda_i \lambda_j)^{r-l} \sum_{n=0}^{l} \left( d_{ln}^{(ij)} S_{k, l}^{[1], (ij)}(\omega_i^{n} f ) + d_{ln}^{(ji)} S_{k, l}^{[1], (ij)}(\omega_j^{n} f ) \right),
	 \end{align*}
 	where $\lambda_1 := x_1$, $\lambda_2 := x_2$, $\lambda_3 := 1 - x_1 - x_2 - z$, $m(i, j)$ is the lone element of $\{1,2,3\} \setminus \{i, j\}$, $d_{ln}^{(ij)}$ and $d_{ln}^{(ji)}$ are suitable constants, and $S_{k, r}^{[1], (12)} := \mathcal{S}_{k, r}^{[1]}$.
 	
 	Let $b \in C_c^{\infty}(\reftri)$, $k, r \in \mathbb{N}_0$, $(s, p) \in \mathcal{A}_k$, and $f \in W^{s-k-\frac{1}{p}, p}_{\mathfrak{E}, r}(\reftri) $ be given. For any $n \in \mathbb{N}_0$ and real $t \geq 0$, the mapping $g \mapsto \omega_i^{n} g$ is continuous from $W^{t, p}_{\mathfrak{E}, r}(\reftri)$ to $W^{t, p}_{\mathfrak{E}, r}(\reftri)$. Similarly, for any $\alpha \in \mathbb{N}_0^3$, the mapping $g \mapsto \lambda_1^{\alpha_1} \lambda_2^{\alpha_2} \lambda_3^{\alpha_3} g$ is continuous from $W^{s, p}(\reftet)$ to $W^{s, p}(\reftet)$. Consequently, \cref{eq:proof:rmr-gamma1-continuity-high} follows from the triangle inequality, \cref{eq:wsp-e-intersection-id}, \cref{eq:smr-gamma1-continuity-high}, \cref{eq:proof:smr13-gamma1-continuity-high}, and \cref{eq:proof:smr23-gamma1-continuity-high}. The smoothness of the mapping $\mathfrak{I}_1$ \cref{eq:ek1-gamma1-def} then gives \cref{eq:rmr-gamma1-continuity-high}.
	
	\noindent \textbf{Step 2: Trace properties \cref{eq:rmr-gamma1-interp,eq:rmr-gamma2-zero}. } Direct computation shows that \cref{eq:rmr-gamma1-interp,eq:rmr-gamma2-zero} hold.
	
	\noindent \textbf{Step 3: Polynomial preservation. } 	Suppose that $f \in \mathcal{P}_N(\Gamma_1)$, $N \in \mathbb{N}_0$, satisfies $D_{\Gamma}^{l} f|_{\partial \reftri} = 0$ for $0 \leq l \leq r-1$. Then, $f \circ \mathfrak{I}_1 = (\omega_1 \omega_2 \omega_3)^r g$ for some $g \in \mathcal{P}_{N-3r}(\reftri)$, and so $\mathcal{R}_{k, r}^{[1]}(f) = (x_1 x_2 (1-x_1-x_2-z))^r \mathcal{E}_{k}^{[1]}(g) \in \mathcal{P}_{N+k}(\reftet)$ thanks to \cref{lem:em-gamma1-cont-high}. \hfill \proofbox

	\appendix
	
	\section{Properties of spaces with vanishing traces}
	
	In this section, we show that smooth functions with vanishing traces are dense in the space $W^{s, p}_{\mathfrak{E}}(\reftri)$ \cref{eq:wsp-e-def} and that functions in $W^{s, p}_{\mathfrak{E}, r}(\reftri)$ \cref{eq:wsp-er-def} satisfy a Hardy inequality.
	
	\subsection{A Density result}
	
	We begin with a density result for the spaces $W^{s,p}_{\mathfrak{E}}(\reftri)$ defined in \cref{sec:two-faces}.
	\begin{lemma}
		\label{lem:wsp1-smooth-density}
		Let $\mathfrak{E} \subseteq \{ \gamma_1, \gamma_2, \gamma_3 \}$ and define
		\begin{align*}
			C^{\infty}_{\mathfrak{E}}(\reftri) &:= \left\{ \phi \in C^{\infty}(\bar{\reftri}) : \bigcup_{\gamma \in \mathfrak{E}} \gamma \cap \supp \phi = \emptyset \right\}.
		\end{align*}
		For $1 < p < \infty$ and $0 \leq s < \infty$, the space $C^{\infty}_{\mathfrak{E}}(\reftri)$ is dense in $W^{s,p}_{\mathfrak{E}}(\reftri)$.
	\end{lemma}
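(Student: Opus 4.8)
The plan is to first dispose of the trivial case $\mathfrak{E} = \emptyset$, in which $W^{s,p}_{\emptyset}(\reftri) = W^{s,p}(\reftri)$ and $C^{\infty}_{\emptyset}(\reftri) = C^{\infty}(\bar{\reftri})$, so the assertion reduces to the classical density of $C^{\infty}(\bar{\reftri})$ in $W^{s,p}(\reftri)$, valid since $\reftri$ is a polygonal (hence Lipschitz) domain admitting a bounded extension operator, e.g.\ \cite{Devore93}. From now on assume $\mathfrak{E} \neq \emptyset$ and write $\Gamma_{\mathfrak{E}} := \bigcup_{\gamma \in \mathfrak{E}} \gamma$. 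The approach combines three ingredients by a diagonal argument: (i) a sharp zero-extension of $f$ across $\Gamma_{\mathfrak{E}}$; (ii) an affine ``push-out'' family of maps converging to the identity which moves the mass of $f$ away from $\Gamma_{\mathfrak{E}}$; and (iii) mollification.

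\emph{Zero-extension.} The first and main step is to show that the extension-by-zero $Ef$ of $f$ across $\Gamma_{\mathfrak{E}}$ belongs to $W^{s,p}$ of an enlarged polygonal domain $R \supset \bar{\reftri}$, chosen so that $\Gamma_{\mathfrak{E}}$ lies in the interior of $R$ while $\partial \reftri \setminus \Gamma_{\mathfrak{E}}$ still lies on $\partial R$, with $\|Ef\|_{s,p,R} \lesssim_{s,p} \inorm{\mathfrak{E}}{f}{s,p,\reftri}$. After an affine change of variables flattening one edge at a time, and a partition of unity exploiting that near a point of one edge of $\mathfrak{E}$ the function $\dist(\cdot, \Gamma_{\mathfrak{E}})$ is comparable to the distance to that single edge, this reduces to the model fact: for $g \in W^{\sigma, p}(\mathbb{R}^2_+)$ with $0 \leq \sigma < 1$, the zero-extension of $g$ to $\mathbb{R}^2$ lies in $W^{\sigma, p}(\mathbb{R}^2)$ when $\sigma p < 1$; when $\sigma p > 1$ and $g|_{\{x_2 = 0\}} = 0$; and when $\sigma p = 1$ and $\| \dist(\cdot, \{x_2 = 0\})^{-\sigma} g \|_{p} < \infty$. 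Here $g$ plays the role of a top-order derivative $D^{\beta} f$ with $|\beta| = m$, and the vanishing of the lower-order traces $D^{\beta} f|_{\gamma} = 0$ for $|\beta| < s - 1/p$ and $\gamma \in \mathfrak{E}$ ensures that the various pieces glue to give $Ef \in W^{s,p}(R)$; when $|\mathfrak{E}| = 3$, $Ef$ is in fact the zero-extension to all of $\mathbb{R}^2$, with $\supp Ef \subseteq \bar{\reftri}$. The critical case $\sigma p = 1$ — which is precisely why the weighted term occurs in the definition of $\inorm{\mathfrak{E}}{\cdot}{s,p,\reftri}$ — is the delicate point: it is the classical sharp zero-extension criterion (see e.g.\ \cite{Leoni23}), and can alternatively be derived from a Hardy inequality of the type proved in the appendix.

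\emph{Push-out and mollification.} If $|\mathfrak{E}| \leq 2$, the edges of $\mathfrak{E}$ bound a convex corner of $\reftri$, so one may fix a unit vector $\bdd{v}$ pointing strictly into $\reftri$ across every edge of $\mathfrak{E}$; after extending $Ef$ to some $G \in W^{s,p}(\mathbb{R}^2)$ by a bounded extension operator \cite{Devore93}, set $G_t(\bdd{x}) := G(\bdd{x} - t\bdd{v})$. If $|\mathfrak{E}| = 3$, then $Ef$ is already defined on $\mathbb{R}^2$, and, writing $\bdd{c}$ for the centroid of $\reftri$, set $G_t := Ef \circ \Phi_t$ with $\Phi_t(\bdd{x}) := \bdd{c} + (1 + t)(\bdd{x} - \bdd{c})$. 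In either case $G_t \to G$ (resp.\ $G_t \to Ef$) in $W^{s,p}(\mathbb{R}^2)$ as $t \to 0^+$, by continuity of translation, respectively by continuity of composition with the smooth diffeomorphisms $\Phi_t$ converging to the identity; hence $G_t|_{\reftri} \to f$ in $W^{s,p}(\reftri)$. Moreover, for each small $t > 0$, points of $\reftri$ near $\Gamma_{\mathfrak{E}}$ are sent by $\bdd{x} \mapsto \bdd{x} - t\bdd{v}$ (resp.\ by $\Phi_t$) into the region where the zero-extension vanishes, so $G_t$ vanishes on an open neighbourhood $\mathcal{N}_t \subset \mathbb{R}^2$ of $\Gamma_{\mathfrak{E}}$. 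Finally, mollifying, $G_{t,\varepsilon} := G_t * \rho_{\varepsilon} \in C^{\infty}(\mathbb{R}^2)$ satisfies $G_{t,\varepsilon} \to G_t$ in $W^{s,p}(\mathbb{R}^2)$ as $\varepsilon \to 0^+$, and for $\varepsilon$ sufficiently small $\supp G_{t,\varepsilon} \cap \Gamma_{\mathfrak{E}} = \emptyset$, so $G_{t,\varepsilon}|_{\bar{\reftri}} \in C^{\infty}_{\mathfrak{E}}(\reftri)$. Choosing $\varepsilon = \varepsilon(t) \to 0$ appropriately as $t \to 0^+$ yields $G_{t,\varepsilon(t)}|_{\reftri} \to f$ in $W^{s,p}(\reftri)$, which proves the lemma.

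\emph{Main obstacle.} The only non-routine ingredient is the sharp zero-extension bound in the critical case $\sigma p = 1$, where one must verify that the weighted term in $\inorm{\mathfrak{E}}{\cdot}{s,p,\reftri}$ is exactly what compensates for the borderline integrability; the accompanying bookkeeping — localizing $\dist(\cdot, \Gamma_{\mathfrak{E}})$ near the vertex shared by two edges of $\mathfrak{E}$ and checking that the enlarged domain $R$ may be taken Lipschitz with $\partial\reftri \setminus \Gamma_{\mathfrak{E}}$ still on its boundary — is then standard, as is the rest of the argument.
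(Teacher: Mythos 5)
Your route (sharp zero extension across $\Gamma_{\mathfrak{E}}:=\bigcup_{\gamma\in\mathfrak{E}}\gamma$, then a push-out plus mollification) is a legitimate alternative to the paper's scaled partition-of-unity argument, but as written it has two genuine gaps, both located exactly where the difficulty of the lemma sits. First, density in $W^{s,p}_{\mathfrak{E}}(\reftri)$ means convergence in the norm $\inorm{\mathfrak{E}}{\cdot}{s,p,\reftri}$ of \cref{eq:wsp-e-def}, which in the critical case $\sigma p=1$, $\mathfrak{E}\neq\emptyset$ contains the weighted term $\|\dist(\cdot,\Gamma_{\mathfrak{E}})^{-\sigma}D_{\Gamma}^{m}(\cdot)\|_{p,\reftri}$; you only prove $G_{t,\varepsilon(t)}|_{\reftri}\to f$ in $W^{s,p}(\reftri)$. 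At the critical index the weighted seminorm is not controlled by $\|\cdot\|_{s,p,\reftri}$ (that is precisely why it is added to the norm, cf.\ the $H^{1/2}_{00}$ phenomenon), so $W^{s,p}$-convergence does not imply convergence in $W^{s,p}_{\mathfrak{E}}(\reftri)$; and the weighted convergence is what the lemma is used for later (e.g.\ transferring the weighted bounds \cref{eq:proof:h1-weighted-cont} in \cref{lem:hardy1} from $C^{\infty}_{\mathfrak{E}}(\reftri)$ to the whole space). This is repairable for your specific approximants — split the weighted integral into $\{\dist(\cdot,\Gamma_{\mathfrak{E}})<\delta\}$, where $D^{m}G_{t,\varepsilon}$ vanishes and the contribution of $f$ is small by absolute continuity of the integral, and the complement, where the weight is bounded by $\delta^{-\sigma}$ — but the step is absent, and "which proves the lemma" is premature.

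Second, the claim that $G_t$ vanishes on an $\mathbb{R}^2$-neighbourhood of $\Gamma_{\mathfrak{E}}$ (so that mollification keeps the support off $\Gamma_{\mathfrak{E}}$) fails for $|\mathfrak{E}|=1$ with your prescription "any unit $\bdd{v}$ pointing strictly into $\reftri$ across the edges of $\mathfrak{E}$, any admissible $R$". Take $\mathfrak{E}=\{\gamma_1\}$ (the edge $x_1=0$), $\bdd{v}=\unitvec{e}_1$, and $R$ the reflection of $\reftri$ across $\gamma_1$ glued to $\reftri$: points of $\bar{\reftri}$, indeed of the open edge $\gamma_1$, within distance $\sim t$ of the vertex $(0,1)$ are sent by $\bdd{x}\mapsto\bdd{x}-t\bdd{v}$ outside $R$, where the DeVore--Sharpley extension of $Ef$ has no reason to vanish; if instead $R$ is obtained by relaxing only the constraint $x_1>0$, the translate does vanish on $\{x_1<t\}\cap\reftri$, but mollification balls centred at points of $\gamma_1$ within $\varepsilon$ of the vertex $(0,0)$ dip below $\{x_2=0\}$, again outside $R$. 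In either case $\supp G_{t,\varepsilon}|_{\bar{\reftri}}$ can meet $\gamma_1$, so the approximant need not lie in $C^{\infty}_{\mathfrak{E}}(\reftri)$. The construction can be saved, but only by choosing $R$ and $\bdd{v}$ together: extend only the linear constraints defining the edges of $\mathfrak{E}$, and take $\bdd{v}$ pointing inward across the edges of $\mathfrak{E}$ but strictly outward across the adjacent free edges (e.g.\ $\bdd{v}=(1,-\tfrac12)$ for $\mathfrak{E}=\{\gamma_1\}$), with $\varepsilon\ll t$; a generic inward vector does not work. These vertex neighbourhoods — where an edge of $\mathfrak{E}$ meets a free edge — are exactly the regions the paper's proof treats separately (the corner patches, killed via Poincar\'e and weighted Hardy estimates), and they need explicit treatment in your argument as well.
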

	\begin{proof}		
		Let $\mathfrak{E} \subseteq \{ \gamma_1, \gamma_2, \gamma_3 \}$ and $1 < p < \infty$ be given.
		
		\noindent \textbf{Step 1: $0 \leq s < 1/p$. } The space $C^{\infty}_c(\reftri) \subseteq C^{\infty}_{\mathfrak{E}}(\reftri)$ is dense in $W^{s, p}(\reftri) = W_{\mathfrak{E}}^{s, p}(\reftri)$ (see e.g. \cite[Theorem 1.4.5.2]{Grisvard85}).
		
		\noindent \textbf{Step 2: $s \geq 1/p$ and $\mathfrak{E} = \{\gamma_1\}$. } Let $s = m + \sigma$ with $m \in \mathbb{N}_0$ and $\sigma \in [0, 1)$, and let $f \in W^{s, p}_{\mathfrak{E}}(\reftri)$. For $n \in \mathbb{N}$, we construct a partition of unity on $\reftri$ as follows. Let $\{ \bdd{a}_i \}_{i=1}^{3}$ denote the vertices of $T$ labeled counterclockwise as in \cref{fig:reference triangle} and define the following sets:
		\begin{alignat*}{2}
			\mathcal{U}_0 &:= \left\{ \bdd{x} \in \reftri : \dist(\bdd{x}, \partial \reftri) > \frac{1}{2n} \right\}, \qquad & & \\
			\mathcal{U}_{(i-1)(n-1) + j} = \mathcal{U}_{j}^{(i)} &:= B\left(\bdd{a}_{i+2} + \frac{j}{n} \unitvec{t}_{i}, \frac{3}{4n}\right) \cap \bar{T}, \qquad & &1 \leq j \leq n-1, \ 1 \leq i \leq 3, \\
			\mathcal{U}_{3n - 3 + k} &:= B\left(\bdd{a}_k, \frac{3}{4n}\right) \cap \bar{T}, \qquad & &1 \leq k \leq 3,
		\end{alignat*}
		where we use the notation $B(\bdd{x}, r)$ to denote the ball of radius $r$ centered at $\bdd{x}$. By construction, $\reftri \subset \bigcup_{i=0}^{3n} \mathcal{U}_{i}$, and so there exists a partition of unity $\{ \phi_i \in C^{\infty}_c(\mathcal{U}_i) : 0 \leq i \leq 3n\}$ satisfying 
		\begin{align*}
			\sum_{i=0}^{3n} \phi_i = 1 \quad \text{and} \quad \| D^k \phi_i \|_{ \infty, \mathcal{U}_i} \lesssim_{k} n^k, \qquad 0 \leq i \leq 3n, \ \forall k \in \mathbb{N}_0.
		\end{align*} 
		We denote $f_i := \phi_i f$ for $0 \leq i \leq 3n-3$ and set $\mathcal{V}_i := \mathcal{U}_i \cap T$ for $0 \leq i \leq 3n$.
		
		Let $\{ \delta_i \}_{i=0}^{3n-3}$ be arbitrary positive constants. The construction proceeds in several parts.
		
		\noindent \textbf{Part (a).} The function $f_0 := \phi_0 f$ satisfies
		\begin{alignat*}{2}
			D^l f_0|_{\partial \mathcal{U}_0} &= 0, \qquad & &0 \leq l < s - \frac{1}{p}, \\
			\| \dist(\cdot, \partial \mathcal{U}_0)^{-\sigma} D^m f_0 \|_{p, \mathcal{U}_0} \lesssim_{m, p} n^{\frac{1}{p}+m} \|f\|_{m, p, \mathcal{U}_0} &< \infty \qquad& &\text{if $\sigma p=1$}.
		\end{alignat*} 
		By \cite[Theorem 1.4.5.2]{Grisvard85}, there exists a $\psi_0 \in C^{\infty}_c(\mathcal{U}_0)$ satisfying
		\begin{align*}
			\delta_0 \geq \| f_0 - \psi_0 \|_{s, p, \mathcal{U}_0} + \begin{cases}
				\| \dist(\cdot, \partial \mathcal{U}_0)^{-\sigma} D^m (f_0 - \psi_0) \|_{p, \mathcal{U}_0} & \text{if } \sigma p = 1, \\
				0 & \text{otherwise}.
			\end{cases}
		\end{align*}
		Since $f \in W^{s, p}(T)$, we apply the same argument to $f_i := \phi_i f$ on $\mathcal{V}_i$, $1 \leq i \leq n-1$, to show that there exists a $\psi_i \in C^{\infty}_c(\mathcal{V}_i)$ satisfying
		\begin{align*}
			\delta_i \geq \| f_i - \psi_i \|_{s, p, \mathcal{V}_i} + \begin{cases}
				\| \dist(\cdot, \partial \mathcal{V}_i)^{-\sigma} D^m(f_i - \psi_i) \|_{p, \mathcal{V}_i} & \text{if } \sigma p = 1, \\
				0 & \text{otherwise}.
			\end{cases}
		\end{align*}
		
		\noindent \textbf{Part (b).} For $n \leq i \le 3n-2$, $f \in W^{s, p}(\mathcal{V}_i)$, and so there exists a $\rho_i \in C^{\infty}(\bar{\mathcal{V}}_i)$ satisfying $\|f - \rho_i\|_{s, p, \mathcal{V}_i} \leq \delta_i n^{m+2}$ thanks to \cite[Theorem 1.4.5.2]{Grisvard85}. Then, the function $\psi_i := \phi_i \rho_i$ satisfies
		\begin{align*}
			\| f_i - \psi_i \|_{s, p, \mathcal{V}_i} &\lesssim_{s,p} \|\phi_i\|_{m, \infty, \mathcal{V}_i} \|f - \rho_i\|_{m, p, \mathcal{V}_i} +  \sum_{l=0}^{m}  \| D^l \phi_i D^{m-l}(f - \rho_i) \|_{\sigma, p, \mathcal{V}_i}  
			\lesssim_{s, p} \delta_i,
		\end{align*} 
		where we used \cite[Theorem 6.3]{Leoni23} to conclude that
		\begin{align*}
			\| D^l \phi_i D^{m-l}(f - \rho_i) \|_{\sigma, p, \mathcal{V}_i} &\lesssim_{s, p} \|D^l \phi_i \|_{\infty, \mathcal{V}_i}^{1-\sigma} \| D^{l+1} \phi_i \|_{\infty, \mathcal{V}_i}^{\sigma} \|D^{m-l}(f - \rho_i)\|_{p, \mathcal{U}_i} \\
			&\qquad \qquad + \|D^l \phi_i\|_{\infty, \mathcal{V}_i} \|D^{m-l}(f - \rho_i)\|_{\sigma, p, \mathcal{V}_i} \\
			&\leq \delta_i.
		\end{align*}
		Moreover, when $\sigma p=1$, we have
		\begin{align*}
			\| \omega_1^{-\sigma} D^m(f_i - \psi_i) \|_{p, \mathcal{V}_i} \lesssim_p n^{\sigma} \|D^m (f_i - \psi_i) \|_{p, \mathcal{V}_i} \lesssim_{s, p} \delta_i.
		\end{align*}
		
		\noindent \textbf{Part (c).} For $3n-1 \leq i \leq 3n$, we will show that
		\begin{align}
			\label{eq:proof:corner-contrib-vanish}
			\lim_{n\to\infty} \| f_i \|_{s, p, \mathcal{V}_i} = 0 \quad \text{and if $\sigma p=1$,} \quad \lim_{n\to\infty} \| \omega_1^{-\sigma} D^m f_i \|_{p, \mathcal{V}_i} = 0.
		\end{align}
		Thanks to \cite[Theorem 6.3]{Leoni23}, there holds
		\begin{align*}
			\| f_i \|_{s, p, \mathcal{V}_i} &\lesssim_{s, p} \sum_{j=0}^{m} \sum_{l=0}^{j} \|D^l \phi_i\|_{\infty, \mathcal{V}_i} \|D^{j-l} f \|_{p, \mathcal{V}_i} +  \sum_{l=0}^{m} \|D^l \phi_i\|_{\infty, \mathcal{V}_i} \|D^{m-l}f \|_{\sigma, p, \mathcal{V}_i} \\
			&\qquad + \sum_{l=0}^{m}   \|D^l \phi_i \|_{\infty, \mathcal{V}_i}^{1-\sigma} \| D^{l+1} \phi_i \|_{\infty, \mathcal{V}_i}^{\sigma} \|D^{m-l}f \|_{p, \mathcal{U}_i}   \\
			&\lesssim_{s, p} \sum_{j=0}^{m} \sum_{l=0}^{j} n^{l} \|D^{j-l} f \|_{p, \mathcal{V}_i} + \sum_{l=0}^{m}  \left(  n^{l+\sigma} \|D^{m-l}f \|_{p, \mathcal{U}_i} + n^l \|D^{m-l}f \|_{\sigma, p, \mathcal{V}_i} \right).
		\end{align*}
		Similar computations show that for $\sigma p =1 $, there holds
		\begin{align*}
			\| \omega_1^{-\sigma} D^m f_i\|_{p, \mathcal{V}_i} \lesssim_{s, p}  \sum_{j=0}^{m} n^j \|\omega_1^{-\sigma} D^{m-j} f\|_{p, \mathcal{V}_i}.
		\end{align*}
		Since $\mathcal{V}_i \cap \gamma_1 \neq \emptyset$,  Poincar\'{e}'s inequality gives
		\begin{align*}
			\|D^r f\|_{p, \mathcal{V}_i} \lesssim_{r, p} n^{-(s-r)} |D^m f|_{\sigma, p, \mathcal{V}_i} \qquad 0 \leq r \leq m,
		\end{align*}	
		and so $\| f_i \|_{s, p, \mathcal{V}_i} \lesssim_{s, p}  |D^m f|_{\sigma, p, \mathcal{V}_i}$. Moreover, if $\sigma p=1$, then $D^{m-j} f \in W_{\mathfrak{E}}^{1,p}(\mathcal{V}_i)$ for $1 \leq j \leq m$, and so \cite[Theorem 5.2]{Brewster14}, \cite[Theorem 3.2]{Egert15}, and a standard scaling argument give
		\begin{align*}
			\|\omega_1^{-\sigma} D^{m-j} f\|_{p, \mathcal{V}_i} \lesssim n^{\sigma-1} \| \omega_1^{-1} D^{m-j} f\|_{p, \mathcal{V}_i} &\lesssim_{p} n^{\sigma-1} \| D^{m-j+1} f\|_{p, \mathcal{V}_i} \\
			&\lesssim_{s, p} n^{-j} |D^m f|_{\sigma, p, \mathcal{V}_i},
		\end{align*}
		and so $\| \omega_1^{-\sigma} D^m f_i\|_{p, \mathcal{V}_i}  \lesssim_{s, p} |D^m f|_{\sigma, p, \mathcal{V}_i}$. Equality \cref{eq:proof:corner-contrib-vanish} now follows from that fact that $ |D^m f|_{\sigma, p, \mathcal{V}_i} \to 0$ as $n \to \infty$ since $|\mathcal{V}_i| \to 0$ as $n \to \infty$.
		
		\noindent \textbf{Part (d).} Let $\epsilon > 0$ be given. First, choose $n$ large enough so that 
		\begin{align*}
			\frac{\epsilon}{2} \geq \sum_{i=3n-1}^{3n} \| f_i \|_{s, p, \mathcal{V}_i} + \begin{cases}
				\sum_{i=3n-1}^{3n} 	\| \omega_1^{-\sigma} D^k f_i\|_{p, \mathcal{V}_i}  & \text{if } \sigma p=1, \\
				0 & \text{otherwise}.
			\end{cases}
		\end{align*}
		Then, for $\{\delta_i\}_{i=0}^{3n-2}$ chosen sufficiently small, we construct $\psi_i$ as above so that
		\begin{align*}
			\frac{\epsilon}{2} \geq \sum_{i=0}^{3n-2} \| f_i - \psi_i \|_{s, p, \mathcal{V}_i} + \begin{cases}
				\sum_{i=3n-1}^{3n} 	\| \omega_1^{-\sigma} D^m (f_i - \psi_i)\|_{p, \mathcal{V}_i}  & \text{if } \sigma p=1, \\
				0 & \text{otherwise}.
			\end{cases}
		\end{align*}
		Let $\tilde{\psi}_i$ denote the extension of $\psi_i$ by zero to $\reftri \setminus \mathcal{U}_i$, $0 \leq i \leq 3n-2$ and set $\tilde{\psi}_{j} \equiv 0$ for $3n-1 \leq j \leq 3n$. Then, $\tilde{\psi}_i \in C^{\infty}_{\mathfrak{E}}(\reftri)$, and the function $\psi = \sum_{i=0}^{3n} \tilde{\psi}_i$ then satisfies $\psi \in C^{\infty}_{\mathfrak{E}}(\reftri)$ and
		\begin{align*}
			\inorm{\mathfrak{E}}{f - \psi}{s, p, T} &\leq \sum_{i=0}^{3n} \| f_i - \psi_i \|_{s, p, \mathcal{V}_i} + \begin{cases}
				\sum_{i=0}^{3n} 	\| \omega_1^{-\sigma} D^m (f_i - \psi_i)\|_{p, \mathcal{V}_i}  & \text{if } \sigma p=1, \\
				0 & \text{otherwise},
			\end{cases} \\
			&\lesssim_{s, p} \epsilon,
		\end{align*}
		which shows that $C^{\infty}_{\mathfrak{E}}(\reftri)$ is dense in $W^{s,p}_{\mathfrak{E}}(\reftri)$. 
		
		\noindent \textbf{Step 3: $\mathfrak{E} = \{\gamma_2\}$ or $\mathfrak{E} = \{\gamma_3\}$. } If $\mathfrak{E} = \{\gamma_2\}$, the density of $C^{\infty}_{\gamma_2}(\reftri)$ in $W_{\gamma_2}^{s, p}(\reftri)$ follows from the fact that $f \in W_{\gamma_2}^{s, p}(\reftri)$ if and only if $f \circ \mathfrak{F}_2^{-1} \in W_{\gamma_1}^{s, p}(\reftri)$, where $\mathfrak{F}_2^{-1}(x_1, x_2) = (1-x_1-x_2, x_1)$ is the inverse of $\mathfrak{F}_2$ defined in \cref{eq:proof:frakf2-def}. The case $\mathfrak{E} = \{\gamma_3\}$ follows from similar arguments using the mapping $\mathfrak{F}_2$.

		\noindent \textbf{Step 4: $|\mathfrak{E}| = 2$. } Now let $\mathfrak{E} = \{\gamma_1,\gamma_2\}$. The density of $C^{\infty}_{\mathfrak{E}}(\reftri)$ in $W^{s,p}_{\mathfrak{E}}(\reftri)$ may be shown using a similar construction to the case $\mathfrak{E} = \{\gamma_1\}$. In particular, we apply the construction of Step 1 Part (a) for $0 \leq i \leq 2n-2$ and $i = 3n$, Part (b) for $2n-1 \leq i \leq 3n-3$, Part (c) for $3n-2 \leq i \leq 3n-1$, and proceed analogously as in Part (d). The remaining cases for $|\mathfrak{E}| = 2$ are proved along similar lines.
		
		\noindent \textbf{Step 5: $\mathfrak{E} = \{\gamma_1,\gamma_2,\gamma_3\}$. } This case is a restatement of \cite[Lemma 1.4.5.2]{Grisvard85}.		
	\end{proof}

	\subsection{Hardy inequalities}
	
	First, we construct a bounded averaging operator.
	
	\begin{lemma}
		\label{lem:hardy1}
		There exists a linear operator $\mathcal{H}_1$ satisfying the following properties: 
		\begin{enumerate}
			\item[(i)] $\mathcal{H}_1$ maps $C(\bar{\reftri})$ boundedly into $C(\bar{\reftri})$, and there holds	
			\begin{align}
				\label{eq:hardy-average-op-def}
				\mathcal{H}_1(f)(\bdd{x}) = \frac{1}{x_1} \int_{0}^{x_1} f(u, x_2) \d{u} = \int_{0}^{1} f(u x_1 , x_2) \d{u} \qquad \forall \bdd{x} \in \reftri.
			\end{align}
			
			\item[(ii)] $\mathcal{H}_1$ maps $W^{s, p}_{\mathfrak{E}, r}(\reftri)$ boundedly into $W^{s, p}_{\mathfrak{E}, r}(\reftri)$ and for all $p \in (1,\infty)$, $s \in [0, \infty)$, $r \in \mathbb{N}_0$, and $\mathfrak{E} \in \{ \emptyset, \{\gamma_1\}, \{\gamma_1, \gamma_2\} \}$. In particular,
			\begin{alignat}{2}
				\label{eq:hardy1-wsps-cont}
				\inorm{\mathfrak{E}, r}{ \mathcal{H}_1(f)}{s, p, \reftri} &\lesssim_{s, p, r} \inorm{\mathfrak{E}, r}{f}{s, p, \reftri} \qquad & &\forall f \in W_{\mathfrak{E}, r}^{s, p}(\reftri).
			\end{alignat}

		\end{enumerate}
	\end{lemma}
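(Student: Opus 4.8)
The plan is to define $\mathcal{H}_1$ directly through the integral form appearing in \cref{eq:hardy-average-op-def}, i.e. $\mathcal{H}_1(f)(\bdd{x}) := \int_0^1 f(ux_1, x_2)\,\d{u}$, which is meaningful for any $f \in L^1_{\mathrm{loc}}(\reftri)$ since $(ux_1,x_2) \in \reftri$ whenever $(x_1,x_2) \in \reftri$ and $u \in (0,1)$ (indeed $ux_1 + x_2 \le x_1 + x_2 < 1$). The substitution $u \mapsto u/x_1$ shows this agrees with the averaging form $\tfrac1{x_1}\int_0^{x_1} f(\cdot, x_2)$, so \cref{eq:hardy-average-op-def} holds; the pointwise bound $|\mathcal{H}_1(f)| \le \|f\|_{\infty,\reftri}$ together with the uniform continuity of $f \in C(\bar\reftri)$ and dominated convergence gives that $\mathcal{H}_1$ maps $C(\bar\reftri)$ into itself, proving (i). For (ii), two elementary facts drive everything: the derivative identity $D^{\alpha}\mathcal{H}_1(f)(\bdd{x}) = \int_0^1 u^{\alpha_1}(D^{\alpha}f)(ux_1,x_2)\,\d{u}$ for $\alpha \in \mathbb{N}_0^2$ (in particular $\partial_{x_2}$ commutes with $\mathcal{H}_1$), valid in the distributional sense, and the scaling estimate $\| g(u\cdot,\cdot)\|_{p,\reftri} \le u^{-1/p}\|g\|_{p,\reftri}$ for $u \in (0,1)$, which follows from the change of variables $t = u x_1$ and the inclusion $\{(ux_1,x_2):(x_1,x_2)\in\reftri\} \subseteq \reftri$.

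With these, Minkowski's integral inequality gives $\|D^{\alpha}\mathcal{H}_1(f)\|_{p,\reftri} \le \big(\int_0^1 u^{\alpha_1 - 1/p}\,\d{u}\big)\|D^{\alpha}f\|_{p,\reftri}$, and the integral is finite because $p > 1$; hence $\mathcal{H}_1$ is bounded on $L^p(\reftri)$ and on $W^{m,p}(\reftri)$ for every $m \in \mathbb{N}_0$. Boundedness on $W^{s,p}(\reftri)$ for non-integer $s = m + \sigma$ then follows by real interpolation, using $W^{s,p}(\reftri) = (W^{m,p}(\reftri), W^{m+1,p}(\reftri))_{\sigma,p}$, which is available because $\reftri$ is a Lipschitz (hence Sobolev-extension) domain; see e.g. \cite[Theorem 14.2.3]{Brenner08} and \cite[Theorem 5.4.1]{BerLof76}. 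This already settles the case $\mathfrak{E} = \emptyset$ and the unweighted part of the $W^{s,p}_{\mathfrak{E},r}$-norm when $s > r$.

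It remains to deal with the extra structure of $W^{s,p}_{\mathfrak{E},r}(\reftri)$ for the admissible $\mathfrak{E} \in \{\emptyset, \{\gamma_1\},\{\gamma_1,\gamma_2\}\}$, all contained in $\{\gamma_1,\gamma_2\}$ with $\gamma_1 = \{x_1 = 0\}$ and $\gamma_2 = \{x_2 = 0\}$; by \cref{eq:wsp-e-intersection-id,eq:wsp-e-intersection-id-2} it suffices to treat a single edge. For the vanishing-trace conditions I would use the density of $C^{\infty}_{\mathfrak{E}}(\reftri)$ in $W^{s,p}_{\mathfrak{E}}(\reftri)$ (\cref{lem:wsp1-smooth-density}) when $s \le r$, and standard density of $C^{\infty}(\bar\reftri)$ in $W^{s,p}(\reftri)$ when $s > r$, combined in both cases with the continuity of $\mathcal{H}_1$ established above and the continuity of the relevant trace operators, together with the elementary facts that $D^{\beta}\mathcal{H}_1(g)|_{\gamma_1} = \tfrac{1}{\beta_1+1}(D^{\beta}g)|_{\gamma_1}$ and $D^{\beta}\mathcal{H}_1(g)|_{\gamma_2}(x_1) = \int_0^1 u^{\beta_1}(D^{\beta}g)(ux_1,0)\,\d{u}$ for smooth $g$, so that vanishing traces of $f$ pass to $\mathcal{H}_1(f)$. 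For the weighted terms that occur when $\sigma p = 1$, observe that on $\reftri$ one has $\dist(\cdot,\gamma_1) = \omega_1 = x_1$ and $\dist(\cdot,\gamma_2) = \omega_2 = x_2$. For $\gamma_1$, the derivative identity yields $x_1^{-\sigma}D^{\alpha}\mathcal{H}_1(f)(\bdd{x}) = x_1^{-(\alpha_1 + 1 + \sigma)}\int_0^{x_1} t^{\alpha_1}(D^{\alpha}f)(t,x_2)\,\d{t}$, whose $L^p(\reftri)$-norm is controlled by $\|x_1^{-\sigma}D^{\alpha}f\|_{p,\reftri}$ via the power-weighted Hardy inequality (\cite[Theorem 327]{Hardy52} and its standard weighted variant), the exponent condition being precisely $p > 1$. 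For $\gamma_2$, since $\mathcal{H}_1$ acts only in $x_1$, $x_2^{-\sigma}D^{\alpha}\mathcal{H}_1(f) = \int_0^1 u^{\alpha_1} x_2^{-\sigma}(D^{\alpha}f)(ux_1,x_2)\,\d{u}$ and the scaling estimate applies verbatim with the weight $x_2^{-\sigma}$ carried along. The same two observations — commutation of $\mathcal{H}_1$ with $x_2$-differentiation, and $\dist(\cdot,\gamma_i) = \omega_i$ — handle the tangential weighted term $\|\dist(\cdot,\bigcup_{\gamma}\gamma)^{-\sigma}\,\partial_{\unitvec{t}_{\gamma}}^{m-r+1}D_{\Gamma}^{r-1}f\|_{p}$ in the $s > r$ part of the norm, since $\partial_{\unitvec{t}_{\gamma_1}} = \partial_{x_2}$ commutes with $\mathcal{H}_1$ while $\partial_{\unitvec{t}_{\gamma_2}} = \partial_{x_1}$ only produces the harmless factors $u^{j}$, $j \le r-1$, inside the $u$-integral.

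I expect the main obstacle to be the case distinctions in this last step: one has to verify, across the finitely many admissible regimes of $(s,p,r,\mathfrak{E})$, that the power-weighted Hardy inequalities and the elementary scaling bound $\int_0^1 u^{\alpha_1 - 1/p}\,\d{u} < \infty$ are the only constraints and that no borderline value is excluded, so that the stated boundedness holds on the entire admissible range. A related subtlety worth flagging is that a direct attack on the Gagliardo seminorm $|\mathcal{H}_1(g)|_{\sigma,p,\reftri}$ via the two-dimensional change of variables loses a factor $u^{-2/p}$ that fails to be integrable for $1 < p \le 2$, which is precisely why the non-integer estimate is obtained through interpolation rather than by a hands-on calculation. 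Everything else — the $L^p$ and integer estimates, the interpolation step, and the propagation of vanishing traces — is routine once the derivative identity, the scaling estimate, and \cref{lem:wsp1-smooth-density} are in place.
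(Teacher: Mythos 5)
Your proposal is correct and follows essentially the same route as the paper: the derivative identity for $D^{\alpha}\mathcal{H}_1(f)$, an $L^p$ bound in the $x_1$-variable (your Minkowski-plus-scaling argument is equivalent to the paper's direct use of Hardy's inequality \cite[Theorem 327]{Hardy52}), interpolation for non-integer $s$, density via \cref{lem:wsp1-smooth-density}, weighted Hardy estimates for the $\sigma p = 1$ terms, and reduction to single edges through \cref{eq:wsp-e-intersection-id,eq:wsp-e-intersection-id-2}. The only cosmetic difference is that the paper propagates the vanishing-trace conditions by observing that $\mathcal{H}_1$ preserves $C^{\infty}_{\mathfrak{E}}(\reftri)$, whereas you use the explicit trace identities $D^{\beta}\mathcal{H}_1(g)|_{\gamma_1} = \tfrac{1}{\beta_1+1}(D^{\beta}g)|_{\gamma_1}$ and its $\gamma_2$ analogue; both are valid.
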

	\begin{proof}
		\textbf{Step 1:  Continuity on $C(\bar{T})$. } Let $f \in C(\bar{\reftri})$ and define $\mathcal{H}_1(f)$ by \cref{eq:hardy-average-op-def}. Elementary arguments show that $\mathcal{H}_1(f) \in C(\bar{\reftri})$ with $\|\mathcal{H}_1(\phi_i)\|_{\infty, \reftri} \leq \|\phi_i\|_{\infty, \reftri}$.		
		
		\noindent \textbf{Step 2: Extension to $W^{s, p}_{\mathfrak{E}, r}(\reftri)$ when $\mathfrak{E} = \emptyset$. } Let $f \in C^{\infty}(\bar{\reftri})$ and $1 < p < \infty$. For $\alpha \in \mathbb{N}_0^2$, there holds
		\begin{align}
			\label{eq:proof:hardy-derivative-id}
			D^{\alpha} \mathcal{H}_1(f)(\bdd{x}) &= \int_{0}^{1} u^{\alpha_1} (D^{\alpha} f)(u x_1, x_2) \d{u} = \frac{1}{x_1^{\alpha_1+1}} \int_{0}^{x_1} u^{\alpha_1} (D^{\alpha} f)(u, x_2) \d{u},
		\end{align}
		and so $\mathcal{H}_1(f) \in C^{\infty}(\bar{\reftri})$. Moreover, Hardy's inequality \cite[Theorem 327]{Hardy52} gives
		\begin{align*}
			\| 	D^{\alpha} \mathcal{H}_1(f) \|_{p, \reftri}^p
			& \leq \int_{0}^{1} \int_{0}^{1-x_2} \left| \frac{1}{x_1} \int_{0}^{x_1} |D^{\alpha} f(u, x_2)| \d{u} \right|^p \d{x_1} \d{x_2} \\
			&\leq \left( \frac{p}{p-1} \right)^p  \int_{0}^{1} \int_{0}^{1-x_2}  |D^{\alpha} f(x_1, x_2)|^p \d{x_1} \d{x_2}.
		\end{align*}
		Consequently, we obtain
		\begin{align*}
			\| \mathcal{H}_1(f) \|_{s, p, \reftri} &\lesssim_{s, p} \| f \|_{s, p, \reftri} \qquad \forall f \in C^{\infty}(\bar{\reftri}), \ s \in \mathbb{N}_0.
		\end{align*}
		Since $C^{\infty}(\bar{\reftri})$ is dense in $W^{s, p}(\reftri)$ \cite[Theorem 1.4.5.2]{Grisvard85}, $\mathcal{H}_1$ can be continuously extended to a linear operator from $W^{s, p}(\reftri)$ into $W^{s, p}(\reftri)$ for $s \in \mathbb{N}_0$. The case for non-integer $s \in (0,\infty)$ follows from interpolation.		
		
		\noindent \textbf{Step 3: Inequality \cref{eq:hardy1-wsps-cont} when $\mathfrak{E} = \{\gamma_1\}$. } The case $r = 0$ follows from Step 2, so let $r \in \mathbb{N}$. Assume first the $s \leq r$ and let $f \in C^{\infty}_{\gamma_1}(\reftri)$. \Cref{eq:proof:hardy-derivative-id} shows that $\mathcal{H}_1(f) \in C^{\infty}_{\gamma_1}(\reftri)$. Moreover, for $s = m + 1/p$, $m \in \mathbb{N}_0$, we apply Hardy's inequality \cite[Theorem 327]{Hardy52} to obtain
		\begin{align}
			\| \omega_1^{-\frac{1}{p}} D^{\alpha} \mathcal{H}_1(f) \|_{p, \reftri}^p 
			&\leq \int_{0}^{1} \int_{0}^{1-x_2} \left( \frac{1}{x_1} \int_{0}^{x_1} \frac{1}{u^{\frac{1}{p}}} |D^{\alpha} f(u, x_2)| \d{u} \right)^p \d{x_1} \d{x_2} \notag \\
			\label{eq:proof:h1-weighted-cont}
			&\leq \left( \frac{p}{p-1} \right)^p \| \omega_1^{-\frac{1}{p}} D^{\alpha} f \|_{p, \reftri}^p 
		\end{align}
		for all $\alpha \in \mathbb{N}_0$ with $|\alpha| = m$. Thus, $\inorm{\gamma_1}{\mathcal{H}_1(f)}{s, p, \reftri} \lesssim_{s, p} \inorm{\gamma_1}{f}{s, p, \reftri}$ for all $f \in C^{\infty}_{\gamma_1}(\reftri)$. By density (\cref{lem:wsp1-smooth-density}),  $\mathcal{H}_1$ maps $W_{\gamma_1, r}^{s, p}(\reftri)$ boundedly into $W_{\gamma_1, r}^{s, p}(\reftri)$ for all $p \in (1,\infty)$ and $s \in [0, r]$.
		
		Now let $s > r$ and $f \in W_{\gamma_1, r}^{s, p}(\reftri)$. Step 2 and the arguments above show that $\mathcal{H}_1(f) \in W^{s, p}(\reftri) \cap W^{r, p}_{\gamma_1}(\reftri)$, and so $\mathcal{H}_1(f) \in W^{s, p}_{\gamma_1, r}(\reftri)$ if $s-1/p \notin \mathbb{Z}$ with $\inorm{\gamma_1, r}{\mathcal{H}_1(f)}{s, p, \reftri} \lesssim_{s, p}  \inorm{\gamma_1, r}{f}{s, p, \reftri}$. Now let $s = m + 1/p$ for some $m \in \mathbb{N}$. Then, $f \in C(\bar{\reftri})$ and thanks to Step 1 and \cref{eq:proof:h1-weighted-cont}, we have
		\begin{align*}
				\left\| \omega_1^{-\frac{1}{p}} \frac{\partial^{m-r-1} D^{r-1} \mathcal{H}_1(f)}{\partial x_2^{m-r-1}} \right\|_{p, \reftri} \lesssim_{p} \left\| \omega_1^{-\frac{1}{p}} \frac{\partial^{m-r-1} D^{r-1} f}{\partial x_2^{m-r-1}} \right\|_{p, \reftri}.
		\end{align*}
		Consequently, $\mathcal{H}_1(f) \in W^{s, p}_{\gamma_1, r}(\reftri)$ and  \cref{eq:hardy1-wsps-cont} holds when $\mathfrak{E} = \{\gamma_1\}$.
		
		\noindent \textbf{Step 4: Inequality \cref{eq:hardy1-wsps-cont} when $\mathfrak{E} = \{\gamma_1, \gamma_2\}$. } Again let $r \in \mathbb{N}$. Assume first that $s \leq r$. As above, $\mathcal{H}_1(f) \in C^{\infty}_{\mathfrak{E}}(\reftri)$ for any $f \in C^{\infty}_{\mathfrak{E}}(\reftri)$ by \cref{eq:proof:hardy-derivative-id}, and for $s = m + 1/p$, $m \in \mathbb{N}_0$, Hardy's inequality \cite[Theorem 327]{Hardy52} gives
		\begin{align}
			\| \omega_2^{-\frac{1}{p}} D^{\alpha} \mathcal{H}_1(f) \|_{p, \reftri}^p 
			&\leq \int_{0}^{1} \frac{1}{x_2} \int_{0}^{1-x_2} \left( \frac{1}{x_1} \int_{0}^{x_1} |D^{\alpha} f(u, x_2)| \d{u} \right)^p \d{x_1} \d{x_2} \notag \\
			\label{eq:proof:h1-weighted-2-cont}
			&\leq \left( \frac{p}{p-1} \right)^p \| \omega_2^{-\frac{1}{p}} D^{\alpha} f \|_{p, \reftri}^p 
		\end{align}
		for all $\alpha \in \mathbb{N}_0$ with $|\alpha| = m$. Inequality \cref{eq:hardy1-wsps-cont} now follows from Step 2 and \cref{eq:wsp-e-intersection-id}. By density, $\mathcal{H}_1$ maps $W_{\mathfrak{E}, r}^{s, p}(\reftri)$ boundedly into $W_{\mathfrak{E}, r}^{s, p}(\reftet)$ for all $p \in (1,\infty)$ and $s \in [0, r]$.
		
		Now let $s > r$ and $f \in W^{s, p}_{\mathfrak{E}, r}(\reftri)$. Arguing analogously as in Step 3, we have $\mathcal{H}_1(f) \in W^{s, p}_{\mathfrak{E}, r}(\reftri)$ if $s - \frac{1}{p} \notin \mathbb{Z}$ with $\inorm{\mathfrak{E}, r}{\mathcal{H}_1(f)}{s, p, \reftri} \lesssim_{s, p}  \inorm{\mathfrak{E}, r}{f}{s, p, \reftri}$. Moreover, \cref{eq:proof:h1-weighted-2-cont} gives
		\begin{align*}
			\left\| \omega_2^{-\frac{1}{p}} \frac{\partial^{m-r-1} D^{r-1} \mathcal{H}_1(f)}{\partial x_1^{m-r-1}} \right\|_{p, \reftri} \lesssim_{p} \left\| \omega_2^{-\frac{1}{p}} \frac{\partial^{m-r-1} D^{r-1} f}{\partial x_1^{m-r-1}} \right\|_{p, \reftri}.
		\end{align*}
		Consequently, $\mathcal{H}_1(f) \in W^{s, p}_{\mathfrak{E}, r}(\reftri)$ and  \cref{eq:hardy1-wsps-cont} holds when $\mathfrak{E} = \{\gamma_1, \gamma_2\}$.		
	\end{proof}

	Finally, we state and prove various versions of Hardy's inequality.
	\begin{theorem}
		\label{lem:omega1-inv-mapping}
		Let $1 < p < \infty$ and $\emptyset \neq  \mathfrak{E} \subseteq \{\gamma_1,\gamma_2\}$. For $0 \leq s < \infty$ and $i \in \{1,2\}$ such that $\gamma_i \in \mathfrak{E}$, the mapping $f \mapsto \omega_i^{-1} f$ is bounded (i) $W^{s+1,p}(\reftri) \cap W^{1,p}_{\mathfrak{E}}(\reftri)$ to $W^{s,p}(\reftri)$, and (ii) $W^{s+1,p}_{\mathfrak{E}}(\reftri)$ to $W^{s,p}_{\mathfrak{E}}(\reftri)$, and there holds
		\begin{alignat}{3}
			\label{eq:omega1-inv-wsp-bound}
			\| \omega_i^{-1} f \|_{s, p, \reftri} &\lesssim_{s, p} \left\| \partial_{i}  f  \right\|_{s, p, \reftri} \qquad & &\forall f \in W^{s+1,p}(\reftri) \cap W_{\mathfrak{E}}^{1, p}(\reftri), \qquad & &\ \\
			\label{eq:omega1-inv-wsps-bound}
			\inorm{\mathfrak{E}}{\omega_i^{-1} f}{s, p, \reftri} &\lesssim_{s, p} \inorm{\mathfrak{E}}{ \partial_i f   }{s, p, \reftri} \qquad & &\forall f \in W_{\mathfrak{E}}^{s+1, p}(\reftri). \qquad & &
		\end{alignat}
		Additionally, for $0 \leq s < 1$ and $i \in \{1,2\}$, the mapping $f \mapsto \omega_i^{-s} f$ is bounded $W_{\gamma_i}^{s, p}(\reftri)$ to $L^p(\reftri)$, and there holds
		\begin{align}
			\label{eq:omega1-invs-wsp1-bound}
			\| \omega_i^{-s} f\|_{p, \reftri} &\lesssim_{s, p} \inorm{\gamma_i}{f}{s, p, \reftri} \qquad \forall f \in W_{\gamma_i}^{s, p}(\reftri).
		\end{align}
	\end{theorem}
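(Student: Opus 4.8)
The plan is to deduce all three inequalities from the boundedness of the averaging operator $\mathcal{H}_1$ of \cref{lem:hardy1}, together with elementary fractional Hardy inequalities. Since the affine involution $(x_1,x_2)\mapsto(x_2,x_1)$ maps $\reftri$ onto itself, swaps $\gamma_1$ and $\gamma_2$, fixes $\gamma_3$, and preserves every norm in the statement, it suffices to treat $i=1$; recall that $\omega_1=x_1$ and $\gamma_1=\{x_1=0\}$ on $\reftri$. The key observation is that
\begin{align*}
	\omega_1^{-1}f=\mathcal{H}_1(\partial_1 f)\qquad\text{a.e.\ on }\reftri
\end{align*}
whenever $f\in W^{1,p}(\reftri)$ has vanishing trace on $\gamma_1$: for a.e.\ $x_2$ the section $u\mapsto f(u,x_2)$ is absolutely continuous with $f(0,x_2)=0$, so $x_1\mathcal{H}_1(\partial_1 f)(\bdd{x})=\int_0^{x_1}\partial_1 f(u,x_2)\d{u}=f(\bdd{x})$.

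Granting this, \cref{eq:omega1-inv-wsp-bound} is immediate: if $f\in W^{s+1,p}(\reftri)\cap W^{1,p}_{\mathfrak{E}}(\reftri)$ with $\gamma_1\in\mathfrak{E}$, then $\partial_1 f\in W^{s,p}(\reftri)$, and \cref{lem:hardy1}(ii) with $\mathfrak{E}=\emptyset$ gives $\omega_1^{-1}f=\mathcal{H}_1(\partial_1 f)\in W^{s,p}(\reftri)$ with $\|\omega_1^{-1}f\|_{s,p,\reftri}\lesssim_{s,p}\|\partial_1 f\|_{s,p,\reftri}$. For \cref{eq:omega1-inv-wsps-bound} I would first check that $f\in W^{s+1,p}_{\mathfrak{E}}(\reftri)$ forces $\partial_1 f\in W^{s,p}_{\mathfrak{E}}(\reftri)$: the vanishing conditions $D^{\beta}(\partial_1 f)|_{\gamma}=D^{\beta+\unitvec{e}_1}f|_{\gamma}=0$ for $|\beta|<s-1/p$, $\gamma\in\mathfrak{E}$, follow from those of $f$, and if the fractional part of $s$ equals $1/p$ the distance-weighted term for $\partial_1 f$ is dominated by the one for $f$, which carries an extra derivative. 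Then pick the integer $r=\lceil s\rceil$, so that $W^{s,p}_{\mathfrak{E},r}(\reftri)=W^{s,p}_{\mathfrak{E}}(\reftri)$ with equal norms; since $\gamma_1\in\mathfrak{E}\subseteq\{\gamma_1,\gamma_2\}$ we have $\mathfrak{E}\in\{\{\gamma_1\},\{\gamma_1,\gamma_2\}\}$, so \cref{lem:hardy1}(ii) applies and gives $\inorm{\mathfrak{E}}{\omega_1^{-1}f}{s,p,\reftri}=\inorm{\mathfrak{E},r}{\mathcal{H}_1(\partial_1 f)}{s,p,\reftri}\lesssim_{s,p,r}\inorm{\mathfrak{E},r}{\partial_1 f}{s,p,\reftri}=\inorm{\mathfrak{E}}{\partial_1 f}{s,p,\reftri}$; as $r=\lceil s\rceil$ this constant depends only on $s,p$. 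This establishes (i) and (ii).

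For \cref{eq:omega1-invs-wsp1-bound} (again $i=1$), note first that $s=0$ is trivial ($\omega_1^{0}f=f$, both sides $\|f\|_{p,\reftri}$) and $s=1/p$ is immediate, since then $\sigma p=1$ and, as $\omega_1=\dist(\cdot,\gamma_1)$ on $\reftri$, the term $\|\omega_1^{-1/p}f\|_{p,\reftri}$ is already a summand of $\inorm{\gamma_1}{f}{1/p,p,\reftri}$ up to a constant. For $0<s<1/p$ one has $W^{s,p}_{\gamma_1}(\reftri)=W^{s,p}(\reftri)$, and since $\gamma_1\subseteq\partial\reftri$ implies $\omega_1=\dist(\cdot,\gamma_1)\ge\dist(\cdot,\partial\reftri)$, the bound follows from the classical fractional Hardy inequality on the Lipschitz domain $\reftri$, $\int_{\reftri}\dist(\cdot,\partial\reftri)^{-sp}|f|^p\,\d{\bdd{x}}\lesssim_{s,p}\|f\|_{s,p,\reftri}^p$ for $sp<1$. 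For $1/p<s<1$, use the density of $C^{\infty}_{\gamma_1}(\reftri)$ in $W^{s,p}_{\gamma_1}(\reftri)$ (\cref{lem:wsp1-smooth-density}) to reduce to $f$ vanishing near $\gamma_1$; applying on each section $g=f(\cdot,x_2)$ the one-dimensional fractional Hardy inequality with zero boundary value, $\int_0^a x^{-sp}|g|^p\,\d{x}\lesssim_{s,p}\int_0^a\!\int_0^a\frac{|g(x)-g(y)|^p}{|x-y|^{1+sp}}\,\d{x}\,\d{y}$ (scale-invariant in $a$, since $sp>1$), and integrating over $x_2\in(0,1)$, one arrives at a sum of directional difference-quotient integrals on $\reftri$, which is controlled by $|f|_{s,p,\reftri}^p$.

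The step I expect to cost the most effort is this last one: verifying that the one-dimensional Hardy constants are genuinely independent of the section length $a=1-x_2$ (a scaling argument reduces each to the case $a=1$), and that the resulting slice integral $\int_0^1\!\int_0^{1-x_2}\!\int_0^{1-x_2}|x_1-y_1|^{-1-sp}|f(x_1,x_2)-f(y_1,x_2)|^p\,\d{x_1}\,\d{y_1}\,\d{x_2}$ is bounded by $|f|_{s,p,\reftri}^p$, which rests on the equivalence of the Gagliardo seminorm on the Lipschitz domain $\reftri$ with a sum of coordinate-direction difference quotients (obtainable via a bounded extension to $\mathbb{R}^2$ and \cref{eq:wsp-tensorized-norm}). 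A secondary routine point is the bookkeeping identifying $\partial_i f$ as an element of $W^{s,p}_{\mathfrak{E}}(\reftri)$ and matching the norms $\inorm{\mathfrak{E}}{\cdot}{s,p,\reftri}$ and $\inorm{\mathfrak{E},r}{\cdot}{s,p,\reftri}$ for $r=\lceil s\rceil$.
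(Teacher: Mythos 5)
Your proposal is correct and follows essentially the same route as the paper: reduce to $i=1$ by the coordinate swap, use the identity $\omega_1^{-1}f=\mathcal{H}_1(\partial_1 f)$ together with \cref{lem:hardy1} to get \cref{eq:omega1-inv-wsp-bound,eq:omega1-inv-wsps-bound}, and prove \cref{eq:omega1-invs-wsp1-bound} by splitting into the cases $sp<1$ (classical Hardy on the Lipschitz domain), $sp=1$ (definition of the norm), and $sp>1$ (extension plus a fractional Hardy inequality with vanishing trace). The only cosmetic deviation is in the case $sp>1$, where the paper extends $f$ to the half-plane and cites the corresponding two-dimensional Hardy-type inequality from Leoni, while you run a scale-invariant one-dimensional fractional Hardy inequality slice-by-slice combined with the directional difference-quotient norm equivalence; the two executions amount to the same estimate.
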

	\begin{proof}
		Let $1 < p < \infty$ be given.
		
		\noindent \textbf{Step 1: Inequalities \cref{eq:omega1-inv-wsp-bound,eq:omega1-inv-wsps-bound} when $\mathfrak{E} \in \{  \{\gamma_1\}, \{\gamma_1,\gamma_2\} \}$.  } Thanks to the fundamental theorem of calculus, there holds
		\begin{align}
			\label{eq:proof:wsp1-equal-hardy-deriv}
			f(\bdd{x}) = \int_{0}^{x_1} (\partial_{1} f)(u, x_2) \d{u} = x_1 \mathcal{H}_1(\partial_1 f) \qquad  \forall \bdd{x} \in \reftri, \ \forall f \in C^{\infty}_{\gamma_1}(\reftri).
		\end{align}
		By density (\cref{lem:wsp1-smooth-density}), \cref{eq:proof:wsp1-equal-hardy-deriv} holds for a.e. $\bdd{x} \in \reftri$ for all $f \in W_{\mathfrak{E}}^{1, p}(\reftri)$. \Cref{lem:hardy1} and \cref{eq:proof:wsp1-equal-hardy-deriv} then show that the mapping $f \mapsto \omega_i^{-1} f$ is bounded (i) from $W^{s+1,p}(\reftri) \cap W^{1,p}_{\mathfrak{E}}(\reftri)$ to $W^{s,p}(\reftri)$, and (ii) from $W^{s+1,p}_{\mathfrak{E}}(\reftri)$ to $W^{s,p}_{\mathfrak{E}}(\reftri)$ provided that $\gamma_i \in \mathfrak{E}$. Inequalities \cref{eq:omega1-inv-wsp-bound,eq:omega1-inv-wsps-bound} now follow from \cref{eq:hardy1-wsps-cont,eq:proof:wsp1-equal-hardy-deriv}. 
		
		\noindent \textbf{Step 2:  Inequalities \cref{eq:omega1-inv-wsp-bound,eq:omega1-inv-wsps-bound} when $\mathfrak{E} = \{2\}$. } Note that $f \in W^{s+1, p}(\reftri) \cap W^{1, p}_{\gamma_2}(\reftri)$ if and only if $g := f \circ \mathfrak{F}_1 \in W^{s+1, p}(\reftri) \cap W^{1, p}_{\gamma_1}(\reftri)$ and $f \in W^{s+1}_{\gamma_2}(\reftri)$ if and only if $g \in W^{s+1}_{\gamma_1}(\reftri)$, where $\mathfrak{F}_1$ is defined in \cref{eq:proof:frakf1-def}.  Inequalities \cref{eq:omega1-inv-wsp-bound,eq:omega1-inv-wsps-bound} then follow from Step 1.
		
		\noindent \textbf{Step 3: Inequality \cref{eq:omega1-invs-wsp1-bound} with $i=1$. } Now let $0 \leq s < 1$. For $sp = 1$, \cref{eq:omega1-invs-wsp1-bound} follows immediately from the definition of the norm. In the case $sp < 1$, the proof of Theorem 1.4.4.4 in \cite{Grisvard85} gives
		\begin{align*}
			\| \omega_i^{-s} f\|_{p, \reftri} \leq \| \dist(\cdot, \partial \reftri)^{-s} f\|_{p, \reftri} \lesssim_{s, p} \|f\|_{s, p, \reftri} = \inorm{\gamma_1}{f}{s, p, \reftri} \qquad \forall f \in W^{s, p}_{\gamma_1}(\reftri).
		\end{align*}
		Finally, let $sp > 1$ and let $f \in W^{s, p}_{\gamma_1}(\reftri)$ be given. We denote by $\tilde{f} \in W^{s, p}(\reftri)$ any extension  of $f$ to $\mathbb{R}^2$ satisfying $\|\tilde{f}\|_{s, p, \mathbb{R}^2} \lesssim_{s, p} \|f\|_{s, p, \reftri}$ (see e.g. \cite{Devore93} or \cite[Theorem 8.4]{Leoni23}). Thanks to Theorem 6.79, inequality (6.58), and Remark 6.80 of \cite{Leoni23}, there holds
		\begin{align*}
			\int_{0}^{\infty} \int_{\mathbb{R}} \frac{|\tilde{f}(x_1, x_2) - \tilde{f}(0, x_2)|^p}{x_1^{sp}} \d{x_2} \d{x_1} \lesssim_{s, p} | \tilde{f} |_{s, p, \mathbb{R}_+ \times \mathbb{R}}^p. 
		\end{align*}
		Since $f|_{\gamma_1} = \tilde{f}|_{\gamma_1} = 0$, we obtain
		\begin{align*}
			\| \omega_1^{-s} f\|_{p, \reftri}  = \| \omega_1^{-s} (\tilde{f} - \tilde{f}(0, \cdot)) \|_{p, \reftri} \leq \|  \omega_1^{-s} (\tilde{f} - \tilde{f}(0, \cdot)) \|_{p, \mathbb{R}_+ \times \mathbb{R}} \lesssim_{s, p} |\tilde{f}|_{s, p, \mathbb{R}_+ \times \mathbb{R}},
		\end{align*}
		and so $\| \omega_1^{-s} f\|_{p, \reftri} \lesssim_{s, p} \|f\|_{s, p, \reftri}$, which completes the proof of \cref{eq:omega1-invs-wsp1-bound} for $i=1$. 
		
		\noindent \textbf{Step 4: Inequality \cref{eq:omega1-invs-wsp1-bound} with $i=2$. } This can be reduced to the case $i=1$ using similar arguments as in Step 2 using the mapping $\mathfrak{F}_1$ defined in \cref{eq:proof:frakf1-def}.
\end{proof}
	
	\begin{corollary}
		\label{cor:omega-inv-wspr-norm}
		Let $1 < p < \infty$, $0 \leq s < \infty$, $1 \leq i \leq 2$, and $r_1, r_2 \in \mathbb{N}_0$ with $r_i \geq 1$. Then, for all $f \in W^{s+1, p}_{\gamma_1, r_1}(\reftri) \cap W^{s+1, p}_{\gamma_2, r_2}(\reftri)$, there holds $\omega_i^{-1} f \in W^{s, p}_{\gamma_i, r_i - 1}(\reftri) \cap W^{s, p}_{\gamma_j, r_j}(\reftri)$ and
		\begin{subequations}
			\label{eq:omega1-inv-wspr-norm}
			\begin{alignat}{2}
				\label{eq:omega1-inv-wspr-norm-1}
				\inorm{\gamma_i, r_i-1}{\omega_i^{-1} f}{s, p, \reftri} &\lesssim_{s, p, r_1, r_2}  \inorm{\gamma_i, r_i-1}{\partial_i f}{s, p, \reftri} & &\leq \inorm{\gamma_i, r_i}{f}{s+1, p, \reftri}, \\
				\label{eq:omega1-inv-wspr-norm-2}
				\inorm{\gamma_j, r_j}{\omega_i^{-1} f}{s, p} &\lesssim_{s, p, r_1, r_2}  \inorm{\gamma_j, r_j}{\partial_i f}{s, p, \reftri} & &\leq \inorm{\gamma_j, r_j}{f}{s+1, p, \reftri},
			\end{alignat}
		\end{subequations}
		where $1 \leq j \leq 2$, $j \neq i$.
	\end{corollary}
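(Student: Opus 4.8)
The plan is to reduce everything, via the coordinate swap $\mathfrak{F}_1(\bdd{x}) = (x_2, x_1)$ of \cref{eq:proof:frakf1-def}, to the case $i = 1$, and then to combine the identity $\omega_1^{-1} f = \mathcal{H}_1(\partial_1 f)$ with the mapping properties of $\mathcal{H}_1$ recorded in \cref{lem:hardy1}. Since $\mathfrak{F}_1$ interchanges the edges $\gamma_1$ and $\gamma_2$ and satisfies $\partial_1(f\circ\mathfrak{F}_1) = (\partial_2 f)\circ\mathfrak{F}_1$ and $\omega_1^{-1}(f\circ\mathfrak{F}_1) = (\omega_2^{-1}f)\circ\mathfrak{F}_1$, the case $i = 2$ follows from the case $i = 1$ applied to $f\circ\mathfrak{F}_1$ (with the roles of $r_1$ and $r_2$ exchanged, which is legitimate since $r_2 \geq 1$ in that case), using the norm equivalences under $\mathfrak{F}_1$ that were already invoked in the proof of \cref{lem:omega1-inv-mapping}. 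So assume $i = 1$, $r_1 \geq 1$, and set $j = 2$.

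Because $r_1 \geq 1$, membership of $f$ in $W^{s+1,p}_{\gamma_1,r_1}(\reftri)$ forces $f|_{\gamma_1} = 0$; hence, arguing exactly as for \cref{eq:proof:wsp1-equal-hardy-deriv} (via the density result \cref{lem:wsp1-smooth-density} and the fundamental theorem of calculus), one has $\omega_1^{-1} f = \mathcal{H}_1(\partial_1 f)$ almost everywhere on $\reftri$. Thus \cref{eq:omega1-inv-wspr-norm} will follow once I establish two ingredients: (a) the transfer estimates
\[
\inorm{\gamma_1,r_1-1}{\partial_1 f}{s,p,\reftri} \le \inorm{\gamma_1,r_1}{f}{s+1,p,\reftri}, \qquad \inorm{\gamma_2,r_2}{\partial_1 f}{s,p,\reftri} \le \inorm{\gamma_2,r_2}{f}{s+1,p,\reftri},
\]
which are precisely the second ($\le$) inequalities in \cref{eq:omega1-inv-wspr-norm-1,eq:omega1-inv-wspr-norm-2}; and (b) boundedness of $\mathcal{H}_1$ on $W^{s,p}_{\gamma_1,r_1-1}(\reftri)$ and on $W^{s,p}_{\gamma_2,r_2}(\reftri)$.

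For (a), I would argue directly from the definitions \cref{eq:wsp-e-def,eq:wsp-er-def}, distinguishing the two edges. On $\gamma_1 = \{x_1 = 0\}$ the operator $\partial_1$ is the normal derivative, so it lowers by one the order of the traces that vanish — those of order $< \min\{s+1,r_1\}-1/p$ for $f$ become those of order $< \min\{s,r_1-1\}-1/p$ for $\partial_1 f$ — which accounts for the shift $r_1 \mapsto r_1-1$; on $\gamma_2 = \{x_2 = 0\}$ the operator $\partial_1$ is tangential, and since tangential differentiation preserves vanishing of traces the order is unchanged, which accounts for $r_2 \mapsto r_2$. After relabeling, in each case the $\sigma p = 1$ weighted term of $\partial_1 f$ in \cref{eq:wsp-er-def} is a sub-expression of the corresponding weighted term of $f$ (the tangential derivative $\partial_{\unitvec{t}_{\gamma}}$ is $\partial_2$ when $\gamma = \gamma_1$ and $\partial_1$ when $\gamma = \gamma_2$), so these hold with constant $1$; the only care is a finite case split on the relative sizes of $s$, $s+1$, and $r$, and on whether $\sigma p = 1$.

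For (b), the $\gamma_1$ piece is \cref{lem:hardy1}(ii) with $\mathfrak{E} = \{\gamma_1\}$, which combined with the first transfer estimate gives \cref{eq:omega1-inv-wspr-norm-1}. For the $\gamma_2$ piece one needs $\mathcal{H}_1$ bounded on $W^{s,p}_{\gamma_2,r_2}(\reftri)$; although this is not literally among the three cases stated in \cref{lem:hardy1}, it follows from the identical argument, since $\mathcal{H}_1$ commutes with $\partial_{x_2}$, identity \cref{eq:proof:hardy-derivative-id} exhibits $D^{\alpha}\mathcal{H}_1$ as a Hardy-type average of $D^{\alpha}$, and the estimates used in the proof of \cref{lem:hardy1} for the $x_2$-weighted term — notably \cref{eq:proof:h1-weighted-2-cont} and the $W^{s,p}$-boundedness of its Step 2 — never involve $\gamma_1$ (equivalently, one may record this as a small addendum to \cref{lem:hardy1}). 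Combined with the second transfer estimate this yields \cref{eq:omega1-inv-wspr-norm-2}, and the asserted memberships $\omega_1^{-1}f \in W^{s,p}_{\gamma_1,r_1-1}(\reftri)\cap W^{s,p}_{\gamma_2,r_2}(\reftri)$ are obtained en route. I expect the bookkeeping in step (a) to be the only real obstacle: correctly reading off \cref{eq:wsp-er-def} across the sub-cases $s < r$, $s = r$, and $s > r$ (with and without $\sigma p = 1$), and applying the normal-versus-tangential dichotomy to the right edge; everything else is routine given \cref{lem:hardy1,lem:wsp1-smooth-density}.
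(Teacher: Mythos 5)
Your proposal follows the paper's own route: the identity $\omega_1^{-1}f = \mathcal{H}_1(\partial_1 f)$ from \cref{eq:proof:wsp1-equal-hardy-deriv}, the transfer $\partial_1 f \in W^{s,p}_{\gamma_1,r_1-1}(\reftri)\cap W^{s,p}_{\gamma_2,r_2}(\reftri)$ via the normal-versus-tangential dichotomy, the mapping properties of $\mathcal{H}_1$ from \cref{lem:hardy1}, and the reduction of $i=2$ to $i=1$ through $\mathfrak{F}_1$. It is correct, and in fact more explicit than the paper's terse argument at the two points it leaves implicit: that the tangent to $\gamma_2$ is $[1,0]^{T}$ (so $\partial_1$ preserves the vanishing order and the weighted term there), and that the boundedness of $\mathcal{H}_1$ on $W^{s,p}_{\gamma_2,r_2}(\reftri)$ alone, while not literally among the cases listed in \cref{lem:hardy1}, follows from the same Hardy estimates (in particular \cref{eq:proof:h1-weighted-2-cont}).
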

	\begin{proof}
		Let $f \in W^{s+1, p}_{\gamma_1, r_1+1}(\reftri) \cap W^{s+1, p}_{\gamma_2, r_2}(\reftri)$ be given. By definition, there holds $\partial_1 f \in  W^{s, p}_{\gamma_1, r_1}(\reftri) \cap W^{s, p}_{\gamma_2, r_2}(\reftri)$ since $\unitvec{t}_{\gamma_2} = [0, 1]^T$. Thanks to identity \cref{eq:proof:wsp1-equal-hardy-deriv}, which was shown in the proof of \cref{lem:omega1-inv-mapping} to hold for all $f \in W^{1, p}_{\gamma_1}(\reftri)$, the result for $i = 1$ follows from \cref{lem:hardy1}. The case $i=2$ can be reduced to the case $i=1$ using similar arguments as in the proof of \cref{lem:omega1-inv-mapping} using the mapping $\mathfrak{F}_1$ \cref{eq:proof:frakf1-def}.	
	\end{proof}

	\section{Equivalent Boundary Norm}
	
	We begin with a result that states necessary and sufficient conditions for a function defined on two faces $\Gamma_i \cup \Gamma_j \subset \partial \reftet$ to belong to $W^{s, p}(\Gamma_i \cup \Gamma_j)$.
	\begin{lemma}
		\label{lem:wsp-two-face}
		Let $0 \leq s < 1$, $1 < p < \infty$, and $1 \leq i < j \leq 4$.  Then, $f \in L^p(\Gamma_i \cup \Gamma_j)$ satisfies $f \in W^{s, p}(\Gamma_i \cup \Gamma_j)$ if and only if
		\begin{enumerate}
			\item[(i)] $f_i \in W^{s, p}(\Gamma_i)$ and $f_i \in W^{s, p}(\Gamma_i)$;
			
			\item[(ii)] if $s > 1/p$, then $f_i|_{\gamma_{ij}} = f_j|_{\gamma_{ij}}$; and
			
			\item[(iii)] if $s = 1/p$, then $I_{ij}^p(f_i, f_j) < \infty$,
		\end{enumerate}
		where $\mathcal{I}_{ij}^p(\cdot,\cdot)$ is defined in \cref{eq:edge-integral-def}. Additionally, 
		\begin{align*}
			\|f\|_{s, p, \Gamma_i \cup \Gamma_j}^p \approx_{s, p} \vertiii{f}_{s,p,\Gamma_i \cup \Gamma_j}^p := \|f_i\|_{s, p, \Gamma_i}^p + \|f_j\|_{s, p, \Gamma_j}^p + \begin{cases}
				\mathcal{I}_{ij}^p(f_i, f_j) & \text{if } sp = 1, \\
				0 & \text{otherwise}.
			\end{cases}
		\end{align*}
	\end{lemma}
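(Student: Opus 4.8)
The plan is to reduce the claim to a single mixed (off-diagonal) Gagliardo integral, transplant it to the reference triangle $\reftri$ where it turns into an explicit weighted model, and then close with the Hardy-type inequalities of the appendix. First, writing out the Gagliardo seminorm of $f$ over $\Gamma_i\cup\Gamma_j$ and splitting the domain of integration according to whether the two points lie in $\Gamma_i\times\Gamma_i$, in $\Gamma_j\times\Gamma_j$, or in the two mixed blocks $\Gamma_i\times\Gamma_j$ and $\Gamma_j\times\Gamma_i$, one gets the exact identity $\|f\|_{s,p,\Gamma_i\cup\Gamma_j}^p=\|f_i\|_{s,p,\Gamma_i}^p+\|f_j\|_{s,p,\Gamma_j}^p+2C$, where $C:=\iint_{\Gamma_i\times\Gamma_j}|f_i(\bdd{x})-f_j(\bdd{y})|^p\,|\bdd{x}-\bdd{y}|^{-(sp+2)}\d{\bdd{x}}\d{\bdd{y}}$ (the $L^p$ part of the norm splits trivially). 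Hence $f\in W^{s,p}(\Gamma_i\cup\Gamma_j)$ if and only if $f_i\in W^{s,p}(\Gamma_i)$, $f_j\in W^{s,p}(\Gamma_j)$ and $C<\infty$, and the whole lemma (both the characterisation and the norm equivalence) will follow once $C$ is bounded two-sidedly in terms of $\|f_i\|_{s,p,\Gamma_i}^p+\|f_j\|_{s,p,\Gamma_j}^p$ and, in the endpoint case $sp=1$ only, the quantity $\mathcal{I}_{ij}^p(f_i,f_j)$ of \cref{eq:edge-integral-def}.

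Second, I would transplant $C$ to $\reftri$ via the affine isomorphisms $\bdd{F}_{ij},\bdd{F}_{ji}$ of \cref{eq:faces-to-reftri-edge-mappings}. With $\bdd{a}$ a vertex of $\reftet$ these read $\bdd{F}_{ij}(\bdd{u})=\bdd{a}+u_1\bdd{e}+u_2\bdd{w}_i$ and $\bdd{F}_{ji}(\bdd{v})=\bdd{a}+v_1\bdd{e}+v_2\bdd{w}_j$ with $\bdd{e}=\bdd{b}-\bdd{a}$ along $\gamma_{ij}$ and $\bdd{w}_i=\bdd{c}_i-\bdd{a}$, $\bdd{w}_j=\bdd{c}_j-\bdd{a}$; since $\{\bdd{e},\bdd{w}_i,\bdd{w}_j\}$ are the three edge vectors of $\reftet$ at $\bdd{a}$, hence a basis of $\mathbb{R}^3$, the difference $\bdd{F}_{ij}(\bdd{u})-\bdd{F}_{ji}(\bdd{v})=(u_1-v_1)\bdd{e}+u_2\bdd{w}_i-v_2\bdd{w}_j$ has Euclidean norm comparable, with a constant depending only on $\reftet$, to $|u_1-v_1|+u_2+v_2$. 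Setting $g:=f_i\circ\bdd{F}_{ij}$ and $h:=f_j\circ\bdd{F}_{ji}$ on $\reftri$, this gives $\|g\|_{s,p,\reftri}\approx_{s,p}\|f_i\|_{s,p,\Gamma_i}$, $\|h\|_{s,p,\reftri}\approx_{s,p}\|f_j\|_{s,p,\Gamma_j}$, $\mathcal{I}_{ij}^p(f_i,f_j)=\int_{\reftri}|g-h|^p\,x_2^{-1}\d{\bdd{x}}$, and $C\approx_{s,p}J:=\iint_{\reftri\times\reftri}|g(\bdd{u})-h(\bdd{v})|^p\,(|u_1-v_1|+u_2+v_2)^{-(sp+2)}\d{\bdd{u}}\d{\bdd{v}}$. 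In this model $x_1$ is arclength along the edge and $x_2$ the distance to it, and the lemma is reduced to two-sided bounds for $J$.

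For the model estimates, split $|g(\bdd{u})-h(\bdd{v})|\le|g(\bdd{u})-g(\bdd{v})|+|g(\bdd{v})-h(\bdd{v})|$ for the upper bound: the elementary comparison $(|u_1-v_1|+u_2+v_2)^2\gtrsim(u_1-v_1)^2+(u_2-v_2)^2$ makes the first term contribute $\lesssim|g|_{s,p,\reftri}^p$, while integrating $\bdd{u}$ out of the second term leaves $\lesssim\int_{\reftri}|g-h|^p\,x_2^{-sp}\d{\bdd{x}}$; for $sp<1$ this is $\lesssim\|g\|_{s,p,\reftri}^p+\|h\|_{s,p,\reftri}^p$ by the weighted Hardy inequality \cref{eq:omega1-invs-wsp1-bound}, for $sp>1$ it is $\lesssim|g|_{s,p}^p+|h|_{s,p}^p$ once the $W^{s-1/p,p}$ edge traces of $g$ and $h$ coincide (Hardy applied to $g-g|_{\gamma_2}$ and $h-h|_{\gamma_2}$, as in the proof of \cref{lem:omega1-inv-mapping}), and for $sp=1$ it is exactly $\mathcal{I}_{ij}^p(f_i,f_j)$. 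For the lower bound, compare $g$ and $h$ with their averages $\bar{g},\bar{h}$ over boxes $Q_{\bdd{x}}\subset\reftri$ of diameter $\approx x_2$ adjacent to $\bdd{x}$ (such boxes exist once $x_2$ is below a fixed threshold, the complementary region being harmless since $x_2^{-sp}$ is then bounded): a Poincar\'e estimate gives $\int_{\reftri}|g-\bar{g}|^p\,x_2^{-sp}\lesssim|g|_{s,p}^p$ (and likewise for $h$), while unwinding the double averages gives $\int_{\reftri}|\bar{g}-\bar{h}|^p\,x_2^{-sp}\lesssim J$, so $\int_{\reftri}|g-h|^p\,x_2^{-sp}\lesssim J+\|g\|_{s,p}^p+\|h\|_{s,p}^p$. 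Read with $sp=1$ this is the missing lower bound on $\mathcal{I}_{ij}^p(f_i,f_j)$; read with $sp>1$, the non-integrability of $x_2^{-sp}$ at $x_2=0$ forces the edge traces of $g$ and $h$ to agree, i.e.\ $f_i|_{\gamma_{ij}}=f_j|_{\gamma_{ij}}$, by the standard Hardy characterisation of zero trace. Undoing the transplantation then delivers conditions (i)--(iii) and $\|f\|_{s,p,\Gamma_i\cup\Gamma_j}\approx_{s,p}\vertiii{f}_{s,p,\Gamma_i\cup\Gamma_j}$.

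I expect the step demanding the most care to be the endpoint $sp=1$: there the one-dimensional edge trace of a $W^{1/p,p}$ function is not defined, the convenient splitting through traces is unavailable, and one genuinely has to run the averaging comparison in both directions to identify the mixed Gagliardo energy, modulo $\|f_i\|_{s,p}^p+\|f_j\|_{s,p}^p$, with the weighted edge integral $\mathcal{I}_{ij}^p(f_i,f_j)$; the delicate point there is the choice of the boxes $Q_{\bdd{x}}$ near the two endpoints of $\gamma_{ij}$ and the bookkeeping of which half-open weight exponents are (non)integrable. The only other nontrivial ingredient is the uniform comparison $|\bdd{F}_{ij}(\bdd{u})-\bdd{F}_{ji}(\bdd{v})|\approx|u_1-v_1|+u_2+v_2$, which is automatic once one notices that the relevant three direction vectors are the edge vectors of the fixed nondegenerate tetrahedron $\reftet$, hence a basis of $\mathbb{R}^3$.
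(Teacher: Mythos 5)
Your proposal is correct and, for the decomposition and the sufficiency direction, runs parallel to the paper: both split the $(\Gamma_i\cup\Gamma_j)^2$ Gagliardo integral into the two face blocks plus a mixed block, pull the mixed block back to $\reftri\times\reftri$ (your comparison $|\bdd{F}_{ij}(\bdd{u})-\bdd{F}_{ji}(\bdd{v})|\approx|u_1-v_1|+u_2+v_2$ is exactly the paper's reflected-distance equivalence with $\bdd{x}_r=(x_1,-x_2)$), and then bound the mixed block by the triangle inequality, the kernel estimate $\int_{\reftri}(|u_1-v_1|+u_2+v_2)^{-(sp+2)}\d{\bdd{u}}\lesssim v_2^{-sp}$, and the Hardy inequality \cref{eq:omega1-invs-wsp1-bound} applied on $\gamma_2$. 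Where you genuinely diverge is the necessity direction: the paper obtains (ii) directly from the trace theorem on $\Gamma_i\cup\Gamma_j$ and obtains (iii) from the pointwise lower bound $A(\bdd{x})=\int_{\reftri}x_2|\bdd{x}_r-\bdd{y}|^{-3}\d{\bdd{y}}\gtrsim 1$, proved by an explicit closed-form evaluation of $A$; you instead compare $g,h$ with local box averages at scale $x_2$, use Jensen/Poincar\'e to control $\int|g-\bar g|^p x_2^{-sp}$ by $|g|_{s,p}^p$ and Fubini to control $\int|\bar g-\bar h|^p x_2^{-sp}$ by the mixed energy, which yields (iii) at $sp=1$ and forces (ii) for $sp>1$ from the non-integrability of $x_2^{-sp}$. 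Your route avoids any explicit integral computation and works uniformly in $s,p$, at the price of the box bookkeeping near the endpoints of $\gamma_{ij}$ (which you flag) and a short standard argument that finiteness of the weighted integral kills the trace; the paper's route is shorter for (ii) and replaces the averaging machinery by one calculus exercise.

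One small repair: in the $sp>1$ upper bound, do not argue via ``Hardy applied to $g-g|_{\gamma_2}$ and $h-h|_{\gamma_2}$'' --- the constant-in-$x_2$ extension of the edge trace is not controlled in $W^{s,p}(\reftri)$, so that splitting is not legitimate as written. The correct (and simpler) step, which is also what the paper does, is to apply \cref{eq:omega1-invs-wsp1-bound} directly to the difference $g-h$, which lies in $W^{s,p}_{\gamma_2}(\reftri)$ by (ii) (respectively has finite weighted norm by (iii) when $sp=1$), giving $\int_{\reftri}|g-h|^p x_2^{-sp}\d{\bdd{x}}\lesssim_{s,p}\vertiii{f}_{s,p,\Gamma_i\cup\Gamma_j}^p$. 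With that adjustment your argument is complete.
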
	
	\begin{proof}
		Let $0 \leq s < 1$, $1 < p < \infty$, and $1 \leq i < j \leq 4$ be given.
		
		\noindent \textbf{Step 1: $f \in W^{s,p}(\Gamma_i \cup \Gamma_j)\implies \text{(i-iii)}$. } Assume first that $f \in W^{s, p}(\Gamma_i \cup \Gamma_j)$. Condition (i) follows from the definition of the norms, and in particular, $ \|f_i\|_{s, p, \Gamma_i}^p + \|f_j\|_{s, p, \Gamma_j}^p \leq \|f\|_{s, p, \Gamma_i \cup \Gamma_j}^p$. If $s > 1/p$, then the trace theorem shows that $f$ has a well-defined trace on $\gamma_{ij}$, and so (ii) holds. 
		
		We now show that condition (iii) is satisfied. There holds
		\begin{align*}
			|\bdd{x} - \bdd{y}|^2 \approx ([\bdd{F}_{ij}^{-1}(\bdd{x}) - \bdd{F}_{ji}^{-1}(\bdd{y})] \cdot \unitvec{e}_1)^2 + ([\bdd{F}_{ij}^{-1}(\bdd{x}) + \bdd{F}_{ji}^{-1}(\bdd{y})] \cdot \unitvec{e}_2)^2
		\end{align*}
		for all  $(\bdd{x}, \bdd{y}) \in  \Gamma_i \times \Gamma_j$, where $\bdd{F}_{ij} : \reftri \to \Gamma_i$ and $\bdd{F}_{ji} : \reftri \to \Gamma_j$ are defined in \cref{eq:faces-to-reftri-edge-mappings}, and so
		\begin{align}
			\label{eq:proof:ref-equivalence}
			\int_{\reftri} \int_{\reftri} \frac{ |f_i \circ \bdd{F}_{ij}(\bdd{x}) - f_j \circ \bdd{F}_{ji} (\bdd{y})|^p }{ |\bdd{x}_{r} - \bdd{y}|^{sp+2}} \d{\bdd{y}} \d{\bdd{x}} 
			&\approx_{s,p} 			
			\int_{\Gamma_i} \int_{\Gamma_j} \frac{ |f_i(\bdd{x}) - f_j(\bdd{y})|^p }{|\bdd{x}-\bdd{y}|^{sp+2}} \d{\bdd{y}} \d{\bdd{x}} \\
			&< \|f\|_{s, p, \Gamma_i \cup \Gamma_j}^p, \notag
		\end{align}
		where $\bdd{x}_{r} := (x_1, -x_2)$.
		Now let $s = 1/p$. Applying the triangle inequality in conjunction with the above inequality, we obtain
		\begin{align*}
			\int_{\reftri} \int_{\reftri} \frac{ |f_i \circ \bdd{F}_{ij}(\bdd{x}) - f_j \circ \bdd{F}_{ji} (\bdd{x})|^p }{ |\bdd{x}_{r} - \bdd{y}|^{3}} \d{\bdd{y}} \d{\bdd{x}} 
			&\lesssim_p \|f\|_{s, p, \Gamma_i \cup \Gamma_j}^p.
		\end{align*}
		Assume, for the moment, that the following holds.
		\begin{align}
			\label{eq:proof:double-int-lower-bound}
			A(\bdd{x}) := \int_{\reftri} \frac{x_2}{ |\bdd{x}_{r} - \bdd{y}|^{3}} \d{\bdd{y}} \gtrsim 1 \qquad \forall \bdd{x} \in T.
		\end{align}
		Then, we obtain the following bound for $I_{ij}^p(f_i, f_j)$ defined in \cref{eq:edge-integral-def}:
		\begin{align*}
			I_{ij}^p(f_i, f_j) \lesssim \int_{\reftri} \int_{\reftri} \frac{ |f_i \circ \bdd{F}_{ij}(\bdd{x}) - f_j \circ \bdd{F}_{ji} (\bdd{x})|^p }{ |\bdd{x}_{r} - \bdd{y}|^{3}} \d{\bdd{y}} \d{\bdd{x}} \lesssim_p \|f\|_{s, p, \Gamma}^p.
		\end{align*}
		Thus, $f$ satisfies (iii) and we have shown that for all $f \in W^{s, p}(\Gamma_i \cup \Gamma_j)$ there holds
		\begin{align}
			\label{eq:proof:boundary-norm-gtr}
			\|f\|_{s, p, \Gamma_i \cup \Gamma_j} \gtrsim_{s, p}  \vertiii{f}_{s, p, \Gamma_i \cup \Gamma_j}.
		\end{align}
		
		We now turn to the proof of \cref{eq:proof:double-int-lower-bound}. First note that $A$ is well-defined on the half plane $\mathbb{R}_+^2 = \{\bdd{x} \in \mathbb{R}^2 : x_2 > 0\}$ and is a continuous function with $A(\bdd{x}) > 0$ for $\bdd{x} \in \mathbb{R}_+^2$. Moreover, a tedious calculation (in Step 3) shows that
		\begin{align*}
			A(\bdd{x}) = \frac{x_1 (x_2^2 + (1-x_1)^2)^{\frac{1}{2}} + (1-x_1 + x_2) (x_1^2 + x_2^2)^{\frac{1}{2}} - x_2((1+x_2)^2 + x_1^2)^{\frac{1}{2}}}{x_1(1-x_1+x_2)},
		\end{align*}
		and so $\lim_{x_2 \to 0^{+}} A(\bdd{x}) = 2$ for all $\bdd{x} \in \mathbb{R}_{+}^2$. Inequality \cref{eq:proof:double-int-lower-bound} now follows.
		
		\noindent \textbf{Step 2: $\text{(i-iii)} \implies f \in W^{s,p}(\Gamma_i \cup \Gamma_j)$. } Now assume that $f \in L^p(\Gamma_i \cup \Gamma_j)$ satisfies (i-iii).  Thanks to the triangle inequality, there holds
		\begin{multline}
			\label{eq:proof:triangle-inequality-mixed-faces}
			\int_{\reftri} \int_{\reftri} \frac{ |f_i \circ \bdd{F}_{ij}(\bdd{x}) - f_j \circ \bdd{F}_{ji} (\bdd{y})|^p }{ |\bdd{x}_{r} - \bdd{y}|^{sp+2}} \d{\bdd{y}} \d{\bdd{x}} \\
			\lesssim_{s, p} \int_{\reftri} \int_{\reftri} \frac{  |f_i \circ \bdd{F}_{ij}(\bdd{x}) - f_j \circ \bdd{F}_{ji} (\bdd{x})|^p }{ |\bdd{x}_{r} - \bdd{y}|^{sp+2}} \d{\bdd{y}} \d{\bdd{x}} \\
			\int_{\reftri} \int_{\reftri} \frac{ |f_j \circ \bdd{F}_{ji}(\bdd{x}) - f_j \circ \bdd{F}_{ji} (\bdd{y})|^p }{ |\bdd{x}_{r} - \bdd{y}|^{sp+2}} \d{\bdd{y}} \d{\bdd{x}}.
		\end{multline}
		We appeal to \cref{eq:proof:ref-equivalence} to bound the second term:
		\begin{align*}
			\int_{\reftri} \int_{\reftri} \frac{ |f_j \circ \bdd{F}_{ji}(\bdd{x}) - f_j \circ \bdd{F}_{ji} (\bdd{y})|^p }{ |\bdd{x}_{r} - \bdd{y}|^{sp+2}} \d{\bdd{y}} \d{\bdd{x}} \lesssim_{s, p} \|f_j\|_{s, p, \Gamma_j}^{p}.
		\end{align*}
		For the first term, there holds
		\begin{align*}
			\int_{\reftri} \frac{\d{\bdd{y}}}{|\bdd{x}_{r} - \bdd{y}|^{sp+2}} \leq \int_{\mathbb{R}^2 \setminus B(\bdd{x}_{r}, x_2)} \frac{\d{\bdd{y}}}{|\bdd{x}_{r} - \bdd{y}|^{sp+2}} = 2\pi \int_{x_2}^{\infty} \frac{\d{\rho}}{\rho^{sp+1}} = \frac{2\pi}{sp} x_2^{-sp},
		\end{align*}
		for all $\bdd{x} \in \reftri$, and so
		\begin{align*}
			\int_{\reftri} \int_{\reftri} \frac{ |f_i \circ \bdd{F}_{ij}(\bdd{x}) - f_j \circ \bdd{F}_{ji} (\bdd{x})|^p }{ |\bdd{x}_{r} - \bdd{y}|^{sp+2}} \d{\bdd{y}} \d{\bdd{x}} \lesssim_{s, p} \int_{\reftri} \frac{ |f_i \circ \bdd{F}_{ij}(\bdd{x}) - f_j \circ \bdd{F}_{ji} (\bdd{x})|^p }{x_2^{sp}}  \d{\bdd{x}}.  
		\end{align*}
		Thanks to conditions (ii)-(iii), the function $g = f_i \circ \bdd{F}_{ij} - f_j \circ \bdd{F}_{ji}$ belongs to $W_{\gamma_2}^{s, p}(\reftri)$ and applying \cref{eq:omega1-inv-wsps-bound} and the triangle inequality gives
		\begin{align*}
			\int_{\reftri} \frac{ |f_i \circ \bdd{F}_{ij}(\bdd{x}) - f_j \circ \bdd{F}_{ji} (\bdd{x})|^p }{x_2^{sp}}  \d{\bdd{x}} \lesssim_{s, p} \vertiii{f}_{s, p, \Gamma_i \cup \Gamma_j}^p
		\end{align*}
		and so
		\begin{align*}
			\int_{\reftri} \int_{\reftri} \frac{ |f_i \circ \bdd{F}_{ij}(\bdd{x}) - f_j \circ \bdd{F}_{ji} (\bdd{y})|^p }{ |\bdd{x}_{r} - \bdd{y}|^{sp+2}} \d{\bdd{y}} \d{\bdd{x}} \lesssim_{s, p} \vertiii{f}_{s, p, \Gamma_i \cup \Gamma_j}^p.
		\end{align*}
		Then, the reverse inequality of \cref{eq:proof:boundary-norm-gtr} immediately follows from \cref{eq:proof:ref-equivalence}, and so $f \in W^{s, p}(\Gamma_i \cup \Gamma_j)$ and the result follows.

		\noindent \textbf{Step 3: Computing $A(\bdd{x})$. } First note that		
		\begin{align*}
			\int_{0}^{1-y_2} \frac{\d{y_1}}{[(y_1 - x_1)^2 + z^2]^{\frac{3}{2}}} = \frac{1}{z^2} \left( \frac{1-y_2-x_1}{((1-y_2-x_1)^2 + z^2)^{\frac{1}{2}}} + \frac{x_1}{(x_1^2 + z^2)^{\frac{1}{2}}} \right),
		\end{align*}
		and so
		\begin{align*}
			&\int_{T} \frac{\d{\bdd{y}}}{[(x_1 - y_1)^2 + (x_2 + y_2)^2]^{\frac{3}{2}}} \\
			&\qquad = \int_{0}^{1} \frac{1}{z^2} \left[ \frac{1-y_2-x_1}{((1-y_2-x_1)^2 + z^2)^{\frac{1}{2}}} + \frac{x_1}{(x_1^2 + z^2)^{\frac{1}{2}}} \right]_{z = x_2 + y_2} \d{y_2} \\
			&\qquad \leftstackrel{z = x_2 + y_2}{=} \int_{x_2}^{1+x_2} \frac{1}{z^2} \left( \frac{1+x_2-x_1-z}{((1 + x_2-x_1-z)^2 + z^2)^{\frac{1}{2}}} + \frac{x_1}{(x_1^2 + z^2)^{\frac{1}{2}}} \right) \d{z}.
		\end{align*}
		Now note that for any $a \in \mathbb{R}$, there holds
		\begin{align*}
			\int_{x_2}^{1+x_2} \frac{a-z}{z^2((a-z)^2 + z^2)^{\frac{1}{2}}} \d{z} = \frac{(x_2^2 + (a-x_2)^2)^{\frac{1}{2}}}{a x_2} - \frac{ ((1+x_2)^2 + (a-1-x_2)^2)^{\frac{1}{2}} }{a (1+x_2)},
		\end{align*}
		and
		\begin{align*}
			\int_{x_2}^{1+x_2} \frac{x_1}{z^2 (x_1^2 + z^2)^{\frac{1}{2}}} \d{z} = 	\frac{(x_1^2 + x_2^2)^{\frac{1}{2}}}{x_1 x_2} - \frac{(x_1^2 + (1 + x_2)^2)^{\frac{1}{2}}}{x_1 (1 + x_2)}.
		\end{align*}
		Consequently, there holds
		\begin{align*}
			A(\bdd{x}) &= \frac{(x_2^2 + (1-x_1)^2)^{\frac{1}{2}}}{(1-x_1+x_2)}  + \frac{(x_1^2 + x_2^2)^{\frac{1}{2}}}{x_1 } \\
			&\qquad \qquad -  x_2((1+x_2)^2 + x_1^2)^{\frac{1}{2}} \left( \frac{1}{(1+x_2-x_1) (1+x_2)} + \frac{1}{x_1 (1 + x_2)} \right) \\
			&= \frac{x_1 (x_2^2 + (1-x_1)^2)^{\frac{1}{2}} + (1-x_1 + x_2) (x_1^2 + x_2^2)^{\frac{1}{2}} - x_2((1+x_2)^2 + x_1^2)^{\frac{1}{2}}}{x_1(1-x_1+x_2)}.
		\end{align*}
	\end{proof}
	
	On noting that $f \in W^{s, p}(\partial \reftet)$ if and only if $f \in L^p(\partial K)$ and $f|_{\Gamma_i \cup \Gamma_j} \in W^{s, p}(\Gamma_i \cup \Gamma_j)$ for all $1 \leq i < j \leq 4$, the following result is an immediate consequence of \cref{lem:wsp-two-face}.
	\begin{corollary}
		\label{lem:equivalent-boundary-norm}
		Let $0 \leq s < 1$ and $1 < p < \infty$. Then, $f \in L^p(\partial \reftet)$ satisfies $f \in W^{s, p}(\partial \reftet)$ if and only if
		\begin{enumerate}
			\item[(i)] $f_i \in W^{s, p}(\Gamma_i)$ for $1 \leq i \leq 4$;
			
			\item[(ii)] if $s > 1/p$, then $f_i|_{\gamma_{ij}} = f_j|_{\gamma_{ij}}$ for all $1 \leq i < j \leq 4$; and
			
			\item[(iii)] if $s = 1/p$, then $I_{ij}^p(f_i, f_j) < \infty$ for all $1 \leq i < j \leq 4$.
		\end{enumerate}
		Additionally, \cref{eq:equivalent-boundary-norm} holds.
	\end{corollary}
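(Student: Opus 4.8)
The plan is to reduce the statement to the two-face characterization \cref{lem:wsp-two-face} by localizing $W^{s,p}(\partial\reftet)$ to unions of pairs of faces. The key structural fact is that $\partial\reftet$ is covered by its four faces $\Gamma_1,\dots,\Gamma_4$, that any two of them meet along exactly one edge (so the six pairs $\{\Gamma_i,\Gamma_j\}$ are in bijection with the six edges $\gamma_{ij}$), and that for $0\le s<1$ the Sobolev--Slobodeckij norm is built from an integral over the product $\partial\reftet\times\partial\reftet$ which splits along this covering.

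First I would record the elementary identity obtained by writing $\partial\reftet\times\partial\reftet=\bigcup_{i,j}\Gamma_i\times\Gamma_j$ (a union disjoint up to surface-measure zero) and using the symmetry of the integrand:
\begin{align*}
	\|f\|_{s,p,\partial\reftet}^p = \sum_{i=1}^{4}\|f_i\|_{s,p,\Gamma_i}^p + 2\sum_{1\le i<j\le 4}\iint_{\Gamma_i\times\Gamma_j}\frac{|f_i(\bdd{x})-f_j(\bdd{y})|^p}{|\bdd{x}-\bdd{y}|^{sp+2}}\d{\bdd{x}}\d{\bdd{y}}.
\end{align*}
Applying the same decomposition to each $\Gamma_i\cup\Gamma_j$ gives $\|f\|_{s,p,\Gamma_i\cup\Gamma_j}^p = \|f_i\|_{s,p,\Gamma_i}^p + \|f_j\|_{s,p,\Gamma_j}^p + 2\iint_{\Gamma_i\times\Gamma_j}(\cdots)$, and since every face belongs to exactly three of the six pairs, summing yields $\|f\|_{s,p,\partial\reftet}^p \approx \sum_{i<j}\|f\|_{s,p,\Gamma_i\cup\Gamma_j}^p$ with absolute constants (lying between $1$ and $3$). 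In particular, for $f\in L^p(\partial\reftet)$, one has $f\in W^{s,p}(\partial\reftet)$ if and only if $f|_{\Gamma_i\cup\Gamma_j}\in W^{s,p}(\Gamma_i\cup\Gamma_j)$ for every pair $1\le i<j\le 4$.

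Next I would apply \cref{lem:wsp-two-face} to each of the six pairs. Since two faces of $\reftet$ always share a single edge $\gamma_{ij}$, the lemma tells us that $f|_{\Gamma_i\cup\Gamma_j}\in W^{s,p}(\Gamma_i\cup\Gamma_j)$ is equivalent to conditions (i)--(iii) of that lemma for the pair $(i,j)$, together with $\|f\|_{s,p,\Gamma_i\cup\Gamma_j}^p \approx_{s,p} \vertiii{f}_{s,p,\Gamma_i\cup\Gamma_j}^p$. Taking the conjunction over all pairs, the union of the pairwise conditions is precisely conditions (i)--(iii) in the present statement: each face lies in some pair, so (i) is recovered, while (ii) and (iii) are already quantified over all $i<j$. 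Summing the pairwise norm equivalences and using the combinatorics of the previous paragraph gives $\|f\|_{s,p,\partial\reftet}^p \approx_{s,p} \sum_{i<j}\vertiii{f}_{s,p,\Gamma_i\cup\Gamma_j}^p$, and since $\sum_{i<j}\vertiii{f}_{s,p,\Gamma_i\cup\Gamma_j}^p = 3\sum_i\|f_i\|_{s,p,\Gamma_i}^p + \sum_{i<j}\mathcal{I}_{ij}^p(f_i,f_j)$ (the last sum present only when $sp=1$) is comparable to $\vertiii{f}_{s,p,\partial\reftet}^p$, we obtain \cref{eq:equivalent-boundary-norm}. All of the genuine analysis — in particular the delicate $sp=1$ case and the lower bound $A(\bdd{x})\gtrsim 1$ of \cref{eq:proof:double-int-lower-bound} — is already contained in \cref{lem:wsp-two-face}, so the only real obstacle here is bookkeeping: verifying that the ambient (chordal-distance) definition of $W^{s,p}$ on a union of faces decomposes exactly as claimed, and keeping track of the face--edge incidence when summing.
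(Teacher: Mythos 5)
Your proposal is correct and follows essentially the same route as the paper: the paper also observes that $f \in W^{s,p}(\partial \reftet)$ if and only if $f|_{\Gamma_i \cup \Gamma_j} \in W^{s,p}(\Gamma_i \cup \Gamma_j)$ for every pair $1 \leq i < j \leq 4$ and then invokes \cref{lem:wsp-two-face}, which is exactly your reduction. Your explicit splitting of the double integral over $\partial\reftet \times \partial\reftet$ and the face--pair counting simply spell out the bookkeeping the paper leaves implicit, and they are accurate.
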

	
	\section{Partial fractions decomposition}
	
	\begin{lemma}
		\label{lem:partial-frac-decomp}
		For all $\beta \in \mathbb{N}_0^3$ with $|\beta| \geq 2$, there holds
		\begin{align}
			\label{eq:partial-frac-decomp}
			\frac{1}{\omega_1^{\beta_1} \omega_2^{\beta_2} \omega_3^{\beta_3}} = \sum_{\substack{\alpha \in \mathbb{N}_0^3 \\ \alpha_j \leq \beta_j \\ |\alpha| \geq 2 }} \left( \frac{ c_{\alpha,1} }{\omega_1^{\alpha_1} \omega_2^{\alpha_2}} + \frac{ c_{\alpha,2} }{\omega_1^{\alpha_1} \omega_3^{\alpha_3}} + \frac{ c_{\alpha,3} }{\omega_2^{\alpha_2} \omega_3^{\alpha_3}} \right) \qquad \text{in } \reftri,
		\end{align}
		where $\{ c_{\alpha, j} \}$ are suitable positive constants.
	\end{lemma}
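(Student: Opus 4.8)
The plan is to prove \cref{eq:partial-frac-decomp} by induction on $|\beta|$, exploiting the single structural fact that the barycentric coordinates sum to one on the reference triangle. Indeed, for $\bdd{x} \in \reftri$ the truncations in \cref{eq:omegai-def} are inactive, so $\omega_1(\bdd{x}) = x_1$, $\omega_2(\bdd{x}) = x_2$, $\omega_3(\bdd{x}) = 1 - x_1 - x_2$, whence $\omega_1 + \omega_2 + \omega_3 \equiv 1$ and each $\omega_i > 0$ on $\reftri$; in particular every expression appearing below is well-defined pointwise on $\reftri$. The argument is purely algebraic, so it suffices to verify the identity as an identity of rational functions subject to the relation $\omega_1 + \omega_2 + \omega_3 = 1$.

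For the base case $|\beta| = 2$, at most two of $\beta_1, \beta_2, \beta_3$ are nonzero, so $1/(\omega_1^{\beta_1}\omega_2^{\beta_2}\omega_3^{\beta_3})$ is \emph{already} one of the three prototype terms on the right-hand side of \cref{eq:partial-frac-decomp} (for the appropriate family $j$, with $\alpha = \beta$ and coefficient one), and $|\alpha| = 2$, $\alpha_i \leq \beta_i$ hold trivially. The same remark settles any $\beta$ with $|\beta| \geq 3$ having a vanishing component. For the inductive step, suppose $|\beta| \geq 3$ and $\beta_1, \beta_2, \beta_3 \geq 1$, and that \cref{eq:partial-frac-decomp} holds for all smaller total degree. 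Multiplying by $1 = \omega_1 + \omega_2 + \omega_3$ and expanding gives
\begin{align*}
\frac{1}{\omega_1^{\beta_1}\omega_2^{\beta_2}\omega_3^{\beta_3}} = \frac{1}{\omega_1^{\beta_1-1}\omega_2^{\beta_2}\omega_3^{\beta_3}} + \frac{1}{\omega_1^{\beta_1}\omega_2^{\beta_2-1}\omega_3^{\beta_3}} + \frac{1}{\omega_1^{\beta_1}\omega_2^{\beta_2}\omega_3^{\beta_3-1}}.
\end{align*}
Each summand is of the form $1/(\omega_1^{\beta'_1}\omega_2^{\beta'_2}\omega_3^{\beta'_3})$ with $\beta' \in \mathbb{N}_0^3$, $\beta'_i \leq \beta_i$ and $|\beta'| = |\beta| - 1 \geq 2$, so the inductive hypothesis applies to each, expanding it as a finite sum of terms of the three allowed types with positive coefficients, $|\alpha| \geq 2$, and $\alpha_i \leq \beta'_i \leq \beta_i$. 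Adding the three expansions and collecting like terms produces \cref{eq:partial-frac-decomp}, with the new constants being sums of positive ones, hence positive.

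The step I expect to take the most care is not an analytic one but the bookkeeping: checking that the exponent constraints $\alpha_i \leq \beta_i$ and $|\alpha| \geq 2$ genuinely propagate through the recursion (they do, since peeling off one factor of $\omega_i$ lowers one exponent and the total degree each by one, and the inductive hypothesis keeps $|\alpha| \geq 2$), and handling the minor point that a given pair $(\alpha, j)$ may simply not occur. This causes no trouble: the sum in \cref{eq:partial-frac-decomp} is finite, and one may take the index set to be precisely the finite collection of pairs $(\alpha, j)$ actually generated by the recursion, on which all coefficients are positive.
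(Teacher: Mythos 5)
Your proof is correct and follows essentially the same route as the paper: induction on $|\beta|$, with the trivial cases being those where some $\beta_j=0$, and the inductive step peeling off one power via the relation $\omega_1+\omega_2+\omega_3=1$ on $\reftri$ (the paper writes this as $\tfrac{1}{\omega_1\omega_2\omega_3}=\tfrac{1}{\omega_1\omega_2}+\tfrac{1}{\omega_1\omega_3}+\tfrac{1}{\omega_2\omega_3}$, which produces exactly your three-term splitting). Your explicit remarks on the exponent bookkeeping and on pairs $(\alpha,j)$ that do not occur are fine and, if anything, slightly more careful than the paper's proof.
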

	\begin{proof}
		We proceed by induction on $|\beta|$. The case $|\beta| = 2$ is trivially true. Assume that \cref{eq:partial-frac-decomp} holds for all $\beta \in \mathbb{N}_0^3$ with $|\beta| = r \geq 2$. Let $\beta \in \mathbb{N}_0^3$ with $|\beta| = r+1$. If $\beta_j = 0$ for some $j \in \{1,2,3\}$, then \cref{eq:partial-frac-decomp} is trivially true, so assume that $\beta_j > 0$ for $1 \leq j \leq 3$. Then,
		\begin{align*}
			\frac{1}{\omega_1^{\beta_1} \omega_2^{\beta_2} \omega_3^{\beta_3}} &= \frac{1}{\omega_1 \omega_2 \omega_3} \cdot \frac{1}{\omega_1^{\beta_1-1} \omega_2^{\beta_2-1} \omega_3^{\beta_3-1}} \\
			&= \left( \frac{1}{\omega_1 \omega_2} + \frac{1}{\omega_1 \omega_3} + \frac{1}{\omega_2 \omega_3} \right) \frac{1}{\omega_1^{\beta_1-1} \omega_2^{\beta_2-1} \omega_3^{\beta_3-1}} \\
			&= \frac{1}{\omega_1^{\beta_1} \omega_2^{\beta_2} \omega_3^{\beta_3-1}} + \frac{1}{\omega_1^{\beta_1} \omega_2^{\beta_2-1} \omega_3^{\beta_3}} + \frac{1}{\omega_1^{\beta_1-1} \omega_2^{\beta_2} \omega_3^{\beta_3}}.
		\end{align*}
		By assumption, each of the three terms above is of the form \cref{eq:partial-frac-decomp}, which completes the proof.
	\end{proof}

\end{document}